\documentclass[11pt,reqno]{amsart}

\usepackage[utf8]{inputenc}
\usepackage{amsfonts,amsthm,amsmath,amssymb,thmtools,comment}
\usepackage{mathtools}
\usepackage{stmaryrd}
%I HAD TO DISABLE MnSymbol IN ORDER TO USE XMAPSTO IN MATHTOOLS.  DID WE NEED THIS FOR SOMETHING ELSE?
%\usepackage{MnSymbol}
\usepackage{float}
\usepackage{array,diagbox}
\usepackage{graphicx}
\usepackage{enumerate}
\usepackage{color}
\usepackage{tikz-cd}
\usepackage{tikz}
\usetikzlibrary{arrows.meta,positioning,calc,fit,backgrounds}
\usetikzlibrary{patterns}
\usepackage[
    maxbibnames=99,
    backend=biber,
    style=alphabetic,
    sorting=nyt,
    giveninits=true
]{biblatex}
\addbibresource{ref.bib}

\DeclareFieldFormat{pages}{#1}
\renewbibmacro{in:}{%
  \ifentrytype{article}
    {}
    {\bibstring{in}%
     \printunit{\intitlepunct}}}
\DeclareFieldFormat
[article,inbook,incollection,inproceedings,patent,thesis,unpublished]
  {title}{\mkbibemph{#1}}
\DeclareFieldFormat{journaltitle}{#1\isdot}
\DeclareFieldFormat[article]{volume}{\mkbibbold{#1}}
\DeclareFieldFormat[article]{number}{\bibstring{number}\addnbspace #1}

\renewbibmacro*{journal+issuetitle}{%
  \usebibmacro{journal}%
  \setunit*{\addspace}%
  \iffieldundef{series}
    {}
    {\newunit
     \printfield{series}%
     \setunit{\addspace}}%
  \printfield{volume}%
  \setunit{\addspace}%
  \usebibmacro{issue+date}%
  \setunit{\addcomma\space}%
  \printfield{number}%
  \setunit{\addcolon\space}%
  \usebibmacro{issue}%
  \setunit{\addcomma\space}%
  \printfield{eid}
  \newunit}

\setlength\parskip{.8em}
\setlength\parindent{0em}

\newtheoremstyle{mystyle}%                % Name
  {}%                                     % Space above
  {}%                                     % Space below
  {\itshape}%                                     % Body font
  {}%                                     % Indent amount
  {\bfseries}%                            % Theorem head font
  {.}%                                    % Punctuation after theorem head
  { }%                                    % Space after theorem head, ' ', or \newline
  {\thmname{#1}\thmnumber{ #2}\thmnote{ (#3)}}%                                     % Theorem head spec (can be left empty, meaning `normal')

\theoremstyle{mystyle}
\newtheorem{Thm}{Theorem}[section]
\newtheorem{Lem}[Thm]{Lemma}
\newtheorem{Cor}[Thm]{Corollary}
\newtheorem{Prop}[Thm]{Proposition}
\newtheorem{Conj}[Thm]{Conjecture}

\theoremstyle{definition}
\newtheorem{Def}[Thm]{Definition}
\newtheorem{Ex}[Thm]{Example}

\theoremstyle{remark}
\newtheorem{Rmk}[Thm]{Remark}

\declaretheoremstyle[%
  spaceabove=3pt,%reduce or increase between theorem and proof
  spacebelow=10pt,%reduce or increase
  headfont=\normalfont\itshape,%
  postheadspace=.5em,%
  qed=\qedsymbol%
]{mystyle2}

\newcommand{\R}{\mathbb{R}}
\newcommand{\Z}{\mathbb{Z}}

\newcommand{\Q}{\mathbb{Q}}

\newcommand{\CP}{\mathbb{CP}}

\newcommand{\A}{\mathbf{a}}
\newcommand{\B}{\mathbf{b}}

%%% TIKZ SHORTCUTS %%%
\newcommand{\ILtikzpic}[2][]{
\vcenter{\hbox{\begin{tikzpicture}[#1]
#2
\end{tikzpicture}}}
}

\newcommand{\drawover}[2][thick]{
\draw[line width=2mm,white] #2
\draw[#1] #2
}
%%% END TIKZ SHORTCUTS %%%

\usepackage[hypertexnames=false]{hyperref}
\usepackage[margin=1in]{geometry}

\title{Khovanov homology and exotic $4$-manifolds}

\author{Qiuyu Ren}
\address{Department of Mathematics, University of California, Berkeley, Berkeley, CA 94720, USA}
\email{qiuyu\_ren@berkeley.edu}

\author{Michael Willis}
\address{Department of Mathematics, Texas A\&M University, College Station, TX 77843, USA}
\email{msw188@tamu.edu}

\begin{document}

\begin{abstract}
We show that the Khovanov-Rozansky $\mathfrak{gl}_2$ skein lasagna module distinguishes the exotic pair of knot traces $X_{-1}(-5_2)$ and $X_{-1}(P(3,-3,-8))$, an example first discovered by Akbulut. This gives the first analysis-free proof of the existence of exotic compact orientable $4$-manifolds. We also present a family of exotic knot traces that seem not directly recoverable from gauge/Floer-theoretic methods. Along the way, we present new explicit calculations of the Khovanov skein lasagna modules, and we define lasagna generalizations of the Lee homology and Rasmussen $s$-invariant, which are of independent interest. Other consequences of our work include a slice obstruction of knots in $4$-manifolds with nonvanishing skein lasagna module, a sharp shake genus bound for some knots from the lasagna $s$-invariant, and a construction of induced maps on Khovanov homology for cobordisms in $k\mathbb{CP}^2$.
\end{abstract}

\maketitle

\section{Introduction}
The revolutionary work of Donaldson and Freedman in the $1980$s revealed a drastic dichotomy between the topological and smooth categories of manifolds in dimension $4$. Since then, a central focus of smooth $4$–manifold topology has been the discovery of new exotic phenomena and the resulting refinement of our understanding of the geography of smooth $4$–manifolds, largely through the development of increasingly sophisticated smooth invariants.

A pair of smooth $4$-manifolds $X_1$ and $X_2$ is said to be \textit{exotic} if they are homeomorphic but not diffeomorphic. Under favorable conditions (e.g. trivial fundamental group), showing two $4$-manifolds are homeomorphic reduces to checking some algebraic topological invariants agree, thanks to Freedman's fundamental theorems \cite{freedman1982topology}. Showing that they are non-diffeomorphic usually involves showing some gauge-theoretic or Floer-theoretic smooth invariants (Donaldson invariants \cite{donaldson1990polynomial}, Seiberg-Witten invariants \cite{seiberg1994monopoles}, Heegaard Floer type invariants \cite{ozsvath2004holomorphic}, and various variations/generalizations) differ. These tools, powerful as they are, typically come with constraints on the topology of the $4$–manifolds (for instance, hypotheses on $b_2^+$ or related structural assumptions), and there is a sense in which their applicability to new phenomena is approaching its natural limits. To date, all known methods for detecting compact orientable exotic $4$-manifolds rely heavily on analysis, primarily elliptic PDEs.

The main result of this paper is the first analysis-free detection of compact orientable exotic $4$–manifolds.\footnote{For a noncompact example, an exotic $\R^4$ can be constructed from a topologically slice but not smoothly slice knot. Such knots can be detected by Freedman's result \cite{freedman1982topology} and Rasmussen's $s$-invariant \cite{rasmussen2010khovanov} from Khovanov homology. For nonorientable examples, Kreck \cite{kreck2006some} showed $K3\#\mathbb{RP}^4$ and $11(S^2\times S^2)\#\mathbb{RP}^4$ form an exotic pair using Rochlin's Theorem, and indeed exotic $\mathbb{RP}^4$'s were constructed by Cappell--Shaneson \cite{cappell1976some} (shown to be homeomorphic to $\mathbb{RP}^4$ later in \cite{hambleton1994nonorientable}).}\footnote{Exotic surfaces in $4$-manifolds were also known to be detectable by Khovanov homology \cite{hayden2021khovanov}.} The relevant smooth invariant in our story is the \textit{Khovanov-Rozansky $\mathfrak{gl}_2$ skein lasagna module} (Khovanov skein lasagna module for short) defined by Morrison--Walker--Wedrich \cite{morrison2022invariants}, which is an extension of a suitable variation of the classical Khovanov homology \cite{khovanov2000categorification} via a skein-theoretic construction. It assigns a tri-graded (by homological degree $h$, quantum degree $q$, and homology class degree $\alpha$) abelian group $$\mathcal{S}_{0,h,q}^2(X;L)=\bigoplus_{\alpha\in H_2(X,L)}\mathcal{S}_{0,h,q}^2(X;L;\alpha)$$ to every pair $(X,L)$ consisting of a compact oriented smooth $4$-manifold and a framed oriented link on its boundary. When $L$ is the empty link, we simply write $\mathcal{S}_0^2(X)$ for the skein lasagna module.

The $n$-trace on a knot $K$, denoted $X_n(K)$, is the $4$-manifold obtained by attaching an $n$-framed $2$-handle to $B^4$ along $K\subset S^3=\partial B^4$. Our main theorem shows that the $(-1)$-traces on the mirror image of the knot $5_2$ and the Pretzel knot $P(3,-3,-8)$, namely $$X_1:=X_{-1}(-5_2),\quad X_2:=X_{-1}(P(3,-3,-8)),$$ have non-isomorphic skein lasagna modules.

\begin{figure}
\centering
\raisebox{12pt}{\scalebox{.5}{\includegraphics{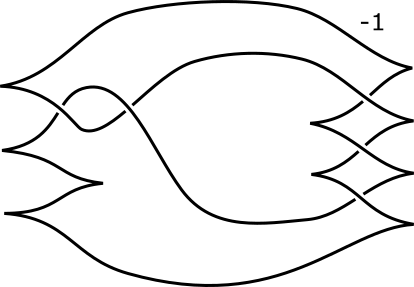}}}\quad\quad\quad
\scalebox{.5}{\includegraphics{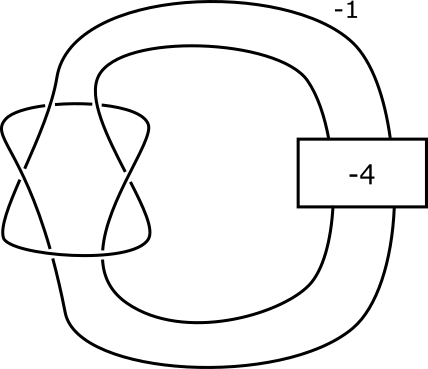}}
\caption{Kirby diagrams of $X_1,X_2$.}
\label{fig:kirby}
\end{figure}

\begin{Thm}\label{thm:exotic}
The graded modules $\mathcal{S}_0^2(X_1)$ and $\mathcal{S}_0^2(X_2)$ are non-isomorphic over $\Q$. Therefore, $X_1$ and $X_2$ form an exotic pair of $4$-manifolds. In fact, even their interiors are exotic.
\end{Thm}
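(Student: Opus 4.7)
The plan divides into a homeomorphism part and a smooth-distinction part. For homeomorphism, both $X_i$ are compact, simply-connected, smooth oriented $4$-manifolds with intersection form $\langle -1\rangle$ and with diffeomorphic boundaries, the latter being Akbulut's original Kirby-calculus observation. Freedman's classification of simply-connected topological $4$-manifolds with specified boundary then yields $X_1 \cong_{\mathrm{top}} X_2$. The \emph{interior} statement reduces to the compact case since $\mathcal{S}_0^2$ is, by construction, an invariant of the manifold interior (up to proper diffeomorphism): any diffeomorphism of interiors would induce an isomorphism of skein lasagna modules, so non-isomorphic modules obstruct even an interior diffeomorphism.

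To show $\mathcal{S}_0^2(X_1) \not\cong \mathcal{S}_0^2(X_2)$ over $\Q$, I would deploy the lasagna $s$-invariant advertised earlier in the paper. For each nonzero homology class $\alpha \in H_2(X_i;\Z) \cong \Z$ on which the skein lasagna module is nonvanishing, this invariant records a quantum-grading datum analogous to Rasmussen's $s$. Taking $\alpha$ to be a generator (or small multiple), I would compute upper and lower bounds on the lasagna $s$-invariant of each $X_i$ using the $(-1)$-trace handle structure, relating it to the Khovanov homology of $-5_2$, of $P(3,-3,-8)$, and of their appropriate cables. The hope, and the source of the exotic phenomenon, is that these two numbers disagree; such a disagreement immediately forces the tri-graded modules to disagree.

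The main obstacle will be making the computation sharp. The upper bound on the minimal quantum grading in which $\mathcal{S}_0^2(X_i;\alpha)$ is nonzero comes from an explicit lasagna generator, typically a surface obtained by capping off a slice surface of the knot with the $2$-handle core, whose quantum degree is read off from the Euler characteristic. The harder half is the matching lower bound, which demands showing that the corresponding Khovanov class survives the colimit defining the skein lasagna module. One standard route is to construct a homomorphism to a computable quotient, for example the Lee-type deformation underlying the lasagna $s$-invariant itself, and check nontriviality there. The pretzel knot is expected to be the subtle case: its Khovanov homology (and that of its relevant cables) is rich, and establishing the correct lasagna bound, so that the invariants for the two knots split apart by a computable integer, is the delicate computational heart of the argument.
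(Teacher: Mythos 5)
Your high-level strategy matches the paper's: Freedman for homeomorphism, lasagna $s$-invariants to separate the smooth structures, and the observation that $\mathcal{S}_0^2$ is an invariant of the interior. But there are two substantive gaps in the smooth-distinction half, and one misconception that makes the computation look harder than it is.

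First, the claim that a disagreement in lasagna $s$-invariants ``immediately forces the tri-graded modules to disagree'' is not correct. The lasagna $s$-invariant $s(X;\alpha)$ is extracted from the quantum filtration on $\mathcal{S}_0^{Lee}(X)$; it is not a priori a datum recoverable from the tri-graded group $\mathcal{S}_0^2(X)$, just as the classical Rasmussen $s$ is not determined by Khovanov homology alone. A difference $s(X_1;1)=3$ versus $s(X_2;1)=1$ shows the Lee skein lasagna modules (as filtered objects) differ, but to conclude $\mathcal{S}_0^2(X_1;\Q)\not\cong\mathcal{S}_0^2(X_2;\Q)$ the paper needs two more ingredients you allude to only obliquely: (i) the rank inequality $\mathrm{rank}\,\mathcal{S}_{0,h,q}^2(X)\ge\dim gr_q\mathcal{S}_{0,h}^{Lee}(X)$ for $2$-handlebodies (Theorem~\ref{thm:rank_ineq}), which provides the \emph{lower} bound on $\mathcal{S}_0^2(X_2;1)$; and (ii) a sharp \emph{upper}-bound computation of $\mathcal{S}_0^2(X_1;1)\otimes\Q$ supplied by the diagrammatic result (Theorem~\ref{thm:nonvan_diagram_knot}(2), using that $-5_2$ is a positive knot), which pins down $\mathcal{S}_{0,0,q}^2(X_1;1)\otimes\Q$ exactly. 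The two modules are then distinguished at bidegree $(h,q)=(0,-1)$: zero for $X_1$, nonzero for $X_2$.

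Second, your remark that the pretzel knot's cable Khovanov homology is ``the delicate computational heart'' misidentifies the strategy. The paper never touches the Khovanov or Lee homology of cables of $P(3,-3,-8)$. Instead it uses that $P(3,-3,-8)$ is slice, hence concordant to the unknot, and invokes the Lee-comparison result (Proposition~\ref{prop:intro_Lee_comparison}) to transfer the easy computation for $X_{-1}(U)$ to $X_2$ verbatim at the Lee level. Without this concordance trick the approach you sketch would face an intractable cable computation; with it, the ``hard'' case $X_2$ becomes the easy one, and all the real work goes into the positive-knot side $X_1$ via the diagrammatic criterion. You should make the rank inequality and the concordance reduction explicit before the argument closes.
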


The exotic pair $X_1,X_2$ was first discovered by Akbulut \cite{akbulut1991exotic,akbulut1991fake}. The fact that $X_1,X_2$ are homeomorphic is standard: a sequence of Kirby moves shows $\partial X_1$ and $\partial X_2$ are homeomorphic and a suitable extension of Freedman's Theorem \cite{freedman1982topology,boyer1986simply} then implies $X_1$ and $X_2$ are homeomorphic. On the other hand, the fact that $X_1,X_2$ are not diffeomorphic was detected by an intricate calculation of Donaldson invariants. Later, the result was reproved by Akbulut--Matveyev \cite[Theorem~3.2]{akbulut1997exotic} using a simplified argument which essentially says $X_{-1}(-5_2)$ is Stein (since the maximal Thurston-Bennequin number of $-5_2$ is $1$) while $X_{-1}(P(3,-3,-8))$ is not (since $P(3,-3,-8)$ is slice). This latter proof is elegant, yet still depends heavily on the calculation of Seiberg-Witten invariants of Stein surfaces, as well as deep results of Eliashberg \cite{eliashberg1990topological} and Lisca--Mati\'c \cite{lisca1997tight}. In comparison, our proof of Theorem~\ref{thm:exotic} depends on the much lighter package of Khovanov homology, which is combinatorial in nature.

Since the Khovanov skein lasagna module over a field of a connected sum or boundary connected sum of $4$-manifolds is equal to the tensor product of the Khovanov skein lasagna modules of the components, a fact proved in \cite{manolescu2022skein}, $X_1$ and $X_2$ remain exotic after (boundary) connected sums with any $4$-manifolds $W$ with nonvanishing skein lasagna module (at least when the support of $\mathcal S_0^2(W)$ is small in a suitable sense). Such tricks lead to many new exotic pairs. We write down some explicit examples in the following corollary to give the reader a flavor of what can be proven. For more choices of such candidates of connected summands, one may look into Section~\ref{sec:examples}.

\begin{Cor}\label{cor:more_exotic}
The $4$-manifolds $X_1^{\#a}\#X_2^{\#b}$ and $X_1^{\#a'}\#X_2^{\#b'}$ form an exotic pair if $a+b=a'+b'$ and $(a,b)\ne(a',b')$. The exotica remain under connected sums or boundary sums with any copies of $S^1\times S^3$, $S^2\times D^2$, $\overline{\CP^2}$, and the negative smooth $E8$ manifold. Moreover, the interiors of such pairs are also exotic.
\end{Cor}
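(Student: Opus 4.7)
The strategy is to deduce the corollary formally from Theorem~\ref{thm:exotic} plus two further inputs: (i) the multiplicativity of $\mathcal S_0^2(-;\Q)$ under (boundary) connected sums from~\cite{manolescu2022skein}, and (ii) the nonvanishing of $\mathcal S_0^2(W)$ for each auxiliary summand $W\in\{S^1\times S^3, S^2\times D^2, \overline{\CP^2}, -E_8\}$, which I expect to be supplied by the computations in Section~\ref{sec:examples}. The homeomorphism half is immediate: since $X_1\approx_{\mathrm{top}} X_2$ (Theorem~\ref{thm:exotic}), reassociation shows that $X_1^{\natural a}\natural X_2^{\natural b}$ and $X_1^{\natural a'}\natural X_2^{\natural b'}$ are homeomorphic whenever $a+b=a'+b'$, and this persists after further identical (boundary) sums with copies of $W$.

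For the smooth non-equivalence, I would pass to Poincar\'e polynomials. Writing $P_i(t,q,\alpha)\in\Z_{\geq 0}[t^{\pm},q^{\pm},\alpha^{\pm}]$ for the Poincar\'e polynomial of $\mathcal S_0^2(X_i;\Q)$, with $\alpha$ generating $H_2(X_i)\cong\Z$, Theorem~\ref{thm:exotic} gives $P_1\ne P_2$. By (i), the Poincar\'e polynomial of $X_1^{\natural a}\natural X_2^{\natural b}$ is the product $\prod_{i=1}^a P_1(t,q,\alpha_i)\prod_{j=1}^b P_2(t,q,\alpha_{a+j})$, one $\alpha$-variable per $H_2$-summand. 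A hypothetical diffeomorphism would identify this product with the analogous one for $(a',b')$ up to the automorphism of $H_2\cong\Z^{a+b}$ it induces, which, by preservation of the negative-definite intersection form, lies in the signed symmetric group $\{\pm 1\}^{a+b}\rtimes S_{a+b}$ acting by $\alpha_k\mapsto\alpha_{\sigma(k)}^{\pm 1}$. Since the ambient Laurent polynomial ring is a UFD, comparing multisets of irreducible factors forces $(a,b)=(a',b')$. Multiplying both sides by $P_W^n\ne 0$ preserves the inequality (the ring remains an integral domain), which handles further sums with the auxiliary $W$'s. For the interior statement, $\mathcal S_0^2$ of a compact $4$-manifold depends only on its interior because all lasagna fillings may be isotoped into the interior; so a diffeomorphism of interiors would still induce a lasagna-module isomorphism, contradicting the above.

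The main obstacle I anticipate is the multiset-comparison step: it requires that $P_1$ and $P_2$ be incomparable even after the $\alpha\mapsto\alpha^{-1}$ ambiguity coming from $\pm 1\in\{\pm 1\}$ and that neither is a nontrivial power of a common polynomial. A cleaner route, if available, is to use the bigraded invariant $\tilde P_i(t,q):=P_i(t,q,1)$ obtained by forgetting the $\alpha$-grading (evaluation at $\alpha=1$ is automatically invariant under any $GL(H_2;\Z)$-action); then the UFD cancellation $\tilde P_1^a\tilde P_2^b=\tilde P_1^{a'}\tilde P_2^{b'}\Rightarrow \tilde P_1^k=\tilde P_2^k\Rightarrow\tilde P_1=\tilde P_2$, with $k:=a-a'=b'-b>0$ (WLOG $a>a'$), gives an immediate contradiction, provided $\tilde P_1\ne\tilde P_2$. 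I would verify this latter inequality from the explicit Poincar\'e polynomials computed in the proof of Theorem~\ref{thm:exotic}.
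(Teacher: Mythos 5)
Your homeomorphism half and the observation that the intersection-form automorphism of $(-1)^{\oplus(a+b)}$ lies in the signed symmetric group are both correct. However, the smooth half as proposed has several genuine gaps, and the paper takes a different — and in fact necessary — route.

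First, the step ``I would verify $\tilde P_1\ne\tilde P_2$ from the explicit Poincar\'e polynomials computed in the proof of Theorem~\ref{thm:exotic}'' is not available: that proof does not compute $P_1$ or $P_2$. It pins down only the single bidegree $(h,\alpha)=(0,1)$, and for $X_2$ it gives only a containment $\mathcal S_{0,0,q}^2(X_2;1)\otimes\Q\supset gr_q\mathcal S_{0,0}^{Lee}(X_2;1)$, not an equality. Nothing is said at other homological degrees or at $\alpha=0$, so the full $P_i$ are unknown. Second, the $P_i$ are not Laurent polynomials: by \eqref{eq:2-hdby_homology_mod_2}, $\mathcal S_0^2(X;\alpha)$ depends on $\alpha$ only through $\alpha\bmod 2$ up to a bigrading shift, so the homology-class grading contributes infinitely many nonzero slices (compare Proposition~\ref{prop:-CP^2}, which already exhibits infinite total rank for $\overline{\CP^2}$). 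Your UFD/multiset comparison, and the ``cleaner'' cancellation $\tilde P_1^k=\tilde P_2^k\Rightarrow\tilde P_1=\tilde P_2$, both presuppose a polynomial ring; they do not carry over to such formal series, and in fact the substitution $\alpha\mapsto 1$ is not even well-defined because a fixed $(h,q)$ receives contributions from infinitely many $\alpha$'s. Third, the nonvanishing of $\mathcal S_0^2(W)$ for the auxiliary summands is not by itself enough to run a cancellation argument in a ring that is not obviously a domain.

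The paper sidesteps all of this by comparing a single numerical invariant rather than the whole module: the lasagna $s$-invariant. From the proof of Theorem~\ref{thm:exotic} one has $s(X_1;\pm1)=3$ and $s(X_2;\pm1)=1$; the connected-sum formula Theorem~\ref{thm:s_prop}(4) gives $s(W_1;(\epsilon_1,\ldots,\epsilon_n))=3a+b$ for every sign vector $(\epsilon_i)$; the intersection-form constraint you identified forces any diffeomorphism to send these classes to sign-vector classes of $W_2$, where $s=3a'+b'$; and $3a+b\ne3a'+b'$ when $(a,b)\ne(a',b')$ under $a+b=a'+b'$. The auxiliary summands are absorbed by computing $s(S^1\times S^3;0)=0$, $s(S^2\times D^2;\alpha)=0$, $s(\overline{\CP^2};\pm1)=1$, $s(E8;0)=0$, and checking (using that square $-1$ classes cannot have nonzero components in the $S^2\times D^2$ or $E8$ summands) that the relevant classes still go to classes on which the $s$-invariant sum is unchanged. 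This is the key extra idea your proposal lacks: replacing the full graded module by the $s$-invariant turns an intractable comparison of infinite-rank modules into an arithmetic identity in $\Z$.
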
\vspace{-5pt}

Mukherjee \cite{mukherjeepersonal} informed us that most of these variations also appear to be achievable via gauge/Floer-theoretic methods and topological arguments. However, below we present a family of exotic knot traces generalizing Theorem~\ref{thm:exotic} that does not seem to be directly recoverable by other methods, in particular because Rasmussen's $s$-invariant $s(K)$ appears in the hypothesis of the theorem. Let $P_{n,m}$ and $Q_{n,m}$ be satellite patterns drawn as in Figure~\ref{fig:Yasui_patterns}, and $P_{n,m}(K)$, $Q_{n,m}(K)$ be the corresponding satellite knots with companion knot $K$. These satellite patterns were studied by Yasui \cite{yasui2015corks}.

\begin{Thm}\label{thm:exotic_family}
If a knot $K\subset S^3$ has a slice disk $\Sigma$ in $k\CP^2\backslash int(B^4)$ and $n,m$ are two integers satisfying $$s(K)=|[\Sigma]|-[\Sigma]^2,\ n<-[\Sigma]^2,\ m\ge0,$$ then $X_n(P_{n,m}(K))$ and $X_n(Q_{n,m}(K))$ form an exotic pair. Their interiors are also exotic.
\end{Thm}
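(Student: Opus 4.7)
The plan is to follow the two-step architecture of Theorem~\ref{thm:exotic}: first establish a homeomorphism by topological means, then distinguish the two 4-manifolds smoothly via the Khovanov skein lasagna module.

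For the homeomorphism step, Yasui's Kirby-theoretic analysis of the patterns $P_{n,m}$ and $Q_{n,m}$ should provide an explicit sequence of moves exhibiting a diffeomorphism $\partial X_n(P_{n,m}(K)) \cong \partial X_n(Q_{n,m}(K))$. Since both traces are simply connected with intersection form $(n)$, Boyer's extension of Freedman's classification upgrades this boundary diffeomorphism to a homeomorphism of pairs, and the interior exotica statement follows from the same stabilization trick used in Theorem~\ref{thm:exotic}.

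For the smooth distinction step, the central inputs are the lasagna $s$-invariant, the slice obstruction for knots in $4$-manifolds with nonvanishing skein lasagna module, and the new construction of induced Khovanov cobordism maps inside $k\CP^2$ -- all advertised in the abstract. The hypotheses are tailored so that exactly one of the two satellite knots, say $Q_{n,m}(K)$, admits a slice disk in a stabilization of the opposing trace built from the disk $\Sigma$ together with the cork-like modification relating $P_{n,m}$ and $Q_{n,m}$. Here the inequality $n<-[\Sigma]^2$ should ensure that the natural sphere obtained by capping $\Sigma$ with the core of the $n$-framed $2$-handle has sufficiently negative self-intersection for the slicing to go through, while $m\ge 0$ ensures the satellite construction cooperates with that sphere. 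By the slice obstruction, this forces a specific class in $\mathcal{S}_0^2(X_n(Q_{n,m}(K)))$ to vanish. On the other side, the equality $s(K)=|[\Sigma]|-[\Sigma]^2$ asserts sharpness of the Rasmussen bound on $\Sigma$, and via the $k\CP^2$ cobordism-map construction this sharpness should lift to a nonzero lasagna class in $\mathcal{S}_0^2(X_n(P_{n,m}(K)))$ in the corresponding tridegree, certified by a lasagna-level Lee/Rasmussen argument. Comparing the two modules then yields the desired non-isomorphism over $\Q$.

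The main obstacle lies in making these two halves quantitatively compatible: the vanishing and nonvanishing classes must live in genuinely separating tridegrees $(h,q,\alpha)$, which requires careful bookkeeping of the homology classes introduced by the satellite pattern and by $\Sigma$. A secondary subtlety is verifying that sharpness of $s(K)$ on $\Sigma$ propagates cleanly through the satellite and trace-attaching constructions to sharpness of the lasagna $s$-invariant of $P_{n,m}(K)$ inside $X_n(P_{n,m}(K))$, rather than degrading under the handle attachment; this is precisely where the explicit combinatorics of Yasui's patterns $P_{n,m}, Q_{n,m}$ should play the decisive role, and where the precise threshold $n<-[\Sigma]^2$ becomes essential.
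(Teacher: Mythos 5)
Your proposed architecture departs from the paper's proof in a way that would not work as stated. The main issue is that you try to distinguish the two traces by a \emph{vanishing versus nonvanishing} dichotomy: you suggest the slice obstruction (Corollary~\ref{cor:slice_obstruction} / Theorem~\ref{thm:van}) forces some class in $\mathcal S_0^2(X_n(Q_{n,m}(K)))$ to die, while the $k\CP^2$ cobordism-map construction of Proposition~\ref{prop:Kh_CP2_cob} produces a surviving class on the $P$ side. But under the hypotheses of the theorem \emph{both} traces have nonvanishing Lee (hence Khovanov) skein lasagna modules, because $Q_{n,m}(K)$ is concordant to $K$ (cap off the unknot component of the pattern as in Figure~\ref{fig:Yasui_patterns}), so by Proposition~\ref{prop:Lee_comparison} the lasagna $s$-invariant $s(X_n(Q_{n,m}(K));1)=s(X_n(K);1)$ is a \emph{finite} number, and in particular $\mathcal S_0^{Lee}\ne 0$. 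There is no vanishing to be had on the $Q$ side, so an argument built on Theorem~\ref{thm:van} cannot get off the ground. The slice obstruction is a Boolean, all-or-nothing statement, while the distinction here is strictly quantitative.

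What the paper actually does is compare the \emph{values} of lasagna $s$-invariants at $\alpha=1$. For the upper bound, concordance invariance (Proposition~\ref{prop:intro_Lee_comparison}) gives $s(X_n(Q_{n,m}(K));1)=s(X_n(K);1)\le s(K)-n$, where the last inequality comes from Proposition~\ref{prop:las_s_from_classical} and the monotonicity of $s$ of cables. For the lower bound, the disk $\Sigma$, turned upside down and viewed in $k\overline{\CP^2}$, is fed through the pattern $P_{n,m}$ and then modified by two further framed concordances (a twist along the satellite meridian, and one removing the $-m$ twists), producing a chain $C_1\cup C_2\cup C_3:(P_{n,m}(K),n)\to(-5_2,-1)$ in some $k'\overline{\CP^2}$. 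Proposition~\ref{prop:las_s_compare_-CP2} (an adjunction-type comparison along $\overline{\CP^2}$-concordances, relying on \cite{ren2023lee}) then gives $s(X_n(P_{n,m}(K));1)\ge s(X_{-1}(-5_2);1)+|[C_1]|+|[C_2]|+|[C_3]|$. Plugging in $s(X_{-1}(-5_2);1)=3$ (from Theorem~\ref{thm:nonvan_diagram_knot}(2) applied to $-5_2$), $|[C_1]|=|[\Sigma]|$, $|[C_2]|=-1-n-[\Sigma]^2$, $|[C_3]|=0$, and using the sharpness hypothesis $s(K)=|[\Sigma]|-[\Sigma]^2$, one gets $s(X_n(P_{n,m}(K));1)\ge s(K)-n+2$, which strictly beats the $Q$-side bound. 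This is also where each numerical hypothesis does its work: the equality $s(K)=|[\Sigma]|-[\Sigma]^2$ makes the chain of adjunction inequalities tight enough to produce a gap of $+2$; the condition $n<-[\Sigma]^2$ guarantees that the bridging concordance $C_2$ (the twists along the satellite torus meridian) can be carried out in $\overline{\CP^2}$'s rather than $\CP^2$'s; and $m\ge 0$ ensures both the band move making $Q_{n,m}$ concordant to the identity pattern and the concordance $C_3$ removing the $-m$ negative full twists are available. The induced cobordism maps in $k\CP^2$ (Section~\ref{sbsec:Kh_CP2_cob}) are not used here at all; they are an independent application of Proposition~\ref{prop:-CP^2}.

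So the concrete gaps you would need to fill are: (a) replace the vanishing/slice-obstruction mechanism by a numerical comparison of lasagna $s$-invariants; (b) identify the two comparison lemmas that drive both halves, namely concordance invariance of the Lee lasagna module (Proposition~\ref{prop:Lee_comparison}) and the $\overline{\CP^2}$-adjunction comparison (Proposition~\ref{prop:las_s_compare_-CP2}); and (c) reduce to the base computation $s(X_{-1}(-5_2);1)=3$ from Theorem~\ref{thm:nonvan_diagram_knot}(2). The homeomorphism half is fine — that is Yasui's Kirby calculus plus Freedman/Boyer, exactly as you suggest.
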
\vspace{-5pt}

In the special case when $K=U$ is the unknot, $k=0$, $\Sigma$ is the trivial slice disk of $K$ in $B^4$, and $n=-1$, $m=0$, Theorem~\ref{thm:exotic_family} recovers Theorem~\ref{thm:exotic}. One can construct more exotic pairs in the spirit of Corollary~\ref{cor:more_exotic} by taking connected sums of these examples and other standard manifolds appearing in that corollary. One may also deduce the existence of exotic pairs of compact contractible $4$-manifolds (see Remark~\ref{rmk:exotic_mazur}) from Theorem~\ref{thm:exotic_family}.\medskip

Now, we turn to some explicit calculations of $\mathcal{S}_0^2$ and other applications, some of which build towards a proof of Theorem~\ref{thm:exotic} presented in Section~\ref{sbsec:exotic_proof}. In the current section, many statements are presented in a specialized form. In such cases, we indicate the corresponding generalizations in parentheses after the statement numbers.

\textbf{Convention.} \textit{Throughout this paper, all $4$-manifolds are assumed to be smooth, compact, and oriented. All surfaces are oriented and smoothly and properly embedded.}

\subsection{A vanishing result and slice obstruction}
For a knot $K\subset S^3$, let $TB(K)$ denote its maximal Thurston-Bennequin number. We have the following vanishing criterion for the Khovanov skein lasagna module.

\begin{Thm}\label{thm:van}
If the knot trace $X_n(K)$ for some knot $K\subset S^3$ and framing $n\ge-TB(-K)$ embeds into a $4$-manifold $X$, then $\mathcal{S}_0^2(X;L)=0$ for any $L\subset\partial X$.
\end{Thm}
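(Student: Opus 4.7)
My plan is threefold: first reduce the Thurston--Bennequin hypothesis to a hypothesis on the $s$-invariant, then prove vanishing of the skein lasagna module of the knot trace itself, and finally propagate that vanishing across the embedding $X_n(K)\hookrightarrow X$.

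For the reduction, I would apply Plamenevskaya's inequality $TB(L)\le s(L)-1$ to $L=-K$, combined with $s(-K)=-s(K)$, to obtain $n\ge -TB(-K)\ge s(K)+1$. So the hypothesis upgrades to the purely Khovanov-theoretic condition $n>s(K)$, which is the form the rest of the proof consumes.

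Next I would show that $\mathcal{S}_0^2(X_n(K);L')=0$ for \emph{every} framed oriented link $L'\subset \partial X_n(K)$, not only for $L'=\emptyset$. The key tool is the lasagna generalization of the Rasmussen $s$-invariant announced in the abstract: each nonzero class in $\mathcal{S}_0^2(X_n(K);L';\alpha)$ carries a finite lasagna $s$-invariant obeying a slice-genus style bound in $X_n(K)$. A Legendrian representative of $-K$ realizing $tb=-n$, together with the $2$-handle cocore, produces an auxiliary surface in $X_n(K)$ whose existence forces the lasagna $s$-invariant in every homology class $\alpha\in H_2(X_n(K),L')$ to exceed the bound, so all classes must vanish. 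Equivalently, one may use the Manolescu--Neithalath colimit description of $\mathcal{S}_0^2(X_n(K);L')$ in terms of Khovanov homologies of suitable cables and argue that the cabling/stabilization maps become zero once $n>s(K)$.

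Finally I would propagate the vanishing to $(X,L)$: given a lasagna filling of $(X,L)$, isotope its input balls into $X\setminus X_n(K)$ and make the underlying surface $\Sigma$ transverse to $\partial X_n(K)\subset \mathrm{int}(X)$. The portion of the filling inside $X_n(K)$ then represents a class in $\mathcal{S}_0^2(X_n(K);L_0)$ for $L_0=\Sigma\cap\partial X_n(K)$, which is zero by the previous step; the skein-theoretic locality of lasagna fillings then forces the original filling to represent $0\in\mathcal{S}_0^2(X;L)$.

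The main obstacle I expect is the second step. Vanishing of $\mathcal{S}_0^2(X_n(K))$ for the empty boundary link is close to what the colimit formula delivers directly, but strengthening this to vanishing for every $L'\subset \partial X_n(K)$ and every homology class simultaneously requires the lasagna $s$-invariant framework in full generality, plus a careful check that the Legendrian/Plamenevskaya data produces a surface saturating the bound in every class. That interaction between classical contact-topological data and the new lasagna-theoretic machinery is where the substantive work lies.
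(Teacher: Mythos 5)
Your third step (propagating vanishing across the embedding via transversality and gluing) is essentially the paper's Proposition~\ref{prop:gluing}, and is fine. The problems lie in the first two steps.

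In the first step you replace the hypothesis $n\ge -TB(-K)$ by $n>s(K)$ using Plamenevskaya's bound $TB(-K)\le s(-K)-1=-s(K)-1$. That is a valid implication, but it discards information you actually need, and the resulting claim ``$\mathcal S_0^2(X_n(K))=0$ for $n>s(K)$'' is not known to be true and is not what the theorem asserts. For example, for $K=-3_1$ one has $s(K)=2$ while $-TB(-K)=6$; Table~\ref{tab:knot_trace_n/v} lists the Khovanov skein lasagna modules of $X_3(-3_1),\dots,X_5(-3_1)$ as open. Moreover Conjecture~\ref{conj:stein} would predict nonvanishing of $\mathcal S_0^2(X_n(K))$ for all $n<TB(K)$, and $s(K)<TB(K)$ can certainly happen (e.g.\ negative torus knots), so a vanishing theorem under the mere hypothesis $n>s(K)$ would contradict that conjecture. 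The actual proof of Theorem~\ref{thm:van} must use the Thurston--Bennequin data itself, not a weakening to $s$.

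The second step has a more fundamental gap. You want to deduce vanishing of $\mathcal S_0^2(X_n(K);L')$ from the lasagna $s$-invariant framework, i.e.\ from facts about the Lee skein lasagna module. But the rank inequality (Theorem~\ref{thm:rank_ineq}) only runs in one direction: nonvanishing of $\mathcal S_0^{Lee}$ forces nonvanishing of $\mathcal S_0^2$. There is no implication ``$\mathcal S_0^{Lee}=0\Rightarrow\mathcal S_0^2=0$'' available, and indeed the paper distinguishes the two (the table entries ``?V'' mark precisely the cases where Lee vanishes but Khovanov is not yet known to vanish). The genus bounds from lasagna $s$-invariants (Theorem~\ref{thm:s_prop}(6)) also point the wrong way: a surface in $X_n(K)$ gives an \emph{upper} bound on the filtration level of a Lee canonical generator, and does not force any Khovanov class to die. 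What is really needed — and what the paper uses — is a support bound on the (underived) Khovanov homology of the cables $K(\alpha^++r,\alpha^-+r)\cup L'$ appearing in the $2$-handlebody colimit \eqref{eq:2-hdby}. That is exactly Ng's Thurston--Bennequin bound (Lemma~\ref{lem:Ng}): $Kh^{h,q}(L)$ is supported in $q-h\ge TB(L)$. Choosing a Legendrian representative of $-K\cup-L'$ with $tb(\mathcal K)=-n$ (this is where the hypothesis $n\ge -TB(-K)$ is actually used, with nothing discarded) and cabling it shows that $KhR_2(K(\alpha^++r,\alpha^-+r)\cup L')$ is supported in $q+h\le 0$. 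Since the $2$-handlebody formula applies a quantum shift $\{-|\alpha|-2r\}$ that sends this support region to $q+h\le-|\alpha|-2r\to-\infty$ while the transition maps $\phi_i$ preserve $q+h$, every class dies in the colimit. No $s$-invariant or Lee-theoretic input is needed or useful in this argument.
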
\vspace{-5pt}

For example, since $S^2\times S^2$ contains an embedded sphere with self-intersection $2$, we see the $2$-trace of the unknot embeds into $S^2\times S^2$, implying $\mathcal S_0^2(S^2\times S^2)=0$. This answers a question of Manolescu (see \cite[Question~2]{sullivan2024kirby}), generalizing one main Theorem of Sullivan--Zhang \cite[Theorem~1.3]{sullivan2024kirby} and answering \cite[Question~3]{sullivan2024kirby} in the affirmative.\smallskip

The folklore trace embedding lemma says that $X_n(K)$ embeds into $X$ if and only if $-K$ is $(-n)$-slice in $X$, i.e. there is a framed slice disk in $X\backslash int(B^4)$ of the $(-n)$-framed knot $-K\subset S^3=-\partial B^4$. Therefore, the contrapositive of Theorem~\ref{thm:van} can be phrased as follows.

\begin{Cor}\label{cor:slice_obstruction}
Suppose $\mathcal{S}_0^2(X;L)\ne0$ for some $L\subset \partial X$. If a knot $K\subset S^3$ is $n$-slice in $X$, then $n>TB(K)$.\qed
\end{Cor}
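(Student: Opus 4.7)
The plan is simply to translate Theorem~\ref{thm:van} through the folklore trace embedding lemma stated in the paragraph just before the corollary; everything needed has already been set up, so this is a one-step contrapositive.

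Concretely, I would argue by contraposition: suppose $K\subset S^3$ is $n$-slice in $X$ with $n\le TB(K)$, and show $\mathcal S_0^2(X;L)=0$ for every $L\subset\partial X$. Setting $K'=-K$ and $n'=-n$, the trace embedding lemma (as stated: $X_{n'}(K')$ embeds in $X$ iff $-K'$ is $(-n')$-slice in $X$) specializes to: $X_{-n}(-K)$ embeds in $X$ iff $K$ is $n$-slice in $X$. So from $n$-sliceness of $K$ we get an embedding $X_{-n}(-K)\hookrightarrow X$. The framing condition $n\le TB(K)$ rewrites as $-n\ge -TB(K)=-TB(-(-K))$, which is precisely the hypothesis of Theorem~\ref{thm:van} applied to the knot $-K$ and framing $-n$. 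Therefore Theorem~\ref{thm:van} gives $\mathcal S_0^2(X;L)=0$ for all $L$, contradicting the assumption in the corollary.

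Since the statement already carries a \qed, there is no real obstacle; the only thing one must be careful about is the sign/orientation bookkeeping in converting between $n$-slice and $(-n)$-slice, and between $K$ and $-K$, when invoking the trace embedding lemma. Once that conversion is in place the corollary reads off immediately.
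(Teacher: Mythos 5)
Your proof is correct and is essentially the paper's own argument: the paper simply observes that the corollary is the contrapositive of Theorem~\ref{thm:van} after applying the trace embedding lemma, leaving the sign/framing bookkeeping implicit, while you have spelled it out (correctly). Nothing to add.
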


\subsection{Lee skein lasagna modules and lasagna \texorpdfstring{$s$}{2}-invariants}\label{sbsec:intro_lee_lasagna}
In the construction of the Khovanov skein lasagna module, the relevant link homology theory (the Khovanov-Rozansky $\mathfrak{gl}_2$ homology) enters only at the very last step. We can replace this link homology theory with its Lee deformation \cite{lee2005endomorphism}, and define an analogous notion of a Lee skein lasagna module of a $4$-manifold $X$ with link $L\subset\partial X$, denoted $\mathcal{S}_0^{Lee}(X;L)$. It is a $\Q$-vector space equipped with a homological $\Z$-grading, a quantum $\Z/4$-grading, a homology class grading by $H_2(X,L)$, and a quantum $\Z$-filtration (where $0$ has filtration degree $-\infty$), with the caveat that a nonzero vector can have quantum filtration degree $-\infty$.

We show in Section~\ref{sec:Lee_lasagna} that the structure of $\mathcal{S}_0^{Lee}(X;L)$, except for the quantum filtration, is determined by simple algebraic data of $(X,L)$. Moreover, we extract numerical invariants from the quantum filtration structure\footnote{The authors are aware that the recent paper \cite{morrison2024invariants} by Morrison--Walker--Wedrich has an independent proof of a part of this structural result, stated in a more general form. They extract slightly different invariants from the filtration structure, which gives somewhat less information than ours in the $\mathfrak{gl}_2$ case. Moreover, our invariants generalize Rasmussen $s$-invariants for links and enjoy nice functorial properties.}. For the purpose of exposition, in the current section we assume $L$ is the empty link. Let $q\colon\mathcal S_0^{Lee}(X)\to\Z\cup\{-\infty\}$ denote the quantum filtration function.

\begin{Thm}\label{thm:intro_Lee_structure}(Theorem~\ref{thm:Lee_structure})
The Lee skein lasagna module of a $4$-manifold $X$ is $\mathcal S_0^{Lee}(X)\cong\Q^{H_2(X)^2},$ with a basis consisting of canonical generators $x_{\alpha_+,\alpha_-}$, one for each pair $\alpha_+,\alpha_-\in H_2(X)$.
\end{Thm}
In fact, the isomorphism $\mathcal S_0^{Lee}(X)\cong\Q^{H_2(X)^2}$ is as $(\Z\times\Z/4\times H_2(X,L))$-graded vector spaces; see Theorem~\ref{thm:Lee_structure} for grading information.

\begin{Def}(Definition~\ref{def:s})\label{def:intro_s}
The \textit{lasagna $s$-invariant} of $X$ at a homology class $\alpha\in H_2(X)$ is $$s(X;\alpha):=q(x_{\alpha,0}).$$
\end{Def}

The lasagna $s$-invariants of $X$ thus define a smooth invariant $s(X;\cdot):H_2(X)\to\Z\cup\{-\infty\}$.

The genus function of a $4$-manifold $X$, defined by
\[g(X,\alpha):=\min\{g(\Sigma)\colon\Sigma\subset X,[\Sigma]=\alpha\},\]
captures subtle information about the smooth topology of $X$.  Like the classical $s$-invariant \cite{rasmussen2010khovanov,beliakova2008categorification}, which provides a lower bound on the slice genus of links, the lasagna $s$-invariants provide a lower bound on the genus functions of $4$-manifolds.

\begin{Thm}\label{thm:intro_genus_bound}(Theorem~\ref{thm:s_prop}(6) and Corollaries after)
The genus function of a $4$-manifold $X$ has a lower bound given by $$g(X;\alpha)\ge\frac{s(X;\alpha)+\alpha^2}2.$$
\end{Thm}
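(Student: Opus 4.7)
The plan is to mimic Rasmussen's classical slice-genus argument in the lasagna setting: given a surface realizing the minimal genus, I construct a lasagna representative of the canonical generator $x_{\alpha,0}$ whose quantum filtration is exactly $2g-\alpha^2$, hence $s(X;\alpha)=q(x_{\alpha,0})\le 2g-\alpha^2$.

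First I would take a closed surface $\Sigma\subset X$ of genus $g=g(X;\alpha)$ with $[\Sigma]=\alpha$ and puncture it: remove a small open $4$-ball $B^4\subset X$ meeting $\Sigma$ transversely in a disk, so that $\Sigma^\circ:=\Sigma\setminus\operatorname{int}B^4$ is a properly embedded genus-$g$ surface in $X\setminus\operatorname{int}B^4$ whose boundary is an unknot $U\subset\partial B^4$, still with $[\Sigma^\circ]=\alpha$ in $H_2(X,U)\cong H_2(X)$. Next I would view $B^4$ as an input ball labelled by the positive Lee canonical generator $\mathbf{s}_+\in\mathcal{S}^{Lee}(U)$ (of quantum filtration $+1$), and take $\Sigma^\circ$ as the outer surface. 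This is a genuine lasagna filling of $X$, and I denote its class by $y\in\mathcal{S}_0^{Lee}(X;\alpha)$.

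The key step is to identify $y$ with the canonical generator $x_{\alpha,0}$ of Theorem~\ref{thm:intro_Lee_structure}. Since $x_{\alpha_+,\alpha_-}$ should be constructed (in Section~\ref{sec:Lee_lasagna}) from an oriented punctured surface representative whose "$+$ part" has homology class $\alpha_+$ and "$-$ part" has class $\alpha_-$, the splitting $(\alpha_+,\alpha_-)=(\alpha,0)$ corresponds precisely to $\Sigma$ with its given orientation, labelled by $\mathbf{s}_+$ at the unique puncture. So $y=x_{\alpha,0}$ on the nose. I expect this identification to be the main obstacle: it is where one needs the explicit construction of the canonical basis of $\mathcal{S}_0^{Lee}(X)$ from surface-orientation data, rather than just its abstract characterization.

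Granting this, Step 4 is a direct application of the quantum grading formula for Lee lasagna fillings, which shifts the filtration of the input by $-\chi-[\,\cdot\,]^2$ of the outer surface:
\[
q(y)\le q(\mathbf{s}_+)-\chi(\Sigma^\circ)-[\Sigma^\circ]^2 = 1-(1-2g)-\alpha^2 = 2g-\alpha^2.
\]
Combining with $y=x_{\alpha,0}$ yields $s(X;\alpha)\le 2g(X;\alpha)-\alpha^2$, which rearranges to the claimed inequality. Sanity check: for $X=\overline{\CP^2}$ with $\alpha$ a generator, a $(-1)$-sphere gives $g=0$ and $\alpha^2=-1$, producing the expected bound $s(\overline{\CP^2};\alpha)\le 1$; and for $(X,\alpha)=(B^4,0)$ the formula collapses to $s=0$.
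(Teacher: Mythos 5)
Your proposal is correct and follows essentially the same route as the paper. The identification $y=x_{\alpha,0}$ (up to a nonzero scalar) that you flag as the main obstacle is exactly the content of the paper's Theorem~\ref{thm:Lee_structure} and its functorial companion Theorem~\ref{thm:can_gen_morphism} (restricted to the cobordism $(X\setminus \operatorname{int}B^4,\Sigma^\circ)$ from $(B^4,U)$ to $(X,\emptyset)$); the paper then runs the same puncture-and-shift computation through Theorem~\ref{thm:s_prop}(6) and Corollaries~\ref{cor:genus_bound_cob}--\ref{cor:genus_bound_simpliest}, in greater generality but with the identical core mechanism.
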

Here, $\alpha^2=\alpha\cdot\alpha$ is computed via the intersection form on $H_2(X)$.

Unlike the classical $s$-invariant, the lasagna $s$-invariants are not directly computable from link diagrams. However, we are able to compute them in some special cases. We indicate one special case where Theorem~\ref{thm:intro_genus_bound} does provide useful information.

The \textit{$n$-shake genus} of a knot $K$ is defined by $g_{sh}^n(K):=g(X_n(K);1)$, where $1$ is a generator of $H_2(X_n(K))\cong\Z$. Clearly, $g_{sh}^n(K)\le g_4(K)$, but the inequality may not be strict \cite{akbulut19772,piccirillo2019shake}. Theorem~\ref{thm:intro_genus_bound} specializes to a shake genus bound 
\begin{equation}\label{eq:shake_genus_bound}
g_{sh}^n(K)\ge\frac{s(X_n(K);1)+n}2.
\end{equation}

\begin{Thm}\label{thm:shake_genus}
\begin{enumerate}[(1)]
\item If $K$ is concordant to a positive knot, then for $n\leq 0$ we have $s(X_n(K);1)=s(K)-n,$ where $s(K)$ is the Rasmussen $s$-invariant of $K$. Thus $$g_{sh}^n(K)=g_4(K)=\frac{s(K)}2,\ n\le0.$$
\item If $\Sigma\subset k\overline{\CP^2}\backslash(int(B^4)\sqcup int(B^4))$ is a concordance from $K$ to a positive knot $K'$, then for $n\le[\Sigma]^2$ we have $s(X_n(K);1)\ge s(K')+[\Sigma]^2+|[\Sigma]|-n$. Thus $$g_{sh}^n(K)\ge\frac{s(K')+[\Sigma]^2+|[\Sigma]|}2,\ n\le[\Sigma]^2.$$
\end{enumerate}
\end{Thm}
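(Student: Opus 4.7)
The plan is to combine the shake genus bound~\eqref{eq:shake_genus_bound}, namely $g_{sh}^n(K)\ge(s(X_n(K);1)+n)/2$, with an explicit lower bound on the lasagna $s$-invariant $s(X_n(K);1)$. In both parts, a Lee-lasagna element realizing the canonical generator $x_{1,0}\in\mathcal S_0^{Lee}(X_n(K))$ is constructed from the canonical Lee cycle of the positive knot at the ``far end'' of the concordance, with the quantum filtration shifted by the $2$-handle self-intersection and (for part (2)) additional contributions from the $k\overline{\CP^2}$ pieces.

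For part (1), first reduce to the case when $K$ itself is positive: a concordance from $K$ to a positive knot $K_+$ in $S^3\times I$ induces a diffeomorphism $X_n(K)\cong X_n(K_+)$ via handle slides along the concordance, so $s(X_n(K);1)=s(X_n(K_+);1)$. The upper bound $s(X_n(K_+);1)\le s(K_+)-n$ follows from Theorem~\ref{thm:intro_genus_bound} applied to the surface obtained by pushing a genus-minimizing Seifert surface of $K_+$ into $B^4$ and gluing in the $2$-handle core; this surface represents $1\in H_2(X_n(K_+))$, has genus $g(K_+)=s(K_+)/2$ (using $s=2g$ for positive knots), and self-intersection $n$. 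For the lower bound, place an input ball just below $\partial B^4\subset X_n(K_+)$ containing $K_+$ labeled by its canonical Lee cycle $s_o$ (of quantum filtration $s(K_+)$), with outer surface the $2$-handle core disk. The resulting lasagna class in $\mathcal S_0^{Lee}(X_n(K_+);1)$ has quantum filtration $s(K_+)-n$ (the $-n$ shift coming from the self-intersection $n$ of the core disk), and Theorem~\ref{thm:intro_Lee_structure} identifies it by bidegree with $x_{1,0}$, giving $s(X_n(K_+);1)\ge s(K_+)-n$. Combined with the obvious $g_{sh}^n(K)\le g_4(K)$, this yields the shake genus equality.

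For part (2), use the induced Khovanov/Lee cobordism maps for cobordisms in $k\overline{\CP^2}$ constructed elsewhere in the paper (as advertised in the abstract). Combining the reversed concordance $-\Sigma$ in $W:=k\overline{\CP^2}\setminus(int(B^4)\sqcup int(B^4))$ with the $n$-framed $2$-handle attached along $K$ produces a combined surface from $K'$ to $\emptyset$ inside $W\cup h^2_n$. The induced quantum-filtered map sends the canonical Lee cycle of $K'$ (of filtration $s(K')$, since $K'$ is positive) to an element of $\mathcal S_0^{Lee}(X_n(K);1)$ of quantum filtration at least $s(K')+[\Sigma]^2+|[\Sigma]|-n$: here the $-n$ comes from the $2$-handle (as in part (1)), the $[\Sigma]^2$ from the self-intersection of $\Sigma$ within $k\overline{\CP^2}$, and the $+|[\Sigma]|$ from the $\overline{\CP^2}$-summand contributions to the canonical Lee structure in Theorem~\ref{thm:intro_Lee_structure}(3)--(4). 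The bidegree identifies this element with $x_{1,0}$, and the hypothesis $n\le[\Sigma]^2\le 0$ ensures consistency of the shifts. The shake genus bound then gives the claim.

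The main obstacle is verifying that the explicit elements constructed above actually realize $x_{1,0}$ rather than a filtration-lower element at the same bidegree, requiring careful analysis in the Lee lasagna module via Theorem~\ref{thm:intro_Lee_structure}. A secondary technical difficulty for part (2) is the $+|[\Sigma]|$ contribution to the filtration shift, which encapsulates the subtle interaction between Lee canonical generators and the negative-definite intersection form of $k\overline{\CP^2}$ and relies critically on the $\overline{\CP^2}$-cobordism construction of induced Khovanov/Lee maps.
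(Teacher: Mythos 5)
Your overall plan is the right one---combine the shake genus bound~\eqref{eq:shake_genus_bound} with a lower bound on $s(X_n(K);1)$---and your upper bound $s(X_n(K_+);1)\le s(K_+)-n$ via Theorem~\ref{thm:intro_genus_bound} applied to the pushed-in Seifert surface union core disk is fine. But the two steps that are supposed to supply the matching lower bound both have genuine gaps.

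First, a concordance from $K$ to $K_+$ in $S^3\times I$ does \emph{not} give a diffeomorphism $X_n(K)\cong X_n(K_+)$. There is no ``handle slide along the concordance'' producing such a diffeomorphism, and this is exactly why the paper proves the strictly weaker statement Proposition~\ref{prop:intro_Lee_comparison}: that concordant $2$-handle attachment links give isomorphic \emph{filtered Lee skein lasagna modules}, via the gluing map induced by the homology cobordism between the boundaries. The shake genus itself is indeed a concordance invariant, but by a separate topological argument, not because the traces are diffeomorphic.

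Second, and more fundamentally, your filtration inequality points the wrong way. The quantum filtration on $\mathcal S_0^{Lee}(X;L)$ is the colimit filtration: the filtration degree of a class is the infimum of the filtration degrees of all its lasagna-filling representatives, and under the structure maps of the colimit this degree can only decrease (this is why the quantity in Proposition~\ref{prop:las_s_from_classical} is \emph{nonincreasing} in $r$, and why a nonzero class can have $q=-\infty$). Therefore, exhibiting one representative of $x_{1,0}$ of filtration degree $s(K_+)-n$ proves $s(X_n(K_+);1)\le s(K_+)-n$, not $\ge$. The lower bound is the hard part: in the paper it comes from Theorem~\ref{thm:nonvan_diagram_knot}(2), whose proof (via Theorem~\ref{thm:nonvan_diagram} and the Sto\v si\'c-type comparison Lemma~\ref{lem:full_comparison}) establishes that the filtration degree stabilizes along the whole colimit system. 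The same directional error appears in your part~(2), where pushing the canonical Lee cycle of $K'$ forward by a filtered cobordism map only bounds the filtration degree of the image from above. The paper avoids this by instead invoking Proposition~\ref{prop:las_s_compare_-CP2}, which bounds lasagna $s$-invariants below by combining Proposition~\ref{prop:las_s_from_classical} with the adjunction inequality for classical $s$-invariants of cables from \cite{ren2023lee}; the $\CP^2$-cobordism maps of Section~\ref{sbsec:Kh_CP2_cob} are never used for this theorem.
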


\begin{Rmk}
Theorem~\ref{thm:shake_genus}(1) gives a refinement of Rasmussen \cite[Theorem~4]{rasmussen2010khovanov} which was used to provide the first analysis-free proof of the Milnor conjecture about the slice genus of torus knots. The computation of shake genus in Theorem~\ref{thm:shake_genus}(1) can be also done using Seiberg-Witten theory:\vspace{-5pt}
\begin{enumerate}[(i)]
\item The adjunction inequality for Stein manifolds \cite{lisca1997tight} implies \begin{equation}\label{eq:Stein_shake_genus}
g_{sh}^n(K)\ge(tb(\mathcal K)+|rot(\mathcal K)|+1)/2
\end{equation}
for any Legendrian representative $\mathcal K$ of $K$ with $tb(\mathcal K)=n+1$. When $n<TB(K)$, take $\mathcal K$ to be a stabilization that maximizes $|rot|$, of a max tb representative of $K$. Then the right-hand side of \eqref{eq:Stein_shake_genus} agrees with $s(K)/2$ for positive knots \cite[Theorem~2]{tanaka1999maximal}. For example, Figure~\ref{fig:kirby} (left) shows a Legendrian representative of $-5_2$ with $tb=0$ and $|rot|=1$, which implies that $g_{sh}^{-1}(-5_2)\ge1$. This is used in the reproof of Theorem~\ref{thm:exotic} in \cite{akbulut1997exotic}; see also Section~\ref{sbsbsec:proof_2}.
\item Two concordant knots have the same shake genus (by an easy topological argument).
\end{enumerate}
\end{Rmk}

\subsection{Comparison results for \texorpdfstring{$2$}{2}-handlebodies}\label{sbsec:intro_comparison}
The Khovanov homology of a link admits a spectral sequence to its Lee homology. In particular, in every bidegree, the rank of the Khovanov homology is bounded below by the dimension of the associated graded vector space of the Lee homology. Although we are not able to define a spectral sequence from $\mathcal S_0^2$ to $\mathcal S_0^{Lee}$, we do have a rank inequality when $X$ is a $2$-handlebody. Let $gr_\bullet V$ denote the associated graded vector space of a filtered vector space $V$.
\begin{Thm}\label{thm:rank_ineq}
If $X$ is a $2$-handlebody, then in any tri-degree $(h,q,\alpha)\in\Z^2\times H_2(X,L)$, $$\mathrm{rank}(\mathcal S_{0,h,q}^2(X;L;\alpha))\ge\dim(gr_q(\mathcal S_{0,h}^{Lee}(X;L;\alpha))).$$
In particular, if the quantum filtration on $\mathcal{S}_0^{Lee}(X;L)$ is not identically $-\infty$, then $\mathcal{S}_0^2(X;L)\ne0$.
\end{Thm}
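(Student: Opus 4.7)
The plan is to invoke, for a 2-handlebody $X = X(\mathcal L)$, cabling formulas (in the spirit of Manolescu-Neithalath \cite{manolescu2022skein} and its Lee analogue to be established in Section~\ref{sec:Lee_lasagna}) expressing both skein lasagna modules as filtered colimits of link homologies in $S^3$. Fix a handle decomposition of $X$ with attaching framed link $\mathcal L \subset S^3$. Then for any $\alpha \in H_2(X, L)$,
\[
\mathcal S_0^2(X;L;\alpha) = \varinjlim_{r} Kh\bigl(\mathcal L^{(\alpha,r)} \cup L\bigr)\{\sigma(\alpha, r)\}, \qquad \mathcal S_0^{Lee}(X;L;\alpha) = \varinjlim_{r} Lee\bigl(\mathcal L^{(\alpha,r)} \cup L\bigr)\{\sigma(\alpha, r)\},
\]
where $\mathcal L^{(\alpha,r)}$ is a suitable cable of $\mathcal L$, $\sigma(\alpha, r)$ is a bigrading shift, and both colimits run over the same directed system of transition cobordisms (adding pairs of oppositely oriented cable strands and capping them off).

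The key observation is that these cobordism-induced transition maps act on the common underlying bigraded module of $CKh$ and $CLee$ and are $q$-filtration-preserving on $CLee$. Taking the colimit of chain complexes produces a lasagna-level version of the classical Lee spectral sequence: its $E_2$-page is $\mathcal S_0^2(X;L) \otimes \Q$ (as a bigraded module) and it converges to the associated graded of $\mathcal S_0^{Lee}(X;L)$ under the quantum filtration. Since $E_\infty$ is a subquotient of $E_2$, we obtain the desired tri-degree-wise rank inequality. The \textit{in particular} statement follows immediately: any nonzero class in $\mathcal S_0^{Lee}(X;L)$ with finite quantum filtration contributes to $gr_q$ in some degree, forcing $\mathcal S_0^2(X;L)\neq 0$ in the corresponding tri-degree.

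The main obstacle is making this spectral sequence argument rigorous through the colimit, since rank is not generally preserved by filtered colimits and the Lee quantum filtration may take value $-\infty$. Two observations tame this. First, associated graded commutes with filtered colimits of vector spaces (as $gr$ is a direct sum of quotients of filtration pieces, and filtered colimits of vector spaces are exact), so
\[
gr_q\,\mathcal S_{0,h}^{Lee}(X;L;\alpha) = \varinjlim_{r}\, gr_q\,Lee_h\bigl(\mathcal L^{(\alpha,r)} \cup L\bigr)\{\sigma(\alpha,r)\}.
\]
Second, any finite-dimensional subspace of this colimit lifts to a finite cable level $r'$, where the classical Lee spectral sequence yields $\Q$-linearly independent $Kh_{h,q}$-classes detecting these $gr_q$-classes. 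By compatibility of the spectral sequence with the directed system, these Khovanov classes descend to linearly independent elements of $\mathcal S_{0,h,q}^2(X;L;\alpha)\otimes\Q$: any relation collapsing them in the colimit would be induced by some further transition cobordism, which, being $q$-filtration-preserving, would impose the same collapse on their Lee representatives, contradicting their independence in $gr_q\,\mathcal S_0^{Lee}$. Verifying this compatibility carefully, and in particular controlling the interaction of the $q$-filtration with the directed-system maps on chain level, is where the technical work of the proof lies.
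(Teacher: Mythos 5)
Your core argument is the paper's: express both modules as filtered colimits over a common cable system via the $2$-handlebody formula, lift a finite-dimensional subspace of $gr_q\,\mathcal{S}_{0,h}^{Lee}(X;L;\alpha)$ to a single cable level, extract Khovanov cocycle representatives there, and show these stay linearly independent under all further transition maps because a downstream Khovanov coboundary relation would force the corresponding Lee class to have quantum filtration strictly below $q$, a contradiction. Two points deserve care, however. The opening claim that the colimit of chain complexes produces a lasagna-level Lee spectral sequence with $E_2=\mathcal{S}_0^2\otimes\Q$ converging to $gr\,\mathcal{S}_0^{Lee}$ is precisely what the paper states it is \emph{unable} to construct (Section~1.3): the quantum filtration on the colimit complex is unbounded below and can be $-\infty$ on nonzero classes, so the spectral sequence of this filtered complex need not converge to $gr\,\mathcal{S}_0^{Lee}$, and the inequality cannot be read off an $E_\infty$ page. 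You do not actually rely on this, but the framing is liable to mislead. Second, the step ``a relation... would impose the same collapse on their Lee representatives'' elides the real mechanism: a Khovanov coboundary $\Phi c = db$ at a later level does \emph{not} make $\Phi_{Lee}a$ a Lee coboundary. What it does, as the paper makes precise by choosing chain lifts $a_i=c_i+\epsilon_i$ with $c_i$ homogeneous of quantum degree $q$ and $q(\epsilon_i)<q$, is allow one to replace $\Phi_{Lee}a$ by the Lee-homologous cocycle $\Phi_{Lee}a-d_{Lee}b$ of filtration strictly below $q$, whence $q\bigl(\sum_i\lambda_i y_i\bigr)<q$, contradicting the choice of the $y_i$. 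That chain-level bookkeeping, together with carrying the $S(r)$-symmetrization and grading shifts through the directed system, is exactly the ``technical work'' your sketch correctly identifies as the crux, and it is what the paper's proof supplies.
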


We state a comparison result for Lee skein lasagna modules of $2$-handlebodies, which can sometimes be used as a nonvanishing criterion for Khovanov skein lasagna modules in view of Theorem~\ref{thm:rank_ineq}; see e.g. Section~\ref{sbsbsec:proof_1}.

\begin{Prop}\label{prop:intro_Lee_comparison}(Proposition~\ref{prop:Lee_comparison})
Suppose $X$ (resp. $X'$) is a $2$-handlebody obtained by attaching $2$-handles to $B^4$ along a framed oriented link $K\subset S^3$ (resp. $K'\subset S^3$). If $K$ is framed concordant to $K'$, then $\mathcal{S}_0^{Lee}(X)\cong\mathcal{S}_0^{Lee}(X')$ as tri-graded, quantum filtered vector spaces. Under the identification $H_2(X)\cong H_2(X')$ induced by a link concordance, $X$ and $X'$ have equal lasagna $s$-invariants.
\end{Prop}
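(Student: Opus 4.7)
The plan is to build a filtered, tri-graded isomorphism $\Phi_C\colon\mathcal{S}_0^{Lee}(X)\to\mathcal{S}_0^{Lee}(X')$ directly from the framed concordance $C$, together with an inverse coming from the reversed concordance $\bar C$. At the level of representative lasagna fillings, I would proceed as follows. A class in $\mathcal{S}_0^{Lee}(X)$ is represented by a decorated surface $\Sigma\subset X$ with input balls carrying Lee labels; by transversality and isotopy I first arrange $\Sigma$ in standard columnar form in a collar $S^3\times[0,1]\subset X$ of $\partial B^4$ so that $\Sigma\cap(S^3\times[0,1])=L\times[0,1]$ for some link $L\subset S^3$ meeting the $2$-handle cores transversely. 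Enlarging the collar to $S^3\times[0,2]$ and inserting the framed concordance $C$ between levels $1$ and $2$ extends $\Sigma$ across $C$ to a surface ending at a pattern link in $K'\times\{2\}$; capping off by re-attaching the $2$-handles of $X'$ along $K'$ gives a lasagna filling of $X'$, and declaring this to represent $\Phi_C[\Sigma]$ defines the map.

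Next, I would verify that $\Phi_C$ respects the tri-grading and the quantum filtration. The concordance $C$ canonically identifies the core basis of $H_2(X)\cong\Z^{|K|}$ with that of $H_2(X')$, and since linking numbers and framings are framed-concordance invariants this identification preserves the intersection form. Consequently the $H_2$-grading is matched and the refinements of the quantum and homological degrees in Theorem~\ref{thm:intro_Lee_structure}(1)-(3) are respected. The homological, $\Z/4$-quantum, and quantum filtration shifts of $\Phi_C$ are all controlled by $-\chi(C)=0$, so no shift occurs, and $\Phi_C$ is in particular filtered.

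Finally, for the isomorphism statement: $\Phi_{\bar C}\circ\Phi_C$ is the lasagna endomorphism induced by the closed-up cobordism $C\cup\bar C\colon K\to K$, a genus-zero, component- and orientation-preserving auto-cobordism of $K$. Invoking the explicit description of the canonical generators $x_{\alpha_+,\alpha_-}$ supplied by Theorem~\ref{thm:intro_Lee_structure}, this auto-cobordism must fix each $x_{\alpha_+,\alpha_-}$, so $\Phi_{\bar C}\circ\Phi_C=\mathrm{id}$ and symmetrically $\Phi_C\circ\Phi_{\bar C}=\mathrm{id}$. The equality of lasagna $s$-invariants then follows by evaluating $\Phi_C$ on the generator $x_{\alpha,0}$ and applying Definition~\ref{def:intro_s}. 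The main obstacle will be establishing well-definedness of $\Phi_C$: I would have to check carefully that every lasagna filling can be reduced to the standard columnar form modulo the defining relations of the Lee lasagna module, and that the resulting class in $\mathcal{S}_0^{Lee}(X')$ depends only on the framed concordance class of $C$, not on the transversality, isotopy, or standardization choices made along the way.
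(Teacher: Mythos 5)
Your overall strategy matches the paper's: build a filtered map from the concordance, build a candidate inverse from the reversed concordance, and argue via canonical Lee generators that the composition is the identity. However, two steps are genuine gaps.

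First, your explicit construction of $\Phi_C$ by isotoping $\Sigma$ into ``standard columnar form'' near $\partial B^4$ and splicing in $C$ is essentially a by-hand reconstruction of a gluing map, and the well-definedness issue you flag at the end is exactly what needs to be proven. The columnar-form standardization is itself nontrivial: it amounts to the cofinality statement underlying the $2$-handlebody formula (Proposition~\ref{prop:2_hdby}) that any lasagna filling can be isotoped to meet the $2$-handles in cables of the cores, and even granting this you must show the resulting class is independent of the isotopy, the standardization, and the representative filling. The paper avoids this entirely: surgering on $C$ produces a homology cobordism $W$ with $X' = X\cup_{\partial X}W$, and the established gluing map \eqref{eq:glue_Kh_ver} along the \emph{outer} boundary $\partial X$ gives a filtered degree-$0$ map $\mathcal S_0^{Lee}(W)\colon\mathcal S_0^{Lee}(X)\to\mathcal S_0^{Lee}(X')$ without needing to normalize the input. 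Your insertion at the $\partial B^4$ level and the paper's gluing at the $\partial X$ level realize the same inclusion $X\hookrightarrow X'$, so the map you want is the same one, but you are reproving the machinery rather than invoking it.

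Second, the claim that $\Phi_{\bar C}\circ\Phi_C$ ``must fix each $x_{\alpha_+,\alpha_-}$'' does not follow from Theorem~\ref{thm:intro_Lee_structure}, which only identifies the underlying graded, filtered vector space and its basis; it says nothing about how a cobordism map acts on that basis. The statement you need is Theorem~\ref{thm:can_gen_morphism}: formula \eqref{eq:las_can_gen_morphism} shows that for a genus-zero, component- and orientation-preserving cobordism such as $C\cup\bar C$ there is a unique compatible orientation and the scalar $2^{m(S,L_1)}(-1)^{m(S_-,L_{1-})}$ equals $1$, so the canonical generators are literally fixed and the two gluing maps are inverses. Without this, a priori the map could scale each generator or even annihilate one (the quantum filtration is only one-sided), and neither the tri-grading constraints nor the structure theorem rule this out.
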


\subsection{A nonvanishing result for \texorpdfstring{$2$}{2}-handlebodies from link diagrams}\label{sbsec:intro_nonvan_diagram}
Using a diagrammatic argument, in Section~\ref{sbsec:diagram}, we prove a nonvanishing result for $2$-handlebodies with sufficiently negative framings. For ease of exposition, here we only state the case for knot traces. Let $n_-(K)$ be the minimal number of negative crossings among all diagrams of a knot $K$.
\begin{Thm}\label{thm:nonvan_diagram_knot}(Theorem~\ref{thm:nonvan_diagram})
Let $K$ be a knot and $X=X_n(K)$ be its $n$-trace.\vspace{-5pt}
\begin{enumerate}[(1)]
\item If $n<-2n_-(K)$, then $s(X;0)=0$ and $\mathcal S_{0,0,q}^2(X;0)\otimes\Q=gr_q(\mathcal S_{0,0}^{Lee}(X;0))=\begin{cases}\Q,&q=0\\0,&q\ne0.\end{cases}$
\item If $K$ is positive, then for $n\le0$, $s(X;0)=0$, $s(X;1)=s(K)-n$. For $n<0$,
$$\mathcal S_{0,0,q}^2(X;1)\otimes\Q=gr_q(\mathcal S_{0,0}^{Lee}(X;1))=\begin{cases}\Q,&q=s(X;1),s(X;1)-2\\0,&\text{otherwise}.\end{cases}$$
\end{enumerate}
\end{Thm}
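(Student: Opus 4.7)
The plan is to prove both parts by sandwiching the rank of $\mathcal S_{0,0,q}^2(X;\alpha)$ between a Lee-theoretic lower bound coming from Theorem~\ref{thm:rank_ineq} and a diagrammatic upper bound. Since $H_2(X_n(K))\cong\Z$, Theorem~\ref{thm:intro_Lee_structure} immediately pins down the homological degree $0$ part of the Lee skein lasagna module: for $\alpha=0$ the only canonical generator is $x_{0,0}$ (the constraint $-2\alpha_+\alpha_-=0$ with $\alpha_++\alpha_-=0$ forces $\alpha_\pm=0$), giving a one-dimensional graded piece in quantum filtration $s(X;0)$; for $\alpha=1$ the generators are $x_{1,0}$ and $x_{0,1}$, and part (4) of the structure theorem places one-dimensional graded pieces in each of $q=s(X;1)$ and $q=s(X;1)-2$. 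The Lee-side claim then reduces to computing the relevant lasagna $s$-invariants.

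Those computations are short. For $s(X;0)=0$, apply Proposition~\ref{prop:embedded_s} to the inclusion $B^4\hookrightarrow X$ to get $s(X;0)\le s(B^4;0)=0$, and use quantum-filtered functoriality of the induced map $\mathcal S_0^{Lee}(B^4)\to\mathcal S_0^{Lee}(X)$ (which sends the unit to $x_{0,0}$) for the reverse inequality. For part (2), Theorem~\ref{thm:shake_genus}(1) directly yields $s(X_n(K);1)=s(K)-n$ when $K$ is positive and $n\le 0$. Feeding these back into Theorem~\ref{thm:rank_ineq} supplies the desired lower bound $\mathrm{rank}\,\mathcal S_{0,0,q}^2(X;\alpha)\ge\dim gr_q\,\mathcal S_{0,0}^{Lee}(X;\alpha)$ in the claimed bidegrees.

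For the matching upper bound I would present $\mathcal S_0^2(X_n(K);\alpha)$ diagrammatically from a fixed diagram $D$ of $K$, for example via the Manolescu--Neithalath cabling formula expressing the lasagna module as a colimit of cable Khovanov homologies with appropriate framing corrections. In part (1), choose $D$ with $n_-(D)=n_-(K)$; the key estimate is that a $k$-cable of $D$ contributes on the order of $k^2 n_-(K)$ negative crossings against a writhe-correction shift of order $k n$, so when $n<-2n_-(K)$ no nontrivial class-$0$ filling survives in homological degree $0$, leaving only the empty filling with its single $\Q$ in quantum degree $0$. In part (2) with $\alpha=1$, take $D$ to be a positive diagram of $K$; then the relevant class-$1$ cable is itself a positive link, whose Khovanov homology in homological degree $0$ is supported in precisely the two quantum degrees $s(K)-n$ and $s(K)-n-2$, matching the Lee lower bound.

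The main obstacle will be making the diagrammatic upper bound rigorous across the cabling colimit. One must verify that stabilization and handleslide maps neither kill the identified generators nor leave extra survivors, that the framing-shift estimate in part (1) is stable under Reidemeister moves between different choices of $D$, and (for part (2)) that positivity of $D$ translates cleanly into control of the top homological part of the cable's Khovanov complex. In part (2) this reduces to the well-understood structure of Khovanov homology of positive links in low homological degree; in part (1) it amounts to a careful crossing/shift accounting forcing every non-empty filling above homological degree $0$. Combining the diagrammatic upper bound with the Lee lower bound then pins down the Khovanov ranks exactly and yields the theorem.
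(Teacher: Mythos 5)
Your Lee-side structure reasoning (identifying the canonical generators $x_{0,0}$ and $x_{1,0},x_{0,1}$ in homological degree $0$ via Theorem~\ref{thm:intro_Lee_structure}, and hence reducing the Lee-side claim to computing $s(X;0)$ and $s(X;1)$, then invoking Theorem~\ref{thm:rank_ineq} for the rank lower bound) is correct for $n<0$ and matches the paper. However, the proposal has three serious gaps.

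First, a circularity: you invoke Theorem~\ref{thm:shake_genus}(1) to obtain $s(X_n(K);1)=s(K)-n$. But in the paper, Theorem~\ref{thm:shake_genus}(1) is \emph{derived from} Theorem~\ref{thm:nonvan_diagram_knot}(2) (see Section~\ref{sbsec:shake_genus}). You cannot use it as an input here; the whole point of Theorem~\ref{thm:nonvan_diagram_knot}(2) is to compute $s(X_n(K);1)$ for positive $K$ from scratch, via the diagrammatic cable analysis.

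Second, your argument that $s(X;0)=0$ does not actually establish the nontrivial direction. The inclusion $B^4\hookrightarrow X$ gives $s(X;0)\le s(B^4;0)=0$ via Proposition~\ref{prop:embedded_s}, which is correct. But the ``reverse inequality'' you propose via ``quantum-filtered functoriality of the induced map $\mathcal S_0^{Lee}(B^4)\to\mathcal S_0^{Lee}(X)$'' yields the \emph{same} inequality, not the reverse: a filtered map can only drop the filtration level, so knowing $1\mapsto x_{0,0}$ only gives $q(x_{0,0})\le q(1)=0$ again. The hard part of the theorem is precisely showing $s(X;0)>-\infty$ (i.e.\ that the quantum filtration does not degenerate to $-\infty$ under the colimit), and no amount of downstream functoriality can produce a lower bound on $s$. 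In the paper this is the content of Theorem~\ref{thm:nonvan_diagram}(3), whose proof requires the St\v{o}\v{s}i\'c-type comparison lemma (Lemma~\ref{lem:full_comparison}) and the analysis in Step~3 of the proof of Theorem~\ref{thm:nonvan_diagram} showing that the annular-creation maps increase the filtration degree of the relevant irreducible pieces by exactly $2$.

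Third, the diagrammatic upper bound is the crux of the argument and is not actually carried out. Your crossing-count heuristic for part (1) is not correct as stated: in the $(r,r)$-cable of a diagram $D$ with a mix of positive and negative crossings, positive crossings between oppositely oriented strands become negative, so the negative-crossing count grows like $2r^2\cdot(\text{total crossings of }D)$, not $r^2 n_-(K)$, and the relevant bound comes from carefully separating this growth from the framing shift $n+2n_-(D)$ via the condition $P-W+N<0$ in Theorem~\ref{thm:nonvan_diagram}(i). For part (2), the $(r+1,r)$-cable of a positive diagram is \emph{not} a positive link for $r\ge1$ (cables with oppositely oriented strands are not positive), so your claim that its $Kh^0$ sits in exactly two quantum degrees is unsupported; it is precisely Lemma~\ref{lem:full_comparison} and Step~1 of the proof of Theorem~\ref{thm:nonvan_diagram} that isolate the top $t$-degree by recursion on cabling. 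You correctly flag that ``making the diagrammatic upper bound rigorous across the cabling colimit'' is the obstacle, but in this theorem that obstacle \emph{is} the theorem, and the verification steps you list are exactly the nontrivial content (Lemmas~\ref{lem:full_comparison}, \ref{lem:Stosic}, and the detailed colimit bookkeeping in Section~\ref{sbsec:diagram}). As written, the proposal identifies the right shape of a sandwich argument but outsources its only load-bearing ingredient.
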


\subsection{Proofs of Theorem~\ref{thm:exotic}}\label{sbsec:exotic_proof}
We give two proofs that $int(X_1)$ and $int(X_2)$ are not diffeomorphic. The first proof distinguishes them on the level of Khovanov/Lee skein lasagna modules, while the second proof distinguishes their smooth genus functions. The following facts will be used:
\begin{itemize}
\item $-5_2$ is a positive knot, with $s(-5_2)=2$.
\item $P(3,-3,-8)$ is a slice knot.
\end{itemize}
\subsubsection{First proof}\label{sbsbsec:proof_1}
Applying Theorem~\ref{thm:nonvan_diagram_knot}(2) to $-5_2$, we obtain $s(X_1;1)=3$, $$\mathcal S_{0,0,q}^2(X_1;1)\otimes\Q=gr_q(\mathcal S_{0,0}^{Lee}(X_1;1))=\begin{cases}\Q,&q=1,3\\0,&\text{otherwise}.\end{cases}$$
Applying Theorem~\ref{thm:nonvan_diagram_knot}(2) to the unknot, and then Proposition~\ref{prop:intro_Lee_comparison} to $P(3,-3,-8)$ and the unknot, we obtain $s(X_2;1)=1$, and
\[\mathcal S_{0,0,q}^2(X_2;1)\otimes\Q\supset gr_q(\mathcal S_{0,0}^{Lee}(X_2;1))=\begin{cases}\Q,&q=\pm1\\0,&\text{otherwise}.\end{cases}\]
where the first containment follows from Theorem~\ref{thm:rank_ineq}. In particular, $\mathcal S_{0,0,-1}^2(X_1;1)\otimes\Q\ne\mathcal S_{0,0,-1}^2(X_2;1)\otimes\Q$, hence $X_1$ and $X_2$ are not diffeomorphic. In fact, since skein lasagna modules are smooth invariants of interiors of $4$-manifold (cf. Lemma~\ref{lem:interior_invariant}), $int(X_1)$ and $int(X_2)$ are not diffeomorphic.\qed

\subsubsection{Second proof}\label{sbsbsec:proof_2}
Applying Theorem~\ref{thm:shake_genus}(1) to $-5_2$, we obtain $g_{sh}^{-1}(-5_2)=1$. On the other hand, capping off a slice disk of $P(3,-3,-8)$ by the core of the $2$-handle in $X_{-1}(P(3,-3,-8))$ shows that $g_{sh}^{-1}(P(3,-3,8))=0$. This proves $int(X_1)$ and $int(X_2)$ are non-diffeomorphic, without actually calculating $\mathcal S_0^\bullet(X_i)$.\qed

\subsection{Example: Khovanov and Lee skein lasagna module of \texorpdfstring{$\overline{\CP^2}$}{-CP2}}\label{sbsec:intro_CP2}
As an example, we note that the recent work by the first author \cite{ren2023lee} has the following implications for the Lee and Khovanov skein lasagna modules for $\overline{\CP^2}$. Note the products $\alpha^2,\beta^2$ below utilize the intersection form on $H_2(\overline{\CP^2})$.

\begin{Prop}\label{prop:-CP^2}
The lasagna $s$-invariants of $\overline{\CP^2}$ are given by $s(\overline{\CP^2};\alpha)=|\alpha|$. The associated graded vector space of the Lee skein lasagna module of $\overline{\CP^2}$ is given by $gr_q\mathcal S_{0,(\beta^2-\alpha^2)/2}^{Lee}(\overline{\CP^2};\alpha)=\Q$ for $$\beta\equiv\alpha\pmod2,\ q=\begin{cases}\frac{\alpha^2-\beta^2}2+|\beta|-1\pm1,&\beta\ne0\\\frac{\alpha^2}2,&\beta=0,\end{cases}$$ and $0$ in other tri-degrees. Moreover, for $\beta\equiv\alpha\pmod2$, $$\mathcal{S}_{0,(\beta^2-\alpha^2)/2,(\alpha^2-\beta^2)/2+|\beta|}^2(\overline{\CP^2};\alpha)=\Z.$$
\end{Prop}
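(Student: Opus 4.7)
The plan is to combine the abstract structure theorem (Theorem~\ref{thm:intro_Lee_structure}) with the explicit Lee-filtered cobordism computations from \cite{ren2023lee}. Applying Theorem~\ref{thm:intro_Lee_structure} to $X=\overline{\CP^2}$ produces a canonical basis $\{x_{\alpha_+,\alpha_-}\}$ of $\mathcal{S}_0^{Lee}(\overline{\CP^2})$ indexed by $(\alpha_+,\alpha_-)\in\Z^2$; items~(1)--(3) of that theorem immediately fix the homological degree, the $H_2$-degree, and the $\Z/4$ quantum degrees of the symmetric and antisymmetric combinations $x_{\alpha_+,\alpha_-}\pm x_{\alpha_-,\alpha_+}$, leaving only the quantum filtration to be pinned down.

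By item~(4) of the same theorem, determining the quantum filtration reduces to a single number $q(x_{\alpha_+,\alpha_-})$ per unordered pair: the $\Z/4$-grading constraint then forces the sym/antisym filtration degrees to differ by exactly~$2$ when $\alpha_+\ne\alpha_-$. I would read off these numbers from the cobordism computations in \cite{ren2023lee}, applied to standard configurations of copies of the $(-1)$-sphere $\overline{\CP^1}\subset\overline{\CP^2}$ taken with both possible orientations and with multiplicity; the resulting filtration values are precisely the quantities appearing in the case-split formula of the proposition. Specializing to $\alpha_-=0$ yields $s(\overline{\CP^2};\alpha)=|\alpha|$. As a cross-check on the upper bound, Proposition~\ref{prop:embedded_s} applied to the embedding $X_{-1}(U)\hookrightarrow\overline{\CP^2}$, combined with Theorem~\ref{thm:nonvan_diagram_knot}(2) giving $s(X_{-1}(U);1)=1$, already forces $s(\overline{\CP^2};\alpha)\le|\alpha|$ on generators.

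For the Khovanov statement, I would invoke the rank inequality of Theorem~\ref{thm:rank_ineq}, or rather the closed-manifold analogue established in \cite{ren2023lee}, together with the associated-graded Lee computation above; this forces $\mathrm{rank}\,\mathcal{S}_0^2(\overline{\CP^2};\alpha)\ge 1$ in each claimed tridegree. The matching upper bound and the absence of torsion would follow from an explicit enumeration of integral lasagna fillings, using that $\overline{\CP^2}$ is built from a single $(-1)$-framed $2$-handle attached to $B^4$ along the unknot. The main obstacle is the identification in the second paragraph: the abstract generators $x_{\alpha_+,\alpha_-}$ are defined via Lee idempotents, whereas the filtration data in \cite{ren2023lee} lives on concrete embedded surfaces, and matching these two descriptions on the nose (up to a nonzero scalar) is the one genuinely nontrivial input. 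Once this identification is in place, the remainder of the argument is a bookkeeping exercise with the $\Z/4$ grading, item~(4) of Theorem~\ref{thm:intro_Lee_structure}, and the change of variables $\beta=\alpha_+-\alpha_-$.
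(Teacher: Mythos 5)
Your high-level plan tracks the paper's: apply Theorem~\ref{thm:Lee_structure} (the generalization of Theorem~\ref{thm:intro_Lee_structure}), reduce everything to the filtration levels $q(x_{\alpha_+,\alpha_-})$, pull those levels out of the computations in \cite{ren2023lee}, and then invoke the rank inequality to pass to the Khovanov module. This is what the paper does, via Proposition~\ref{prop:torus_knot_trace_Lee} with $p=q=k=1$ together with Proposition~\ref{prop:34_hd}. However, I think you have mislocated where the genuine work lives, and the step you call a ``bookkeeping exercise'' is the one that actually requires a nontrivial argument.

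You identify the hard step as ``matching the abstract generators $x_{\alpha_+,\alpha_-}$ (defined via Lee idempotents) with concrete embedded surfaces.'' That matching is not a gap: it is built into the definition of the canonical Lee lasagna generators in the proof of Theorem~\ref{thm:Lee_structure} --- each $x_{\alpha_+,\alpha_-}$ is \emph{by construction} represented by a canonical Lee lasagna filling on any double skein representing $(\alpha_+,\alpha_-)$. What \emph{is} genuinely hard is the following: a concrete lasagna filling (cables of the $(-1)$-framed unknot with canonical Lee decorations, at some finite stage $r$ of the $2$-handlebody colimit) only gives an \emph{upper bound} on $q(x_{\alpha_+,\alpha_-})$, because the quantum filtration may keep dropping as $r\to\infty$. ``Reading off'' the number from a single configuration does not establish the proposition --- you must show the filtration level stabilizes. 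The paper handles this through Proposition~\ref{prop:las_s_from_classical}, which expresses $s(X;\alpha)$ as $\lim_{r\to\infty}(s_{\mathfrak{gl}_2}(K(\alpha^++r,\alpha^-+r))-2|r|)-|\alpha|+1$ and proves stabilization via the $S(r)$-representation decomposition of $KhR_{Lee}$ of cables (adapted from Section~4 of \cite{ren2023lee}). With that in hand, the concrete input from \cite{ren2023lee} is Lemma~\ref{lem:torus_links_s}, the $s$-invariant formula for torus links $T(dp,dq)_{r,s}$ with mixed orientations; for the $(-1)$-framed unknot these cables are precisely $T(n,n)_{r,s}$. Your proposal never names either ingredient, and without the stabilization argument the filtration claim is unsupported.

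Two smaller corrections. First, there is no ``closed-manifold analogue of the rank inequality established in \cite{ren2023lee}''; what you want is exactly Theorem~\ref{thm:rank_ineq} of the present paper, applicable because $\overline{\CP^2}$ is (up to a $4$-handle) the $2$-handlebody $X_{-1}(U)$. Second, for the integral Khovanov statement, ``explicit enumeration of integral lasagna fillings'' is too vague to be a proof: the paper instead cites \cite[Theorem~2.1(1)]{ren2023lee} to produce specific integral generators of $Kh(T(2n,2n))$ in the relevant bidegree that are seen to survive the colimit maps in \eqref{eq:-CP2_colimit}, and then observes that the torsion ambiguity from working over $\Z$ (as in Proposition~\ref{prop:2_hdby}) is killed by comparison with the rational rank forced by the Lee/rank-inequality argument. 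Your cross-check via Proposition~\ref{prop:embedded_s} and Theorem~\ref{thm:nonvan_diagram_knot}(2) is fine as far as it goes, but note that since the $4$-handle induces an \emph{isomorphism} (Proposition~\ref{prop:34_hd}), $s(\overline{\CP^2};\alpha)=s(X_{-1}(U);\alpha)$ on the nose, and Theorem~\ref{thm:nonvan_diagram_knot} only pins this down for $\alpha\in\{0,\pm1\}$; the full formula for all $\alpha$ still requires Proposition~\ref{prop:las_s_from_classical} and Lemma~\ref{lem:torus_links_s}.
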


In fact, in Conjecture~\ref{conj:-CP2}, we formulate an expected complete determination of $\mathcal S_0^2(\overline{\CP^2};\Q)$.

A consequence of Proposition~\ref{prop:-CP^2} is the following.
\begin{Cor}\label{cor:emd_-kCP2}
If a $4$-manifold $X$ admits an embedding $i\colon X\hookrightarrow k\overline{\CP^2}$ for some $k$, then $s(X;\alpha)\ge|i_*\alpha|$, with equality achieved if $\alpha=0$. If in addition, $X$ is a $2$-handlebody, then $\mathcal{S}_{0,0,q}^2(X;\alpha)\ne0$ for $q=s(X;\alpha)$, and for $q=s(X;\alpha)-2$ if $\alpha\ne0\in H_2(X)$.
\end{Cor}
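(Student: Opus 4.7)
The approach is to combine the monotonicity statement of Proposition~\ref{prop:embedded_s} with an explicit computation of $s(k\overline{\CP^2};\bullet)$, and then to invoke Theorems~\ref{thm:intro_Lee_structure} and \ref{thm:rank_ineq} for the nonvanishing statements about $\mathcal{S}_0^2$. To compute $s(k\overline{\CP^2};\bullet)$, I would first establish additivity of lasagna $s$-invariants under boundary connected sum, via a Lee analog of the tensor-product decomposition for Khovanov skein lasagna modules from \cite{manolescu2022skein}: namely, $\mathcal{S}_0^{Lee}(X\natural Y)\cong\mathcal{S}_0^{Lee}(X)\otimes\mathcal{S}_0^{Lee}(Y)$ should identify canonical generators via $x_{\alpha_++\beta_+,\alpha_-+\beta_-}\leftrightarrow x_{\alpha_+,\alpha_-}\otimes x_{\beta_+,\beta_-}$, with quantum filtrations adding. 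This yields $s(X\natural Y;\alpha+\beta)=s(X;\alpha)+s(Y;\beta)$; iterating and combining with Proposition~\ref{prop:-CP^2} then gives $s(k\overline{\CP^2};\gamma)=|\gamma|$ for $\gamma\in H_2(k\overline{\CP^2})\cong\Z^k$, where $|\cdot|$ denotes the $\ell^1$-norm in the standard basis. Applying Proposition~\ref{prop:embedded_s} to $i\colon X\hookrightarrow k\overline{\CP^2}$ then gives $|i_*\alpha|=s(k\overline{\CP^2};i_*\alpha)\le s(X;\alpha)$, proving the main inequality.

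For the equality claim at $\alpha=0$ with $X$ a $2$-handlebody, the above already gives $s(X;0)\ge0$, while the matching upper bound $s(X;0)\le0$ follows from Theorem~\ref{thm:intro_genus_bound} applied to the empty surface, which represents $0\in H_2(X)$ with genus $0$. Hence $s(X;0)=0$.

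For the nonvanishing of $\mathcal{S}_{0,0,q}^2(X;\alpha)$, Theorem~\ref{thm:rank_ineq} (applicable since $X$ is a $2$-handlebody) reduces the task to exhibiting a nonzero class in $gr_q(\mathcal{S}_{0,0}^{Lee}(X;\alpha))$. For $q=s(X;\alpha)$: the canonical generator $x_{\alpha,0}$ of Theorem~\ref{thm:intro_Lee_structure} lies in homological degree $-2\alpha\cdot0=0$ and homology class $\alpha$, with quantum filtration exactly $s(X;\alpha)$ by Definition~\ref{def:intro_s}, so its image in $gr_{s(X;\alpha)}$ is nonzero by construction. For $q=s(X;\alpha)-2$ (assuming $\alpha\ne0$ in $H_2(X;\Q)$): I would consider the symmetric pair $x_{\alpha,0}\pm x_{0,\alpha}$, both of homological degree $0$ and homology class $\alpha$; by Theorem~\ref{thm:intro_Lee_structure}(3) they carry the distinct $\Z/4$ quantum degrees $-\alpha^2$ and $-\alpha^2+2$, so their $\Z$-filtrations must differ by at least $2$ modulo compatibility. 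Theorem~\ref{thm:intro_Lee_structure}(4) then pins these filtrations exactly at $s(X;\alpha)$ and $s(X;\alpha)-2$, and the latter combination provides the desired nonzero class in $gr_{s(X;\alpha)-2}$.

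The main technical obstacle is the additivity of lasagna $s$-invariants under boundary connected sum, which relies on a Lee tensor-product decomposition compatible with canonical generators and quantum filtrations. Once this is in place, every remaining step is a formal application of the results stated in the introduction.
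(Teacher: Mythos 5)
Your proposal is correct and matches the paper's overall strategy: compute $s(k\overline{\CP^2};\gamma)=|\gamma|$ from $s(\overline{\CP^2};\bullet)$ by additivity, apply Proposition~\ref{prop:embedded_s} to get the lower bound, and then deduce nonvanishing of $\mathcal{S}_0^2$ from Theorem~\ref{thm:intro_Lee_structure}(4) together with Theorem~\ref{thm:rank_ineq}. The additivity you flag as the ``main technical obstacle'' is already available in the paper as Theorem~\ref{thm:s_prop}(4) (the connected-sum formula for lasagna $s$-invariants), so there is nothing to build; the Lee tensor-product decomposition compatible with canonical generators that you sketch is essentially the proof given there. Your one genuine departure is the upper bound $s(X;0)\le 0$: you obtain it from the genus bound applied to the empty (or any null-homologous genus-$0$) surface, whereas the paper invokes Proposition~\ref{prop:las_s_from_classical}, whose nonincreasing sequence evaluated at $r=0$ gives $s(X;0)\le s_{\mathfrak{gl}_2}(\emptyset)+1=0$. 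Both are valid; at bottom each comes down to the observation that $x_{0,0}$ is represented by the empty lasagna filling, which has quantum filtration degree $\le 0$. Your route has the minor advantage of not needing the $2$-handlebody hypothesis for this particular step (it is still required for Theorem~\ref{thm:rank_ineq}). The nonvanishing argument for $\mathcal{S}_{0,0,q}^2(X;\alpha)$ at $q=s(X;\alpha)$ and $q=s(X;\alpha)-2$ is identical to the paper's.
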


If $\Sigma\subset k\CP^2\backslash(int(B^4)\sqcup int(B^4))$ is an oriented cobordism in $k\CP^2$ between two oriented links $L_0,L_1$, the last part of Proposition~\ref{prop:-CP^2} enables us to define an induced map $$Kh(\Sigma)\colon Kh(L_0)\to Kh(L_1)$$ between ordinary Khovanov homology of $L_0$ and $L_1$, at least over a field and up to sign, with bidegree $(0,\chi(\Sigma)-[\Sigma]^2+|[\Sigma]|)$. We carry out this construction in more detail in Section~\ref{sbsec:Kh_CP2_cob}.

\subsection{Organization of the paper}
In Section~\ref{sec:Kh_lasagna}, we review the theory of Khovanov skein lasagna modules, expanding on a few properties, as well as making some new comments on the skein-theoretic construction. In the short Section~\ref{sec:van}, we prove Theorem~\ref{thm:van} by applying a bound on Khovanov homology by Ng \cite{ng2005legendrian}, and we give some examples of $4$-manifolds with vanishing Khovanov skein lasagna module.

In Section~\ref{sec:Lee_lasagna}, we develop the theory of Lee skein lasagna modules and lasagna $s$-invariants, which can be thought of as a lasagna generalization of the classical theory by Lee \cite{lee2005endomorphism} and Rasmussen \cite{rasmussen2010khovanov} for links in $S^3$. A generalization of Theorem~\ref{thm:intro_Lee_structure}, in particular the definition of canonical Lee lasagna generators, is presented in Section~\ref{sbsec:Lee_structure}. The behavior of canonical Lee lasagna generators under morphisms is given in Theorem~\ref{thm:can_gen_morphism} in Section~\ref{sbsec:can_Lee_gen_mor}. Building on these results, in Section~\ref{sbsec:las_s} we define and examine the lasagna $s$-invariants, generalizing Definition~\ref{def:intro_s} and Theorem~\ref{thm:intro_genus_bound}. We have also included a discussion about the coefficient field $\Q$ at the end of Section~\ref{sec:Lee_lasagna}.

In Section~\ref{sec:2-hdby}, we make use of a $2$-handlebody formula by Manolescu--Neithalath \cite{manolescu2022skein} to prove nonvanishing results for Khovanov and Lee skein lasagna modules of $2$-handlebodies. In Section~\ref{sbsec:rank_ineq}, we prove Theorem~\ref{thm:rank_ineq} which shows that finiteness results for the quantum filtration of Lee skein lasagna modules imply nonvanishing results for Khovanov skein lasagna modules. In Section~\ref{sbsec:Lee_compare} we prove a generalization of Proposition~\ref{prop:intro_Lee_comparison} and in Section~\ref{sbsec:las_s_from_classical} we show that lasagna $s$-invariants can be expressed in terms of classical $s$-invariants of some cable links. Finally, in the somewhat technical Section~\ref{sbsec:diagram}, we prove a vast generalization of the diagrammatic nonvanishing criterion Theorem~\ref{thm:nonvan_diagram_knot}.

Finally, in Section~\ref{sec:examples}, we give examples and applications, mostly to the theory and tools developed in Section~\ref{sec:Lee_lasagna} and Section~\ref{sec:2-hdby}. This includes proofs of Corollary~\ref{cor:more_exotic}, Theorem~\ref{thm:exotic_family}, Theorem~\ref{thm:shake_genus}, and an expanded discussion of Section~\ref{sbsec:intro_CP2}. Other discussions include a proof that $s$-invariants of cables of the Conway knot can obstruct its sliceness, that the Khovanov skein lasagna module over $\Q$ does not detect potential exotica arising from Gluck twists in $S^4$, and a conjecture about skein lasagna modules of Stein $2$-handlebodies.

Most of our developments are independent of proving Theorem~\ref{thm:exotic}. Readers only interested in Theorem~\ref{thm:exotic} may want to look at its proof in Section~\ref{sbsec:exotic_proof} and delve into the corresponding sections. In particular, to distinguish the smooth structures of the pair $X_1,X_2$ in Theorem~\ref{thm:exotic}, one can read the first half of Section~\ref{sec:Kh_lasagna}, certain statements in Section~\ref{sec:Lee_lasagna} (mainly Definition~\ref{def:s} and Corollary~\ref{cor:genus_bound_simpliest}), Section~\ref{sbsec:diagram}, and Section~\ref{sbsec:shake_genus}. The minimal amount of theory needed to actually distinguish their Khovanov skein lasagna modules depends moreover on Section~\ref{sbsec:rank_ineq} and Section~\ref{sbsec:Lee_compare} (but not really on Corollary~\ref{cor:genus_bound_simpliest} and Section~\ref{sbsec:shake_genus}). Still, the statements in those sections are often much more general than needed for Theorem~\ref{thm:exotic}, which at times complicate the proofs.

\pgfdeclarelayer{background}
\pgfsetlayers{background,main}

\begin{figure}[h]
\centering
\begin{tikzpicture}[
  >=Latex,
  x=1.5cm, y=1.5cm, % more separation horizontally and vertically
  result/.style={rectangle,draw,rounded corners,
                 align=center,inner sep=3pt},
  dep/.style={-Latex,thick},
  depdashed/.style={-Latex,thick,dashed},
  transform shape
]
%----------------- NODES -----------------

% top row
\node[result] (sec24) at (0,5.4)  {\S\ref{sbsec:skein_construction}--\ref{sbsec:Kh_lasagna_properties}};
\node[result] (sec25) at (3.8,4.5) {\S\ref{sbsec:skein_structure}};
\node[result] (sec26) at (3.8,3.5) {\S\ref{sbsec:variation}};
\node[result] (sec3)  at (-3.8,4.2) {\S\ref{sec:van}\\Thm~\ref{thm:van}};

% big central box
\node[result, text width=5.2cm] (sec4) at (0,4) {\S\ref{sec:Lee_lasagna}\\[2pt]
Thm~\ref{thm:Lee_structure}(\ref{thm:intro_Lee_structure}) Def~\ref{def:s}(\ref{def:intro_s}) Thm~\ref{thm:s_prop}(\ref{thm:intro_genus_bound})};

% row of 5.x
\node[result] (sec51) at (-4.0,2.4) {\S\ref{sbsec:rank_ineq}\\Thm~\ref{thm:rank_ineq}};
\node[result] (sec54) at (-1.3,2.4) {\S\ref{sbsec:diagram}\\Thm~\ref{thm:nonvan_diagram}(\ref{thm:nonvan_diagram_knot})};
\node[result] (sec52) at (1.4,2.4)  {\S\ref{sbsec:Lee_compare}\\Prop~\ref{prop:Lee_comparison}(\ref{prop:intro_Lee_comparison})};
\node[result] (sec53) at (3.8,2.4)  {\S\ref{sbsec:las_s_from_classical}\\Prop~\ref{prop:las_s_from_classical}};

% row of 6.1, 6.2, 6.4, 6.5, 6.7
\node[result] (sec61) at (-2.8,1.1) {\S\ref{sbsec:example_traces}};
\node[result] (sec62) at (-4,1.1) {\S\ref{sbsec:example_plumbing}};
\node[result] (sec64) at (2.1,1.1)  {\S\ref{sbsec:shake_genus}\\Thm~\ref{thm:shake_genus}(1)};
\node[result] (sec65) at (-0.5,1.1)  {\S\ref{sbsec:comparison_refined}\\Thm~\ref{thm:shake_genus}(2)\\Prop~\ref{prop:las_s_compare_-CP2}};
\node[result] (sec67) at (4,0.9)  {\S\ref{sbsec:conway}};

% middle row (corollary + theorems)
\node[result] (cor12) at (-3.1,-0.25)  {\S\ref{sbsec:more_exotica}\\Cor~\ref{cor:more_exotic}};
\node[result] (thm11) at (-1.5,-0.25)  {Thm~\ref{thm:exotic}};
\node[result] (thm13) at (0.1,-0.25){\S\ref{sbsec:yasui}\\Thm~\ref{thm:exotic_family}};

% bottom row of section 6.x
\node[result] (sec69)  at (-2.3,-1.5) {\S\ref{sbsec:-CP^2}\\Prop~\ref{prop:-CP^2}\\Cor~\ref{cor:emd_-kCP2}};
\node[result] (sec610) at (-4,-1.75) {\S\ref{sbsec:htp_S4}};
\node[result] (sec611) at (-0.3,-1.64) {\S\ref{sbsec:Kh_CP2_cob}\\Prop~\ref{prop:Kh_CP2_cob}}; % moved down
\node[result] (sec68)  at (1.6,-1.4)  {\S\ref{sbsec:torus_traces}\\Prop~\ref{prop:torus_knot_trace_Lee}};
\node[result] (sec612) at (3.6,-1.3)  {\S\ref{sbsec:stein}};

%----------------- ARROWS -----------------

% from Section 2.1--2.4
\draw[dep] (sec24) -- (sec3);
\draw[dep] (sec24) -- (sec4);
\draw[dep] (sec24) -- (sec25);

% from Section 2.5
\draw[dep] (sec25) -- (sec26);
\draw[depdashed] (sec25) -- (sec4);

% from Section 3
\draw[dep, bend left=10] (sec3) to (sec61);

% from Section 4
\draw[dep] (sec4) -- (sec54);
\draw[dep] (sec4) -- (sec53.150);
\draw[dep] (sec4) -- (sec52);
\draw[dep] (sec4) -- (sec51);

% from Section 5.1
\draw[depdashed] (sec51) to (sec54);
\draw[depdashed, bend right=40] (sec51.215) to (sec69.165);
\draw[dep, bend left=30] (sec51) to (thm11);

% from Section 5.2
\draw[dep] (sec52) -- (sec64);
\draw[depdashed] (sec52) -- (sec65);
\draw[dep] (sec52) -- (thm13);

% from Section 5.3
\draw[dep] (sec53) -- (sec67);
\draw[dep] (sec53) to (sec68);
\draw[dep] (sec53) -- (sec65);
\draw[depdashed, bend right=20] (sec53) to (sec54);

% from Section 5.4
\draw[dep] (sec54) -- (sec61);
\draw[dep] (sec54) -- (sec62);
\draw[dep] (sec54) -- (sec64);
\draw[dep] (sec54) -- (sec65);
\draw[dep, bend right=5] (sec54) to (thm11);

% 6.1, 6.2 to Cor 1.2
\draw[dep] (sec61) -- (cor12);
\draw[dep] (sec62) -- (cor12);

% Corollary and theorems
\draw[dep] (thm11) -- (cor12);
\draw[dep] (thm11) -- (thm13);

% from Sections 6.4, 6.5
\draw[dep] (sec64) -- (thm11);
\draw[dep] (sec65) -- (thm13);
\draw[dep] (sec65) -- (sec61);

% from Thm 1.1 / Thm 1.3 downwards
\draw[dep] (sec69) -- (sec610);
\draw[dep] (sec69) -- (sec611);

% 6.8 and 6.9
\draw[dep, bend right=10] (sec68) to (sec69);

% S6.9 -> S6.1
\draw[dep, bend right=10] (sec69) to (sec61);

% dashed edges to 6.12
\draw[depdashed] (sec53.250) -- (sec612);
\draw[depdashed, bend left=5] (sec54.340) to (sec612);
\draw[depdashed] (sec64) -- (sec612);

\begin{pgfonlayer}{background}
  \node[
    draw,
    rounded corners,
    inner sep=8pt,
    fill=black!5,
    fit=(cor12) (thm11) (thm13),
  ] (group_main_results) {};
\end{pgfonlayer}

\begin{pgfonlayer}{background}
  \node[
    draw,
    rounded corners,
    inner sep=8pt,
    fill=black!5,
    fit=(sec61) (sec62),
  ] (group_main_results) {};
\end{pgfonlayer}

\begin{pgfonlayer}{background}
  \node[
    draw,
    rounded corners,
    inner sep=8pt,
    fill=black!5,
    fit=(sec64) (sec65),
  ] (group_main_results) {};
\end{pgfonlayer}

\begin{pgfonlayer}{background}
  \node[
    draw,
    rounded corners,
    inner sep=8pt,
    fill=black!5,
    fit=(sec68) (sec69) (sec610) (sec611),
  ] (group_main_results) {};
\end{pgfonlayer}

\end{tikzpicture}
\caption{Dependency graph. Arrows indicate logical dependence, and dashed arrows indicate weak logical dependence. Applications with a similar flavor are boxed together.}
\end{figure}

\subsection*{Acknowledgement}
We thank Ian Agol, Daren Chen, Kyle Hayden, Sungkyung Kang, Ciprian Manolescu, Marco Marengon, Anubhav Mukherjee, Gheehyun Nahm, Ikshu Neithalath, Lisa Piccirillo, Mark Powell, Yikai Teng, Joshua Wang, Paul Wedrich, and Hongjian Yang for helpful discussions. We thank the referees for their careful reading of the paper and their helpful suggestions. This work was supported in part by the Simons grant ``new structures in low-dimensional topology'' through the workshop on exotic $4$-manifolds held at Stanford University December 16--18, 2023.

\section{Khovanov and Lee skein lasagna modules}\label{sec:Kh_lasagna}
The Khovanov-Rozansky $\mathfrak{gl}_2$ skein lasagna modules were defined by Morrison--Walker--Wedrich \cite{morrison2022invariants}. In the first part of this section we review the construction, cast in a slightly different way, while defining the Lee deformation under the same framework. In Section~\ref{sbsec:skein_construction}, we begin with a general skein-theoretic construction of skein lasagna modules, using an arbitrary link homology theory for links in $S^3$. In Section~\ref{sbsec:lasagna_properties}, we state a few properties of skein lasagna modules, most of which are essentially proved in \cite{manolescu2022skein}. Only then, in Section~\ref{sbsec:KhR_2}, do we fix the link homology theory under consideration to be either the $\mathfrak{gl}_2$ Khovanov-Rozansky homology $KhR_2$ \cite{morrison2022invariants} or its Lee deformation $KhR_{Lee}$ \cite{lee2005endomorphism}. We state a few more computational results for these settings in Section~\ref{sbsec:Kh_lasagna_properties}.

The second part of the section is new, and can be skipped on a first reading. In Section~\ref{sbsec:skein_structure}, we identify the structure of the underlying space of skeins in the construction of Section~\ref{sbsec:skein_construction}. The proof will eventually be useful in Section~\ref{sbsec:Lee_structure} where we calculate the Lee skein lasagna module of a pair $(X,L)$. Finally, in Section~\ref{sbsec:variation}, we suggest a few variations of the skein-theoretic construction.

\subsection{Construction of skein lasagna modules}\label{sbsec:skein_construction}\footnote{The formulation of our construction was motivated by a conversation with Sungkyung Kang.}
Let $X$ be a $4$-manifold and $L\subset\partial X$ be a framed oriented link on its boundary. A \textit{skein} in $(X,L)$ or in $X$ rel $L$ is a properly embedded framed oriented surface in $X$ with the interiors of finitely many disjoint $4$-balls deleted, whose boundary on $\partial X$ is $L$. That is, a skein in $(X,L)$ can be written as
\begin{equation}\label{eq:skein}
\Sigma\subset X\backslash{\sqcup_{i=1}^kint(B_i)},\text{ with }\partial\Sigma|_{\partial X}=L,
\end{equation}
The deleted (unparametrized) balls $B_i$ are called \textit{input balls} of the skein $\Sigma$, and the framed links $K_i\:= \Sigma \cap \partial B_i$ in (unparametrized) 3-spheres are called \emph{input links} of $\Sigma$.

We define the \textit{category of skeins} in $(X,L)$, denoted $\mathcal{C}(X;L)$, as follows. The objects consist of all skeins in $X$ rel $L$. Suppose $\Sigma$ is an object as in \eqref{eq:skein} and $\Sigma'$ is another object with input balls $B_j'$, $j=1,\cdots,k'$. If $\sqcup B_i\subset\sqcup B_j'$, and each $B_j'$ either coincides with some $B_i$ or does not intersect any $B_i$ on its boundary, then the set of morphisms from $\Sigma$ to $\Sigma'$ consists of pairs of an isotopy class of surfaces\footnote{We formally allow some components of $S$ to be curves on the boundaries of those $B_i=B_j'$.} $S\subset\sqcup B_j'\backslash\sqcup\,int(B_i)$ rel boundary with $S|_{\partial B_i}=\Sigma|_{\partial B_i}$, $S|_{\partial B_j'}=\Sigma'|_{\partial B_j'}$, and an isotopy class of isotopies from $S\cup\Sigma'$\footnote{In order for the surfaces to glue smoothly, one may assume throughout that all embedded surfaces have a standard collar near each boundary, and all isotopies rel boundary respect these collars.} to $\Sigma$ rel boundary. We will suppress the isotopy and write $[S]\colon\Sigma\to\Sigma'$ for such a morphism. If the input balls $B_i,B_j'$ do not satisfy the condition above, there is no morphism from $\Sigma$ to $\Sigma'$. The identity morphism at $\Sigma$ is given by $[\Sigma|_{\sqcup\partial B_i}]$. The composition of two morphisms $[S]\colon\Sigma\to\Sigma'$ and $[S']\colon\Sigma'\to\Sigma''$ is $[S\cup S']$.

Next, we fix a commutative ground ring $R$ and introduce a functorial link homology theory $Z$ valued in $R$-modules for framed oriented links in $\R^3$, which satisfies the conditions of Theorem~2.1 of \cite{morrison2024invariants}. Explicitly,
\begin{itemize}
\item $Z$ is lax monoidal in the sense that there are canonical maps $\iota_{L_0,L_1}\colon Z(L_0)\otimes_RZ(L_1)\to Z(L_0\sqcup L_1)$ for links $L_0,L_1\subset\R^3$. In our later discussions, $Z$ will be strictly monoidal, meaning that the canonical maps $\iota_{L_0,L_1}$ are isomorphisms.
\item $Z$ induces the identity map for the sweep-around move \cite[(1-1)]{morrison2022invariants}, allowing it to extend to a functorial link homology theory $Z$ for links in $S^3$. That is to say, for a framed oriented link $L\subset S^3$ we have an $R$-module $Z(L)$, and for a framed oriented link cobordism $S\subset S^3\times I$ from $L_0$ to $L_1$ we have an $R$-module map $Z(S)\colon Z(L_0)\to Z(L_1)$ invariant under isotopies of $S$ rel boundary.
\item The trace of a $2\pi$-rotation on $\R^3$ induces the identity map $Z(L)\to Z(L)$ for any link $L\subset\R^3$. This, together with the triviality of the sweep-around move, allows one to upgrade $Z$ to a homology theory for framed oriented links in \textit{abstract} $S^3$ and link cobordisms in \textit{abstract} $S^3\times I$. Here, an abstract $S^3$ is an oriented $3$-manifold diffeomorphic to $S^3$, and an abstract $S^3\times I$ is an oriented $4$-manifold diffeomorphic to $S^3\times I$, with a chosen order of its two boundary components.
\end{itemize}

We then define a functor (also denoted $Z$) on the category of skeins $\mathcal{C}(X;L)$ as follows.  For an object $\Sigma$ of $\mathcal{C}(X;L)$ having $k$ input balls $B_i$ we define
\begin{equation}\label{eq:Z(Sigma)}
Z(\Sigma):=\bigotimes_{i=1}^k Z(K_i)
\end{equation}
where the $K_i$ are the input links of $\Sigma$.  Now let $[S]\colon \Sigma \to \Sigma'$ be a morphism in $\mathcal{C}(X;L)$ represented by the surface $S$. For each `outer' input ball $B'_j$ of $\Sigma'$ with input link $K_j'$, we define a map
\[Z(S)_j:\bigotimes_i Z(K_i) \to Z(K'_j),\]
where the index $i$ runs over the `inner' input balls $B_i$ of $\Sigma$ that are contained in $B'_j$.  First, if $B'_j$ coincides with some $B_i$, we define $Z(S)_j$ to be the identity.  Otherwise, we have a surface $S_j=S\cap B'_j$ and choose paths within $B'_j\backslash S_j$ to connect all of the inner $B_i$ within $B'_j$ to form one abstract input ball $\natural_iB_i$ with input link $\sqcup_i K_i$, and view $S_j$ as a cobordism within an abstract $S^3\times I$ from $\sqcup_i K_i$ to $K'_j$, to define $Z(S)_j = Z(S_j)\circ \iota$, where $\iota$ is defined as in the first bullet point above\footnote{The triviality of the sweep-around move ensures that this definition is independent of the choice of paths connecting the inner input balls.}.  We then collect all of the tensor factors together to define
\[Z([S]) = \bigotimes_j Z(S)_j \colon Z(\Sigma) \to Z(\Sigma').\]

\begin{Def}\label{def:skein lasagna module}
Let $X$ be a 4-manifold and $L\subset \partial X$, and fix a functorial link homology theory $Z$ as above.  Then the \emph{skein lasagna module} of $(X,L)$ with respect to $Z$, denoted $\mathcal{S}_0^Z(X;L)$, is the colimit
\begin{equation}\label{eq:colim}
\mathcal{S}_0^Z(X;L):=\mathrm{colim}(Z\colon\mathcal C(X;L)\to\mathbf{Mod}_R).
\end{equation}
\end{Def}

More explicitly,
\begin{equation}\label{eq:S_0^Z}
\mathcal S_0^Z(X;L)=\{(\Sigma,v)\colon\Sigma\subset X\backslash\sqcup_{i=1}^kint(B_i),\ v\in Z(\Sigma)\}/\sim
\end{equation}
where $\sim$ is the equivalence relation generated by linearity in $v$, and $(\Sigma,v)\sim(\Sigma',Z([S])(v))$ for $[S]\colon\Sigma\to\Sigma'$. A pair $(\Sigma,v)$ (or just $v$ if there's no confusion) as in \eqref{eq:S_0^Z} is called a \textit{lasagna filling} of $(X,L)$.

The skein lasagna module $\mathcal{S}_0^Z(X;L)$ is an invariant of the pair $(X,L)$ up to diffeomorphism. When $L=\emptyset$, we write for short $\mathcal{S}_0^Z(X)$ for $\mathcal{S}_0^Z(X;L)$. In this case, since all lasagna fillings and their relations appear in $int(X)$, we in fact have the following.
\begin{Lem}\label{lem:interior_invariant}
The skein lasagna module $\mathcal S_0^Z(X)$ is an invariant of $int(X)$, the interior of $X$.\qed
\end{Lem}

\subsection{Properties of skein lasagna modules}\label{sbsec:lasagna_properties}
We state some properties of the skein lasagna modules, mostly following Manolescu--Neithalath \cite{manolescu2022skein}. Although their paper dealt with the case where the link homology theory $Z$ is the Khovanov-Rozansky $\mathfrak{gl}_N$ homology, the proofs remain valid in a more general setup. The more recent paper \cite{manolescu2023skein} gives a formula for $\mathcal{S}_0^Z(X;L)$ in terms of a handlebody decomposition of $X$. However, as we will only be using this result occasionally, we refer interested readers to their paper.

Every object $\Sigma$ of $\mathcal C(X;L)$ represents a homology class in $H_2(X,L)$, whose image under the boundary homomorphism $\partial$ in the long exact sequence $$0\to H_2(X)\to H_2(X,L)\xrightarrow{\partial}H_1(L)\to H_1(X)\to\cdots$$ is $[L]$, the fundamental class of $L$. If $[S]\colon\Sigma\to\Sigma'$ is a morphism, then $[\Sigma]=[\Sigma']\in H_2(X,L)$. This means $\mathcal S_0^Z(X;L)$ admits a grading by $H_2^L(X):=\partial^{-1}([L])$, which we write as $$\mathcal S_0^Z(X;L)=\bigoplus_{\alpha\in H_2^L(X)}\mathcal S_0^Z(X;L;\alpha).$$
Of course, $H_2^L(X)$ is nonempty if and only if the image of $[L]$ in $H_1(X)$ is zero, i.e., $L$ is null-homologous in $X$. From now on this will be assumed, since otherwise $\mathcal S_0^Z(X;L)$ is trivially zero as there is no lasagna filling.

\begin{Prop}[$3,4$-handle attachments, {\cite[Proposition~2.1]{manolescu2022skein}}]\label{prop:34_hd}
If $(X',L)$ is obtained from $(X,L)$ by attaching a $3$-handle (resp. $4$-handle) away from $L$, then there is a natural surjection (resp. isomorphism) $\mathcal S_0^Z(X;L)\to\mathcal S_0^Z(X';L)$.\qed
\end{Prop}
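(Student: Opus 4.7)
The plan is to construct the natural map from the inclusion $X\hookrightarrow X'$ and then verify surjectivity (and injectivity, in the $4$-handle case) by a general position argument with respect to the cocore of the attached handle.

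First I would define $\iota_*\colon\mathcal{S}_0^Z(X;L)\to\mathcal{S}_0^Z(X';L)$ as follows. Since the handle is attached away from $L$, the link $L$ still sits on $\partial X'$, and a lasagna filling $(\Sigma,v)$ of $(X,L)$ determines a lasagna filling of $(X',L)$ by viewing $\Sigma\subset X\subset X'$. The relations generating the colimit \eqref{eq:colim} are manifestly preserved: linearity in $v$ is formal, and every elementary morphism $[S]\colon\Sigma\to\Sigma'$ of $\mathcal{C}(X;L)$ lives inside a union of input balls contained in $\mathrm{int}(X)$, so it is equally a morphism in $\mathcal{C}(X';L)$.

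For surjectivity, let $D^k\subset X'$ denote the cocore of the attached handle ($k=1$ for a $3$-handle, $k=0$ for a $4$-handle), and let $(\Sigma',v)$ be any lasagna filling of $(X',L)$. Since $\dim\Sigma'+k=2+k<4$, transversality rel $\partial X'$ lets us ambient-isotope $\Sigma'$ and all of its input balls off of $D^k$. The resulting filling lies in the complement of an open tubular neighborhood of $D^k$ in $X'$, which deformation retracts onto $X$; transporting along this retraction produces a lasagna filling in $(X,L)$ whose image under $\iota_*$ represents $[(\Sigma',v)]$.

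For injectivity in the $4$-handle case, I would argue that any equivalence $(\Sigma_0,v_0)\sim(\Sigma_1,v_1)$ in $\mathcal{S}_0^Z(X';L)$ with both fillings situated in $X$ is witnessed by a finite chain of elementary colimit relations in $\mathcal{C}(X';L)$, each consisting of $2$-dimensional skein/cobordism data together with the $3$-dimensional isotopies rel boundary that witness it. When the cocore is a single point, generic transversality forces every piece of this finite data to miss the cocore, so the entire chain lifts to an equivalence in $X'\setminus\{\mathrm{pt}\}\simeq X$. The main obstacle — and precisely the reason the $3$-handle statement stops at surjectivity — is that for a $3$-handle the cocore is $1$-dimensional, and a generic $3$-dimensional isotopy meets a $1$-dimensional arc in isolated points, so one cannot in general propagate an equivalence in $X'$ back to one in $X$ through such isotopies.
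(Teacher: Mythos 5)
This paper does not prove the proposition itself --- it records it with a citation to Manolescu--Neithalath --- so there is no in-paper argument to compare against. Your general strategy (inclusion-induced map plus general position with respect to the cocore, with the failure of injectivity for $3$-handles traced to the $1$-dimensional cocore) is the right one and captures the essential geometry.

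There is, however, a genuine gap in the transversality step, and it recurs in both the surjectivity and injectivity parts. The dimension count $\dim\Sigma'+k=2+k<4$ only lets you isotope the surface $\Sigma'$ off the cocore $D^k$; it says nothing about the input balls $B_i$, which are $4$-dimensional submanifolds of $X'$. A generic $4$-ball meets a properly embedded arc in a nonempty $1$-manifold, and a generic $4$-ball may contain a given point, so the claim that transversality ``lets us ambient-isotope $\Sigma'$ and all of its input balls off of $D^k$'' is simply false for the balls. The extra step you need is a ball-shrinking move using the colimit relations: after isotoping $\Sigma'$ off an open tubular neighborhood $\nu(D^k)$, each input link $K_i=\partial\Sigma'|_{\partial B_i}$ lies on $\Sigma'$ and hence off $\nu(D^k)$, so one may replace $B_i$ by a much smaller $4$-ball $B_i''\subset\mathrm{int}(B_i)\setminus\nu(D^k)$ and extend $\Sigma'$ across $B_i\setminus\mathrm{int}(B_i'')$ by a product cylinder on $K_i$ (a $2$-dimensional surface, hence generically disjoint from $D^k$); the cylinder induces an isomorphism on $Z(K_i)$, so this is an equivalence in the colimit. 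The resulting filling lies in $X'\setminus\nu(D^k)$, which is diffeomorphic to $X$. The analogous shrinking must also be inserted in your injectivity argument for the $4$-handle case: the intermediate fillings in the chain of elementary relations carry $4$-dimensional input balls, and ``generic transversality forces every piece of this finite data to miss the cocore'' is not literally true for those balls without this additional move.
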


\begin{Prop}[Connected sum formula, {\cite[Theorem~1.4,Corollary~7.3]{manolescu2022skein}}]\label{prop:cntd_sum}
If $Z$ is strictly monoidal, then there is a natural isomorphism $$\mathcal{S}_0^Z(X\natural X';L\sqcup L')\cong\mathcal{S}_0^Z(X;L)\otimes\mathcal{S}_0^Z(X';L').$$ The same formula holds with the boundary sum replaced by the connected sum or the disjoint union.\qed
\end{Prop}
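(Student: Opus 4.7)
The plan is to construct explicit natural maps $\Psi$ and $\Phi$ between the two sides and check they are mutually inverse. The forward map $\Psi \colon \mathcal{S}_0^Z(X;L) \otimes \mathcal{S}_0^Z(X';L') \to \mathcal{S}_0^Z(X \natural X'; L \sqcup L')$ is immediate: perform the boundary sum along a $3$-ball disjoint from the given skeins, so that any pair of lasagna fillings $(\Sigma, v)$ and $(\Sigma', v')$ may be placed disjointly inside $X \natural X'$, and set $\Psi(v \otimes v') := (\Sigma \sqcup \Sigma', v \otimes v')$, using that $Z(\Sigma \sqcup \Sigma') = Z(\Sigma) \otimes Z(\Sigma')$ because $Z$ is symmetric monoidal. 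Compatibility with the skein equivalence relation on both factors is automatic, since morphisms in $\mathcal{C}(X;L)$ and $\mathcal{C}(X';L')$ act on disjoint regions.

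For the inverse $\Phi$, fix the separating $3$-ball $B \subset X \natural X'$ coming from the boundary sum decomposition. Given a lasagna filling $(\Sigma, w)$, use the skein relation together with a transversality isotopy to arrange that $\Sigma$ meets $B$ transversely in a framed oriented link $K$ with no input ball crossing $B$. Replacing a tubular neighborhood $N(K) \subset B$ by a pair of new input balls (one on each side of $B$) cuts $\Sigma$ into skeins $\Sigma_X \subset X$ and $\Sigma_{X'} \subset X'$ rel $L$ and $L'$, respectively. By involutivity of $Z$, the coevaluation element $\mathrm{coev}_K \in Z(K) \otimes Z(-K)$ is well-defined as the image of $\mathrm{id}_{Z(K)}$ under $\mathrm{End}(Z(K)) \cong Z(K) \otimes Z(K)^* \cong Z(K) \otimes Z(-K)$. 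Under the canonical identification $Z(\Sigma_X) \otimes Z(\Sigma_{X'}) \cong Z(\Sigma) \otimes Z(K) \otimes Z(-K)$, expand
\[w \otimes \mathrm{coev}_K \;=\; \sum_i v_i \otimes v_i' \;\in\; Z(\Sigma_X) \otimes Z(\Sigma_{X'}),\]
and define $\Phi(\Sigma, w) := \sum_i (\Sigma_X, v_i) \otimes (\Sigma_{X'}, v_i')$.

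The main obstacle is showing $\Phi$ is well-defined, independent of the transverse representative of $\Sigma$ (and of the chosen separating $3$-ball, any two of which are ambient-isotopic). Two transverse positions are connected by a generic isotopy that crosses finitely many codimension-one events along $B$: births and deaths of unknotted circles in $K$, saddle (Reidemeister-like) moves on $K$, and passages of input balls across $B$. For each local move, one must verify that $\sum_i (\Sigma_X, v_i) \otimes (\Sigma_{X'}, v_i')$ is unchanged modulo the skein equivalences on the two sides. This reduces to the snake (zigzag) identities for $\mathrm{coev}_K$ guaranteed by involutivity, together with the observation that an elementary cobordism on one side is compensated by the dual cobordism on the other — precisely the content of naturality of coevaluation under functoriality of $Z$. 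Once well-definedness is established, $\Phi \circ \Psi = \mathrm{id}$ is immediate by choosing $B$ so that $K = \emptyset$, and $\Psi \circ \Phi = \mathrm{id}$ follows from the coevaluation–evaluation identity which rebuilds $(\Sigma, w)$ from $(\Sigma_X, v_i) \sqcup (\Sigma_{X'}, v_i')$ via the canonical tube along $K$. The ordinary connected sum case is identical with $B$ replaced by a separating $S^3$, and the disjoint union case is formal from $Z$ being symmetric monoidal applied to the (empty) cobordism between the two summands.
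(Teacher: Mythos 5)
Your strategy — gluing for $\Psi$, cutting along the separating $3$-ball/$3$-sphere and inserting the coevaluation for the candidate inverse $\Phi$ — is the same argument as the cited Manolescu--Neithalath proof, with the coevaluation element playing the role that dots-based neck-cutting plays for $KhR_N$; this is exactly why the paper asserts (and does not reprove) that the MN argument extends to any involutive $Z$ with finitely generated values over a field.

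The one place where your write-up is materially incomplete is the well-definedness of $\Phi$, which is the actual content of the proof and which you assert rather than verify. Two specific points. First, $\Phi$ must descend from lasagna fillings to the colimit $\mathcal{S}_0^Z(X\natural X';L\sqcup L')$, so you must check compatibility with the skein-category morphisms $[S]\colon\Sigma\to\Sigma'$; these \emph{enlarge} input balls, and an enlarged $B_j'$ may well straddle $B$. That is a different check from isotopy invariance of a transverse representative, and you cannot simply first isotope every $\Sigma'$ in a zigzag to avoid $B$ because you must also track the connecting $S$'s. Second, when an input ball $B_i$ with input link $K_i$ passes through $B$, the separating link $K=\Sigma\cap B$ changes at the same time as the tensor factorization $Z(\Sigma_X)\otimes Z(\Sigma_{X'})$ is rewired (the $Z(K_i)$ factor moves from one side to the other); this is not covered by the snake identities for $\mathrm{coev}_K$ alone and needs a short computation using dinaturality of coevaluation plus the naturality of the $Z(K_i)$-factor relocation. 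Both points are fillable and are where the real work in the MN proof lives; the rest (snake identities from involutivity, $\Phi\Psi=\mathrm{id}$ by taking $K=\emptyset$, $\Psi\Phi=\mathrm{id}$ via the evaluation tube, the formal disjoint-union case) is fine.
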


Suppose $(X,L)$ is a pair and $Y\subset X$ is a properly embedded separating oriented $3$-manifold, possibly with transverse intersections with $L$ in some point set $P\subset\partial Y$. Then $Y$ cuts $(X,L)$ into two pairs $(X_1,T_1)$ and $(X_2,T_2)$, where $\partial X_1$ induces the orientation on $Y$ and $\partial X_2$ induces the reverse orientation, and $T_1,T_2$ are framed oriented tangles with endpoints $P$. The framing and the orientation of $L$ restrict to a framing and an orientation of $P\subset\partial Y$.

If $T_0\subset Y$ is a framed oriented tangle with $\partial T_0=P$ in the framed oriented sense, then $T_1\cup T_0$ is a framed oriented link in $\partial X_1$ and $-T_0\cup T_2$ is one in $\partial X_2$. A lasagna filling of $(X_1,T_1\cup T_0)$ and a lasagna filling of $(X_2,-T_0\cup T_2)$ glue to a lasagna filling of $(X,L)$ in a bilinear way, inducing a map 
\begin{equation}\label{eq:glue_2_summands}
\mathcal S_0^Z(X_1;T_1\cup T_0)\otimes\mathcal S_0^Z(X_2;-T_0\cup T_2)\to\mathcal S_0^Z(X;L)
\end{equation}

\begin{Prop}[Gluing]\label{prop:gluing}
The gluing map $$\bigoplus_{\substack{T_0\subset Y\\\partial T_0=P}}\mathcal S_0^Z(X_1;T_1\cup T_0)\otimes\mathcal S_0^Z(X_2;-T_0\cup T_2)\to\mathcal S_0^Z(X;L)$$ is a surjection.
\end{Prop}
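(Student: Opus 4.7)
The plan is to show that every class in $\mathcal{S}_0^Z(X;L)$ admits a representative lasagna filling $(\Sigma,v)$ for which $\Sigma$ is transverse to $Y$ and every input ball of $\Sigma$ is disjoint from $Y$ (hence contained in $int(X_1)\sqcup int(X_2)$). Once such a representative is in hand, the intersection $T_0:=\Sigma\cap Y\subset Y$ is a framed oriented tangle with $\partial T_0=P$, the surface splits as $\Sigma=\Sigma_1\cup_{T_0}\Sigma_2$ with $\Sigma_i$ a skein on the corresponding side, the input balls partition according to which component of $X\setminus Y$ they lie in, and $Z(\Sigma)\cong Z(\Sigma_1)\otimes Z(\Sigma_2)$ canonically. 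Expanding $v=\sum_k v_1^{(k)}\otimes v_2^{(k)}$ as a finite sum of pure tensors, the element $\sum_k[\Sigma_1,v_1^{(k)}]\otimes[\Sigma_2,v_2^{(k)}]$ in the $T_0$-summand of the domain then maps to $[\Sigma,v]$, yielding surjectivity.

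To build such a representative from an arbitrary filling $(\Sigma_0,v_0)$, I would first perturb $\Sigma_0$ (fixing a collar of $\partial X$ and of each $\partial B_i$) through a small isotopy making both $\Sigma_0$ and each $\partial B_i$ transverse to $Y$; this perturbation corresponds to a morphism in $\mathcal C(X;L)$ whose surface $S$ is a trivial collar, and so leaves $v_0$ unchanged in the colimit. The essential geometric step is then to push any input ball $B_i$ that still meets $Y$ to a nearby ball $B_i'$ contained entirely in $int(X_1)$ or $int(X_2)$, without altering the colimit class. For this, I would fix a slightly larger ball $B^*\supset B_i\cup B_i'$ and an ambient isotopy $f_t$ of $X$ supported in $B^*$ with $f_1(B_i)=B_i'$ and $f_1$ fixing every other input ball. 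Both the original skein $\Sigma_0$ and the isotoped skein $\Sigma_0':=f_1(\Sigma_0)$ admit morphisms in $\mathcal C(X;L)$ to the common coarser skein $\Sigma^*$ obtained by replacing $(B_i,\Sigma_0|_{B^*\setminus int(B_i)})$ with the single input ball $B^*$; the corresponding cobordism surfaces $S\subset B^*\setminus int(B_i)$ and $S':=f_1(S)\subset B^*\setminus int(B_i')$ are identified by $f_1$ rel $\partial B^*$. Functoriality of $Z$ then yields $Z([S])(v_0)=Z([S'])(v_0')$, where $v_0'\in Z(\Sigma_0')$ denotes the transport of $v_0$ under the identification $Z(\Sigma_0)\cong Z(\Sigma_0')$ induced by $f_1$. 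Hence $(\Sigma_0,v_0)\sim(\Sigma^*,Z([S])v_0)=(\Sigma^*,Z([S'])v_0')\sim(\Sigma_0',v_0')$ in the colimit, and one input ball has been moved off $Y$. Iterating over all offending input balls produces the desired representative.

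The main obstacle will be the functoriality identification $Z([S])(v_0)=Z([S'])(v_0')$, which requires unpacking the standard diffeomorphisms $\partial B_i\cong S^3$ implicit in the definition of $Z(K_i)$ and checking that they transport correctly under $f_1$. Once this bookkeeping is in place, the remainder of the argument reduces to standard general-position transversality, the evident partition of input balls into those in $X_1$ and those in $X_2$, and the elementary fact that any vector in a tensor product is a finite sum of pure tensors.
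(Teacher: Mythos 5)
Your proof is correct and follows the same approach as the paper's one-line argument (``Every lasagna filling of $(X,L)$ can be isotoped to intersect $Y$ transversely in some tangle $T_0$''), but supplies the details the paper leaves implicit, in particular the step of isotoping input balls off $Y$ using the enclosing-ball relation, which the paper observes elsewhere (Section~\ref{sbsec:skein_structure}, remark (c)) can be derived from relations (a) and (b). The only small imprecision is that you say you perturb $\Sigma_0$ while fixing a collar of each $\partial B_i$ yet simultaneously make the $\partial B_i$ transverse to $Y$; this is moot since you subsequently move the balls off $Y$ entirely, but you may want a final transversality perturbation of the surface after all balls have been moved.
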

\begin{proof}
Every lasagna filling of $(X,L)$ can be isotoped to intersect $Y$ transversely in some tangle $T_0$.
\end{proof}

Write $L_1=T_1\cup T_0$. A framed surface $S\subset X_2$ with $\partial S=-T_0\cup T_2$ is a lasagna filling of $(X_2;-T_0\cup T_2)$ (without input balls), thus putting its class in the second tensor summand of \eqref{eq:glue_2_summands} defines a gluing map 
\begin{equation}\label{eq:glue}
\mathcal S_0^Z(X_2;S)\colon\mathcal S_0^Z(X_1;L_1)\to\mathcal S_0^Z(X;L).
\end{equation}

The most useful cases of gluing might be when $Y$ is closed, where $P=\emptyset$, and $T_0,T_1,T_2$ are links. Still more specially, if $X\subset int(X')$ is an inclusion of $4$-manifolds, there is an induced map $\mathcal S_0^Z(X)\to\mathcal S_0^Z(X')$ by gluing the exterior of $X\subset X'$ to $X$. For example, the map in Proposition~\ref{prop:34_hd} is defined this way.

The skein lasagna module with respect to $Z$ refines the link homology theory $Z$ in the following sense.

\begin{Prop}[Recover link homology theory]\label{prop:recover_Z}
For a framed oriented link $L\subset S^3$ we have a natural isomorphism $\mathcal{S}_0^Z(B^4;L)\cong Z(L)$. If $S\subset S^3\times I$ is a cobordism from $L$ to $L'$, then the gluing map $\mathcal S_0^Z(S^3\times I;S)\colon\mathcal S_0^Z(B^4;L)\to\mathcal S_0^Z(B^4;L')$ agrees with $Z(S)\colon Z(L)\to Z(L')$ under the natural isomorphisms.
\end{Prop}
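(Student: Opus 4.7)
The plan is to construct mutually inverse natural maps $\phi\colon Z(L)\to\mathcal S_0^Z(B^4;L)$ and $\psi\colon\mathcal S_0^Z(B^4;L)\to Z(L)$. For $\phi$, I fix a small interior ball $B_\ast\subset B^4$ disjoint from a collar of $\partial B^4$ and let $\Sigma_L$ be the canonical skein with single input ball $B_\ast$ whose surface is the product cylinder $L\times[0,1]$ connecting $L\subset\partial B^4$ to a parallel copy of $L\subset\partial B_\ast$; then $Z(\Sigma_L)=Z(L)$, and I define $\phi(v):=[(\Sigma_L,v)]$.

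For $\psi$, given any lasagna filling $(\Sigma,v)$ with input balls $B_1,\dots,B_k$, I first isotope $\Sigma$ so that all $B_i$ lie in the interior of a single round ball $B_\ast$ and $\Sigma\cap(B^4\setminus\mathrm{int}\,B_\ast)$ agrees with the product collar $L\times[0,1]$. The piece $\Sigma_0:=\Sigma\cap(B_\ast\setminus\sqcup_i\mathrm{int}\,B_i)$ is then a link cobordism from $\sqcup_iK_i$ to $L$ inside $B_\ast\setminus\sqcup_iB_i$, which carries the standard identification (used throughout the skein-theoretic construction) with a multi-input cylindrical cobordism in $S^3\times I$. I then set $\psi[(\Sigma,v)]:=Z(\Sigma_0)(v)$. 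Well-definedness reduces to two checks: (i) independence of the choice of $B_\ast$, which follows from isotopy invariance of $Z$; and (ii) respect for the generating relation, namely that for a morphism $[S]\colon\Sigma\to\Sigma'$ the cobordism $\Sigma_0$ is isotopic rel boundary to the concatenation $S\cup\Sigma_0'$---this isotopy is precisely the one built into the definition of $[S]$, and functoriality of $Z$ on composed cobordisms then yields $Z(\Sigma_0)(v)=Z(\Sigma_0')(Z([S])(v))$. I expect check (ii) to be the main technical obligation.

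The identity $\psi\circ\phi=\mathrm{id}$ is then immediate from the TQFT axiom applied to the product cobordism on $L$, and the identity $\phi\circ\psi=\mathrm{id}$ follows because $\Sigma_0$ itself supplies a morphism $[\Sigma_0]\colon\Sigma\to\Sigma_L$ in $\mathcal C(B^4;L)$, so that $[(\Sigma,v)]=[(\Sigma_L,Z(\Sigma_0)(v))]=\phi(\psi[(\Sigma,v)])$. For the gluing assertion, a cobordism $S\subset S^3\times I$ from $L$ to $L'$ acts on $[(\Sigma_L,v)]$ by producing $[(\Sigma_L\cup S,v)]\in\mathcal S_0^Z(B^4;L')$; applying $\psi_{L'}$ reduces to $Z$ of the concatenation of the product cylinder $\Sigma_L$ with $S$, which is isotopic rel boundary to $S$ itself, yielding $Z(S)(v)$ as desired.
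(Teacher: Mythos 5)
Your proof is correct. The paper's own proof simply cites Example~5.6 of Morrison--Walker--Wedrich for the isomorphism $\mathcal{S}_0^Z(B^4;L)\cong Z(L)$ and leaves the gluing compatibility as a ``simple exercise''; your $\phi/\psi$ construction, the well-definedness check~(ii) via the built-in isotopy data of morphisms in $\mathcal C(B^4;L)$, and the concatenation argument for the gluing map carry out in detail exactly this standard argument.
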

\begin{proof}
The statement on objects is proved in \cite[Example~5.6]{morrison2022invariants} and the statement on morphisms is a simple exercise.
\end{proof}

Finally, we remark that extra structures on the link homology theory $Z$ usually induce structures on the skein lasagna module $\mathcal S_0^Z$. For example, suppose $Z$ takes values in $R$-modules with a grading by an abelian group $A$, so that for a link cobordism $S$, $Z(S)$ is homogeneous with degree $\chi(S)a$ for some $a\in A$ independent of $S$. Then the lasagna module $\mathcal S_0^Z(X;L)$ inherits an $A$-grading, once we impose an extra grading shift of $\chi(\Sigma)a$ in \eqref{eq:Z(Sigma)}, the definition of $Z(\Sigma)$ for a skein $\Sigma$. Analogously, a filtration structure on $Z$ induces a filtration structure on $\mathcal S_0^Z$. The isomorphisms or morphisms stated or constructed in this section all respect the extra grading or filtration structure, with a warning that the gluing morphism $\mathcal{S}_0^Z(X_2;S)$ in \eqref{eq:glue} has degree or filtration degree $(\chi(S)-\chi(T_0))a$.

\begin{Rmk}\label{rmk:TQFT}
In view of the gluing morphism \eqref{eq:glue} (in the special case $\partial Y=\emptyset$ and $T_1=\emptyset$), one is tempted to define a link homology theory for framed oriented links in oriented $3$-manifolds and cobordisms between them. Namely, for a $3$-manifold $Y$ and link $L\subset Y$, define $$Z(Y,L):=\bigoplus_{\partial X=Y}\mathcal S_0^Z(X;L).$$ For a cobordism $(W,S)\colon(Y,L)\to(Y',L')$, define $Z(W,S)\colon Z(Y,L)\to Z(Y',L')$ by taking the direct sum of the gluing maps $\mathcal{S}_0^Z(X;L)\to\mathcal{S}_0^Z(X\cup_YW;L')$.

However, the resulting modules $Z(Y,L)$ are usually infinite dimensional. To obtain a finite theory, one has to either restrict the domain category (one extreme of which is to recover back the link homology theory $Z$ for links in $S^3$ again), or modulo suitable skein-theoretic relations on $Z(Y,L)$. We leave the exploration of these possibilities to future work.
\end{Rmk}

\subsection{Khovanov-Rozansky \texorpdfstring{$\mathfrak{gl}_2$}{gl2} homology and its Lee deformation}\label{sbsec:KhR_2}
In this paper we will take the link homology theory $Z$ in the skein lasagna construction to be the Khovanov-Rozansky $\mathfrak{gl}_2$ homology or its Lee deformation. In this section we review some features of these link homology theories, and establish renormalization conventions and notation. We assume the reader is familiar with the classical Khovanov homology as well as its Lee deformation. See \cite{khovanov2000categorification,lee2005endomorphism,rasmussen2010khovanov} for relevant background and \cite{bar2002khovanov} for an accessible overview.

We follow the convention in \cite{morrison2022invariants}. Khovanov-Rozansky $\mathfrak{gl}_2$ homology is a renormalization of the Khovanov-Rozansky $\mathfrak{sl}_2$ homology defined in \cite{khovanov2008matrix} (and extended to $\Z$-coefficients in \cite{blanchet2010oriented}) that is sensitive to the framing of a link. Since conventionally most calculations have been done using the classical Khovanov homology \cite{khovanov2000categorification}, we write down explicitly the renormalization differences between it and the $\mathfrak{gl}_2$ theory.

Let $KhR_2$ and $Kh$ denote the Khovanov-Rozansky $\mathfrak{gl}_2$ homology and the usual Khovanov homology, respectively. Then for a framed oriented link $L\subset S^3$, we have \cite{beliakova2023functoriality}
\begin{equation}\label{eq:KhR_2}
KhR_2^{h,q}(L)\cong Kh^{h,-q-w(L)}(-L),
\end{equation}
where $-L$ is the mirror image of $L$, and $w(L)$ is the writhe of $L$, defined as the linking number between $L$ and a slight pushoff of $L$ in the framing direction. When we want to emphasize that we are working over a commutative ring $R$, write $Kh(L;R)$ and $KhR_2(L;R)$ for the corresponding homology theories.

A dotted framed oriented cobordism $S\colon L_0\to L_1$ in $S^3\times I$ induces a map $KhR_2(S)\colon KhR_2(L_0)\to KhR_2(L_1)$ of bidegree $(0,-\chi(S)+2\#(\text{dots}))$ which only depends on the isotopy class of $S$ rel boundary. Here, a ``dot'' marks a multiplication-by-$X$ operation in the associated Frobenius algebra (see the paragraph below). Let $\bar S\colon-L_0\to-L_1$ denote the mirror image of $S$. Then the identification \eqref{eq:KhR_2} is functorial in the sense that $Kh(\bar S)\colon Kh(-L_0)\to Kh(-L_1)$ is equal to $KhR_2(S)$ under \eqref{eq:KhR_2} up to sign. Since the classical Khovanov homology is only known to be functorial in $\R^3$ up to sign, $KhR_2$ can be thought of as a sign fix and a functorial upgrade of $Kh$.

Both the constructions of $KhR_2$ and $Kh$ depend on the input of the Frobenius algebra $V=\Z[X]/(X^2)$\footnote{Strictly speaking, the construction of $KhR_2$ uses the formalism of foams, but for simplicity our exposition remains in the classical Bar-Natan formalism throughout.} with comultiplication and counit given by $$\Delta1=X\otimes 1+1\otimes X,\ \Delta X=X\otimes X,\ \epsilon1=0,\ \epsilon X=1.$$ Here $X$ has quantum degree $2$ in the $KhR_2$ case and $-2$ in the $Kh$ case.

The Lee deformation of $V$ over $\Q$ \cite{lee2005endomorphism} is the Frobenius algebra $V_{Lee}=\Q[X]/(X^2-1)$ with $$\Delta1=X\otimes1+1\otimes X,\ \Delta X=X\otimes X+1\otimes1,\ \epsilon1=0,\ \epsilon X=1.$$ Using $V_{Lee}$ instead of $V$ defines the Khovanov-Rozansky $\mathfrak{gl}_2$ Lee homology and the classical Lee homology, denoted $KhR_{Lee}$ and $Kh_{Lee}$, respectively, which are related by an equation analogous to \eqref{eq:KhR_2}. Since $V_{Lee}$ is no longer graded but only filtered, the resulting homology theories carry quantum filtration structures, instead of quantum gradings (although a compatible quantum $\Z/4$-grading is still present). We remark that $0\in KhR_{Lee}$ has quantum filtration degree $-\infty$ while $0\in Kh_{Lee}$ has quantum filtration degree $+\infty$, since the two filtrations go in different directions.

Either $KhR_2$ or $KhR_{Lee}$ changes only by a renormalization upon changing the orientation of the link $L$. More precisely, if $(L,\mathfrak o)$ denote the link $L$ equipped with a possibly different orientation $\mathfrak o$, then for $\bullet\in\{2,Lee\}$,
\begin{equation}\label{eq:KhR_ori_change}
KhR_\bullet(L,\mathfrak o)\cong KhR_\bullet(L)\left[\frac{w(L)-w(L,\mathfrak o)}{2}\right]\left\{\frac{w(L,\mathfrak o)-w(L)}{2}\right\}.
\end{equation}
Here, $[\cdot]$ and $\{\cdot\}$ indicate shifts in homological degree and quantum (filtration) degree, respectively.

It is sometimes more convenient to change from the basis $\{1,X\}$ of $V_{Lee}$ to the basis $\{\A=(1+X)/2,\,\B=(1-X)/2\}$. In this new basis, $$\A\B=0,\ \A^2=\A,\ \B^2=\B,\ 1=\A+\B,\ X=\A-\B,\ \Delta\A=2\A\otimes\A,\ \Delta\B=-2\B\otimes\B,\ \epsilon\A=\tfrac12,\ \epsilon\B=-\tfrac12.$$

Like the classical story in \cite{rasmussen2010khovanov}, the Khovanov-Rozansky $\mathfrak{gl}_2$ Lee homology of a framed oriented link $L$ has a basis consisting of canonical generators $x_\mathfrak o$, one for each orientation $\mathfrak o$ of $L$ as an unoriented link. The explicit renormalization convention of our choice of generators, together with the proof of the following properties, can be found in Appendix~\ref{sec:Lee_generator}.

\begin{Prop}\label{prop:Lee_generator_properties}\mbox{}\hfill\vspace{-5pt}
\begin{enumerate}[(1)]
\item The unique canonical generator of the empty link $\emptyset$ is $1\in\Q\cong KhR_{Lee}(\emptyset)$.
\item The $\mathfrak{gl}_2$ Lee homology of the $n$-framed oriented unknot $(U^n,\mathfrak o)$ is canonically isomorphic to $V_{Lee}\{-n-1\}$, with canonical generators given by $x_\mathfrak o=\A$, $x_{\bar{\mathfrak o}}=\B$, renormalized by $\{-n-1\}$.
\end{enumerate}
Let $L$ be a framed oriented link and $\mathfrak o$ be an orientation of $L$ as an unoriented link. Write $L=L_+(\mathfrak o)\cup L_-(\mathfrak o)$ as a union of sublinks on which $\mathfrak o$ agrees (resp. differs) with the orientation of $L$.
\begin{enumerate}[(1)]
\setcounter{enumi}{2}
\item $x_\mathfrak o$ is homogeneous with homological degree $$h(x_\mathfrak o)=-2\ell k(L_+(\mathfrak o),L_-(\mathfrak o))=(w(L,\mathfrak o)-w(L))/2.$$
\item $x_\mathfrak o\pm x_{\bar{\mathfrak o}}$ is homogeneous with $\Z/4$ quantum degrees $$q_{\Z/4}(x_\mathfrak o\pm x_{\bar{\mathfrak o}})=(-w(L)-\#L-1\pm1)\bmod4.$$
\item If $S\colon L\to L'$ is a framed oriented link cobordism, then
\begin{equation}\label{eq:can_gen}
KhR_{Lee}(S)(x_{\mathfrak o})=2^{n(S)}\sum_{\mathfrak O|_L=\mathfrak o}(-1)^{n(S_-(\mathfrak O))}x_{\mathfrak O|_{L'}},
\end{equation}
where $\mathfrak O$ runs over orientations on $S$ that are compatible with the orientation $\mathfrak o$ (thus each $(S,\mathfrak O)$ is an oriented cobordism from $(L,\mathfrak o)$ to $(L',\mathfrak O|_{L'})$), $S_-(\mathfrak O)$ is the union of components of $S$ whose orientations disagree with $\mathfrak O$, and 
\begin{equation}\label{eq:n_renormalization}
n(\Sigma)=\frac{-\chi(\Sigma)+\#\partial_+\Sigma-\#\partial_-\Sigma}{2}\in\Z
\end{equation}
for a surface $\Sigma$ regarded as a cobordism from the negative boundary $\partial_-\Sigma$ to the positive boundary $\partial_+\Sigma$.
\end{enumerate}
\end{Prop}
Note that the function $n(\cdot)$ in \eqref{eq:n_renormalization} is additive under gluing cobordisms.

The $\mathfrak{gl}_2$ $s$-invariant of $L$ is defined to be $$s_{\mathfrak{gl}_2}(L):=q(x_{\mathfrak o_L})-1$$ where $\mathfrak o_L$ is the orientation of $L$ and $q\colon KhR_{Lee}(L)\to\Z\sqcup\{-\infty\}$ is the quantum filtration function. It relates to the classical $s$-invariant of oriented links (as defined in \cite{beliakova2008categorification}, after \cite{rasmussen2010khovanov}) by
\begin{equation}\label{eq:s_renormalize}
s_{\mathfrak{gl}_2}(L)=-s(-L)-w(L).
\end{equation}

\subsection{Properties for Khovanov and Lee skein lasagna modules}\label{sbsec:Kh_lasagna_properties}
Letting $Z=KhR_2$ or $KhR_{Lee}$ as the link homology theory for the skein lasagna module, we recover the Khovanov and Lee skein lasagna modules, denoted $\mathcal{S}_0^2$ and $\mathcal{S}_0^{Lee}$, respectively, the first of which agrees with the construction in \cite{morrison2022invariants} (or more concisely in \cite{manolescu2022skein}) for the Khovanov-Rozansky $\mathfrak{gl}_2$ skein lasagna module.

\begin{Rmk}\label{rmk:dotted_skein}
Since $KhR_2$ and $KhR_{Lee}$ allow dots on cobordisms, in the definition of lasagna fillings one may also allow dots on skeins, which does not change the structure of $\mathcal{S}_0^2$ or $\mathcal{S}_0^{Lee}$ but makes the description of certain elements easier. One can think of a dot as an $X$-decoration on the unknot in the boundary of an extra deleted local $4$-ball near the dot.
\end{Rmk}

It follows from our general discussion in Section~\ref{sbsec:skein_construction} and Section~\ref{sbsec:lasagna_properties} that for a pair $(X,L)$ of $4$-manifold $X$ and framed oriented link $L\subset\partial X$, the Khovanov skein lasagna module $$\mathcal{S}_0^2(X;L)=\bigoplus\mathcal{S}_{0,h,q}^2(X;L;\alpha)$$ is an abelian group with a homological grading by $h\in\Z$, a quantum grading by $q\in\Z$, and a homology class grading by $\alpha\in H_2^L(X)$. Similarly, the Lee skein lasagna module $\mathcal{S}_0^{Lee}(X;L)$ is a $\Q$-vector space with a homological $\Z$-grading, a quantum filtration, a compatible quantum $\Z/4$-grading, and a homology class grading by $H_2^L(X)$. Incorporating the grading convention described in Section \ref{sbsec:lasagna_properties} with \eqref{eq:Z(Sigma)} we have
\begin{equation}\label{eq:KhR(Sigma)}
KhR_\bullet(\Sigma) = \bigotimes_{i=1}^k KhR_\bullet(K_i)\{-\chi(\Sigma)\}
\end{equation}
for skeins $\Sigma$ in $X$ rel $L$ with input links $K_i$, and $\bullet\in\{2,Lee\}$.  Since the space of Khovanov/Lee lasagna fillings with underlying skein $\Sigma$ is in one-one correspondence to $KhR_\bullet(\Sigma)$, this also defines homological gradings and quantum gradings/filtrations on Khovanov/Lee lasagna fillings. Thus, the class $[(\Sigma,v)]$ in $\mathcal{S}_0^\bullet(X;L)$ has homological degree $h(v)$ for homogeneous $v\in KhR_\bullet(\Sigma)$. If $\bullet=2$, $[(\Sigma,v)]$ has quantum degree $q(v)$ if $v$ is homogeneous; if $\bullet=Lee$, $[(\Sigma,v)]$ has filtration degree at most $q(v)$. Finally, when we work over a ring $R$, we write $\mathcal{S}_0^2(X;L;R)=\oplus_\alpha\mathcal{S}_0^2(X;L;\alpha;R)$ for the corresponding Khovanov skein lasagna module.

Next we describe a formula of Khovanov and Lee skein lasagna modules for $2$-handlebodies, developed in \cite{manolescu2022skein}. From now on till the end of this section, suppose $X$ is a $2$-handlebody obtained by attaching $2$-handles along a framed oriented link $K=K_1\cup\cdots\cup K_m\subset S^3=\partial B^4$ disjoint from a framed oriented link $L\subset S^3$. Then $L$ can be regarded as a link in $\partial X$. A class $\alpha\in H_2^L(X)$ intersects the cocore of the $2$-handle on $K_i$ algebraically some $\alpha_i$ times, and the assignment $\alpha\mapsto(\alpha_i)_i$ gives an isomorphism $H_2^L(X)\cong\Z^m$. Write $\alpha_i^+=\max(0,\alpha_i)$ and $\alpha_i^-=\max(0,-\alpha_i)$. Write $|\cdot|$ for the $L^1$-norm on $\Z^m$.

For $k^+,k^-\in\Z_{\ge0}^m$, let $K(k^+,k^-)\cup L\subset S^3$ denote the framed oriented link obtained from $K\cup L$ by replacing each $K_i$ by $k_i^++k_i^-$ parallel copies of itself, with orientations on $k_i^-$ of them reversed.

\begin{Prop}[$2$-handlebody formula]\label{prop:2_hdby}
Let $(X,L)$ be given as above. Then 
\begin{equation}\label{eq:2-hdby}
\mathcal S_0^\bullet(X;L;\alpha)\cong\left(\bigoplus_{r\in\Z_{\ge0}^m}KhR_\bullet(K(\alpha^++r,\alpha^-+r)\cup L)\{-|\alpha|-2|r|\}\right)\bigg/\sim
\end{equation}
where $\bullet\in\{2,Lee\}$ and $\sim$ is the linear equivalence relation generated by\vspace{-5pt}
\begin{enumerate}[(i)]
\item $\sigma_i\cdot x\sim x$ for $x\in KhR_\bullet(K(\alpha^++r,\alpha^-+r)\cup L)\{-|\alpha|-2|r|\}$ and $\sigma_i\in S_{|\alpha_i|+2r_i}$, where $\sigma_i$ acts by permuting the cables for $K_i$ (cf. \cite{grigsby2018annular} and \cite[Remark 3.7]{manolescu2022skein})\footnote{Strictly speaking, elements in $S_{\alpha_i+2r_i}$ may permute cables of $K_i$ with different orientations, so it does not act on $KhR_\bullet(K(\alpha^++r,\alpha^-+r)\cup L)\{-|\alpha|-2|r|\}$. Nevertheless, since changing orientation only affects $KhR_\bullet$ by a renormalization, we can first work with $K(\alpha^++\alpha^-+2r,0)\cup L$ and then renormalize.};
\item $\phi_i(x)\sim x$ for $x\in KhR_\bullet(K(\alpha^++r,\alpha^-+r)\cup L)\{-|\alpha|-2|r|\}$ where 
\begin{align*}
\phi_i\colon KhR_\bullet(K(\alpha^++r,&\alpha^-+r)\cup L)\{-|\alpha|-2|r|\}\to\\
&\,KhR_\bullet(K(\alpha^++r+e_i,\alpha^-+r+e_i)\cup L)\{-|\alpha|-2|r|-2\}
\end{align*}
is the map induced by the dotted annular cobordism that creates two parallel, oppositely oriented cables of $K_i$ (which is bidegree-preserving), and where $e_i$ is the $i$\textsuperscript{th} coordinate vector.\vspace{-5pt}
\end{enumerate}
Here, when $\bullet=2$, we assume we work over a base ring where $2$ is invertible; otherwise, $\mathcal{S}_0^2(X;L;\alpha)$ is given by a further quotient of the right-hand side of \eqref{eq:2-hdby}.
\end{Prop}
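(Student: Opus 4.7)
The argument adapts Manolescu--Neithalath's proof of the same formula for $L=\emptyset$ and $Z=KhR_N$, $N\ge2$ \cite[Section~4]{manolescu2022skein}; the novelty here is allowing a nonempty boundary link $L\subset\partial X$ and replacing the underlying TQFT by $KhR_{Lee}$. The key geometric observation is that every lasagna filling of $(X;L;\alpha)$ can be normalized to a standard form adapted to the handle decomposition $X=B^4\cup(\text{2-handles})$, and that the ambiguity in this normalization is exactly the equivalence $\sim$.

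First I would construct a map from the right-hand side to $\mathcal{S}_0^\bullet(X;L;\alpha)$: given a class $v\in KhR_\bullet(K(\alpha^++r,\alpha^-+r)\cup L)$, use Proposition~\ref{prop:recover_Z} to view $v$ as a lasagna class of $(B^4,K(\alpha^++r,\alpha^-+r)\cup L)$, and then apply the gluing morphism \eqref{eq:glue} with the standard capping surface $S\subset X\setminus\mathrm{int}(B^4)$ that consists, inside the $i$th $2$-handle, of $|\alpha_i|$ cocore-parallel disks (with signs matching $\alpha_i$) together with $r_i$ annuli joining the remaining oppositely oriented pairs of cable components of $K_i$. The overall quantum-degree shift $\{-|\alpha|-2|r|\}$ is obtained by tracking the Euler characteristic of $S$ together with the writhe renormalization \eqref{eq:KhR_2} of $KhR_\bullet$ on the parallel cables relative to the framing inherited from $K_i$; the shift is exactly the amount required to make $\phi_i$ preserve the total quantum grading, which is a useful consistency check.

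Next I would verify that the two declared relations genuinely hold in $\mathcal{S}_0^\bullet(X;L;\alpha)$. The permutation relation~(i) is immediate: the $|\alpha_i|+2r_i$ parallel cable components of $K_i$ can be permuted by an ambient isotopy supported inside the $i$th $2$-handle (which is a $4$-ball), transporting the standard caps accordingly. For the annular relation~(ii), the local move inside a $2$-handle that replaces an annulus capping an oppositely oriented parallel pair by two cocore-parallel disks can be accomplished by a Bar-Natan neck cutting, which introduces a dotted annular cobordism on the $B^4$ side; by construction this cobordism is $\phi_i$. The assumption that $2$ be invertible (when $\bullet=2$) is exactly what is needed for the neck-cutting identity to hold on the nose rather than as a congruence.

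The main obstacle is establishing that the map is both surjective and injective. Surjectivity: any lasagna filling $(\Sigma,v)$ can be perturbed so that $\Sigma$ meets each cocore $c_i$ transversely in the geometric minimum of $|\alpha_i|$ points, and then isotoped inside each $2$-handle to consist of $|\alpha_i|$ cocore-parallel disks plus $r_i$ canceling annuli, at the cost of absorbing extra tubes and handles into $\Sigma\cap B^4$; this puts every filling into the standard form of Step~1 for some $r\in\Z_{\ge0}^m$. Injectivity: an equivalence between two standard fillings in $\mathcal{S}_0^\bullet(X;L;\alpha)$ decomposes via a movie-move analysis into moves supported either entirely in $B^4$ (absorbed by Proposition~\ref{prop:recover_Z} into the $KhR_\bullet$-structure of the boundary cable link) or entirely inside a single $2$-handle (where, because the handle is a ball, they reduce to the $\sigma_i$- and $\phi_i$-moves as in the verification of relations). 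This localization step is the technical heart of the proof; it is a direct adaptation of \cite[Section~4]{manolescu2022skein}, uses only the functoriality of the ambient TQFT, and so applies verbatim in both the $KhR_2$ and $KhR_{Lee}$ settings, with $L$ serving as a passive bystander sitting in $\partial X$ disjoint from all cocores throughout the argument.
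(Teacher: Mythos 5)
Your overall approach matches the paper's: the proof in the text simply cites Theorem~1.1 and Proposition~3.8 of \cite{manolescu2022skein} (restated as Theorem~3.2 of \cite{manolescu2023skein}) and observes that the argument carries over verbatim to the Lee deformation with a passive boundary link $L$. You are reconstructing what that cited argument does, but the construction of the comparison map in your sketch has a concrete error that breaks the degree accounting.

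The standard capping surface $S\subset X\setminus\mathrm{int}(B^4)$ should consist, inside the $i$th $2$-handle $D^2\times D^2$, of all $|\alpha_i|+2r_i$ \emph{core}-parallel disks --- not cocore-parallel (the cocore $\{0\}\times D^2$ is the transverse disk, while cables of $K_i$ bound pushoffs of the core $D^2\times\{0\}$) --- and certainly not $|\alpha_i|$ disks together with $r_i$ annuli. With your disks-plus-annuli surface, $\chi(S)=|\alpha|$ for every $r$, so the gluing morphism produces the quantum shift $\{-|\alpha|\}$ regardless of $r$, not the required $\{-|\alpha|-2|r|\}$. The writhe renormalization cannot compensate: the writhe of $K(\alpha^++r,\alpha^-+r)\cup L$ depends only on $\alpha$ (the contribution from the $p_i$-framed cables of $K_i$ is $(\alpha_i^+-\alpha_i^-)^2p_i=\alpha_i^2p_i$, since the $r_i$ oppositely-oriented pairs cancel in the linking count) and is therefore $r$-independent. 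The all-disks surface has $\chi(S)=|\alpha|+2|r|$ and gives exactly the required shift; the annuli and neck-cutting enter only when checking that $\phi_i$ realizes relation~(ii), i.e.\ in relating the image of one value of $r$ to that of $r+e_i$, not in defining the map from the direct sum. Your movie-move sketch for injectivity is reasonable heuristic motivation, but the argument in \cite[Section~4]{manolescu2022skein} proceeds instead via the gluing theorem and an explicit handle-attachment colimit formula; you are right, though, that this machinery uses only functoriality and monoidality of the ambient TQFT, so the passage to $KhR_{Lee}$ and the insertion of $L$ are indeed routine.
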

\begin{proof}
The $\mathcal S_0^2$ version follows from a slight generalization of Theorem~1.1 and Proposition~3.8\footnote{The $Kh$ in Proposition~3.8 in \cite{manolescu2022skein} should be $KhR_2$ instead.} in \cite{manolescu2022skein}, stated as Theorem~3.2 in \cite{manolescu2023skein}. The claim for the case when $2$ is not invertible follows from the proof of Proposition~3.8 in \cite{manolescu2022skein}. The version for $\mathcal S_0^{Lee}$ has an identical proof.
\end{proof}

Changing the orientation of a link only affects $KhR_\bullet$ by a grading shift, given by \eqref{eq:KhR_ori_change}. Therefore for $2$-handlebodies $X$, by \eqref{eq:2-hdby}, $\mathcal S_0^\bullet(X;L)$ is independent of the orientation of $L$ (as long as it is null-homologous) up to renormalizations, and $\mathcal S_0^\bullet(X;L;\alpha)$ only depends on the mod $2$ reduction of $\alpha\in H_2^L(X)\subset H_2(X,L)$ up to renormalizations. Explicitly, suppose $\beta\in H_2(X,L)$ has the same mod $2$ reduction as $\alpha\in H_2^L(X)$ such that $\partial\beta\in H_1(L)$ is the fundamental class of $L$ with a possibly different orientation. Let $(L,\partial\beta)$ denote the link $L$ with orientation given by $\partial\beta$. Then $\beta\in H_2^{(L,\partial\beta)}(X)$, and we have
\begin{equation}\label{eq:2-hdby_homology_mod_2}
\mathcal S_0^\bullet(X;(L,\partial\beta);\beta)\cong\mathcal S_0^\bullet(X;L;\alpha)\left[\frac{\alpha^2-\beta^2}{2}\right]\left\{\frac{\beta^2-\alpha^2}{2}\right\}.
\end{equation}
for any $2$-handlebody $X$. Also, when $\bullet=2$ in \eqref{eq:2-hdby_homology_mod_2}, we assume $2$ is invertible in the base ring.

The bilinear product on $H_2^L(X)$ in \eqref{eq:2-hdby_homology_mod_2} (similarly for $H_2^{(L,\partial\beta)}(X)$) is defined by counting intersections of two surfaces in $X$ with boundary $L$, one copy of which is pushed off $L$ via its framing. In particular, it is independent of the choice of $K\cup L\subset S^3$.

\begin{Ex}\label{ex:S2D2}
For the $2$-handlebody $X=S^2\times D^2$, $K$ is the $0$-framed unknot. It follows from Proposition~\ref{prop:2_hdby} that $\mathcal{S}_0^\bullet(S^2\times D^2)$ can be computed from the Khovanov or Lee homology of unlinks and dotted cobordism maps between them. Explicitly, \cite[Theorem~1.2]{manolescu2022skein} calculated that for every $\alpha\in H_2(S^2\times D^2)$, $$\mathcal{S}_{0,h,q}^2(S^2\times D^2;\alpha)=\begin{cases}\Z,&h=0,\,q=-2k\le0\\0,&\text{otherwise}.\end{cases}$$ Similarly, $gr_q\mathcal{S}_{0,h}^{Lee}(S^2\times D^2;\alpha)$ is given by the same formula with $\Z$ replaced by $\Q$.

Using Proposition~\ref{prop:2_hdby}, \cite[Theorem~1.3]{manolescu2022skein} also obtained partial information on Khovanov skein lasagna modules for $D^2$-bundles over $S^2$ with Euler number $n\ne0$, denoted $D(n)$, namely that $$\mathcal S_{0,0,*}^2(D(n);0)=\begin{cases}\Z,&n<0,\ *=0\\
0,&\text{otherwise.}\end{cases}$$
For $n=\pm1$, this also yields the corresponding calculation for $\CP^2$, $\overline{\CP^2}$, in view of Proposition~\ref{prop:34_hd}. These results will be improved for $n>0$ in Example~\ref{ex:unknot_van}, and for $n<0$ in Example~\ref{ex:unknot_trace_nonvan}, Section~\ref{sbsec:torus_traces}, and Section~\ref{sbsec:-CP^2}.
\end{Ex}

If $L',L$ are two framed links in an oriented $3$-manifold with the same underlying unframed link, their framings differ by a tuple of integers, one for each link component. The total framing difference between them, denoted $w(L',L)$, is the sum of these integers. For example, if $U^n$ denotes the $n$-framed unknot in $S^3$, then $w(U^n,U^m)=n-m$.

\begin{Prop}[Framing change]\label{prop:framing_change}
If $L,L'\subset\partial X$ are framed oriented links that are equal as unframed oriented links, then for $\bullet\in\{2,Lee\}$, $$\mathcal S_0^\bullet(X;L')\cong\mathcal S_0^\bullet(X;L)\{-w(L',L)\}.$$
\end{Prop}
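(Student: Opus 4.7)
The plan is to realize the framing change by a framed annular cobordism in a collar of $\partial X$ and then invoke the gluing formula \eqref{eq:glue}. Concretely, I would attach an external collar $C\cong\partial X\times[0,1]$ to $X$, identifying $X\cup_{\partial X}C$ with $X$, placing $L$ on $\partial X\times\{0\}$ and $L'$ on $\partial X\times\{1\}$. Since $L$ and $L'$ share the same underlying unframed oriented link, there is a framed oriented cobordism $A\subset C$ consisting of a disjoint union of annuli from $L$ to $L'$ whose total framing twist along the $[0,1]$-direction equals $w(L',L)$. Taking $X_1=X$, $X_2=C$, $T_0=L$, $T_1=\emptyset$, $T_2=L'$, and $S=A$ in \eqref{eq:glue} produces a map $\Phi\colon\mathcal S_0^\bullet(X;L)\to\mathcal S_0^\bullet(X;L')$.

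To show $\Phi$ is an isomorphism, I would use the framed annulus $\bar A\subset C$ obtained from $A$ by reversing the $[0,1]$-direction (a framed cobordism from $L'$ to $L$) to build an analogous map $\Phi'\colon\mathcal S_0^\bullet(X;L')\to\mathcal S_0^\bullet(X;L)$. After enlarging the collar, the compositions $A\cup\bar A$ and $\bar A\cup A$ are framed-isotopic rel boundary to product annuli. By Proposition~\ref{prop:recover_Z}, product annuli induce identity maps on $\mathcal S_0^\bullet$, so $\Phi'\circ\Phi$ and $\Phi\circ\Phi'$ are identities, establishing the isomorphism.

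For the grading shift, note that $A$ is a disjoint union of oriented annuli, so the homological and homology-class gradings are preserved, and the only effect is on the quantum (filtration) grading. I would localize the framing twists of $A$ into disjoint small product neighborhoods, in each of which $A$ is the Reidemeister~I cobordism between consecutive unknot framings $U^n$ and $U^{n+1}$ inside $S^3\times[0,1]$. Using the identification $KhR_\bullet(U^n)\cong V_\bullet\{-1-n\}$ recorded in Section~\ref{sbsec:KhR_2}, each local Reidemeister~I isomorphism has quantum (filtration) degree shift $-1$, so summing over the $w(L',L)$ twists yields total quantum shift $-w(L',L)$. The main obstacle is precisely this quantum bookkeeping: although $\chi(A)=0$, the framing twists in $A$ interact with the $\mathfrak{gl}_2$ renormalization \eqref{eq:KhR_2} to produce a nontrivial quantum shift, which must be aggregated correctly across all twists.
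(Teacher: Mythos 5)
The proposal has a genuine gap: the framed annulus $A$ from $L$ to $L'$ required by the framed gluing formula \eqref{eq:glue} does not exist unless $w(L',L)=0$. For each component, a framing of the product annulus $A_i=L_i\times[0,1]\subset\partial X\times[0,1]$ is a trivialization of its normal bundle, and since $A_i$ deformation retracts onto either boundary circle, the restriction maps from framings of $A_i$ to framings of $L_i\times\{0\}$ and of $L_i\times\{1\}$ are both bijections that agree under the product identification; equivalently, the obstruction to extending the two prescribed boundary framings over $A_i$ lives in $H^2(A_i,\partial A_i;\Z)\cong\Z$ and equals their difference. There is no continuous one-parameter ``framing twist'' along the $[0,1]$-direction interpolating between two framings of $L_i$ that differ by a nonzero integer, since framings are discretely classified. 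The same incompatibility shows up in the degree count: the gluing map \eqref{eq:glue} along a framed $S$ has quantum degree $-\chi(S)+\chi(T_0)$, which for $S=A$ and $T_0=L$ a closed link is $0$, not $-w(L',L)$; so even granting the annulus, your quantum bookkeeping contradicts the very formula you invoke.

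Your Reidemeister~I intuition is, however, the right mechanism, and the missing ingredient is exactly the paper's key idea: puncture the annulus (equivalently, the skein near $L$) by deleting a small input $4$-ball on each component, which absorbs the framing obstruction. The resulting new input links are framed unknots $U^{n_j}$ with $\sum_j n_j=w(L',L)$, and decorating each with $1\in KhR_\bullet(U^{n_j})$ (which sits in quantum degree $-1-n_j$) produces exactly the quantum shift $-w(L',L)$ you compute, while also making the assignment invertible and hence an isomorphism. Note that the unframed gluing map \eqref{eq:glue_Kh_ver}, whose degree $-\chi(S)+\chi(T_0)-[S]^2$ does evaluate to $-w(L',L)$ for the punctured annulus, is itself constructed by precisely this ball-deletion trick and is stated in the paper only after Proposition~\ref{prop:framing_change}, so appealing to it here would be circular.
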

\begin{proof}
For a given lasagna filling of $(X,L)$, create new input balls near each component of $L$ that intersect the skein in disks. Change the framing of $L$ to $L'$ and push the framing change into the intersection disks. Then the sum of framings on the input unknots is $w(L',L)$. Assigning these unknots the Khovanov or Lee class $1$ (the element corresponds to $1\in KhR_\bullet(U)$ under the Reidemeister I induced isomorphisms, where $U$ is the $0$-framed unknot) produces a lasagna filling of $(X,L')$. This assignment induces the claimed graded isomorphism.
\end{proof}

Finally, in view of the trick for framing changes in the proof of Proposition~\ref{prop:framing_change}, we can state a slight improvement of the gluing construction \eqref{eq:glue}. In the general setup when gluing $(X_2,S)$ to $(X_1,L_1)$ to obtain $(X,L)$, in order to get an induced map on skein lasagna modules, we needed to assume that $S$ was framed, so that it could be regarded as a skein in $X_2$ rel $L_2$. In the Khovanov or Lee skein lasagna module setup, however, we can drop the framing assumption, at the expense of deleting extra input balls to compensate. Thus, for any (unframed) surface $S\subset X_2$ with the previous assumptions, there is a gluing morphism
\begin{equation}\label{eq:glue_Kh_ver}
\mathcal S_0^\bullet(X_2;S)\colon\mathcal S_0^\bullet(X_1;L_1)\to\mathcal S_0^\bullet(X;L),
\end{equation}
which has homological degree $0$ and quantum (filtration) degree $-\chi(S)+\chi(T_0)-[S]^2$. If $S$ is already framed, this map agrees with the previous construction.

\subsection{Structure of space of skeins}\label{sbsec:skein_structure}
The skein lasagna module $\mathcal S_0^Z(X;L)$ decomposes as a direct sum over $\Omega_{\ge0}(X;L):=\pi_0(\mathcal C(X;L))$, the set of connected components of the category $\mathcal C(X;L)$. We call $\Omega_{\ge0}(X;L)$ the \textit{space of skeins} in $X$ rel $L$. In this section, we show that $\Omega_{\ge0}(X;L)$ contains the same information as $H_2^L(X)$.

\begin{Thm}\label{thm:space_of_skeins}
Taking homology defines a bijection
\begin{equation}\label{eq:skein_homology_iso}
\Omega_{\ge0}(X;L)\xrightarrow{\cong}H_2^L(X).
\end{equation}
\end{Thm}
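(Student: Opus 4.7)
The map \eqref{eq:skein_homology_iso} is well-defined because any morphism $[S]\colon\Sigma\to\Sigma'$ in $\mathcal{C}(X;L)$ forces $[\Sigma]=[\Sigma']\in H_2(X,L)$. Surjectivity is the standard fact that in dimension four every class in $H_2^L(X)$ is represented by a properly embedded oriented surface with boundary $L$; such a surface, viewed as a skein with no input balls, represents an element of $\Omega_{\ge0}(X;L)$ mapping to the prescribed class.

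For injectivity, the first step is to reduce to the case of skeins with empty input-ball system. If $\Sigma$ is a skein with input balls $B_i$ and boundary links $K_i=\Sigma|_{\partial B_i}$, choose any Seifert surfaces $F_i\subset B_i$ for $K_i\subset\partial B_i\cong S^3$. Then $\widetilde\Sigma:=\Sigma\cup(\sqcup F_i)$ is a skein with no input balls satisfying $[\widetilde\Sigma]=[\Sigma]$, while the added surface $\sqcup F_i$ defines a morphism $\widetilde\Sigma\to\Sigma$ in $\mathcal{C}(X;L)$ whose target input balls are the original $B_i$. Hence $\Sigma$ and $\widetilde\Sigma$ lie in the same component, and from now on we may assume the two skeins $\Sigma_0,\Sigma_1$ to be compared are honest properly embedded oriented surfaces with $\partial\Sigma_i=L$ and $[\Sigma_0]=[\Sigma_1]\in H_2(X,L)$.

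The closed cycle $\Sigma_0\cup_L(-\Sigma_1)$ represents a class in $H_2(X)$ lying in the kernel of $H_2(X)\to H_2(X,L)$; by the relevant long exact sequence this kernel is the image of $H_2(L)=0$, so $\Sigma_0\cup_L(-\Sigma_1)$ is null-homologous in $X$. Placing $\Sigma_0$ in $X\times\{0\}$ and $\Sigma_1$ in $X\times\{1\}$ and working in the $5$-manifold $X\times I$, general position yields a properly embedded oriented $3$-dimensional cobordism $W\subset X\times I$ with $\partial W=\Sigma_0\times\{0\}\cup\Sigma_1\times\{1\}\cup L\times I$. Perturb the height function $W\to I$ to be Morse with distinct critical values and critical points in $int(X)\setminus L$. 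Between critical times the level surfaces $W_t:=W\cap(X\times\{t\})\subset X$ evolve by ambient isotopy; at each critical time $W_t$ undergoes one of four elementary surgery modifications (birth or death of a $2$-sphere, attachment or compression of a $1$-handle), each supported in a small ball $B^*\subset int(X)$ disjoint from $L$.

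It remains to realize each elementary modification as a zigzag in $\mathcal{C}(X;L)$. If $\Sigma_-,\Sigma_+\subset X$ agree outside a ball $B^*\subset int(X)$, then the common truncation $\Sigma^*:=\Sigma_-\setminus int(B^*)=\Sigma_+\setminus int(B^*)$, viewed as a skein with the single input ball $B^*$, receives morphisms $[\Sigma_\pm\cap B^*]\colon\Sigma_\pm\to\Sigma^*$, and so $\Sigma_-$ and $\Sigma_+$ lie in the same component. Chaining these zigzags across all critical times of the Morse function on $W$ connects $\Sigma_0$ and $\Sigma_1$, completing injectivity. The principal technical point is obtaining the $3$-dimensional cobordism $W$ as a properly embedded oriented submanifold; the extra dimension provided by $X\times I$ is exactly what lets general position resolve the self-intersections of a generic $3$-chain. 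Any framing compatibility along the intermediate level surfaces can be arranged by adjusting the framing inside the balls $B^*$ already used for the local surgeries.
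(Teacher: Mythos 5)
Your proof has the right shape—surjectivity by representing classes by surfaces and injectivity via a $3$-dimensional cobordism in $X\times I$—but it repeatedly glosses over the framing data that is baked into the definition of a skein, and this is exactly where all the real content of the theorem lies. Specifically:

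\emph{Surjectivity.} A properly embedded oriented surface $\Sigma$ with $\partial\Sigma=L$ need not be framed; the obstruction to framing lies in $H^2(\Sigma,L)$. One cannot simply ``view it as a skein with no input balls.'' The correct fix (and the reason input balls exist at all) is to delete a local $4$-ball on each component of $\Sigma$, which kills the obstruction.

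\emph{The Seifert-surface reduction.} Filling the input links $K_i$ with Seifert surfaces $F_i\subset B_i$ fails to produce a framed surface $\widetilde\Sigma$: a Seifert surface induces the $0$-framing on $K_i$, while the framing that $K_i$ inherits from $\Sigma$ is in general different, so the two framings do not glue. More basically, most framed links in $S^3$ bound no framed surface in $B^4$ at all, so this reduction cannot be done. This step is also unnecessary; it suffices to enclose all input balls of $\Sigma_0$ and $\Sigma_1$ in one large ball $B^4$ and work in $X^\circ:=X\setminus int(B^4)$.

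\emph{Embeddedness of $W$.} A $3$-chain in the $5$-manifold $X\times I$ is codimension $2$, so general position produces an \emph{immersed} $W$ with a $1$-dimensional self-intersection locus, not an embedded submanifold. One must delete a tubular neighborhood of this $1$-complex to obtain an embedded cobordism; this is permitted precisely because the relations (b) allow deleting balls, but it means you are working with cobordisms in the complement of a $1$-complex, not genuine embedded cobordisms.

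\emph{Framing of $W$.} Even after deleting the self-intersection, $W$ need not admit a compatible framing. The obstruction lies in $H^2(W,\partial W|_{\partial(X\times I)})$, which is in general nonzero; it is killed by deleting a further $1$-complex, not by ``adjusting the framing inside the balls $B^*$.'' The latter is a local modification and cannot fix a global cohomological obstruction. Without a framed $W$, the level surfaces $W_t$ between critical times are merely isotopic as unframed surfaces; the skein category does not identify skeins that are unframed-isotopic but have different framings, so your zigzag is not actually well-defined.

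The paper's approach first proves a lemma identifying the equivalence relation on skeins with the relation of framed cobordism in $X\times I$ away from a $1$-complex, and then carries out exactly your homology argument but systematically handling the immersion and framing obstructions by deleting $1$-complexes. Your Morse-theoretic decomposition of $W$ is essentially the content of that lemma, but the conclusion is only valid once the framing bookkeeping above has been addressed.
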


We recall that $\Omega_{\ge0}(X;L)$ is the set of (properly embedded, oriented) framed surfaces $\Sigma$ in $X$ outside finitely many open balls with $\partial\Sigma|_{\partial X}=L$ (such $\Sigma$ are called skeins in $X$ rel $L$) up to\vspace{-5pt}
\begin{enumerate}[(a)]
\item isotopy rel boundary;
\item enclosing some small balls by some large balls and deleting the interior of the large balls, or its inverse.\vspace{-5pt}
\end{enumerate}
Although\vspace{-5pt}
\begin{enumerate}[(a)]
\setcounter{enumi}{2}
\item isotopy of input balls in $X$ (that drags surfaces along)\vspace{-5pt}
\end{enumerate}
is not directly allowed in the relations, it can be recovered using a combination of (a)(b).

In the context of studying skeins in $X$ rel $L$, it is natural to consider the following cobordism relation. We say two skeins $\Sigma_0,\Sigma_1$ in $X$ rel $L$ with input balls $\sqcup_iint(B_i^{(0)}),\sqcup_iint(B_i^{(1)})$ are \textit{framed cobordant rel $L$} if there is a properly embedded framed oriented $3$-manifold $Y\subset(X\times I)\backslash\nu(\Gamma)$ for some tubular neighborhood $\nu(\Gamma)$ of some $1$-complex $\Gamma\subset int(X)\times I$, such that $\partial Y|_{X\times\{k\}}=\Sigma_k\times\{k\}$, $k=0,1$, and $\partial Y|_{\partial X\times I}=L\times I$. It is understood that $\nu(\Gamma)\cap(X\times\{k\})=\sqcup_iint(B_i^{(k)})\times\{k\}$, $k=0,1$, and that $Y$ can have boundary on $\partial\nu(\Gamma)$.\medskip

\begin{Lem}\label{lem:skein_framed_cobordant}
Two skeins are equal in $\Omega_{\ge0}(X;L)$ if and only if they are framed cobordant.
\end{Lem}
\begin{proof}
One direction is clear: (a) is clearly contained in the relation of framed cobordisms, and (b) is realized by taking the identity cobordism but gradually enlarging the small deleted balls so that they merge with each other and become the large balls.

For the converse, after an isotopy we can decompose every framed cobordism of skeins in $X$ rel $L$ into the following elementary moves:\vspace{-5pt}
\begin{enumerate}[(i)]
\item Isotopy of the surface and deleted balls rel $\partial X$;
\item Morse move away from deleted balls;
\item Pushing a Morse critical point into one deleted ball;
\item Deleting a local ball away from the surface, or its inverse;
\item Choosing a path $\gamma$ between two deleted balls $B_1,B_2$ disjoint from the surface and replacing $B_1,B_2$ by a small tubular neighborhood of $B_1\cup B_2\cup\gamma$, or its inverse.\vspace{-5pt}
\end{enumerate}
That these moves are sufficient can be proved by using (i)(iv)(v) to account for the topology change of the $1$-complex $\Gamma$ as one moves along the time ($I$-)direction, and (i)(ii)(iii) to account for the topology change of the cobordism $3$-manifold, assuming $\Gamma$ is constant in time.

We see immediately that (i) is realized by (a) and (c), while (iii),(iv), and (v) are realized by (b). Since a Morse move is local, we can use (b) twice to delete a local ball and reglue it back to realize (ii).
\end{proof}

\begin{proof}[Proof of Theorem~\ref{thm:space_of_skeins}]
First we show the map is surjective. Any class $\alpha\in H_2^L(X)$ can be represented by an oriented embedded surface $\Sigma\subset X$ with $\partial\Sigma=L$. The obstruction to putting a framing on $\Sigma$ lies in $H^2(\Sigma,L)$. Once we delete local $4$-balls at points on each component of $\Sigma$, this obstruction vanishes and we get a skein $\Sigma$ with $[\Sigma]=\alpha$.

Next we show the map is injective. Without loss of generality, assume $X$ is connected. Suppose two skeins $\Sigma_0$, $\Sigma_1$ have the same homology class. By enclosing all input balls by one large ball, we may assume $\Sigma_0,\Sigma_1\in X^\circ:=X\backslash int(B^4)$ for some fixed ball $B^4$ in $X$. Since $[\Sigma_1]-[\Sigma_0]=0\in H_2(X^\circ\times I,\partial X\times I)$, we can find a properly immersed oriented $3$-manifold $Y\looparrowright X^\circ\times I$ that cobounds $\Sigma_0,\Sigma_1$. By general position we may assume that the self-intersection of $Y$ is an embedded $1$-manifold in $X^\circ\times I$ with endpoints on $\partial(X^\circ\times I)$ and interior singularities of the immersion modeled on the Whitney umbrella \cite{whitney1944singularities}. Deleting a tubular neighborhood along the self intersection makes $Y$ an embedded cobordism between $\Sigma_0$ and $\Sigma_1$ in $(X\times I)\backslash\nu(\Gamma)$ for some $1$-complex $\Gamma$. The obstruction of putting a compatible framing on $Y$ lies in $H^2(Y,\partial Y|_{(X^\circ\times\partial I)\cup(\partial X\times I)})\cong H_1(Y,\partial Y|_{\partial\nu(\Gamma)})$, which can be made zero upon a further deletion of the tubular neighborhood of a $1$-complex. This yields a framed cobordism between $\Sigma_0,\Sigma_1$, showing that they are equal in $\Omega_{\ge0}(X;L)$ by Lemma~\ref{lem:skein_framed_cobordant}.
\end{proof}

\begin{Rmk}
An alternative proof of Theorem~\ref{thm:space_of_skeins} can be deduced from a relative and noncompact version of \cite[Theorem~2]{kirby2012cohomotopy} applied to $(X^\circ,L)$ together with the fact that a framed link translation (as defined in their paper) can be undone in $\Omega_{\ge0}(X;L)$ by a neck-cutting. However, for our purpose in Section~\ref{sec:Lee_lasagna}, we need a similar result for the space of a pair of skeins, which can be proved in the same way as we demonstrated, but is inaccessible by directly applying \cite[Theorem~2]{kirby2012cohomotopy}.
\end{Rmk}

\subsection{Variations of the construction}\label{sbsec:variation}
In this section we assume $X$ to be connected and nonempty. The skein lasagna module $\mathcal{S}_0^Z(X;L)$ is the colimit of $Z$ along the category $\mathcal C_{\ge0}(X;L):=\mathcal C(X;L)$ of skeins. One can restrict to the subcategory $\mathcal C_1(X;L)\subset\mathcal C_{\ge0}(X;L)$ of skeins with exactly $1$ input ball. The colimit of $Z$ along $\mathcal C_1(X;L)$ produces a slightly different version of skein lasagna module, say $\mathcal S_0^{'Z}(X;L)$. It comes with a map
\begin{equation}\label{eq:refine_S_0^Z}
\mathcal S_0^{'Z}(X;L)\twoheadrightarrow\mathcal S_0^Z(X;L)
\end{equation}
which is surjective since every object in $\mathcal C_{\ge0}(X;L)$ has a successor in $\mathcal C_1(X;L)$.

In fact, \eqref{eq:refine_S_0^Z} is an isomorphism whenever $X$ has no $1$-handles. This is because the relevant formulas of $\mathcal S_0^Z$ for attaching $2,3,4$-handles (generalized appropriately for $2$-handles when $Z\ne KhR_\bullet$), developed by Manolescu--Walker--Wedrich \cite{manolescu2023skein}, still hold for $\mathcal S_0^{'Z}$. In particular, most calculations done in this paper, including the proof of Theorem~\ref{thm:exotic}, remain valid with $\mathcal S_0^\bullet$ replaced by $\mathcal S_0^{'\bullet}$, $\bullet\in\{2,Lee\}$.

However, when $H_1(X)\ne0$, \eqref{eq:refine_S_0^Z} is usually not an injection. In fact, a relative and noncompact version of \cite[Theorem~2]{kirby2012cohomotopy} shows that the relevant space of skeins $\Omega_1(X;L):=\pi_0(\mathcal{C}_1(X;L))$ admits a surjection onto $H_2^L(X)\cong\Omega_{\ge0}(X;L)$ by taking homology, whose fiber is usually nontrivial.

We illustrate the problem in the following example. Take $Z=KhR_2$ and let $K$ be a knot. By neck-cutting \cite[Lemma~7.2]{manolescu2022skein}, unwinding and regluing, we see that the lasagna filling $(K\times S^1,1)$ of $S^3\times S^1$ (with any framing on $K\times S^1$) is equivalent to some lasagna filling $(T,1)$ where $T$ is a local torus in $S^3\times S^1$. Since every torus in $B^4$ evaluates to $2$ in $KhR_2$ (see \cite{rasmussen2005khovanov} for a proof in the $Kh$ case, up to sign), $(T,1)$, and hence $(K\times S^1,1)$, is equivalent to $(\emptyset,2)$. However, the neck-cutting relation no longer holds for $\mathcal{S}_0^{'2}$, so $(K\times S^1,1)$ cannot be simplified there in general. More precisely, if the framing on $K\times S^1$ has twists along the $K$ factor, then $K\times S^1$ and $\emptyset$ do not represent the same element in $\Omega_1(S^3\times S^1)$ \cite{kirby2012cohomotopy}, so $(K\times S^1,1)$ is not equivalent to $(\emptyset,2)$ in $\mathcal S_0^{'2}(S^3\times S^1)$, as the latter represents a class that is nonzero in the quotient $\mathcal S_0^2(S^3\times S^1)$ \cite[Corollary~4.2]{manolescu2023skein}.

Since the proof for the connected sum formula (Proposition~\ref{prop:cntd_sum}) requires a neck-cutting, we also lose it for $\mathcal{S}_0^{'2}$. Thus, although $\mathcal{S}_0^{'Z}$ is a refinement of $\mathcal{S}_0^Z$, its computation could be more challenging. We suspect that when $X$ is simply-connected, $\mathcal C_1(X;L)$ is cofinal in $\mathcal C_{\ge0}(X;L)$ and thus \eqref{eq:refine_S_0^Z} is an isomorphism.

One may also consider versions of skein lasagna modules with various conditions on framings and orientations. For example, if one drops the assumption that surfaces are framed, one can obtain a skein lasagna module for every input link homology theory of unframed oriented links in $S^3$. The underlying space of skeins in $X$ rel $L$, no matter whether one allows only one input ball or finitely many (or none), is again isomorphic to $H_2^L(X)$. The many-balls version of the unframed skein lasagna module for the Khovanov or Lee link homology theory seems to agree with its framed counterpart up to renormalizations, in view of the framing-change trick in the proof of Proposition~\ref{prop:framing_change}. In fact, it could be notationally simpler to work with this version throughout the paper, but we follow the existing convention. The structure of the one-ball version of the unframed Khovanov skein lasagna module is less clear.

\section{Vanishing criterion and examples}\label{sec:van}
In this section we prove Theorem~\ref{thm:van} and give a few examples where the theorem applies. The proof is a straightforward application of Ng's maximal Thurston-Bennequin bound from Khovanov homology \cite{ng2005legendrian} and the $2$-handlebody formula \eqref{eq:2-hdby}.

\begin{Lem}[{\cite[Theorem~1]{ng2005legendrian}}]\label{lem:Ng}
For an oriented link $L$, the Khovanov homology $Kh^{h,q}(L)$ is supported in the region $$TB(L)\le q-h\le-TB(-L).$$
\end{Lem}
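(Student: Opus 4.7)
My plan is to bound the support of Khovanov homology using a judicious diagram of $L$ coming from a Legendrian representative. Fix a Legendrian representative $\mathcal{L}$ of $L$ achieving $tb(\mathcal{L}) = TB(L)$ and take its front projection $F$. Resolving the cusps of $F$ in the standard way yields an oriented link diagram $D$ of $L$, and a direct combinatorial computation on the front reads $tb(\mathcal{L}) = w(D) - s_o(D)$, where $w(D)$ is the writhe and $s_o(D)$ is the number of circles in the oriented (Seifert) resolution of $D$.

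The heart of the argument is a chain-level bound on the support of the Khovanov complex $CKh(D)$. Generators are enhanced Kauffman states $(s,e)$, where $s$ is a choice of $0$- or $1$-resolution at each crossing and $e$ labels each resulting circle with $1$ or $X$. I would verify by a local computation that $q(s,e) - h(s,e) \ge w(D) - s_o(D)$ for every enhanced state, with the minimum attained at the oriented resolution with all circles labeled $X$. The point is that toggling a crossing's resolution, or swapping an $X$ label for a $1$, can only weakly increase $q-h$, because the sign conventions for positive and negative crossings are compatible with the Seifert orientation (positive crossings have the oriented resolution as their $0$-smoothing and negative crossings as their $1$-smoothing). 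Since the inequality is pointwise at the chain level, it passes to homology, giving $Kh^{h,q}(L)=0$ whenever $q-h < TB(L)$.

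For the upper bound, I would apply the same argument to the mirror link $-L$, obtaining $Kh^{h,q}(-L)=0$ for $q-h < TB(-L)$, and then combine with the canonical mirror isomorphism $Kh^{h,q}(L) \cong Kh^{-h,-q}(-L)$ to deduce $Kh^{h,q}(L)=0$ for $q-h > -TB(-L)$.

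\textbf{Main obstacle.} The crux is the chain-level inequality in the second paragraph, which is essentially a categorification of the classical Bennequin--Kauffman estimate $TB(L) \le w(D) - s_o(D)$. Proving it requires careful tracking of the bigrading through every elementary operation (change of resolution at a crossing, merger or split of circles, relabeling of a circle by $X \leftrightarrow 1$) to show that the oriented-resolution configuration with all $X$-labels indeed realizes the minimum of $q-h$, and that this minimum coincides with $w(D) - s_o(D)$. Once this bound is established, the application of mirror symmetry and the passage from the diagram $D$ coming from the $TB$-realizing Legendrian are formal.
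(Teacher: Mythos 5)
The paper does not prove this lemma; it is cited verbatim from Ng, so there is no in-paper argument to compare against. Your proposal, however, has a genuine gap at its chain-level heart. In standard conventions for the Khovanov complex of a diagram $D$, an enhanced state $(s,e)$ satisfies $q(s,e) - h(s,e) = w(D) + \theta(e)$, where $\theta(e)$ is the number of circles labeled $1$ minus the number labeled $X$ in the state $s$. The minimum of $q-h$ over the whole cube is therefore $w(D) - \max_s k(s)$, where $\max_s k(s)$ is the maximum number of circles over \emph{all} Kauffman states, and this can be strictly less than $w(D) - s_o(D)$. So your claimed pointwise inequality $q - h \ge w(D) - s_o(D)$, with the minimum at the Seifert state, is false. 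Concretely, for the standard $3$-crossing diagram of the left-handed trefoil one has $w(D) = -3$ and $s_o(D) = 2$, so your bound predicts $q - h \ge -5$; but the all-$0$ state has $3$ circles, and $Kh^{-3,-9}(3_1) \ne 0$ realizes $q - h = -6 = TB(3_1)$, so the lemma is sharp and your bound fails. Your monotonicity heuristic likewise fails: since $q - h = w(D) + \theta(e)$, a resolution toggle that merges two $1$-labeled circles, or splits one circle into two $X$-labeled circles, drops $\theta$ (hence $q-h$) by $1$, so the Seifert state is not a minimizer on the cube.

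The identity $tb(\mathcal L) = w(D) - s_o(D)$ you assert for the rounded-front diagram is also unsubstantiated: the cusp formula reads $tb(\mathcal L) = w(F) - c_r(F)$, and $c_r(F)$ need not equal the Seifert circle count of the resolved diagram (for the $k$-times stabilized unknot, $c_r = 1+k$ while $s_o = 1$). The nontrivial content of Ng's theorem is to establish the correct chain-level estimate $q - h \ge w(D) - \max_s k(s)$ and then relate $w(D) - \max_s k(s)$ to $TB(L)$ via a suitable choice of diagram; this Kauffman-type step does the real work, and is not recoverable from the Seifert-state reasoning you outline. To salvage the plan you would need to replace $s_o(D)$ with $\max_s k(s)$ throughout and then actually prove that some diagram $D$ of $L$ satisfies $w(D) - \max_s k(s) \ge TB(L)$.
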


\begin{proof}[Proof of Theorem~\ref{thm:van}]
We set out to show that $\mathcal{S}_0^2(X;L)=0$ for any $X$ containing an embedded knot trace $X_n(K)$ with framing $n\ge-TB(-K)$.  By the gluing formula Proposition~\ref{prop:gluing}, it suffices to prove that $\mathcal{S}_0^2(X_n(K);L)$ vanishes for such $n$ (and for any boundary link $L$). The strategy will be to use the framing assumption and Lemma \ref{lem:Ng} to show that, in any fixed $(q+h)$-degree, the homology groups of the cablings in the 2-handlebody formula \eqref{eq:2-hdby} must eventually go to zero.

We give $K$ framing $n$ and an arbitrary orientation. Any $L\subset\partial X$ can be isotoped to miss the $2$-handle, hence we may henceforth assume $L$ is a framed oriented link in $S^3$ missing $K$. Since $-n\le TB(-K)$, we can find a Legendrian representative $\mathcal K\cup\mathcal L$ of $-K\cup-L$ with $\mathcal K$ having Thurston-Bennequin number $tb(\mathcal K)=-n$. In view of Proposition~\ref{prop:framing_change}, we may assume without loss of generality that the framing of $-L$ induced from $L$ agrees with the contact framing of $\mathcal L$.

Let $\mathcal{K}(k^+,k^-)$ denote the Legendrian $(k^++k^-)$-cable of $\mathcal K$ with the orientation on $k^-$ of the strands reversed. In other words, $\mathcal K(k^+,k^-)$ is the link obtained by taking $k^++k^-$ pushoffs of $\mathcal K$ by the Reeb flow associated to a compatible contact form on $S^3$, and reversing the orientation on $k^-$ of the components. The underlying link type of $\mathcal K(k^+,k^-)\cup\mathcal L$ is $-(K(k^+,k^-)\cup L)$, and the contact framing on the former agrees with the framing on the latter induced from $K$ and $L$. We thus have
\begin{equation}\label{eq:TB_KL}
TB(-(K(k^+,k^-)\cup L))\ge tb(\mathcal{K}(k^+,k^-)\cup\mathcal L)=w(-(K(k^+,k^-)\cup L))=-w(K(k^+,k^-)\cup L).
\end{equation}
By Lemma~\ref{lem:Ng} and \eqref{eq:TB_KL}, $Kh(-(K(k^+,k^-)\cup L))$ is supported in $q-h\ge-w(K(k^+,k^-)\cup L)$. By \eqref{eq:KhR_2}, $KhR_2(K(k^+,k^-)\cup L)$ is supported in $q+h\le w(K(k^+,k^-)\cup L)-w(K(k^+,k^-)\cup L)=0$. Now Proposition~\ref{prop:2_hdby} implies that $\mathcal{S}_0^2(X;L;\alpha)=0$ for any $\alpha$, as $KhR_2(K(r+\alpha^+,r+\alpha^-)\cup L)\{-|\alpha|-2r\}$ is supported in $q+h\le-|\alpha|-2r\to-\infty$ as $r\to\infty$ yet the map $\phi$ in Proposition~\ref{prop:2_hdby}(ii) preserves the grading $q+h$.
\end{proof}

\begin{Ex}\label{ex:unknot_van}
The unknot $K=U$ has maximal $tb$ number $-1$. The $n$-trace on $U$ is $D(n)$, the $D^2$-bundle over $S^2$ with Euler number $n$, which is also a tubular neighborhood of an embedded sphere in a $4$-manifold with self-intersection $n$. Therefore in this case, Theorem~\ref{thm:van} says $\mathcal{S}_0^2(X;L)=0$ whenever $X$ contains an embedded sphere with positive self-intersection. This is an adjunction-type obstruction for $\mathcal S_0^2$ to be nontrivial.

For example, $D(n)$ with $n>0$, $\CP^2$, $S^2\times S^2$, or the mirror image of the $K3$ surface have vanishing Khovanov skein lasagna modules.
\end{Ex}

\begin{Ex}
The right handed trefoil $-3_1$ has $TB(-3_1)=1$. Therefore, the $(\pm1)$-trace on $3_1$ both have vanishing Khovanov skein lasagna modules. The boundary of this $4$-manifold is the Poincar\'e homology sphere $\Sigma(2,3,5)$ for $-1$, and the mirror image of the Brieskorn sphere $\Sigma(2,3,7)$ for $+1$.
\end{Ex}

One way to see embedded traces is through a Kirby diagram of the $4$-manifold $X$. If there is an $n$-framed $2$-handle attached along some knot $K$ not going over $1$-handles, then the $2$-handle together with the $4$-ball is a copy of $X_n(K)$ inside $X$. Consequently, if we see such a $2$-handle with $n\ge-TB(-K)$, then $\mathcal{S}_0^2(X;L)=0$ for any $L$.\smallskip

\begin{Rmk}
As proved by \cite[Theorem~1.3]{manolescu2022skein} (and will be reproved more generally in Section~\ref{sbsec:-CP^2}), $k\overline{\CP^2}$ has nonvanishing skein lasagna module. Therefore, we obtain a slice obstruction from Corollary~\ref{cor:slice_obstruction}, which states that if $\Sigma$ is a slice disk in $k\overline{\CP^2}\backslash int(B^4)$ for a knot $K$, then $$TB(K)+[\Sigma]^2<0.$$ In this special case, however, the obstruction is somewhat weaker than the obstruction coming from the Rasmussen $s$-invariant from \cite[Corollary~1.5]{ren2023lee} which implies $s(K)\le-[\Sigma]^2-|[\Sigma]|$. Since $TB(K)<s(K)$ \cite[Proposition~4]{plamenevskaya2006transverse}, this implies $TB(K)+[\Sigma]^2+|[\Sigma]|<0$.
\end{Rmk}

\section{Lee skein lasagna modules and lasagna \texorpdfstring{$s$}{s}-invariants}\label{sec:Lee_lasagna}
In this section, we explore the structure of the Lee skein lasagna module for an arbitrary pair $(X,L)$ of $4$-manifold $X$ and framed oriented link $L\subset\partial X$. In particular, we prove the statements in Section~\ref{sbsec:intro_lee_lasagna} (except Theorem~\ref{thm:shake_genus}) in a more general setup.

Recall from Section~\ref{sbsec:Kh_lasagna_properties} that the Lee skein lasagna module is defined to be the skein lasagna module with $KhR_{Lee}$ as the link homology theory input. For a pair $(X,L)$, its Lee skein lasagna module $\mathcal S_0^{Lee}(X;L)$ is a vector space over $\Q$ with a homological $\Z$-grading, a quantum $\Z/4$-grading, a homology class grading by $H_2^L(X)$, and an (increasing) quantum filtration (so that $0$ has filtration degree $-\infty$). Since the filtration degree of an element can decrease under a filtered map, a nonzero element in the colimit $\mathcal S_0^{Lee}(X;L)$ may also have filtration degree $-\infty$. Let $q\colon\mathcal S_0^{Lee}(X;L)\to\Z\sqcup\{-\infty\}$ denote the quantum filtration function.

\subsection{Structure of Lee skein lasagna module} \label{sbsec:Lee_structure}

For a pair $(X,L)$, let $L_i$ be the components of $L$ and $[L_i]\in H_1(L)$ be the fundamental class of $L_i$. Let $\partial\colon H_2(X,L)\to H_1(L)$ be the boundary homomorphism. Define the set of \textit{double classes} in $X$ rel $L$ to be $$H_2^{L,\times2}(X):=\left\{(\alpha_+,\alpha_-)\in H_2(X,L)^2\colon\partial\alpha_\pm=\sum_i\epsilon_{i,\pm}[L_i],\,\epsilon_{i,\pm}\in\{0,1\},\,\epsilon_{i,+}+\epsilon_{i,-}=1\right\}.$$
Double classes are precisely those pairs of second homology classes which can be represented by double skeins, to be defined below. First though we present the following generalization of Theorem~\ref{thm:intro_Lee_structure}, the proof of which will occupy the remainder of this section.

\begin{Thm}\label{thm:Lee_structure}
The Lee skein lasagna module of $(X,L)$ is $\mathcal{S}_0^{Lee}(X;L)\cong\Q^{H_2^{L,\times2}(X)}$, with a basis consisting of canonical generators $x_{\alpha_+,\alpha_-}$, one for each double class $(\alpha_+,\alpha_-)\in H_2^{L,\times2}(X)$. Moreover,\vspace{-5pt}
\begin{enumerate}[(1)]
\item $x_{\alpha_+,\alpha_-}$ has homological degree $-2\alpha_+\cdot\alpha_-$;
\item $x_{\alpha_+,\alpha_-}$ has homology class degree $\alpha:=\alpha_++\alpha_-$;
\item $x_{\alpha_+,\alpha_-}\pm x_{\alpha_-,\alpha_+}$ has quantum $\Z/4$ degrees $-\alpha^2-\#L-1\pm1$;
\item $q(x_{\alpha_+,\alpha_-})=q(x_{\alpha_-,\alpha_+})=\max(q(x_{\alpha_+,\alpha_-}\pm x_{\alpha_-,\alpha_+}))$, which also equals $\min(q(x_{\alpha_+,\alpha_-}\pm x_{\alpha_-,\alpha_+}))+2$ if $\alpha_+\ne\alpha_-$ in $H_2(X,L;\Q)$ (in particular if $L\ne\emptyset$).
\end{enumerate}
\end{Thm}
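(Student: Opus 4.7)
The plan is to construct canonical Lee lasagna generators $x_{\alpha_+,\alpha_-}$ explicitly, then verify spanning, linear independence, and the grading/filtration properties separately. Given $(\alpha_+,\alpha_-) \in H_2^{L,\times 2}(X)$, I would represent it geometrically by a framed skein $\Sigma \subset X \setminus \sqcup_i int(B_i)$ with $\partial\Sigma|_{\partial X} = L$, equipped with an orientation $\mathfrak{O}$ of the underlying unoriented surface chosen so that the bipartition $\Sigma = \Sigma_+(\mathfrak{O}) \sqcup \Sigma_-(\mathfrak{O})$ into components where $\mathfrak{O}$ agrees/disagrees with the ambient orientation satisfies $[\Sigma_\pm(\mathfrak{O})] = \alpha_\pm$. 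Such $(\Sigma,\mathfrak{O})$ exists by a pairs version of the proof of Theorem~\ref{thm:space_of_skeins} (applied simultaneously to $\alpha_+$ and $\alpha_-$, deleting neighborhoods of their intersection $1$-complex), and any two representatives are framed-pairs-cobordant (modulo $1$-complex neighborhoods) in $X \times I$. Letting $\mathfrak{O}|_{K_i}$ denote the induced orientation on each input link $K_i$, I set
$$x_{\alpha_+,\alpha_-} := 2^{-n(\Sigma,\mathfrak{O})} \cdot [\Sigma,\ \otimes_i x_{\mathfrak{O}|_{K_i}}]$$
with normalization $n(\Sigma,\mathfrak{O})$ from \eqref{eq:n_renormalization}. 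The cobordism formula \eqref{eq:can_gen} applied to a pairs-cobordism between two representatives shows this class is well-defined: the bipartition data isolates a single term in the cobordism sum, and the factor $2^{n(S)}$ is cancelled by the normalization, which is additive under composition.

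The hardest step will be linear independence, and my plan is to reduce it to the $2$-handlebody case. When $X$ is a $2$-handlebody, Proposition~\ref{prop:2_hdby} expresses $\mathcal{S}_0^{Lee}(X;L;\alpha)$ as a colimit of Lee homologies of cable links; the classical Lee theorem supplies a Rasmussen canonical basis on each cable, and a bookkeeping exercise shows that after imposing the symmetry and dotted-annulus relations (i) and (ii), exactly the elements corresponding to double classes summing to $\alpha$ survive, matching the geometric $x_{\alpha_+,\alpha_-}$. For general $X$, a handle decomposition together with Proposition~\ref{prop:34_hd} yields a surjection from the underlying $2$-handlebody skein lasagna module that sends canonical generators to canonical generators (after transferring homology classes via the $3$-handle attaching maps), and a careful analysis of the induced kernel, described in terms of $3$-handle attaching spheres mapping pairs of lifted double classes together, allows linear independence to descend. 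Spanning is comparatively easy: expand the input vector of any lasagna filling in the Rasmussen bases at the input balls; for tuples $(\mathfrak{o}_i)$ not arising from any orientation of the given $\Sigma$, attach tubes (an allowed cobordism move) between components whose orientations must disagree, so that each tuple lifts to a global orientation on a new skein, and is thereby identified with a canonical generator.

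The grading and filtration claims then unpack directly. Claim (1) follows by summing the homological degrees $-2\ell k(K_{i,+}(\mathfrak{O}), K_{i,-}(\mathfrak{O}))$ of the Rasmussen generators over all input balls and interpreting the total linking as the intersection number $\alpha_+\cdot\alpha_-$ via the capping disks inside each input ball. Claim (2) is immediate from $[\Sigma] = \alpha_++\alpha_-$. For (3), in the $\mathbf{a},\mathbf{b}$ basis of $V_{Lee}$, the combinations $x_{\alpha_+,\alpha_-} \pm x_{\alpha_-,\alpha_+}$ lie entirely in one of the two $X$-eigenspaces at each input ball; summing the $\Z/4$-degrees across input balls together with the $\chi(\Sigma)$-shift of the skein and the framing expression of $[\Sigma]^2$ yields the claimed $-\alpha^2+\#L$ and $-\alpha^2+\#L+2$. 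Finally, (4) is largely formal: $q(x_{\alpha_+,\alpha_-}) \le \max(q(x_{\alpha_+,\alpha_-} \pm x_{\alpha_-,\alpha_+}))$ automatically, symmetry under $(\alpha_+,\alpha_-) \leftrightarrow (\alpha_-,\alpha_+)$ yields equality, and the strict $2$-gap when $\alpha_+ \ne \alpha_-$ in $H_2(X,L;\Q)$ comes from the observation that equal filtrations on the two combinations would force a cancellation contradicting the linear independence of $x_{\alpha_+,\alpha_-}$ and $x_{\alpha_-,\alpha_+}$ in distinct $\Z/4$-graded summands.
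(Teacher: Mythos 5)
Your overall plan is right --- define $x_{\alpha_+,\alpha_-}$ via a double skein, check well-definedness by the cobordism formula \eqref{eq:can_gen}, verify spanning, verify linear independence, compute gradings --- and it parallels the paper's opening moves. But two of your steps have real gaps.

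\textbf{Linear independence.} Your plan to reduce to the $2$-handlebody case via Proposition~\ref{prop:34_hd} and then ``carefully analyze'' the kernel from $3$-handle attachments is both incomplete and unnecessarily hard. It is incomplete because not every compact $4$-manifold is a $2$-handlebody after removing $3$- and $4$-handles: if $X$ has $1$-handles (e.g.\ $S^1\times S^3$), the sub-handlebody of index $\le 2$ is not a $2$-handlebody and the formula \eqref{eq:2-hdby} does not apply; you say nothing about $1$-handles. It is also harder than it needs to be, because the surjection $\mathcal S_0^{Lee}(Y)\to\mathcal S_0^{Lee}(X)$ from $3$-handle attachment has a kernel whose identification (``$3$-handle attaching spheres mapping pairs of lifted double classes together'') is not something you prove; the very fact that this kernel equals the span of differences $x^Y_{\alpha_+,\alpha_-}-x^Y_{\alpha'_+,\alpha'_-}$ over double classes with the same image is already most of the theorem for $X$. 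The paper avoids all of this by defining an augmentation $\varepsilon\colon \mathcal S_0^{Lee}(X;L)\to\Q^{H_2^{L,\times 2}(X)}$, and using the two facts you already partially establish (incompatible pure fillings vanish; morphisms preserve the double class and take canonical fillings to canonical fillings) to show $\varepsilon$ is well-defined. Once $\varepsilon$ is well-defined and sends $x_{\alpha_+,\alpha_-}\mapsto e_{(\alpha_+,\alpha_-)}$, linear independence is automatic, and this works for any compact oriented $(X,L)$ without any handle-theoretic bookkeeping.

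\textbf{Part (4), the $2$-gap.} The subadditivity bound $q(x_{\alpha_+,\alpha_-})\le\max(q(x_{\alpha_+,\alpha_-}\pm x_{\alpha_-,\alpha_+}))$ and the parity constraint $\min\le\max-2$ are indeed formal, but the crucial content is the reverse inequality $\min\ge\max-2$ (so the gap is \emph{exactly} $2$, not larger). Your argument --- that equal filtrations would ``force a cancellation contradicting linear independence'' --- is a non sequitur: the two vectors live in distinct quantum $\Z/4$-summands so they are automatically linearly independent and their filtrations are never equal; none of this controls how far apart the filtrations are. The paper's proof here is genuinely nontrivial: it chooses a filtration-optimal representative of $x_{\alpha_+,\alpha_-}\pm x_{\alpha_-,\alpha_+}$ on a skein $\Sigma$, expands it into canonical fillings over orientations $\mathfrak O\in\{\pm1\}^n$, and then uses Lemma~\ref{lem:cube_lin_alg} to produce coefficients $a_1,\ldots,a_n$ so that the degree-$2$ dot operator $L=\sum a_k X_k$ carries that representative to one of $x_{\alpha_+,\alpha_-}\mp x_{\alpha_-,\alpha_+}$, yielding $\max\le\min+2$. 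This linear-algebra-plus-dot-operator step has no analogue in your proposal and needs to be supplied. (It is also exactly where the hypothesis $\alpha_+\ne\alpha_-$ in $H_2(X,L;\Q)$ is used.)

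\textbf{A smaller point on spanning.} You say the incompatible orientation tuples ``lift to a global orientation on a new skein'' after tubing and are ``thereby identified with a canonical generator.'' That is not the mechanism. Tubing two mismatched boundary circles does not create a compatible orientation; what actually happens (and what the paper proves in Lemma~\ref{lem:can_Lee_las_generates}) is that the saddle/connect-sum cobordism carries such a pure filling to \emph{zero}, because the Lee canonical generator of the saddle has no summand compatible with the mismatched orientations. So the incompatible tuples drop out, rather than getting absorbed into a canonical generator. The conclusion is the same, but as written your argument claims the wrong thing.

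The well-definedness step, parts (1)--(3), and the use of $\Z/4$-parity for the $\le$ direction in (4) are all fine and essentially match the paper.
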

Here, the bilinear product on $H_2(X,L)$ is defined using the framing of $L$.  When $X=B^4$, the bilinear product computes the linking numbers of the boundary links and thus, in view of Proposition~\ref{prop:recover_Z}, Theorem~\ref{thm:Lee_structure} recovers classical structural results on $KhR_{Lee}(L)$ by Lee \cite[Proposition~4.3]{lee2005endomorphism} and Rasmussen \cite[Proposition~2.3,3.3,Lemma~3.5]{rasmussen2010khovanov} \cite[Section~6.1]{beliakova2008categorification}.\medskip

Throughout this section we will make use of the following notations. Given a skein $\Sigma$ in $X$ rel $L$, we continue to use the notation $KhR_{Lee}(\Sigma)$ as in \eqref{eq:KhR(Sigma)}. We also let $\Sigma^\circ$ denote the skein obtained from $\Sigma$ by deleting a local ball around a point on each component of $\Sigma$ that intersected no input balls. In this way every component of $\Sigma^\circ$ now intersects some input ball and moreover we have a morphism $[D]:\Sigma\to \Sigma^\circ$ in $\mathcal{C}(X;L)$ consisting of a disc in each of the new input balls.  We define $v^\circ$ to be $KhR_{Lee}([D])(v)$, i.e. $v^\circ$ assigns the label $1$ in the homology of each new input unknot in $\Sigma^\circ$. Thus we have $[(\Sigma,v)]=[(\Sigma^\circ,v^\circ)]$ in $\mathcal{S}_0^{Lee}(X;L)$.

We also define a \textit{double skein} in $X$ rel $L$ to be a skein $\Sigma$ together with a partition of its components $\Sigma=\Sigma_+\cup\Sigma_-$. Equivalently, a double skein is a pair $(\Sigma,\mathfrak O)$ where $\Sigma$ is a skein (which is already oriented) and $\mathfrak O$ is an orientation of $\Sigma$ as an unoriented surface. The equivalence is given by defining $\Sigma_+$ to be the union of components of $\Sigma$ on which $\mathfrak O$ matches the given orientation of $\Sigma$. A double skein $\Sigma=\Sigma_+\cup\Sigma_-$ represents a double class $([\Sigma_+],[\Sigma_-])\in H_2^{L,\times2}(X)$.  Finally, a double skein structure $\mathfrak O$ on $\Sigma$ determines one on $\Sigma^\circ$ in the obvious way, still denoted $\mathfrak O$.

\begin{Def}\label{def:can Lee lasagna filling}
The \textit{canonical Lee lasagna filling} of a double skein $(\Sigma,\mathfrak O)$ is the lasagna filling
\[x(\Sigma,\mathfrak O) = x(\Sigma_+,\Sigma_-) :=(\Sigma^\circ, 2^{-n(\Sigma^\circ)}(-1)^{-n(\Sigma_-^\circ)} \bigotimes_i x_{\mathfrak O|_{K_i}})\]
which assigns to each input link $K_i$ of $\Sigma^\circ$ the (rescaled) canonical Lee generator associated to the orientation $\mathfrak O|_{K_i}$ before adding a global rescaling as indicated (here $n(\cdot)$ is defined as in \eqref{eq:n_renormalization}; note that $n(\Sigma^\circ)=n(\Sigma)$, $n(\Sigma_\pm^\circ)=n(\Sigma_\pm)$), as well as a filtration degree shift $-\chi(\Sigma^\circ)$ as in \eqref{eq:KhR(Sigma)} (suppressed from the notation).
\end{Def}

\begin{Lem}\label{lem:can_Lee_las_generates}
Let $\Sigma$ be a skein in $X$ rel $L$.  For every $v\in KhR_{Lee}(\Sigma)$, we have a unique decomposition
\begin{equation}\label{eq:filling as lin combo of pures}
(\Sigma^\circ,v^\circ) = \sum_{\mathfrak{O}} \lambda_{\mathfrak{O}} x(\Sigma,\mathfrak{O}) + \cdots
\end{equation}
where the sum runs over various double skein structures (orientations) $\mathfrak{O}$ on $\Sigma$ (or equivalently on $\Sigma^\circ$), the $\lambda_\mathfrak{O}$'s are scalars, and $\cdots$ indicates terms that are zero in $\mathcal{S}_0^{Lee}(X;L)$.

In particular, any class in $\mathcal{S}_0^{Lee}(X;L)$ is a linear combination of classes of canonical Lee lasagna fillings, and there are no relations among such fillings arising from relations within $KhR_{Lee}(\Sigma^\circ)$ for any fixed skein $\Sigma$.
\end{Lem}
\begin{proof}
Since the Lee homology of a link has canonical generators as a basis, we may write $(\Sigma^\circ,v^\circ)$ as a unique linear combination of fillings of the form $(\Sigma^\circ, \bigotimes_i x_{\mathfrak{o}_i})$ where each $\mathfrak{o}_i$ is some orientation of the input link $K_i$ for $\Sigma^\circ$ viewed as an unoriented link; we shall refer to such fillings as \emph{pure Lee lasagna fillings} of $\Sigma^\circ$.

Then if for some pure Lee lasagna filling $(\Sigma^\circ, \bigotimes_i x_{\mathfrak{o}_i})$ there is no orientation $\mathfrak{O}$ of $\Sigma^\circ$ (or equivalently, of $\Sigma$) that gives rise to the various $\mathfrak{o}_i$, then there must be two components $C_1,C_2$ on some input links $K_{i_1},K_{i_2}$ (possibly $i_1=i_2$) on the same component of $\Sigma^\circ$ with incompatible orientations.  We then puncture this component of $\Sigma^\circ$ once more to arrive at a new skein, denoted $\Sigma^{\circ\circ}$, with
\[
[(\Sigma^\circ,\bigotimes_i x_{\mathfrak{o}_i})] = [(\Sigma^{\circ\circ},1\otimes \bigotimes_i x_{\mathfrak{o}_i})] = [(\Sigma^{\circ\circ},\A \otimes \bigotimes_i x_{\mathfrak{o}_i})] + [(\Sigma^{\circ\circ},\B \otimes \bigotimes_i x_{\mathfrak{o}_i})],
\]
where the labels $1,\A,\B$ indicate labels on the new input unknot $U$ in $\Sigma^{\circ\circ}$ (see Section~\ref{sbsec:KhR_2}). Then by choosing a path along $\Sigma^{\circ\circ}$ from $U$ to $C_1$ (respectively to $C_2$), we can enclose $U$ with $K_{i_1}$ (respectively with $K_{i_2}$) in a new input ball inducing a morphism in $\mathcal{C}(X;L)$ which connects $U$ with $C_1$ (respectively with $C_2$).  Since $\A$ and $\B$ correspond to two different orientations of $U$, \eqref{eq:can_gen} shows that one of these morphisms will force $[(\Sigma^{\circ\circ},\A \otimes \bigotimes_i x_{\mathfrak{o}_i})]=0$, and the other will force $[(\Sigma^{\circ\circ},\B \otimes \bigotimes_i x_{\mathfrak{o}_i})]=0$, in $\mathcal{S}_0^{Lee}(X;L)$.
\end{proof}

If $[S]\colon\Sigma_1\to\Sigma_2$ is a morphism in the category of skeins $\mathcal C(X;L)$, then a double skein structure $\mathfrak{O}$ on $\Sigma_1$ (or equivalently on $\Sigma_1^\circ$) induces a double skein structure $\mathfrak{O}|_2$ on $\Sigma_2$ (or equivalently on $\Sigma_2^\circ$) by restricting the orientation, with corresponding decompositions $\Sigma_1=\Sigma_{1+}\cup\Sigma_{1-}$ and $\Sigma_2=\Sigma_{2+}\cup \Sigma_{2-}$ satisfying $([\Sigma_{1+}],[\Sigma_{1-}])=([\Sigma_{2+}],[\Sigma_{2-}])\in H_2^{L,\times 2}(X)$.

\begin{Lem}\label{lem:can_Lee_las_morphism}
Let $[S]\colon\Sigma_1\to\Sigma_2$ be a morphism in $\mathcal C(X;L)$, and let $v_1\in KhR_{Lee}(\Sigma_1)$ with $v_2=KhR_{Lee}([S])(v_1)\in KhR_{Lee}(\Sigma_2)$.  Then if we decompose the lasagna filling $(\Sigma_1^\circ,v_1^\circ)$ as in \eqref{eq:filling as lin combo of pures}
\begin{equation}\label{eq:pures for v1}
(\Sigma_1^\circ,v_1^\circ) = \sum_{\mathfrak{O}} \lambda_{\mathfrak{O}} x(\Sigma_1,\mathfrak{O}) + \cdots,
\end{equation}
we have
\begin{equation}\label{eq:pures for v2}
(\Sigma_2^\circ,v_2^\circ) = \sum_{\mathfrak{O}} \lambda_{\mathfrak{O}} x(\Sigma_2,\mathfrak{O}|_2) + \cdots,
\end{equation}
where the sum is still over double skein structures (orientations) $\mathfrak{O}$ of $\Sigma_1$, the scalars $\lambda_{\mathfrak{O}}$ are equal to those in the decomposition for $(\Sigma_1^\circ,v_1^\circ)$, and $\cdots$ continues to indicate terms which are zero in $\mathcal{S}_0^{Lee}(X;L)$.

In particular, if we view $\mathcal{S}_0^{Lee}(X;L)$ as generated by the classes represented by canonical Lee lasagna fillings, then all relations between such classes are generated by relations of the form $[x(\Sigma_1,\mathfrak{O})] = [x(\Sigma_2,\mathfrak{O}|_2)]$.
\end{Lem}
\begin{proof}
The morphism $[S]$ is represented by a surface $S$ together with an isotopy rel boundary $\phi$ from $\Sigma_2\cup S$ to $\Sigma_1$. We may decompose any such morphism into a composition $\Sigma_1 \xrightarrow{[\mathrm{id},\phi]}\Sigma_2\cup S \xrightarrow{[S,\mathrm{id}]} \Sigma_2$ where $[\mathrm{id},\phi]$ is represented by the `trivial surface' formally consisting of the input links (and maintaining all input balls) and the isotopy $\phi$, while $[S,\mathrm{id}]$ is represented by the surface $S$ and the identity isotopy. The trivial surface morphism acts by identity on $KhR_{Lee}(\Sigma_1)=KhR_{Lee}(\Sigma_2\cup S)$, and thus for the remainder of the proof we will assume our morphism $[S]$ gives rise to a surface $S$ such that $\Sigma_1=\Sigma_2\cup S$.

We let $D_i\colon \Sigma_i\to \Sigma_i^\circ$ denote the surface consisting of discs in the new input balls as before.  Now by definition the lasagna filling $(\Sigma^\circ_2,v^\circ_2)$ arises from $(\Sigma_1,v_1)$ via the following sequence of morphisms (we abuse notation here slightly):
\[(\Sigma_1,v_1) \xmapsto{KhR_{Lee}([S])} (\Sigma_2,v_2) \xmapsto{KhR_{Lee}([D_2])} (\Sigma_2^\circ,v_2^\circ).\]
However, since $D_2\subset\Sigma_2$ is disjoint from $S$, we may instead write \[(\Sigma_1,v_1) \xmapsto{KhR_{Lee}([D_2])} (\Sigma_2^\circ \cup S, z) \xmapsto{KhR_{Lee}([S])} (\Sigma_2^\circ,v_2^\circ)\]
for $z=KhR_{Lee}([D_2])(v_1)\in KhR_{Lee}(\Sigma_2^\circ \cup S)$.  Finally, since any component of $\Sigma_1$ intersecting no input ball is either disjoint from $S$ (and so is a component of $\Sigma_2$ intersecting no input ball), or intersects $S$ (in which case the corresponding component(s) of $\Sigma_2$ intersect input balls), we can choose our punctures such that $D_1\cap \Sigma_2 = D_2$.  This allows us to further decompose
\[
(\Sigma_1,v_1) \xmapsto{KhR_{Lee}([D_2])} (\Sigma_2^\circ \cup S, z) \xmapsto{KhR_{Lee}([D_1 \backslash D_2])} (\Sigma_1^\circ,v_1^\circ) 
\xmapsto{KhR_{Lee}([S^\circ])} (\Sigma_2^\circ,v_2^\circ),
\]
where $S^\circ$ is by definition $\overline{S\backslash(D_1 \cap S)}$.  Thus to prove the lemma we may assume without loss of generality that $(\Sigma_i,v_i)=(\Sigma_i^\circ,v_i^\circ)$ and that $S=S^\circ$.

This is now a more or less straightforward consequence of \eqref{eq:can_gen}. First, for any pure Lee lasagna filling $(\Sigma_1,\bigotimes_i x_{\mathfrak{o}_i})$ that does not arise from a double skein structure, and thus contributes to the ellipsis $\cdots$ in \eqref{eq:pures for v1} (as in the proof of Lemma \ref{lem:can_Lee_las_generates}), either $S$ connects two incompatibly oriented components of input links, or it does not.  In the first case, $KhR_{Lee}([S])(\bigotimes_i x_{\mathfrak{o}_i}) = 0$ via \eqref{eq:can_gen}; in the second case, $KhR_{Lee}([S])(\bigotimes_i x_{\mathfrak{o}_i})$ is a linear combination of terms which still contain two incompatibly oriented components of input links, all of which contribute to the ellipsis $\cdots$ in \eqref{eq:pures for v2} (again as in the proof of Lemma \ref{lem:can_Lee_las_generates} using \eqref{eq:can_gen}).

Finally, for any fixed double skein structure (orientation) $\mathfrak{O}$ of $\Sigma_1$, \eqref{eq:can_gen} shows $KhR_{Lee}([S]) (x(\Sigma_1,\mathfrak{O}))$ is a linear combination of terms coming from various orientations of $S$ whose restrictions on the input links of $\Sigma_1$ agree with the restriction of $\mathfrak O$. The restriction $\mathfrak{O}|_{S}$ gives rise to some scalar multiple of the desired term $x(\Sigma_2,\mathfrak{O}|_2)$ in $KhR_{Lee}([S])(x(\Sigma_1,\mathfrak{O}))$; one can use the additivity of $n(\cdot)$ to show that the scalar is indeed one.  We thus have from \eqref{eq:can_gen}
\[KhR_{Lee}([S]) (x(\Sigma_1,\mathfrak{O})) = x(\Sigma_2,\mathfrak{O}|_2) + \sum_{\mathfrak{O}'_S} \lambda_{\mathfrak{O}'_S} \left(\Sigma_2 \,,\, \bigotimes _jx_{\mathfrak{O}'_S|_{K_j'}} \right)
\]
where the sum runs over allowable orientations $\mathfrak{O}_S'$ of $S$ different from $\mathfrak O|_S$, the $\lambda_{\mathfrak{O}'_S}$ are scalars, and the $K_j'$ denote input links of $\Sigma_2$ (recall that we allow some components of $S$ to consist of curves where input balls of $\Sigma_1$ and $\Sigma_2$ coincide; on such components, the only allowable orientation is the restriction of $\mathfrak{O}$). To complete the proof, one uses the fact that every component $\Sigma_1$ intersects some input ball to conclude that, for any such allowable $\mathfrak{O}'_S$, the filling $(\Sigma_2 \,,\, \bigotimes _jx_{\mathfrak{O}'_S|_{K_j'}})$ has two input link components which have been assigned incompatible orientations, and thus represents zero in $\mathcal{S}_0^{Lee}(X;L)$ as in the proof of Lemma \ref{lem:can_Lee_las_generates}.
\end{proof}

\begin{Rmk}\label{rmk:can Lee filling equivalent to any number of puncture}
One consequence of Lemma \ref{lem:can_Lee_las_morphism} is that, for any double skein $(\Sigma,\mathfrak{O})$, the canonical Lee lasagna filling $x(\Sigma,\mathfrak{O})$ is equivalent in $\mathcal{S}_0^{Lee}(X;L)$ to the canonical Lee lasagna filling $x(\Sigma^{\circ\circ},\mathfrak{O})$ where $\Sigma^{\circ\circ}$ is any skein obtained from $\Sigma^\circ$ by introducing any number of additional local input balls (with new input unknots), with orientation inherited from $\mathfrak{O}$ (and given same notation).  We note that each of these input unknots is assigned the label $\A$ or $\B$ according to whether it occurs on $\Sigma^{\circ\circ}_+(\mathfrak{O})$ or $\Sigma^{\circ\circ}_-(\mathfrak{O})$ (and that this partition of $\Sigma^{\circ\circ}$ is also directly inherited from that of $\Sigma$), which satisfy $\A^k=\A,\B^k=\B$, and $\A\B=0$.
\end{Rmk}

\begin{Rmk}
With Remark \ref{rmk:can Lee filling equivalent to any number of puncture} in mind, one may also view labels on input unknots as decorations on skeins (which may be multiplied upon collision), in turn viewing any canonical Lee lasagna filling $x(\Sigma, \mathfrak O) = x(\Sigma_+,\Sigma_-)$ as a lasagna filling with skein $\Sigma$, which assigns the canonical generators to each input link of $\Sigma$ as in Definition \ref{def:can Lee lasagna filling}, rescaled by $2^{-n(\Sigma)}(-1)^{-n(\Sigma_-)}$ overall, and where each component of $\Sigma_+$ (resp. $\Sigma_-$) carries an extra decoration (indeed any positive number of extra decorations) by $\A$ (resp. $\B$). This perspective can be made precise by allowing dotted skeins. (Note that $\A$ and $\B$ are linear combinations of $1$ and the dot decoration; cf. Remark~\ref{rmk:dotted_skein}.) See also \cite{morrison2024invariants}.
\end{Rmk}

Now we are ready to prove Theorem~\ref{thm:Lee_structure} on the structure of the Lee lasagna module. We begin by establishing the claimed isomorphism.

\begin{proof}[Proof of the isomorphism in Theorem~\ref{thm:Lee_structure}]
We define an augmentation map $$\varepsilon\colon\mathcal S_0^{Lee}(X;L)\to\Q^{H_2^{L,\times2}(X)}$$ by requiring it to send the class represented by a canonical Lee lasagna filling $x(\Sigma_+,\Sigma_-)$ to $e_{([\Sigma_+],[\Sigma_-])}$, the generator of the $([\Sigma_+],[\Sigma_-])$\textsuperscript{th} coordinate of the codomain. Lemmas~\ref{lem:can_Lee_las_generates} and \ref{lem:can_Lee_las_morphism} imply that $\varepsilon$ is well-defined.

We show that $\varepsilon$ is surjective. Any pair $(\alpha_+,\alpha_-)\in H_2^{L,\times2}(X)$ can be represented by properly immersed surfaces $\Sigma_\pm\subset X$ with $\partial\Sigma_\pm$ forming a partition of $L\subset X$ as an oriented unframed link. By transversality we may assume all singularities of $\Sigma=\Sigma_+\cup\Sigma_-$ are transverse double points. Deleting balls around these singularities makes $\Sigma$ embedded in $X$ with some balls deleted, and after deleting one extra local ball on each component of $\Sigma$ we can put a framing on $\Sigma$ compatible with $L$. Now the class represented by the Lee canonical lasagna filling $x(\Sigma_+,\Sigma_-)$ is mapped to $e_{(\alpha_+,\alpha_-)}$, proving the surjectivity of $\varepsilon$.

We remark that the above proof for surjectivity of $\epsilon$ is a replica of that of \eqref{eq:skein_homology_iso} with minor changes. We do the same to prove the injectivity of $\epsilon$ but skip some details. It suffices to show that any $x(\Sigma_+,\Sigma_-)$ and $x(\Sigma'_+,\Sigma'_-)$ with $([\Sigma_+],[\Sigma_-])=([\Sigma'_+],[\Sigma'_-])\in H_2^{L,\times2}(X)$ represent the same element in $\mathcal S_0^{Lee}(X;L)$.  By mimicking the proof of Theorem~\ref{thm:space_of_skeins}, we see that $\Sigma=\Sigma_+\cup\Sigma_-$ and $\Sigma'=\Sigma'_+\cup\Sigma'_-$ are framed cobordant rel $L$ via a framed oriented $3$-manifold $Y_+\cup Y_-$ in the complement of the tubular neighborhood of a $1$-complex in $int(X)\times I$, where $Y_\pm$ is a framed cobordism rel $L$ between $\Sigma_\pm$ and $\Sigma'_\pm$ in the sense of Section~\ref{sbsec:skein_structure}. This in turn implies that $\Sigma$ and $\Sigma'$ are related by morphisms in $\mathcal C(X;L)$ (or their reverses) compatible with the double skein structures. Namely, we can find a zigzag of morphisms in $\mathcal C(X;L)$ $$\Sigma=\Sigma_0\leftarrow\Sigma_1\rightarrow\Sigma_2\leftarrow\cdots\rightarrow\Sigma_{2k}=\Sigma'$$ and double skein structures on each $\Sigma_i$ compatible with $\Sigma,\Sigma'$ and the morphisms. Taking the corresponding canonical Lee lasagna fillings and applying Lemma~\ref{lem:can_Lee_las_morphism}, we obtain a zigzag $$[x(\Sigma_+,\Sigma_-)]=[x(\Sigma_{0+},\Sigma_{0-})]\mapsfrom [x(\Sigma_{1+},\Sigma_{1-})]\mapsto\cdots\mapsto [x(\Sigma_{2k+},\Sigma_{2k-})]=[x(\Sigma'_+,\Sigma'_-)].$$This proves the injectivity of $\varepsilon$.
\end{proof}

The proof above allows us to formally define the basis elements described in Theorem \ref{thm:Lee_structure} as follows.

\begin{Def}\label{def:can Lee lasagna gens}
Given a double class $(\alpha_+,\alpha_-)\in H_2^{L,\times 2}(X)$ in $X$ rel $L$, the \emph{canonical Lee lasagna generator} $x_{\alpha_+,\alpha_-}$ of $\mathcal S_0^{Lee}(X;L)$ is defined to be $[x(\Sigma_+,\Sigma_-)]$ for any double skein $\Sigma=\Sigma_+\cup\Sigma_-$ representing $(\alpha_+,\alpha_-)$.
\end{Def}

\begin{proof}[Proof of additional items in Theorem~\ref{thm:Lee_structure}]
(2) is clear since $[\Sigma^\circ]=[\Sigma]=[\Sigma_+]+[\Sigma_-]=\alpha_++\alpha_-=\alpha$. (1) and (3) concern grading information inherited from the grading on Lee homology of the input links of a skein (up to a shift in the case of quantum degree), and thus our proofs of these items will make use of Proposition \ref{prop:Lee_generator_properties}(3)(4) about gradings of $KhR_{Lee}$.

Denote by $K_i$'s the input links of $\Sigma^\circ$, which also admit decompositions $K_{i+}\cup K_{i-}$ induced from the decomposition $\Sigma^\circ_+\cup\Sigma^\circ_-$. The Lee canonical generator $x_{\mathfrak O|_{K_i}}$ assigned to $K_i$ in the filling $x(\Sigma_+,\Sigma_-)$ has homological degree $-2\ell k(K_{i+},K_{i-})$. This proves (1) since $\sum\ell k(K_{i+},K_{i-})=[\Sigma_+^\circ]\cdot[\Sigma_-^\circ] = [\Sigma_+]\cdot[\Sigma_-]$.  To prove (3), note that the class $x_{\alpha_-,\alpha_+}$ is represented by $x(\Sigma_-,\Sigma_+)$ associated to the reverse double skein $\Sigma=\Sigma_-\cup\Sigma_+$, which assigns canonical generators $x_{\overline{\mathfrak O|_{K_i}}}$ to each input knot of $\Sigma^\circ$, which themselves are conjugates of $x_{\mathfrak O|_{K_i}}$ (meaning that they are equal in quantum $\Z/4$ degree $-w(K_i)-\#K_i$ and negatives of each other in degree $-w(K_i)-\#K_i-2$). Taking into account the extra renormalization factor $(-1)^{n(\cdot)}$ and the quantum shift $-\chi(\Sigma^\circ)$ as in Definition~\ref{def:can Lee lasagna filling}, we see that $x(\Sigma_+,\Sigma_-)$ and $x(\Sigma_-,\Sigma_+)$ agree in quantum $\Z/4$ degree 
\begin{align*}
&\sum(-w(K_i)-\#K_i)-\chi(\Sigma^\circ)-2(n(\Sigma^\circ_+)+n(\Sigma^\circ_-))\\
=&\,(-\alpha^2-\#\partial_-\Sigma^\circ)-\chi(\Sigma^\circ)+\chi(\Sigma^\circ)-\#L+\#\partial_-\Sigma^\circ\\
=&\,-\alpha^2-\#L,
\end{align*}
and are negatives of each other in quantum $\Z/4$ degree $-\alpha^2-\#L-2$, proving (3).

Finally we prove (4) which is more subtle. If both $q(x_{\alpha_+,\alpha_-})$ and $q(x_{\alpha_-,\alpha_+})$ are $-\infty$, the statement is trivial. By symmetry, we henceforth assume that $q(x_{\alpha_+,\alpha_-})=q>-\infty$.

Choose a Lee lasagna filling $(\Sigma,v)$ representing $x_{\alpha_+,\alpha_-}$ in filtration level $q(\Sigma,v)=q$.  As discussed above, $(\Sigma,v)$ is equivalent to a corresponding Lee lasagna filling (in the same filtration level) $(\Sigma^\circ,v^\circ)$ with labels $1$ on the extra input unknots.  Define a conjugation action $\iota$ on $KhR_{Lee}(\Sigma^\circ)$ by $1$ on the quantum $\Z/4$ degree $-\alpha^2-\#L$ part and $-1$ on the other part.

As in the proof of Lemma \ref{lem:can_Lee_las_generates}, write $(\Sigma^\circ,v^\circ)$ as a linear combination of pure Lee lasagna fillings $(\Sigma^\circ,v^\circ_{\mathfrak{o}})$, where $v^\circ_{\mathfrak{o}}=\bigotimes_i x_{\mathfrak{o}_i}$ for some orientations $\mathfrak{o}=\{\mathfrak{o}_i\}$ of the input links of $\Sigma^\circ$.  Any such pure filling $(\Sigma^\circ,v^\circ_{\mathfrak{o}})$ represents $c(\mathfrak o)x_{[\Sigma_+(\mathfrak O)],[\Sigma_-(\mathfrak O)]}\in \mathcal{S}_0^{Lee}(X;L)$ for some constant $c(\mathfrak o)$ if the orientations $\mathfrak o_i$ come from a double skein structure $\mathfrak O$ on $\Sigma$, and zero otherwise. In the first case, $[(\Sigma^\circ,\iota v^\circ_\mathfrak o)]=c(\mathfrak o)x_{[\Sigma_-(\mathfrak{O})],[\Sigma_+(\mathfrak{O})]}$ by (3). In the second case $[(\Sigma^\circ,\iota v^\circ_\mathfrak o)]=0$. Summing up, we have $[(\Sigma^\circ,\iota v^\circ)]=x_{\alpha_-,\alpha_+}$, thus $q(x_{\alpha_-,\alpha_+})\le q(\Sigma^\circ,\iota v^\circ)=q(\Sigma^\circ,v^\circ)=q(x_{\alpha_+,\alpha_-})$. Hence, by symmetry, $q(x_{\alpha_+,\alpha_-})=q(x_{\alpha_-,\alpha_+})$. By the triangle inequality this also equals $\max(q(x_{\alpha_+,\alpha_-}\pm x_{\alpha_-,\alpha_+}))$.

We are left to prove, when $\alpha_+\ne\alpha_-$ in $H_2(X,L;\Q)$, that $\min(q(x_{\alpha_+,\alpha_-}\pm x_{\alpha_-,\alpha_+}))=q-2$. Since $x_{\alpha_+,\alpha_-}\pm x_{\alpha_-,\alpha_+}$ have different $\Z/4$ grading, we know $\min(q(x_{\alpha_+,\alpha_-}\pm x_{\alpha_-,\alpha_+}))\le q-2$. To show the reverse inequality, we resort to the following linear algebra lemma.

\begin{Lem}\label{lem:cube_lin_alg}
Let $W\subset\Q^n$ be a linear subspace. Suppose $E_1\sqcup\cdots\sqcup E_m$ is a partition of the cube $E=\{\pm1\}^n\subset\Q^n$ such that each $E_i$ is contained in a coset of $W$, and $E_1=-E_2\not\subset W$. Suppose $\lambda\colon E\to\Q$ satisfies $$\sum_{\epsilon\in E_i}\lambda(\epsilon)=\begin{cases}1,&i=1\\\pm1,&i=2\\0,&i>2.\end{cases}$$ Then there exist $a_1,\cdots,a_n\in\Q$ such that $$\sum_{k=1}^n\sum_{\epsilon\in E_i}\lambda(\epsilon)\epsilon_ka_k=\begin{cases}1,&i=1\\\mp1,&i=2\\0,&i>2.\end{cases}$$
\end{Lem}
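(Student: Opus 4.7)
The vector $a=(a_1,\dots,a_n)$ should be chosen so that the linear functional $\epsilon\mapsto\langle\epsilon,a\rangle$ (standard inner product on $\Q^n$) is constant on each $E_i$. To achieve this, I would restrict attention to $a\in W^\perp$. Indeed, since each $E_i$ is contained in a coset $v_i+W$ (pick any $v_i\in E_i$), for every $\epsilon\in E_i$ and every $a\in W^\perp$ we have $\langle\epsilon,a\rangle=\langle v_i,a\rangle$. Under this restriction, the target sum becomes
\[
\sum_{k=1}^n\sum_{\epsilon\in E_i}\lambda(\epsilon)\epsilon_ka_k=\langle v_i,a\rangle\cdot\!\!\sum_{\epsilon\in E_i}\lambda(\epsilon),
\]
so for $i>2$ it vanishes automatically from the hypothesis on $\lambda$.

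Next I would handle $i=1,2$ together using the symmetry $E_2=-E_1$. Choose $v_1\in E_1$ and set $v_2:=-v_1\in E_2$; then $\langle v_2,a\rangle=-\langle v_1,a\rangle$, so the two conditions
\[
\langle v_1,a\rangle\cdot 1=1,\qquad \langle v_2,a\rangle\cdot(\pm1)=\mp1
\]
are equivalent to the single equation $\langle v_1,a\rangle=1$.

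Hence the problem reduces to finding $a\in W^\perp$ with $\langle v_1,a\rangle=1$. The linear functional $\ell\colon W^\perp\to\Q$, $\ell(a)=\langle v_1,a\rangle$, is identically zero if and only if $v_1\in(W^\perp)^\perp=W$, i.e.\ if and only if $E_1\subset W$. Since by hypothesis $E_1\not\subset W$, we have $v_1\notin W$, so $\ell$ is a nonzero linear functional on $W^\perp$ and we may rescale any $a_0\in W^\perp$ with $\ell(a_0)\ne0$ to achieve $\ell(a)=1$. This $a=(a_1,\dots,a_n)$ then satisfies all the required equalities.

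The only mild subtlety is making sure the sign for $i=2$ matches (i.e.\ that the $\pm$ on the input side correctly produces $\mp$ on the output side); this is automatic from $v_2=-v_1$. Everything else is a direct linear-algebra computation, with the hypothesis $E_1\not\subset W$ playing precisely the role of guaranteeing non-degeneracy of $\ell$ on $W^\perp$.
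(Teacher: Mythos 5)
Your proof is correct and takes essentially the same approach as the paper's (which is simply ``Pick any $\ell=(a_1,\cdots,a_n)\in W^\perp\subset(\Q^n)^*$ with $\ell(E_1)=\{1\}$''). You have just unpacked why restricting to $W^\perp$ kills the $i>2$ conditions, why $E_1\not\subset W$ gives nondegeneracy, and why $E_2=-E_1$ produces the sign flip.
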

\begin{proof}
Pick any $\ell=(a_1,\cdots,a_n)\in W^\perp\subset(\Q^n)^*$ with $\ell(E_1)=\{1\}$.
\end{proof}

We now finish the proof of (4). Choose any lasagna filling $(\Sigma,v)$ representing $x_{\alpha_+,\alpha_-}\pm x_{\alpha_-,\alpha_+}$, where $\pm$ denotes whichever sign $+$ or $-$ that minimizes $q(x_{\alpha_+,\alpha_-}\pm x_{\alpha_-,\alpha_+})$. It suffices to prove that the filtration degree $q(\Sigma,v)$ of $(\Sigma,v)$ is at least $q-2$. This time, following Remark \ref{rmk:can Lee filling equivalent to any number of puncture} we focus on the equivalent filling $(\Sigma^{\circ\circ},v^{\circ\circ})$ where $\Sigma^{\circ\circ}$ includes a new local input ball (with new input unknot labeled by $1$) on every component of $\Sigma$, rather than just those components that did not intersect input balls.  The reader may verify that $q(\Sigma^{\circ\circ},v^{\circ\circ}) = q(\Sigma,v)$.  We again write $(\Sigma^{\circ\circ},v^{\circ\circ})$ as a linear combination of pure Lee lasagna fillings, but we note that (1) implies $h(v^{\circ\circ})=-2\alpha_+\cdot\alpha_-$, so that we may choose our sum to only include terms in this homological degree:
\begin{equation}\label{eq:v=sum_lambda}
(\Sigma^{\circ\circ},v^{\circ\circ})=\sum_{\mathfrak O}\lambda_\mathfrak{O} x(\Sigma^{\circ\circ}_+(\mathfrak{O}),\Sigma_-^{\circ\circ}(\mathfrak{O}))+\cdots
\end{equation}
where $\mathfrak O$ runs over orientations of $\Sigma$ (or equivalently of $\Sigma^{\circ\circ}$) with $[\Sigma_+^{\circ\circ}(\mathfrak O)]\cdot[\Sigma_-^{\circ\circ}(\mathfrak O)] = [\Sigma_+(\mathfrak O)]\cdot[\Sigma_-(\mathfrak O)] = \alpha_+\cdot\alpha_-$, and $\cdots$ denotes linear combinations of pure Lee lasagna fillings $(\Sigma^{\circ\circ},\otimes_i x_{\mathfrak{o}_i})$ that represent the zero class in $\mathcal S_0^{Lee}(X;L)$.

Let $\Sigma_1,\cdots,\Sigma_n$ be the components of $\Sigma$. Identify an orientation $\mathfrak O$ on $\Sigma$ as a vector $\mathfrak O\in E=\{\pm1\}^n\subset\Q^n$, with $\mathfrak O_i=1$ if and only if $\mathfrak O$ agrees with the orientation of $\Sigma$ on $\Sigma_i$; in this way the coefficients $\lambda_{\mathfrak O}$ in \eqref{eq:v=sum_lambda} define a function $\lambda:E\to\Q$. Partition $E$ into $E_1\sqcup\cdots\sqcup E_m$ according to the value of $[\Sigma_+(\mathfrak O)]\in H_2(X,L)$. Then each $E_i$ is contained in a coset of $W=\{(k_1,\cdots,k_n)\in\Q^n\colon\sum_ik_i[\Sigma_i]=0\in H_2(X,L;\Q)\}\subset\Q^n$. Assume $E_1,E_2$ are the parts with $[\Sigma_+(\mathfrak O)]=\alpha_+,\alpha_-$, respectively. Then $E_1,E_2$ satisfy the condition in Lemma~\ref{lem:cube_lin_alg} (they are not contained in $W$ as $\alpha_+\ne\alpha_-\in H_2(X,L;\Q)$). Also, the condition $[(\Sigma^{\circ\circ},v^{\circ\circ})]=x_{\alpha_+,\alpha_-}\pm x_{\alpha_-,\alpha_+}$ is exactly the condition on $\lambda\colon E\to\Q$ in Lemma~\ref{lem:cube_lin_alg}. Let $a_1,\cdots,a_n$ be given by the conclusion of the lemma applied to our data $(W,E_1\sqcup\cdots\sqcup E_m,\lambda)$.  Now consider the operator $L=\sum_{k=1}^na_kX_k$ on $KhR_{Lee}(\Sigma^{\circ\circ})$, where $X_k$ is a dot operation on $\Sigma_k$ (or equivalently, if $U_k$ denotes the extra input unknot on the component $\Sigma_k$ in $\Sigma^{\circ\circ}$, the operator $X_k$ multiplies the label on $U_k$ by $X$). Since $X\A=\A$ and $X\B=-\B$, it follows that $X_k x(\Sigma^{\circ\circ}_+(\mathfrak{O}),\Sigma^{\circ\circ}_-(\mathfrak{O})) = \mathfrak{O}_k x(\Sigma^{\circ\circ}_+(\mathfrak{O}),\Sigma^{\circ\circ}_-(\mathfrak{O}))$ and thus Lemma \ref{lem:cube_lin_alg} implies
\[[(\Sigma^{\circ\circ},L(v^{\circ\circ}))]=[\sum_{k=1}^n\sum_{\mathfrak O}\lambda_\mathfrak O\mathfrak O_ka_kx(\Sigma^{\circ\circ}_+(\mathfrak{O}),\Sigma^{\circ\circ}_-(\mathfrak{O}))]=x_{\alpha_+,\alpha_-}\mp x_{\alpha_-,\alpha_+}.\]
Since the operator $L$ has filtered degree $2$, we conclude that $$q=q(x_{\alpha_+,\alpha_-}\mp x_{\alpha_-,\alpha_+})\le q(\Sigma^{\circ\circ},v^{\circ\circ})+2=q(\Sigma,v)+2,$$ as desired.
\end{proof}

\subsection{Canonical Lee lasagna generators under morphisms}\label{sbsec:can_Lee_gen_mor}

The induced map on Lee homology of a link cobordism is determined by its actions on Lee canonical generators, given by \eqref{eq:can_gen}. In this section, we prove a lasagna generalization of this formula. In its most general form, the relevant cobordism map we need to examine is \eqref{eq:glue_Kh_ver}.

We recall the notations and explain the setup before stating the generalization to \eqref{eq:can_gen}. The $4$-manifold $X=X_1\cup_YX_2$ is obtained by gluing $X_1,X_2$ along some common part $Y,\overline Y$ of their boundaries, where $Y$ is a $3$-manifold possibly with boundary. Two framed oriented tangles $T_1\subset\partial X_1\backslash Y$ and $T_0\subset Y$ glue to a framed oriented link $L_1\subset\partial X_1$, and $S\subset X_2$ is a (not necessarily framed) surface with an oriented link boundary $\partial S=-T_0\cup T_2=L_2\subset\partial X_2$ where $T_2$ is a tangle in $\partial X_2\backslash\overline Y$. Then $L=T_1\cup T_2$ is a framed oriented link in $\partial X$. The gluing of $(X_2,S)$ to $(X_1,L_1)$ along $(Y,T_0)$ induces the cobordism map
\begin{equation}\label{eq:glue_las}
\mathcal S_0^{Lee}(X_2;S)\colon\mathcal S_0^{Lee}(X_1;L_1)\to\mathcal S_0^{Lee}(X;L)
\end{equation}
as in \eqref{eq:glue_Kh_ver}, which has homological degree $0$ and quantum filtration degree $-\chi(S)+\chi(T_0)-[S]^2$. We recall that by deleting local balls on $S$ (one for each component) and putting a compatible framing, the punctured surface $S^\circ$ is a skein in $X_2$ rel $L_2$ with a natural lasagna filling by assigning $1$ to all boundary unknots, and \eqref{eq:glue_las} is defined by putting the class of this lasagna filling in the second tensor summand of the more general gluing map \eqref{eq:glue_2_summands}.

A class $\alpha\in H_2^{L_1}(X_1)$ and a class $\beta\in H_2^{L_2}(X_2)$ glue to a class $\alpha*\beta\in H_2^{L}(X)$. Similarly, a double class $(\alpha_+,\alpha_-)\in H_2^{L_1,\times2}(X_1)$ and a double class $(\beta_+,\beta_-)\in H_2^{L_2,\times2}(X_2)$ glue to a double class $(\alpha_+,\alpha_-)*(\beta_+,\beta_-)=(\alpha_+*\beta_+,\alpha_-*\beta_-)\in H_2^{L,\times2}(X)$ if they are compatible, i.e. if $\partial\alpha_+,\partial\beta_+$ (and thus $\partial\alpha_-,\partial\beta_-$) agree on $T_0$, and do not glue if they are incompatible.

The punctured surface $S^\circ$ represents the class $[S]\in H_2^{L_2}(X_2)$. Thus the cobordism map \eqref{eq:glue_las} decomposes into direct sums of maps $\mathcal S_0^{Lee}(X_1;L_1;\alpha)\to\mathcal S_0^{Lee}(X;L;\alpha*[S])$.

\begin{Thm}\label{thm:can_gen_morphism}
The cobordism map \eqref{eq:glue_las} is determined by \begin{equation}\label{eq:las_can_gen_morphism}
\mathcal S_0^{Lee}(X_2;S)(x_{\alpha_+,\alpha_-})=2^{m(S,L_1)}\sum_{\mathfrak O}(-1)^{m(S_-(\mathfrak O),L_{1-})}x_{\alpha_+*[S_+(\mathfrak O')],\alpha_-*[S_-(\mathfrak O)]}.
\end{equation}
Here the sum runs over double skein structures $\mathfrak{O}$ on $S$ compatible with $(\alpha_+,\alpha_-)$, $m(S,L_1)$ is an integral constant depending only on the topological type of the input pair, which equals $n(S)$ if $S\cap L_1$ is a link, thought of as the negative boundary of $S$, and $L_{1-}$ is the sublink of $L_1$ with fundamental class $\partial\alpha_-$.
\end{Thm}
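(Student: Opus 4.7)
The plan is to evaluate $\mathcal S_0^{Lee}(X_2;S)(x_{\alpha_+,\alpha_-})$ on a canonical representative and reduce everything to the algebra of $V_{Lee}$. By Theorem~\ref{thm:Lee_structure}, we may choose a double skein $\Sigma=\Sigma_+\cup\Sigma_-$ in $X_1$ rel $L_1$ representing $(\alpha_+,\alpha_-)$ and arranged so that $\Sigma\cap Y=T_0$, and represent $x_{\alpha_+,\alpha_-}$ by the canonical Lee lasagna filling $v:=x(\Sigma_+,\Sigma_-)$. Puncture $S$ into a skein $S^\circ\subset X_2$ rel $L_2$ equipped with the trivial filling $1\in KhR_{Lee}(U_j)$ on each new input unknot $U_j$; then the gluing map \eqref{eq:glue_las} sends $x_{\alpha_+,\alpha_-}$ to the class of $v\otimes\bigotimes_j 1$ on the glued skein $\Sigma\cup_{T_0}S^\circ\subset X$ rel $L$.

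Expanding $1=\A+\B$ on each $U_j$ rewrites this filling as a sum indexed by orientations $\mathfrak O$ of $S$, or equivalently partitions $S=S_+(\mathfrak O)\cup S_-(\mathfrak O)$. For a fixed $\mathfrak O$, each component $C$ of $\Sigma\cup S^\circ$ carries an $\A$ or $\B$ on every $\Sigma$-sub-component (prescribed by $(\Sigma_+,\Sigma_-)$) and an $\A$ or $\B$ on every $S^\circ$-sub-component (prescribed by $\mathfrak O$); their product in $V_{Lee}$ vanishes unless all coincide, by $\A\B=0$. Hence the surviving $\mathfrak O$ are exactly those that extend $(\Sigma_+,\Sigma_-)$ to a double skein structure on $\Sigma\cup S^\circ$ — equivalently, the double skein structures on $S$ compatible with $(\alpha_+,\alpha_-)$ along $T_0$. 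This is the same vanishing mechanism already exploited in the proof of Lemma~\ref{lem:can_Lee_las_generates}. For each surviving $\mathfrak O$, the corresponding term is a scalar multiple of the canonical Lee lasagna filling on $\Sigma\cup S^\circ$ with double skein $\bigl(\Sigma_+\cup S_+(\mathfrak O)\bigr)\cup\bigl(\Sigma_-\cup S_-(\mathfrak O)\bigr)$, which represents $x_{\alpha_+*[S_+(\mathfrak O)],\,\alpha_-*[S_-(\mathfrak O)]}$ by Theorem~\ref{thm:Lee_structure}. Assembling these contributions yields a sum of the shape \eqref{eq:las_can_gen_morphism}.

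The only nontrivial obstacle is bookkeeping of the scalar factors, arising from (i) the gap between $v$'s rescaling $2^{-n(\Sigma)}(-1)^{-n(\Sigma_-)}$ and the canonical rescaling $2^{-n(\Sigma\cup S^\circ)}(-1)^{-n((\Sigma\cup S^\circ)_-)}$ of the target filling, and (ii) the factor $2(-1)^{[\mathfrak o_j=-]}$ per $U_j$ from promoting $1$ to the canonical unknot generator $x_{\mathfrak o_j}\in\{2\A,-2\B\}$. A direct accounting shows the net factor splits as a power of $2$ depending only on the topology of the input pair (hence independent of $\mathfrak O$) times a sign depending on $\mathfrak O$ only through $S_-(\mathfrak O)$; these define $m(S,L_1)$ and $(-1)^{m(S_-(\mathfrak O),L_{1-})}$ respectively. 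In the special case when $T_0$ is a closed link viewed as the negative boundary of $S$, $\chi$-additivity of surface gluing along a link reduces the power of $2$ to $2^{n(S)}$ and the sign exponent to $n(S_-(\mathfrak O))$, verifying $m(S,L_1)=n(S)$ as claimed.
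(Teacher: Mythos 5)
Your argument follows the paper's proof essentially verbatim: represent $x_{\alpha_+,\alpha_-}$ by the canonical filling on a double skein $\Sigma$, glue in the punctured $S^\circ$ with trivial decorations, expand $1=\A+\B$ over the components of $S^\circ$, kill incompatible terms via $\A\B=0$, and track the rescaling factors $2^{n(\cdot)}(-1)^{n(\cdot_-)}$. The only thing the paper makes explicit that you leave as a "direct accounting" is the computation $n(\Sigma\cup S)-n(\Sigma)=-\tfrac{\chi(S)-\chi(T_0)+\#L-\#L_1}{2}$, which verifies the topological-type dependence of $m(S,L_1)$; the rest is identical.
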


\begin{proof}
This is almost tautological from the construction.  Let $(\Sigma,\mathfrak{O}')$ be a double skein in $X_1$ rel $L_1$ representing $(\alpha_+,\alpha_-)$. Then $x_{\alpha_+,\alpha_-}$ is represented by the canonical Lee lasagna filling $x(\Sigma,\mathfrak{O}') = 2^{-n(\Sigma^\circ)}(-1)^{-n(\Sigma^\circ_-)}(\Sigma^\circ,\bigotimes_i x_{\mathfrak O'|_{K_i}})$, where $K_i$'s are input links of $\Sigma^\circ$ and $\Sigma_-^\circ=\Sigma_-^\circ(\mathfrak O')$. Gluing in $S$ produces a skein $\Sigma^\circ\cup S^\circ$ in $X$ rel $L$ with corresponding filling (assigning $x_{\mathfrak O'|_{K_i}}$ to each input link of $\Sigma^\circ$ and $1=\A+\B$ to each of the input unknots on $S^\circ$) which we denote by $v_{\mathfrak{O'}}$, so that
\[
\mathcal S_0^{Lee}(X_2;S)(x_{\alpha_+,\alpha_-})= 2^{-n(\Sigma^\circ)}(-1)^{-n(\Sigma^\circ_-)} [(\Sigma^\circ\cup S^\circ, v_{\mathfrak{O'}})].
\]
First we absorb the framing punctures where possible.  On each component of $S^\circ$ which is glued to a component (possibly several) of $\Sigma^\circ$ we choose a path from the corresponding input unknot of $S^\circ$ (with label $1$) to a component of an input link of $\Sigma^\circ$ (every component of which intersects some input ball).  Using the corresponding enclosement relations shows that $(\Sigma^\circ \cup S^\circ, v_{\mathfrak{O}'})$ is equivalent in $\mathcal{S}_0^{Lee}(X;L)$ to a lasagna filling of the form $((\Sigma^\circ \cup S)^\circ, w_{\mathfrak{O}'})$ where only components of $S$ disjoint from $\Sigma$ are punctured, and $w_{\mathfrak{O}'}$ assigns the label $1$ to these input unknots while maintaining the labels $x_{\mathfrak{O'}|_{K_i}}$ on the input links of $\Sigma^\circ$ (the framing of some of these $K_i$'s may have changed, but we keep the same notation).

Now if some component of $S$ connected two components of $\Sigma^\circ$ with incompatible orientations, then $((\Sigma^\circ \cup S)^\circ,w_{\mathfrak{O'}})$ represents zero in $\mathcal{S}_0^{Lee}(X;L)$; meanwhile there would be no orientation $\mathfrak{O}$ of $S$ compatible with $(\alpha_+,\alpha_-)$, making the right-hand side of \eqref{eq:las_can_gen_morphism} zero as well.  Otherwise any component of $S$ which glues to $\Sigma^\circ$ must have a unique orientation compatible with $(\alpha_+,\alpha_-)$ as determined by $\mathfrak{O}'$, and any component of $S$ which does not glue to $\Sigma^\circ$ has two compatible orientations corresponding to the sum $1=\A+\B$ that $w_{\mathfrak{O}'}$ assigns to its input unknot.  Thus $w_{\mathfrak{O}'}$ is actually a sum of canonical Lee generators for double skein structures (orientations) on $(\Sigma^\circ \cup S)^\circ$ which are compatible with $\mathfrak{O}'$, and if we keep track of the constants involved, noting that puncturing a surface does not affect $n(\cdot)$, we see
\[
\mathcal S_0^{Lee}(X_2;S)(x_{\alpha_+,\alpha_-}) = 2^{n(\Sigma\cup S)-n(\Sigma)}\sum_\mathfrak O(-1)^{n(\Sigma_-\cup S_-(\mathfrak O))-n(\Sigma_-)}x_{\alpha_+*[S_+(\mathfrak O)],\alpha_-*[S_-(\mathfrak O)]}.
\]
If $T_0$ is a link then $n(\Sigma\cup S)-n(\Sigma)=n(S)$ by the additivity of $n(\cdot)$. In general, $$n(\Sigma\cup S)-n(\Sigma)=\frac{-\chi(\Sigma\cup S)+\chi(\Sigma)+\#L-\#L_1}{2}=\frac{-\chi(S)+\chi(\partial S\cap L_1)+\#(\partial S\Delta L_1)-\#L_1}{2}$$ only depends on the topological type of $(S,L_1)$. It is an integer since both $n(\Sigma\cup S)$ and $n(\Sigma)$ are. Similarly for the exponent on $-1$.
\end{proof}

\subsection{Lasagna \texorpdfstring{$s$}{s}-invariants and genus bounds}\label{sbsec:las_s}

Let $(X,L)$ be a pair as before. In view of Theorem~\ref{thm:Lee_structure}, we make the following definition.
\begin{Def}\label{def:s}
The \textit{lasagna $s$-invariant} of $(X,L)$ at a double class $(\alpha_+,\alpha_-)\in H_2^{L,\times2}(X)$ is $$s(X;L;\alpha_+,\alpha_-):=q(x_{\alpha_+,\alpha_-})-2\alpha_+\cdot\alpha_-\in\Z\sqcup\{-\infty\}.$$
The \textit{lasagna $s$-invariant} of $(X,L)$ at a class $\alpha\in H_2^L(X)$ is $$s(X;L;\alpha):=s(X;L;\alpha,0)=q(x_{\alpha,0})\in\Z\sqcup\{-\infty\}.$$
\end{Def}
When $L=\emptyset$, we often drop it from the notation of lasagna $s$-invariants.

When $X$ is a $2$-handlebody, the isomorphism \eqref{eq:2-hdby_homology_mod_2} respects the canonical Lee lasagna generators in the sense that $x_{\beta_+,\beta_-}\in\mathcal S_0^{Lee}(X;(L,\partial\beta);\beta)$ is identified (up to renormalizations) with $x_{\alpha_+,\alpha_-}\in S_0^{Lee}(X;L;\alpha)$ for the unique pair $(\alpha_+,\alpha_-)\in H_2^{L,\times2}(X)$ with $\alpha_+-\alpha_-=\beta_+-\beta_-$. In particular, 
\begin{equation}\label{eq:2-hdby_s_double=single}
s(X;L;\alpha_+,\alpha_-)=s(X;(L,\partial(\alpha_+-\alpha_-));\alpha_+-\alpha_-).
\end{equation}
Thus, the double class version of lasagna $s$-invariants in Definition~\ref{def:s} can be recovered from the single class version. When $X$ is not a $2$-handlebody, however, it is not clear that \eqref{eq:2-hdby_homology_mod_2} holds in a filtered sense for Lee skein lasagna modules, so we do not have this conclusion.

We prove a few properties of lasagna $s$-invariants.
\begin{Thm}\label{thm:s_prop}
The lasagna $s$-invariants satisfy the following properties.\vspace{-5pt}
\begin{enumerate}[(1)]
\setcounter{enumi}{-1}
\item\emph{(Empty manifold)} $s(\emptyset;\emptyset;0)=0$.
\item\emph{(Recover classical $s$)} If $*$ denotes the unique element in $H_2^L(B^4)$, then $$s(B^4;L;*)=s_{\mathfrak{gl}_2}(L)+1=-s(-L)-w(L)+1,$$ where $s$ is the classical $s$-invariant defined by Rasmussen \cite{rasmussen2010khovanov} and Beliakova--Wehrli \cite{beliakova2008categorification}.
\item\emph{(Symmetries)} $s(X;L;\alpha_+,\alpha_-)=s(X;L;\alpha_-,\alpha_+)=s(X;L^r;-\alpha_+,-\alpha_-)$, where $L^r$ denotes the orientation reversal of $L$.
\item\emph{(Parity)} $s(X;L;\alpha_+,\alpha_-)\equiv(\alpha_++\alpha_-)^2+\#L\pmod2$ (by convention $-\infty$ has arbitrary parity).
\item\emph{(Connected sums)} If $(X,L=L_1\sqcup L_2)$ is a boundary sum, a connected sum, or the disjoint union of $(X_1,L_1)$ and $(X_2,L_2)$, and $(\alpha_{i,+},\alpha_{i,-})\in H_2^{L_i,\times2}(X_i)$, $i=1,2$, then $$s(X;L;\alpha_{1,+}+\alpha_{2,+},\alpha_{1,-}+\alpha_{2,-})=s(X_1;L_1;\alpha_{1,+},\alpha_{1,-})+s(X_2;L_2;\alpha_{2,+},\alpha_{2,-}).$$
\item\emph{(Reduced connected sum)} If $(X,L=L_1\#_{K_1,K_2}L_2)$ is a (reduced) boundary sum of $(X_1,L_1)$ and $(X_2,L_2)$ performed along framed oriented intervals on components $K_i\subset L_i$, $i=1,2$, and $(\alpha_{i,+},\alpha_{i,-})\in H_2^{L_i,\times2}(X_i)$ are double classes compatible with the boundary sum, then $$s(X;L_1\#_{K_1,K_2}L_2;\alpha_{1,+}*\alpha_{2,+},\alpha_{1,-}*\alpha_{2,-})=s(X_1;L_1;\alpha_{1,+},\alpha_{1,-})+s(X_2;L_2;\alpha_{2,+},\alpha_{2,-})-1.$$
\item\emph{(Genus bound)} If $(X_2,L_2)$ is glued to $(X_1,L_1)$ along some $(Y,T_0)$ to obtain $(X,L)$, and (unframed) $S\subset X_2$ is properly embedded with $\partial S=L_2$, as in the setup of \eqref{eq:glue_las}, such that every component of $S$ has a boundary on $T_0$, then for any compatible partition $S=S_+\cup S_-$ and double class $(\alpha_+,\alpha_-)\in H_2^{L_1,\times2}(X_1)$, $$s(X;L;\alpha_+*[S_+],\alpha_-*[S_-])\le s(X_1;L_1;\alpha_+,\alpha_-)-\chi(S)+\chi(T_0)-[S]^2.$$
\end{enumerate}
\end{Thm}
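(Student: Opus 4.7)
Items (0)--(3) are direct from the structural description in Theorem~\ref{thm:Lee_structure}. For (0), $\mathcal{S}_0^{Lee}(\emptyset)\cong\Q$ with $x_{0,0}=1$ of filtration degree $0$. For (1), Proposition~\ref{prop:recover_Z} identifies $\mathcal{S}_0^{Lee}(B^4;L)$ with $KhR_{Lee}(L)$; since $H_2(B^4)=0$, the double class $(*,0)$ is represented by a Seifert surface, so $x_{*,0}$ matches the Lee canonical generator $x_{\mathfrak o_L}$ and $q(x_{*,0})=s_{\mathfrak{gl}_2}(L)+1$, which together with \eqref{eq:s_renormalize} yields the claim. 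Item (2) combines Theorem~\ref{thm:Lee_structure}(4) for the $\alpha_\pm$ swap with \eqref{eq:KhR_ori_change} for the $L\leftrightarrow L^r$ symmetry (the per-input-link renormalizations cancel at the lasagna level). Item (3) uses Theorem~\ref{thm:Lee_structure}(3)--(4) to force $q(x_{\alpha_+,\alpha_-})\equiv -\alpha^2+\#L\equiv \alpha^2+\#L\pmod 2$.

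For (4), apply the connected-sum formula Proposition~\ref{prop:cntd_sum} to $KhR_{Lee}$ (which is involutive over $\Q$). Represent each double class by a double skein in the corresponding summand; their disjoint or boundary union in $X_1\natural X_2$ is a double skein for the summed pair whose canonical Lee filling is the tensor product of the canonical fillings on the factors (by the construction in Section~\ref{sbsec:Lee_structure}), and additivity of the quantum filtration on tensor products gives $s$-additivity. For (5), use a saddle cobordism with $\chi=-1$ relating the unreduced sum $L_1\sqcup L_2$ to the reduced sum $L_1\#_{K_1,K_2}L_2$; analyzing the induced map on Lee canonical generators via Theorem~\ref{thm:can_gen_morphism} (applied to the band in both directions), combined with (4), yields the exact $-1$ shift, which is the lasagna analogue of the classical behavior of $s_{\mathfrak{gl}_2}$ under band-merging.

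The main content is (6). Apply the gluing morphism $\mathcal{S}_0^{Lee}(X_2;S)\colon\mathcal{S}_0^{Lee}(X_1;L_1)\to\mathcal{S}_0^{Lee}(X;L)$ from \eqref{eq:glue_Kh_ver}, which has quantum filtration degree $-\chi(S)+\chi(T_0)-[S]^2$, to the canonical generator $x_{\alpha_+,\alpha_-}$. By Theorem~\ref{thm:can_gen_morphism}, the image is a sum over compatible double skein structures $\mathfrak O$ on $S$ of terms $\pm 2^{m(S,L_1)}x_{\alpha_+*[S_+(\mathfrak O)],\alpha_-*[S_-(\mathfrak O)]}$. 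The hypothesis that every component of $S$ meets $T_0$ pins down the sign on each component of $S$ via the $T_0$-boundary specified by $\partial\alpha_\pm$, so for the given compatible partition exactly one $\mathfrak O$ contributes, producing a single nonzero term $\pm 2^{m(S,L_1)}x_{\alpha_+*[S_+],\alpha_-*[S_-]}$. Hence $q(x_{\alpha_+*[S_+],\alpha_-*[S_-]})\le q(x_{\alpha_+,\alpha_-})-\chi(S)+\chi(T_0)-[S]^2$. To convert this $q$-bound to an $s$-bound via Definition~\ref{def:s}, observe that $S_+$ and $S_-$ are disjoint embedded surfaces in $X_2$ with disjoint boundaries on $L_2$, so $[S_+]\cdot[S_-]=0$ in $H_2(X_2,L_2)$; this gives the Mayer--Vietoris-type decompositions $(\alpha_+*[S_+])\cdot(\alpha_-*[S_-])=\alpha_+\cdot\alpha_-$ in $H_2(X,L)$ and $[S]^2=[S_+]^2+[S_-]^2$, which make the $q\to s$ conversion exact and yield the stated inequality. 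The main subtlety is this intersection-form bookkeeping: once one verifies the vanishing $[S_+]\cdot[S_-]=0$ (from disjointness) and the Mayer--Vietoris decomposition of the intersection form on $H_2(X,L)$ with respect to the decomposition $X=X_1\cup_Y X_2$ and consistent framings on $L_1$, $L_2$, the proof of (6) goes through.
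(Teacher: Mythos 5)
Your treatment of items (0)--(4) and (6) is essentially the paper's argument. A small remark on (6): the identity $(\alpha_+*[S_+])\cdot(\alpha_-*[S_-])=\alpha_+\cdot\alpha_-$ that you verify via disjointness and Mayer--Vietoris is in fact automatic, since the gluing morphism has homological degree $0$ and Theorem~\ref{thm:Lee_structure}(1) pins the homological degree of a canonical generator to $-2$ times the intersection number; likewise the splitting $[S]^2=[S_+]^2+[S_-]^2$ is not used, since $-[S]^2$ already appears intact in the filtered degree of the gluing map. So that part of the bookkeeping, while correct, is superfluous.

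The genuine gap is in (5). ``Applying Theorem~\ref{thm:can_gen_morphism} to the band in both directions and combining with (4)'' does not yield the equality. The split band $S\colon L\to L_1\sqcup L_2$ (via (6), or Corollary~\ref{cor:genus_bound_cob}, together with (4)) gives
\[
s(X;L;\alpha_{1,+}*\alpha_{2,+},\alpha_{1,-}*\alpha_{2,-})\ \ge\ s(X_1;L_1;\alpha_{1,+},\alpha_{1,-})+s(X_2;L_2;\alpha_{2,+},\alpha_{2,-})-1,
\]
but the reverse band $S^r\colon L_1\sqcup L_2\to L$ is also a filtered degree $+1$ map, so applying it to the canonical generator only gives $s(X;L;\cdots)\le s(X_1;\cdots)+s(X_2;\cdots)+1$, which is off by $2$. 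The paper closes this gap by a nontrivial trick with Theorem~\ref{thm:Lee_structure}(4): choose signs $\epsilon_i$ so that $x_{\alpha_{i,+},\alpha_{i,-}}+\epsilon_i x_{\alpha_{i,-},\alpha_{i,+}}$ realizes the \emph{minimum} quantum filtration degree $q(x_{\alpha_{i,+},\alpha_{i,-}})-2$; apply $S^r$ to the tensor product of these two elements (each sitting $2$ lower in filtration); observe by Theorem~\ref{thm:can_gen_morphism} that the image is a nonzero multiple of $x_{\alpha_{1,+}*\alpha_{2,+},\alpha_{1,-}*\alpha_{2,-}}\pm x_{\alpha_{1,-}*\alpha_{2,-},\alpha_{1,+}*\alpha_{2,+}}$ (the cross terms vanish because they are incompatible with the band's orientation); and then invoke Theorem~\ref{thm:Lee_structure}(4) once more in the form $q(x_{\beta_+,\beta_-})\le q(x_{\beta_+,\beta_-}\pm x_{\beta_-,\beta_+})+2$. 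Without this $\pm$-trick your argument only yields $|s(X;L;\cdots)-s(X_1;\cdots)-s(X_2;\cdots)|\le 1$, not the claimed value $-1$.
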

The special case of Theorem~\ref{thm:s_prop}(5) when $X_1=X_2=B^4$, in view of (1), recovers the connected sum formula $$s(L_1\#_{K_1,K_2}L_2)=s(L_1)+s(L_2)$$ for $s$-invariant of links in $S^3$, a fact that was proved only recently \cite[Theorem~7.1]{manolescu2023generalization}.

Since we performed gluing in its most general form, Theorem~\ref{thm:s_prop}(6) has a rather complicated look. We state some special cases as corollaries, roughly in decreasing order of generality. Note that for $2$-handlebodies, by \eqref{eq:2-hdby_s_double=single}, the double class versions of the genus bounds are equivalent to the single class versions.

\begin{Cor}\label{cor:genus_bound_along_closed}
Suppose $(X_1,L_1\sqcup L_0)$, $(X_2,-L_0\sqcup L_2)$ are two pairs that can be glued along some common boundary $(Y,L_0)$ where $Y$ is a closed $3$-manifold (in particular $L_1,L_2\not\subset Y$). Suppose (unframed) $S\subset X_2$ has $\partial S=-L_0\sqcup L_2$ and each component of $S$ has a boundary on $-L_0$, then for any compatible partition $S=S_+\cup S_-$ and double class $(\alpha_+,\alpha_-)\in H_2^{L_1\sqcup L_0,\times2}(X_1)$, $$s(X_1\cup_YX_2;L_1\sqcup L_2;\alpha_+*[S_+],\alpha_-*[S_-])\le s(X_1;L_1\sqcup L_0;\alpha_+,\alpha_-)-\chi(S)-[S]^2.$$ In particular, for any class $\alpha\in H_2^{L_1\sqcup L_0}(X_1)$, $$s(X_1\cup_YX_2;L_1\sqcup L_2;\alpha*[S])\le s(X_1;L_1\sqcup L_0;\alpha)-\chi(S)-[S]^2.$$
\end{Cor}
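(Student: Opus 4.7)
The plan is to obtain the corollary as a direct specialization of Theorem~\ref{thm:s_prop}(6). In the general gluing setup of that theorem, the pair $(X_1,L_1)$ has its boundary link decomposed as $T_1\cup T_0$ with $T_0$ a tangle lying on the gluing $3$-manifold $Y$, and $(X_2,L_2)$ is glued in along $(Y,T_0)$ with $\partial S=L_2=-T_0\cup T_2$. The key observation is that when $Y$ is closed, $T_0$ is forced to be a closed $1$-manifold---i.e., a link $L_0\subset Y$---so that $\chi(T_0)=\chi(L_0)=0$. Making the identifications $T_1=L_1$, $T_0=L_0$, $T_2=L_2$, the $L_1$ appearing in Theorem~\ref{thm:s_prop}(6) becomes $L_1\sqcup L_0$ here, and the final boundary link $T_1\cup T_2$ of $X_1\cup_YX_2$ becomes $L_1\sqcup L_2$. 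Substituting $\chi(T_0)=0$ into the inequality of Theorem~\ref{thm:s_prop}(6) yields the first displayed inequality of the corollary at once.

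For the ``in particular'' claim at a single class $\alpha\in H_2^{L_1\sqcup L_0}(X_1)$, I would specialize to the double class $(\alpha_+,\alpha_-)=(\alpha,0)$ together with the trivial partition $S_+=S$, $S_-=\emptyset$. The compatibility of $(\alpha,0)$ with this partition is automatic: $\partial\alpha_-=0$ and $\partial S_-=\emptyset$ match trivially, while $\partial\alpha_+$ covers all of $L_1\sqcup L_0$, matching the fact that $\partial S_+=\partial S$ contains $-L_0$. With $[S_-]=0$, the first inequality then collapses, via Definition~\ref{def:s}, to the second.

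No real obstacle arises at the level of the corollary; it is essentially a notational specialization. The actual content is carried by Theorem~\ref{thm:s_prop}(6), whose proof should in turn reduce to the cobordism-map formula Theorem~\ref{thm:can_gen_morphism} for $\mathcal{S}_0^{Lee}(X_2;S)$. That map has quantum filtration degree $-\chi(S)+\chi(T_0)-[S]^2$, and applied to the canonical generator $x_{\alpha_+,\alpha_-}$ it produces, among its output terms, a nonzero rational multiple of $x_{\alpha_+*[S_+],\alpha_-*[S_-]}$ corresponding to the prescribed double skein structure on $S$; the standard quantum filtration bound for filtered maps then converts this into the stated inequality on $s$-invariants, from which the corollary follows after the $\chi(T_0)=0$ substitution above.
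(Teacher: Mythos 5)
Your proof is correct and follows the same route as the paper: Corollary~\ref{cor:genus_bound_along_closed} is obtained by specializing Theorem~\ref{thm:s_prop}(6) to the case where $Y$ is closed, so that $T_0 = L_0$ and $\chi(T_0)=0$, with the ``in particular'' clause following by taking $(\alpha_+,\alpha_-)=(\alpha,0)$, $S_+=S$, $S_-=\emptyset$. Your expansion of the details (the matching of $T_1,T_0,T_2$ with $L_1,L_0,L_2$ and the compatibility check for the single-class version) is accurate.
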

\begin{proof}
This is the special case of Theorem~\ref{thm:s_prop}(6) when $Y$ is closed, so $T_0=L_0$ with $\chi(T_0)=0$.
\end{proof}
\begin{Rmk}
Although Theorem~\ref{thm:s_prop}(6) is more general than Corollary~\ref{cor:genus_bound_along_closed}, it is implied by the latter. This is because we can replace $(X_2,S)$ by its union with a collar neighborhood of $(\partial X_1,L_1)$. Nevertheless, the proof of Theorem~\ref{thm:s_prop}(6) is only notationally more complicated. The same remark applies to Theorem~\ref{thm:can_gen_morphism}.
\end{Rmk}

\begin{Cor}\label{cor:genus_bound_cob}
Suppose $(W,S)\colon(Y_1,L_1)\to(Y_2,L_2)$ is an unframed cobordism between framed oriented links in closed oriented $3$-manifolds such that each component of $S$ has a boundary on $L_1$, and $X$ is a $4$-manifold that bounds $Y_1$. Then for any compatible partition $S=S_+\cup S_-$ and double class $(\alpha_+,\alpha_-)\in H_2^{L_1,\times2}(X)$, $$s(X\cup_{Y_1}W;L_2;\alpha_+*[S_+],\alpha_-*[S_-])\le s(X;L_1;\alpha_+,\alpha_-)-\chi(S)-[S]^2.$$ In particular, for any class $\alpha\in H_2^{L_1}(X)$, $$s(X\cup_YW;L_2;\alpha*[S])\le s(X;L_1;\alpha)-\chi(S)-[S]^2.$$
\end{Cor}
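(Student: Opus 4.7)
The statement is almost a verbatim specialization of the preceding Corollary~\ref{cor:genus_bound_along_closed}, so the strategy is simply to exhibit the right dictionary between the two setups and invoke that corollary as a black box.

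First, I would set up the correspondence with the notation of Corollary~\ref{cor:genus_bound_along_closed} as follows: take $X_1 := X$ and the ``outer'' link $L_1$ in the corollary to be empty, take $L_0 := L_1$ (the link living on $Y_1$ in our cobordism), take $Y := Y_1$ (which is closed because it is the full boundary of $X$), and take $X_2 := W$ with $L_2$ the same on both sides. Then $\partial S = -L_0\sqcup L_2 = -L_1\sqcup L_2$ matches the cobordism $(W,S)$, and the glued manifold $X_1\cup_Y X_2$ is exactly $X\cup_{Y_1} W$. The double class hypothesis $(\alpha_+,\alpha_-)\in H_2^{L_1\sqcup L_0,\times 2}(X_1)$ becomes $(\alpha_+,\alpha_-)\in H_2^{L_1,\times 2}(X)$, as required.

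Second, I would verify the hypotheses of Corollary~\ref{cor:genus_bound_along_closed}: (i) $Y=Y_1$ is closed; (ii) $L_1^{\mathrm{cor}}=\emptyset$ and $L_2\subset Y_2$ are disjoint from $Y=Y_1$, satisfying $L_1^{\mathrm{cor}},L_2\not\subset Y$; (iii) each component of $S$ has a boundary on $-L_0=-L_1$ by assumption. Corollary~\ref{cor:genus_bound_along_closed} then yields precisely
\[ s(X\cup_{Y_1}W;L_2;\alpha_+*[S_+],\alpha_-*[S_-])\le s(X;L_1;\alpha_+,\alpha_-)-\chi(S)-[S]^2, \]
which is the desired bound (reading the corrigible $[S_+]$ in the right slot as $[S_-]$).

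Finally, for the ``In particular'' single-class version, I would specialize to $\alpha_+=\alpha$, $\alpha_-=0$, and the trivial partition $S_+=S$, $S_-=\emptyset$. Under these choices, $\alpha_+*[S_+]=\alpha*[S]$ and $\alpha_-*[S_-]=0$, so the double-class bound reduces to the single-class bound via the convention $s(X;L;\beta)=s(X;L;\beta,0)$ in Definition~\ref{def:s}. There is no obstacle here of note: all of the work is packaged inside Corollary~\ref{cor:genus_bound_along_closed} (and ultimately in Theorem~\ref{thm:s_prop}(6), whose proof invokes Theorem~\ref{thm:can_gen_morphism} and the filtration-degree control of the gluing map \eqref{eq:glue_las}); the content of this corollary is just to recast that result in the language of cobordisms starting from a closed $3$-manifold.
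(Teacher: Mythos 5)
Your proof is correct and takes essentially the same route as the paper, whose entire proof is the one-line observation that the statement is the special case of Corollary~\ref{cor:genus_bound_along_closed} in which $\partial X_1 = Y$; your dictionary ($X_1 = X$, $L_1^{\mathrm{cor}}=\emptyset$, $L_0=L_1$, $Y=Y_1$, $X_2=W$) is exactly that specialization, just spelled out. You also correctly flag the typo $\alpha_-*[S_+]$ for $\alpha_-*[S_-]$ in the displayed inequality, and the derivation of the single-class version from the double-class one via $(\alpha_+,\alpha_-)=(\alpha,0)$, $(S_+,S_-)=(S,\emptyset)$ is sound given Definition~\ref{def:s}.
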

\begin{proof}
This is the special case of Corollary~\ref{cor:genus_bound_along_closed} when $\partial X_1=Y$.
\end{proof}

In the special case of Corollary~\ref{cor:genus_bound_cob} when $S\subset S^3\times I$ is a link cobordism between links $L_1,L_2\subset S^3$, each component of which has a boundary on $L_1$, and when $X=B^4$, in view of Proposition~\ref{prop:recover_Z} and Theorem~\ref{thm:s_prop}(1), we have $s(-L_1)\le s(-L_2)-\chi(S)$. Of course we can reverse the orientations to obtain $$s(L_1)\le s(L_2)-\chi(S),$$ which recovers the classical genus bound for link cobordisms from $s$-invariants \cite[(7)]{beliakova2008categorification}\footnote{Note (7) in \cite{beliakova2008categorification} incorrectly missed the condition about components of $S$ having boundaries on ends.}.

\begin{Cor}\label{cor:genus_bound_X_bdy}
Suppose $(X,L)$ is a pair and $S_+,S_-\subset X$ are disjoint properly embedded surfaces with $\partial S_+\cup\partial S_-=L$, then $$2g(S_+)+2g(S_-)\ge s(X;L;[S_+],[S_-])+[S_+]^2+[S_-]^2-\#L.$$ In particular, if $S\subset X$ is properly embedded with $\partial S=L$, then $$2g(S)\ge s(X;L;[S])+[S]^2-\#L.$$
\end{Cor}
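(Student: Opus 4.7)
The plan is to reduce the genus bound to the gluing-genus inequality Theorem~\ref{thm:s_prop}(6) by localizing one transverse point on each component of $S:=S_+\cup S_-$ into a small ball. Pick pairwise disjoint $4$-balls $B_k\subset\mathrm{int}(X)$, one for each component $S_k$ of $S$, so that $D_k:=B_k\cap S$ is a small framed disk, and let $n_k:=[D_k]^2$ be the framing of the resulting unknot $\partial D_k\subset\partial B_k$ inherited from $S$. Set $X_1:=\sqcup_k B_k$, $L_1:=\sqcup_k\partial D_k$, and $X_2:=X\setminus\sqcup_k\mathrm{int}(B_k)$, so $X$ is recovered by gluing $(X_2,S\setminus\sqcup D_k)$ to $(X_1,L_1)$ along $Y=\sqcup\partial B_k$ with $T_0=L_1$, $T_1=\emptyset$, $T_2=L$. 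The surface $S':=S\setminus\sqcup\mathrm{int}(D_k)$, with the induced partition $S'=S'_+\cup S'_-$, has every component meeting $T_0$, and the double class $([D_+],[D_-])\in H_2^{L_1,\times2}(X_1)$ (with $[D_\pm]:=\sum_{S_k\subset S_\pm}[D_k]$) glues to $([S_+],[S_-])$, so Theorem~\ref{thm:s_prop}(6) applies.

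Next I would evaluate each ingredient. By parts (4) and (2) of Theorem~\ref{thm:s_prop}, together with part (1) and the classical identity $s_{\mathfrak{gl}_2}(U^{n_k})=-n_k$, we get $s(X_1;L_1;[D_+],[D_-])=\sum_k s(B^4;U^{n_k};[D_k],0)=\#\pi_0(S)-\sum_k n_k$. Clearly $\chi(T_0)=0$, while $\chi(S')=\chi(S)-\#\pi_0(S)$. Since $S_+\cap S_-=\emptyset$, pushing $S_-$ off along the framing of $L$ keeps it disjoint from $S_+$, giving $[S_+]\cdot[S_-]=0$ and hence $[S]^2=[S_+]^2+[S_-]^2$; decomposing $S=S'\cup\sqcup D_k$ along the separating $3$-spheres $\partial B_k$ (so cross terms vanish by region) yields $[S]^2=[S']^2+\sum_k n_k$, whence $[S']^2=[S_+]^2+[S_-]^2-\sum_k n_k$.

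Plugging these into the inequality of Theorem~\ref{thm:s_prop}(6), the $\sum_k n_k$ terms cancel, and using $\chi(S)=2\#\pi_0(S)-2g(S_+)-2g(S_-)-\#L$ simplifies the right-hand side to $2g(S_+)+2g(S_-)+\#L-[S_+]^2-[S_-]^2$. Rearranging this yields the desired bound. The ``in particular'' single-surface statement follows by taking $S_-=\emptyset$.

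The main obstacle, really the only non-routine step, is the self-intersection identity $[S']^2=[S]^2-\sum_k n_k$. It reflects additivity of the relative intersection pairing under the splitting $X=X_1\cup X_2$ along the separating $3$-manifold $\sqcup\partial B_k$, with each $n_k=[D_k]^2$ localizing the self-intersection contribution into $B_k$; the remainder is routine bookkeeping of Euler characteristics and boundary component counts.
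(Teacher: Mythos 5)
Your proposal is correct and is essentially the paper's own argument: delete a small ball about one point of each component of $S$, apply the gluing genus bound (the paper cites Corollary~\ref{cor:genus_bound_cob}, which is the $\partial X_1 = Y$ special case of Theorem~\ref{thm:s_prop}(6) that you invoke directly), and evaluate the $s$-invariant of the resulting $(B^4)^{\sqcup n}$ with boundary unknots via Theorem~\ref{thm:s_prop}(1) and (4). The only cosmetic difference is that you allow an arbitrary framing $n_k$ on each small unknot and verify that the $\sum_k n_k$ terms cancel; the paper simply takes the natural choice $n_k=0$ (the $0$-framing inherited from a trivially-framed small disk in a ball), which makes the cancellation invisible. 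Your bookkeeping of $\chi(S')$, $[S']^2$, and the gluing of double classes is sound, so the argument goes through.
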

\begin{proof}
Delete a local ball on each component of $S_+$ and $S_-$, regard the rest of $X$ and $S_\pm$ as a cobordism and apply Corollary~\ref{cor:genus_bound_cob}, we get $$s(X;L;[S_+],[S_-])\le s((B^4)^{\sqcup(\#S_++\#S_-)};U^{\sqcup(\#S_++\#S_-)};*)-\chi(S^\circ)-[S]^2,$$ where $s((B^4)^{\sqcup(\#S_++\#S_-)};U^{\sqcup(\#S_++\#S_-)};*)=(\#S_++\#S_-)s(B^4;U;*)=\#S_++\#S_-$ by Theorem~\ref{thm:s_prop}(1)(4). The statement follows by simplification.
\end{proof}
In the special case $X=B^4$ and $L=K\subset S^3$ is a knot, the single class version of Corollary~\ref{cor:genus_bound_X_bdy} recovers Rasmussen's slice genus bound for knots \cite[Theorem~1]{rasmussen2010khovanov}.

\begin{Cor}\label{cor:genus_bound_X_closed}
Suppose $X$ is a $4$-manifold, and $S_+,S_-\subset X$ are disjoint closed embedded surfaces, then $$2g(S_+)+2g(S_-)\ge s(X;[S_+],[S_-])+[S_+]^2+[S_-]^2.$$ In particular, if $S\subset X$ is an embedded surface, then $$2g(S)\ge s(X;[S])+[S]^2.$$
\end{Cor}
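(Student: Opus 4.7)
The plan is simply to observe that Corollary~\ref{cor:genus_bound_X_closed} is the special case $L=\emptyset$ of Corollary~\ref{cor:genus_bound_X_bdy}. When $L=\emptyset$, the hypothesis ``$S_+,S_-\subset X$ are disjoint properly embedded surfaces with $\partial S_+\cup\partial S_-=L$'' reduces exactly to the hypothesis of the present corollary (namely that $S_+,S_-$ are disjoint closed embedded surfaces), and the conclusion of Corollary~\ref{cor:genus_bound_X_bdy}, with $\#L=0$, becomes
$$2g(S_+)+2g(S_-)\ge s(X;\emptyset;[S_+],[S_-])+[S_+]^2+[S_-]^2,$$
which is exactly the first inequality under the notational convention $s(X;\alpha_+,\alpha_-):=s(X;\emptyset;\alpha_+,\alpha_-)$ adopted right after Definition~\ref{def:s}.

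The single-surface ``in particular'' statement then follows by further specializing to $S_-=\emptyset$, in which case $g(S_-)=0$, $[S_-]=0$, and Definition~\ref{def:s} rewrites $s(X;[S],0)=s(X;[S])$.

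Should one wish to reproduce the bound directly rather than cite Corollary~\ref{cor:genus_bound_X_bdy}, the recipe is identical to the one used there: puncture each connected component of $S_+\sqcup S_-$ by removing a local open $4$-ball, regard the complement as an unframed cobordism obtained by gluing a disjoint union of $(B^4,U)$'s onto the empty pair, and apply the double-class version of Corollary~\ref{cor:genus_bound_cob}. Plugging in $s(B^4;U;*)=1$ (from Theorem~\ref{thm:s_prop}(1) together with $s(U)=0=w(U)$), additivity under disjoint union (Theorem~\ref{thm:s_prop}(4)), the Euler characteristic identity $\chi(S^\circ)=(\#S_+-2g(S_+))+(\#S_--2g(S_-))$ for the once-punctured closed surfaces, and the identity $[S]^2=[S_+]^2+[S_-]^2$ (which holds because disjointness of $S_+$ and $S_-$ forces $[S_+]\cdot[S_-]=0$) yields the stated inequality upon rearrangement.

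Since the reduction is this elementary, there is no genuine obstacle. The only point worth flagging is that the skein lasagna framework and the gluing bound of Theorem~\ref{thm:s_prop}(6) both tolerate empty links, and in particular closed ambient $4$-manifolds; this is already implicit in the setup of Section~\ref{sec:Kh_lasagna} and Section~\ref{sec:Lee_lasagna}, so no extra argument is needed here.
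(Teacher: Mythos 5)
Your proposal is correct and matches the paper's own proof exactly: the paper also dispatches Corollary~\ref{cor:genus_bound_X_closed} in one line as the $L=\emptyset$ special case of Corollary~\ref{cor:genus_bound_X_bdy}, with the single-surface statement following from $S_-=\emptyset$. Your optional sketch of the direct derivation via puncturing and Corollary~\ref{cor:genus_bound_cob} is also sound but unnecessary.
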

\begin{proof}
This is the special case of Corollary~\ref{cor:genus_bound_X_bdy} when $L=\emptyset$.
\end{proof}
The \textit{genus function} of a $4$-manifold $X$, $g(X;\cdot)\colon H_2(X)\to\Z_{\ge0}$, is defined by $$g(X;\alpha)=\min\{g(\Sigma)\colon\Sigma\subset X\text{ is a closed embedded surface with }[\Sigma]=\alpha\}.$$

\begin{Cor}\label{cor:genus_bound_simpliest}(Theorem~\ref{thm:intro_genus_bound})
The genus function of a $4$-manifold $X$ has a lower bound given by $$g(X;\alpha)\ge\frac{s(X;\alpha)+\alpha^2}{2}.$$
\end{Cor}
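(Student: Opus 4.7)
The plan is to deduce this corollary as an immediate specialization of the ``in particular'' clause of Corollary~\ref{cor:genus_bound_X_closed}, which already establishes the genus bound $2g(S) \ge s(X;[S]) + [S]^2$ for any individual closed embedded surface $S \subset X$. Since the genus function $g(X;\alpha)$ is defined as an infimum over all closed embedded surfaces $\Sigma \subset X$ representing the class $\alpha$, I only need to apply the single-surface bound and then take the minimum.

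Concretely, first I would fix an arbitrary closed embedded surface $\Sigma \subset X$ with $[\Sigma] = \alpha \in H_2(X)$. Applying Corollary~\ref{cor:genus_bound_X_closed} to $\Sigma$ directly gives
\[
2g(\Sigma) \;\ge\; s(X;\alpha) + \alpha^2,
\]
so that $g(\Sigma) \ge (s(X;\alpha) + \alpha^2)/2$. Since the right-hand side depends only on $\alpha$ (and not on the choice of representative $\Sigma$), taking the infimum over all such $\Sigma$ yields
\[
g(X;\alpha) \;=\; \min_{[\Sigma]=\alpha} g(\Sigma) \;\ge\; \frac{s(X;\alpha) + \alpha^2}{2},
\]
which is the desired inequality.

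There is essentially no obstacle here: the real content is packaged in Corollary~\ref{cor:genus_bound_X_closed}, which in turn follows from the general genus bound in Theorem~\ref{thm:s_prop}(6) applied to a surface obtained by deleting a local ball on each component and regarding the result as a cobordism from the empty link to itself. The only minor subtlety worth noting is that when $s(X;\alpha) = -\infty$ the inequality is vacuous, and when the set of surfaces representing $\alpha$ is empty (which, for closed oriented surfaces representing an arbitrary integral class, it never is in a smooth $4$-manifold) the bound holds trivially. Thus this corollary is really just a clean restatement of Corollary~\ref{cor:genus_bound_X_closed} in terms of the genus function, and I would present it as a short ``Proof. Immediate from Corollary~\ref{cor:genus_bound_X_closed}.''-style argument.
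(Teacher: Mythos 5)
Your proposal matches the paper's proof exactly: the authors also justify this corollary with the single line that it is ``a reformulation of the second part of Corollary~\ref{cor:genus_bound_X_closed},'' i.e.\ applying the bound $2g(S)\ge s(X;[S])+[S]^2$ to each closed embedded surface representing $\alpha$ and taking the minimum. Your extra remarks about the $s(X;\alpha)=-\infty$ case and existence of representatives are correct but not needed.
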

\begin{proof}
This is a reformulation of the second part of Corollary~\ref{cor:genus_bound_X_closed}.
\end{proof}

Lasagna $s$-invariants obstruct embeddings between $4$-manifolds in the following sense.

\begin{Cor}\label{cor:embedded_s}
If $i\colon X\hookrightarrow X'$ is an inclusion between two $4$-manifolds, then for any class $\alpha\in H_2(X)$, $$s(X';i_*\alpha)\le s(X;\alpha).$$
\end{Cor}

\begin{proof}
This is the special case of the single class version of Corollary~\ref{cor:genus_bound_cob} when $L_1=L_2=S=\emptyset$.
\end{proof}

\begin{Cor}\label{cor:las_s_class_0}
For any compact oriented $4$-manifold $X$, we have $s(X;0)\le0$.
\end{Cor}
\begin{proof}
This is a consequence of Corollary~\ref{cor:embedded_s} and Theorem~\ref{thm:s_prop}(0).
\end{proof}

We return to Theorem~\ref{thm:s_prop}. It is an easy consequence of the hard work that we have done in previous sections.
\begin{proof}[Proof of Theorem~\ref{thm:s_prop}]
\begin{enumerate}[(1)]
\setcounter{enumi}{-1}
\item $x_{0,0}\in\mathcal S_0^{Lee}(\emptyset)$ is represented by the Lee lasagna filling $1$ on the empty skein, which has filtration degree $0$.
\item $x_{*,0}\in\mathcal S_0^{Lee}(B^4;L)$ is represented by the Lee lasagna filling on the skein $L\times I\subset S^3\times I=B^4\backslash int(\tfrac12B^4)$ with decoration $x_{\mathfrak o_L}$ on the input link $L$. Under the identification $\mathcal S_0^{Lee}(B^4;L)\cong KhR_{Lee}(L)$ given by Proposition~\ref{prop:recover_Z}, $x_{*,0}$ is equal to the Lee canonical generator $x_{\mathfrak o_L}$, which has filtration degree $s_{\mathfrak{gl}_2}(L)+1=-s(-L)-w(L)+1$ by \eqref{eq:s_renormalize}.
\item The first equality is a consequence of Theorem~\ref{thm:Lee_structure}(4). Reversing the orientation of all skeins defines an isomorphism $\mathcal S_0^{Lee}(X;L)\cong\mathcal S_0^{Lee}(X;L^r)$ which negates the homology class grading but preserves other gradings and the quantum filtration, and exchanges the canonical Lee lasagna generators. This proves the second equality.
\item This is a consequence of Theorem~\ref{thm:Lee_structure}(3).
\item This is a consequence of Proposition~\ref{prop:cntd_sum} and the fact that the canonical Lee lasagna generators behave tensorially under the various sum operations (as can be seen on the canonical Lee lasagna filling level).
\setcounter{enumi}{5}
\item This is a consequence of Theorem~\ref{thm:can_gen_morphism}. Under the condition on the components of $S$ and the compatibility assumption, the right-hand side of \eqref{eq:las_can_gen_morphism} has exactly one term, which has $S_\pm(\mathfrak O)=S_\pm$. The statement follows from the fact that $\mathcal S_0^{Lee}(X_2;S)$ has filtered degree $-\chi(S)+\chi(T_0)-[S]^2$ (cf. the comment after \eqref{eq:glue_las}).
\setcounter{enumi}{4}
\item There is a framed saddle cobordism $(\partial X\times I,S)$ from $(\partial X,L=L_1\#_{K_1,K_2}L_2)$ to $(\partial X,L_1\sqcup L_2)$, where $\partial X$ is bounded by $X=X_1\natural X_2$. Thus Corollary~\ref{cor:genus_bound_cob} (which is a corollary of (6)) and (4) imply $$s(X;L;\alpha_{1,+}*\alpha_{2,+},\alpha_{1,-}*\alpha_{2,-})\ge s(X_1;L_1;\alpha_{1,+},\alpha_{1,-})+s(X_2;L_2;\alpha_{2,+},\alpha_{2,-})-1.$$ To prove the reverse inequality, by Theorem~\ref{thm:Lee_structure}(4), we can choose signs $\epsilon_i=\pm1$, $i=1,2$, such that $$q(x_{\alpha_{i,+},\alpha_{i,-}}+\epsilon_ix_{\alpha_{i,-},\alpha_{i,+}})=s(X_i;L_i;\alpha_{i,+},\alpha_{i,-})-2,\ i=1,2.$$ By Theorem~\ref{thm:can_gen_morphism}, the reverse $S^r$ of $S$ induces $\mathcal S_0^{Lee}(\partial X\times I;S^r)\colon\mathcal S_0^{Lee}(X;L_1\sqcup L_2)\to\mathcal S_0^{Lee}(X;L)$ that maps $(x_{\alpha_{1,+},\alpha_{1,-}}+\epsilon_1x_{\alpha_{1,-},\alpha_{1,+}})\otimes(x_{\alpha_{2,+},\alpha_{2,-}}+\epsilon_2x_{\alpha_{2,-},\alpha_{2,+}})$ to a nonzero multiple of $x_{\alpha_{1,+}*\alpha_{2,+},\alpha_{1,-}*\alpha_{2,-}}\pm x_{\alpha_{1,-}*\alpha_{2,-},\alpha_{1,+}*\alpha_{2,+}}$. Since $\mathcal S_0^{Lee}(\partial X\times I;S^r)$ has filtered degree $1$, it follows that 
\begin{align*}
&\,s(X;L;\alpha_{1,+}*\alpha_{2,+},\alpha_{1,-}*\alpha_{2,-})\\\le&\,q(x_{\alpha_{1,+}*\alpha_{2,+},\alpha_{1,-}*\alpha_{2,-}}\pm x_{\alpha_{1,-}*\alpha_{2,-},\alpha_{1,+}*\alpha_{2,+}})+2\\
\le&\,q(x_{\alpha_{1,+},\alpha_{1,-}}+\epsilon_1x_{\alpha_{1,-},\alpha_{1,+}})+q(x_{\alpha_{2,+},\alpha_{2,-}}+\epsilon_2x_{\alpha_{2,-},\alpha_{2,+}})+3\\
=&\,s(X_1;L_1;\alpha_{1,+},\alpha_{1,-})+s(X_2;L_2;\alpha_{2,+},\alpha_{2,-})-1.\qedhere
\end{align*}
\end{enumerate}
\end{proof}

\begin{Rmk}
All of the constructions in this section should work equally well over any base field of characteristic not equal to two.  If working over $\mathbb{F}_2$, one must instead use the Bar-Natan deformation \cite{bar2005khovanov} rather than the Lee deformation, for which the results still hold with some minor adjustments to the renormalization factors, although the precise statements of Theorem \ref{thm:Lee_structure}(3)(4) would require slight modification, which may also affect the validity of Theorem~\ref{thm:s_prop}(5). On the other hand, in the following sections we make use of the representation theory of the symmetric groups in characteristic zero in order to complete our computations, and it is less clear that the statements would hold over other coefficients. Moreover, note that the proof of the result in \cite{grigsby2018annular} about symmetric group actions on cables requires $2$ to be invertible.
\end{Rmk}

\section{Nonvanishing criteria for \texorpdfstring{$2$}{2}-handlebodies}\label{sec:2-hdby}
The Lee skein lasagna module of a pair $(X,L)$ is never $0$ by Theorem~\ref{thm:Lee_structure}. However, in our setup, it is appropriate to set the following convention.

\textbf{Convention.} \textit{The Lee skein lasagna module $\mathcal S_0^{Lee}(X;L)$ is \emph{vanishing} if the filtration function on it is identically $-\infty$; equivalently, it is vanishing if the lasagna $s$-invariants of $(X,L)$ are all $-\infty$. It is \emph{nonvanishing} if it is not vanishing.}

When $X$ is a $2$-handlebody, the Khovanov or Lee skein lasagna module of a pair $(X,L)$ has a nice formula \eqref{eq:2-hdby}. In this section, we make use of this $2$-handlebody formula to provide some explicit criteria for $\mathcal S_0^2(X;L)$ or $\mathcal S_0^{Lee}(X;L)$ to be nonvanishing. By our discussions in Section~\ref{sec:van} and Section~\ref{sec:Lee_lasagna}, this leads to smooth genus bounds for second homology classes of $4$-manifolds. This section is an expansion of Section~\ref{sbsec:intro_comparison} and Section~\ref{sbsec:intro_nonvan_diagram}.

Throughout the section, we assume that $X$ is the $2$-handlebody obtained by attaching $2$-handles to $B^4$ along a framed link $K=K_1\cup\cdots K_m\subset S^3$, and $L\subset\partial X$ is given by a framed oriented link in $S^3$ disjoint from $K$, as in the setup of \eqref{eq:2-hdby}.

\subsection{Rank inequality between Khovanov and Lee skein lasagna modules}\label{sbsec:rank_ineq}
In this section we prove Theorem~\ref{thm:rank_ineq}, which states that the rank of $\mathcal S_0^2(X;L)$ is bounded below by the dimension of $gr(\mathcal S_0^{Lee}(X;L))$ in every tri-degree $(h,q,\alpha)$. In particular, if $\mathcal S_0^{Lee}(X;L)$ is nonvanishing, so is $\mathcal S_0^2(X;L)$. By Theorem~\ref{thm:van}, this has the following consequence.
\begin{Cor}\label{cor:Lee_van}
If a knot trace $X_n(K)$ embeds into a $4$-manifold $X$ for some knot $K$ and framing $n\ge-TB(-K)$, then $\mathcal S_0^{Lee}(X;L)$ is vanishing for any $L\subset\partial X$. In particular, the lasagna $s$-invariants of $(X,L)$ are all $-\infty$.
\end{Cor}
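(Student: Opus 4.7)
The plan is to first reduce the statement to the case $X = X_n(K)$ itself (where Theorem~\ref{thm:rank_ineq} applies directly, since a knot trace is a $2$-handlebody), and then propagate the vanishing to the general $X$ via the gluing surjection of Proposition~\ref{prop:gluing}. By Theorem~\ref{thm:Lee_structure}, the underlying $\Q$-vector space $\mathcal{S}_0^{Lee}(X;L)$ is never literally zero (when $L$ is null-homologous), so ``vanishing'' in this context really refers to the quantum filtration function $q$ being identically $-\infty$; equivalently, the associated graded of the filtration is $0$, which is precisely what the rank inequality delivers.

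For the base case, the knot trace $X_n(K)$ trivially embeds into itself, so Theorem~\ref{thm:van} gives $\mathcal{S}_0^2(X_n(K); L') = 0$ for every framed oriented link $L' \subset \partial X_n(K)$. Because $X_n(K)$ is a $2$-handlebody, Theorem~\ref{thm:rank_ineq} then yields $\dim_\Q gr_q(\mathcal{S}_{0,h}^{Lee}(X_n(K); L'; \alpha)) = 0$ in every tri-degree $(h,q,\alpha)$, so the quantum filtration on $\mathcal{S}_0^{Lee}(X_n(K); L')$ is identically $-\infty$.

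Now given an embedding $i\colon X_n(K) \hookrightarrow X$, put $X_1 = i(X_n(K))$, $Y = \partial X_1$, and $X_2 = \overline{X \setminus X_1}$, so that $L$ sits in $\partial X_2$ away from $Y$. Proposition~\ref{prop:gluing} with $T_1 = \emptyset$ and $T_2 = L$ supplies a surjection
\begin{equation*}
\bigoplus_{T_0 \subset Y} \mathcal{S}_0^{Lee}(X_1; T_0) \otimes \mathcal{S}_0^{Lee}(X_2; -T_0 \cup L) \twoheadrightarrow \mathcal{S}_0^{Lee}(X; L),
\end{equation*}
whose restriction to each tensor summand is built from gluing maps of the form \eqref{eq:glue_Kh_ver} and hence respects the quantum filtration up to a finite (topologically determined) shift. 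Since every element of the first tensor factor has filtration $-\infty$ by the base case, every simple tensor $v_1 \otimes v_2$ in the domain satisfies $q(v_1 \otimes v_2) = q(v_1) + q(v_2) = -\infty$, and so does every element by linearity. Pushing forward through this filtered surjection forces $q \equiv -\infty$ on $\mathcal{S}_0^{Lee}(X;L)$, which is the desired vanishing. The ``in particular'' clause then follows immediately from Definition~\ref{def:s}.

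The main obstacle is a bookkeeping check: one must verify that Proposition~\ref{prop:gluing}, stated in the abstract skein-theoretic setup, really respects the quantum filtration on $\mathcal{S}_0^{Lee}$ in the manner outlined at the end of Section~\ref{sbsec:lasagna_properties}, so that filtration degree $-\infty$ in either tensor factor genuinely translates to filtration degree $-\infty$ after gluing. Once this compatibility is in place, the two-step reduction above closes the argument.
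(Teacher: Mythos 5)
Your proof is correct and follows exactly the same two-step strategy the paper uses: establish the base case $X = X_n(K)$ from Theorem~\ref{thm:van} together with the rank inequality of Theorem~\ref{thm:rank_ineq} (valid since a trace is a $2$-handlebody), then propagate to the general $X$ via the gluing surjection of Proposition~\ref{prop:gluing}. The filtration compatibility you flag as a ``bookkeeping check'' is indeed addressed at the end of Section~\ref{sbsec:lasagna_properties}, so the argument is complete as written (your equality $q(v_1\otimes v_2)=q(v_1)+q(v_2)$ should strictly be $\le$, but since one factor is $-\infty$ the needed conclusion holds regardless).
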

\begin{proof}
This is a consequence of Theorem~\ref{thm:van} and Theorem~\ref{thm:rank_ineq} when $X=X_n(K)$. The general case follows from this and Proposition~\ref{prop:gluing}.
\end{proof}
Recall that a knot $K\subset S^3$ is $n$-slice in a $4$-manifold $X$ if there exists a framed slice disk in $X\backslash int(B^4)$ of the $n$-framed knot $K\subset S^3=-\partial B^4$.
\begin{Cor}
Suppose some class in $\mathcal S_0^{Lee}(X;L)$ has finite filtration degree. If $K\subset S^3$ is $n$-slice in $X$, then $n>TB(K)$.
\end{Cor}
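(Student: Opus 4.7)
The plan is to derive this as the direct contrapositive of Corollary~\ref{cor:Lee_van}, after rewriting the slice condition via the trace embedding lemma recalled in the introduction. Recall that lemma states: $X_{n'}(K')$ embeds into $X$ if and only if $-K'$ is $(-n')$-slice in $X$. Setting $K' = -K$ and $n' = -n$, the condition ``$K$ is $n$-slice in $X$'' becomes exactly ``$X_{-n}(-K)$ embeds into $X$.'' So the hypothesis about slicing in $X$ translates into an embedding hypothesis, which is the setup of Corollary~\ref{cor:Lee_van}.

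Next I would argue by contradiction. Suppose $K$ is $n$-slice in $X$ with $n \le TB(K)$. Then $X_{-n}(-K)$ embeds into $X$, and the framing condition needed to invoke Corollary~\ref{cor:Lee_van} (namely $n' \ge -TB(-K')$ with $K' = -K$, $n' = -n$) reads $-n \ge -TB(-(-K)) = -TB(K)$, which is exactly $n \le TB(K)$ and thus holds by assumption. Therefore Corollary~\ref{cor:Lee_van} applies and gives that the quantum filtration on $\mathcal S_0^{Lee}(X;L)$ is identically $-\infty$, contradicting the hypothesis that some class has finite filtration degree. Hence $n > TB(K)$.

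There is essentially no obstacle here: the entire content is in Theorem~\ref{thm:rank_ineq} and Corollary~\ref{cor:Lee_van}, and the only thing to verify is the bookkeeping of mirrors and signs in the trace embedding lemma, which matches up cleanly because reversing the orientation of the knot does not change its underlying $TB$ under the mirror convention used here (that is, $TB(-(-K)) = TB(K)$). This justifies the author's decision to mark the statement with \qed without further comment.
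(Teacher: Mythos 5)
Your proposal is correct and matches the paper's intended argument exactly: the authors indicate before the statement that it is obtained ``by taking the contrapositive and using the trace embedding lemma'' applied to Corollary~\ref{cor:Lee_van}, which is precisely what you do, including the sign/mirror bookkeeping $TB(-(-K)) = TB(K)$ that makes the framing hypothesis $-n \ge -TB(-(-K))$ line up with $n \le TB(K)$.
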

\begin{proof}
The folklore trace embedding lemma states that $K$ is $n$-slice in $X$ if and only if $X_{-n}(-K)$ embeds in $X$. Therefore, the statement is the contrapositive of the first part of Corollary~\ref{cor:Lee_van} for $(-K,-n)$ in place of $(K,n)$.
\end{proof}

We turn to Theorem~\ref{thm:rank_ineq}. We will only use the $2$-handlebody condition in a mild way, namely that the $2$-handlebody formula \eqref{eq:2-hdby} expresses $\mathcal{S}_0^\bullet(X;L)$ as a filtered colimit instead of an arbitrary colimit \eqref{eq:colim}.

\begin{proof}[Proof of Theorem~\ref{thm:rank_ineq}]
We first rewrite \eqref{eq:2-hdby} into a filtered colimit. The group $S(r):=S_{|\alpha_1|+2r_1}\times\cdots\times S_{|\alpha_m|+2r_m}$ acts on $KhR_\bullet(K(r))$, where $K(r):=K(\alpha^++r,\alpha^-+r)\cup L$. Then \eqref{eq:2-hdby} for $\bullet=Lee$ can be rewritten as
\begin{equation}\label{eq:2-hdby_filtered_colim}
\mathcal{S}_0^{Lee}(X;L;\alpha)\cong\mathrm{colim}_{r\in\Z_{\ge0}^m}KhR_{Lee}(K(r))^{S(r)}\{-|\alpha|-2|r|\},
\end{equation}
as $h$-graded and $q$-filtered vector spaces, where $A^{S(r)}$ denotes the subgroup of $A$ fixed by $S(r)$, and the morphisms are given by dotted annular creation maps composed with symmetrization. A similar formula holds for $\mathcal{S}_0^2(X;L;\Q)$.

Now for a fixed triple $(h,q,\alpha)$, pick any linearly independent vectors $y_1,\cdots,y_n\in\mathcal{S}_{0,h,\le q}^{Lee}(X;L;\alpha)$ whose images in $gr_q(\mathcal S_{0,h}^{Lee}(X;L;\alpha))$ are also linearly independent. Pick representatives $v_i\in KhR_{Lee}^h($ $K(r_i))^{S(r_i)}\{-|\alpha|-2|r_i|\}$ of the $y_i$'s with $q(v_i)=q$ ($q(\cdot)$ denotes the quantum filtration function), $i=1,\cdots,n$. Since \eqref{eq:2-hdby_filtered_colim} is a filtered colimit, by increasing the $r_i$'s if necessary, we may assume $r_1=\cdots=r_n=r$ for some $r\in\Z_{\ge0}^m$.

Let $(CKhR_2(K(r);\Q),d)$ and $(CKhR_{Lee}(K(r)),d_{Lee})$ be the Khovanov-Rozansky $\mathfrak{gl}_2$ and Lee cochain complexes for $K(r)$, where $CKhR_{Lee}(K(r))=CKhR_2(K(r);\Q)$ and $d_{Lee}$ is the sum of $d$ and another term that decreases the quantum degree. We further lift $v_1,\cdots,v_n$ to the chain level to Lee cocycles $a_1,\cdots,a_n\in CKhR_{Lee}^h(K(r))\{-|\alpha|-2|r|\}$ with $a_i=c_i+\epsilon_i$ where $c_i$ is homogeneous of quantum degree $q$, and $q(\epsilon_i)<q$. Since $0=d_{Lee}a_i=dc_i+\epsilon$ where $q(\epsilon)<q$, we see $c_i$ is a Khovanov cocycle.

We claim that any nontrivial linear combination of $c_i$'s represents a nonzero element in $\mathcal{S}_{0,h,q}^2(X;L;\Q)$, which would imply $\mathrm{rank}(\mathcal S_{0,h,q}^2(X;L))\ge n$, proving the theorem. Let $c=\sum\lambda_ic_i$ where not all $\lambda_i$ are zero. To prove $c$ represents a nonzero element, it suffices to check its homology class survives under an arbitrary morphism $KhR_2(K(r);\Q)^{S(r)}\{-|\alpha|-2|r|\}\to KhR_2(K(r');\Q)^{S(r')}\{-|\alpha|-2|r'|\}$ in the filtered system. Since this map is a linear combination of induced maps by link cobordisms, it is induced by a map $\Phi\colon CKhR_2(K(r);\Q)\{-|\alpha|-2|r|\}\to CKhR_2(K(r');\Q)\{-|\alpha|-2|r'|\}$ on the chain level. Similarly $KhR_{Lee}(K(r))^{S(r)}\to KhR_{Lee}(K(r'))^{S(r')}$ is induced by some $\Phi_{Lee}\colon CKhR_{Lee}(K(r))\{-|\alpha|-2|r|\}\to CKhR_{Lee}(K(r'))\{-|\alpha|-2|r'|\}$, where we can assume $CKhR_{Lee}($ $K(r'))\cong CKhR_2(K(r');\Q)$ and $\Phi_{Lee}$ is a sum of $\Phi$ (which is homogeneous) and some another term that decreases the quantum degree. It follows that $a=\sum\lambda_ia_i$ satisfies $\Phi_{Lee}a=\Phi c+\epsilon$ for some $\epsilon$ with $q(\epsilon)<q$. Now, suppose to the contrary that $\Phi c=db$ is a Khovanov coboundary.  Then $\Phi_{Lee}a$ is homologous to $\Phi_{Lee}a-d_{Lee}b$ which has filtration degree less than $q$.  Thus the element  $y=\sum_i\lambda_iy_i\in S_0^{Lee}(X;L)$, which was represented by $[a]\in KhR_{Lee}(K(r))^{S(r)}$, is also represented by $[\Phi_{Lee}a]\in KhR_{Lee}(K(r'))^{S(r')}$ with filtration level less than $q$, implying $q(y)<q$, contradicting the choices of $y_1,\cdots,y_n$.
\end{proof}

\subsection{Comparison results for Lee skein lasagna modules}\label{sbsec:Lee_compare}
In this section, we prove two sample comparison results for Lee skein lasagna modules of $2$-handlebodies, as an easy consequence of the structure theorem of Lee skein lasagna modules (Theorem~\ref{thm:Lee_structure}) and the behavior of the canonical generators under gluing (Theorem~\ref{thm:can_gen_morphism}).

Recall that $(X,L)$ is the pair that arises from $K\cup L\subset S^3$ after attaching 2-handles along $K$. Let $(X',L')$ be another such pair from $K'\cup L'$.  Suppose there is a concordance $C\colon K\to K'$, that is, a genus $0$ component-preserving framed link cobordism in $S^3\times I$ (thus topologically $C$ is a disjoint union of annuli). Then the surgery on $C$ gives a homology cobordism $W$ between the surgery on $K$ and the surgery on $K'$. In particular, $W$ (and thus $C$) induces an isomorphism $\varphi_C\colon H_*(\partial X)\cong H_*(\partial X')$. Moreover, $X'=X\cup_{\partial X}W$. If $A\subset W$ is a concordance between $L$ and $L'$, then $(W,A)$ induces an isomorphism $H_2(X,L)\cong H_2(X',L')$ that restricts to $\phi_A\colon H_2^L(X)\cong H_2^{L'}(X')$.

\begin{Prop}\label{prop:Lee_comparison}
Let $(X,L)$ and $(X',L')$ be obtained from $K\cup L,K'\cup L'\subset S^3$ as above, and let $C\colon K\to K'$ be a concordance that induces the homology cobordism $W\colon\partial X\to\partial X'$.\vspace{-5pt}
\begin{enumerate}
\item If $A\subset W$ is a framed concordance between $L,L'$, then $\mathcal S_0^{Lee}(X;L)\cong\mathcal S_0^{Lee}(X';L')$ as tri-graded (by $\Z\times\Z/4\times H_2^L(X)$, where $H_2^L(X)\cong H_2^{L'}(X')$ via $\phi_A$), quantum filtered vector spaces. Moreover, $s(X;L;\alpha)=s(X';L';\phi_A(\alpha))$ for all $\alpha\in H_2^L(X)$.
\item If $L$ has components $L_1,\cdots,L_k$, $L'$ has components $L_1',\cdots,L_k'$, and $\varphi_C([L_i])=[L_i']$ for all $i$, then $\mathcal{S}_0^{Lee}(X;L)$ is nonvanishing if and only if $\mathcal S_0^{Lee}(X';L')$ is nonvanishing.
\end{enumerate}
\end{Prop}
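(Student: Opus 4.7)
\emph{Proof proposal.} The common engine for both parts is the gluing map
\[
\phi:=\mathcal{S}_0^{Lee}(W;A)\colon\mathcal{S}_0^{Lee}(X;L)\longrightarrow\mathcal{S}_0^{Lee}(X';L')
\]
produced by any oriented surface $A\subset W$ with $\partial A=-L\cup L'$ via \eqref{eq:glue_Kh_ver} (taking $X_1=X$, $X_2=W$, $Y=\partial X$, $T_0=L$, $S=A$), of filtered degree $-\chi(A)+\chi(L)-[A]^2=-\chi(A)-[A]^2$. Whenever every component of $A$ has a boundary on $L$, the compatibility condition in Theorem~\ref{thm:can_gen_morphism} forces a unique compatible double skein structure on $A$ for each double class $(\alpha_+,\alpha_-)\in H_2^{L,\times 2}(X)$, so the sum in \eqref{eq:las_can_gen_morphism} collapses to
\[
\phi(x_{\alpha_+,\alpha_-})=\pm 2^{m(A,L)}\,x_{\phi_A(\alpha_+,\alpha_-)},
\]
a nonzero scalar multiple of the canonical generator at $\phi_A(\alpha_+,\alpha_-)=(\alpha_+*[A_+],\alpha_-*[A_-])$, matching the $\phi_A$ from the statement. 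This is the key consequence of Theorem~\ref{thm:can_gen_morphism} that drives both parts.

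For part (1), a framed concordance $A$ is a disjoint union of annuli with compatible framings, so $\chi(A)=0$, $[A]^2=0$, and $\phi$ has filtered degree zero. A brief computation using \eqref{eq:n_renormalization} shows $n(A)=n(A_\pm(\mathfrak{O}))=0$ for annular components, so the scalar in the displayed formula equals $+1$ uniformly and is invariant under $(\alpha_+,\alpha_-)\leftrightarrow(\alpha_-,\alpha_+)$; hence $\phi$ carries canonical generators exactly onto canonical generators, giving a vector space isomorphism respecting the tri-grading. The symmetric surgery construction applied to the reverse concordance $C^r$ yields the identification $X=X'\cup_{\partial X'}W^r$, so $A^r\subset W^r$ induces a filtered-degree-zero map $\psi\colon\mathcal{S}_0^{Lee}(X';L')\to\mathcal{S}_0^{Lee}(X;L)$ of the same form. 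The double-of-surface argument shows $\phi_{A^r}\circ\phi_A=\mathrm{id}$ on $H_2^{L,\times 2}(X)$, whence $\psi\circ\phi$ acts by a nonzero scalar on every canonical generator of $\mathcal{S}_0^{Lee}(X;L)$ and so preserves filtration; combined with the filtered-degree-zero bound from $\phi$ this pins down $q(x_{\alpha_+,\alpha_-})=q(x'_{\phi_A(\alpha_+,\alpha_-)})$, which together with the preservation of $\mathbb{Z}/4$ grading and conjugate-pair structure in Theorem~\ref{thm:Lee_structure}(3)(4) upgrades $\phi$ to a filtered isomorphism. Equality of lasagna $s$-invariants then follows from Definition~\ref{def:s}.

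For part (2), the hypothesis $\varphi_C([L_i])=[L_i']$ together with $W$ being a homology cobordism gives $[L_i]=[L_i']$ in $H_1(W)$ for every $i$. A routine surface construction (realize each vanishing relative class by an embedded surface, then tube disconnected pieces together inside $W$) produces $A\subset W$ with $\partial A=-L\cup L'$ such that every connected component of $A$ has boundary $-L_i\cup L_i'$ for a single $i$; in particular each component has a boundary on $L$. Theorem~\ref{thm:can_gen_morphism} thus applies and gives the displayed formula, so $\phi$ maps every canonical generator to a nonzero scalar multiple of a canonical generator. Although the filtered degree $-\chi(A)-[A]^2$ may be negative, it is finite, so any $x_{\alpha_+,\alpha_-}$ of finite filtration has nonzero image of finite filtration; a symmetric construction using $W^r$ furnishes the converse.

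The main obstacle is establishing the filtered (rather than merely graded) isomorphism in part (1). Theorem~\ref{thm:can_gen_morphism} with the $n(\cdot)=0$ computation immediately yields a vector space isomorphism matching canonical generators on the nose, but the filtration on $\mathcal{S}_0^{Lee}$ is controlled by Theorem~\ref{thm:Lee_structure}(3)(4) through the conjugate pairing and not by individual generators in isolation. The crux is therefore to justify that $\psi\circ\phi$ is genuinely an endomorphism of $\mathcal{S}_0^{Lee}(X;L)$ acting diagonally with nonzero scalars: this requires the identification $X=X'\cup_{\partial X'}W^r$ coming from the surgery-on-$C^r$ construction, plus the homological fact $\phi_{A^r}\circ\phi_A=\mathrm{id}$ via the double construction, even though $W\cup_{\partial X'}W^r$ need not itself be the trivial product cobordism $\partial X\times I$.
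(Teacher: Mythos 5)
Your part (1) follows the paper's argument: define the gluing map $\mathcal{S}_0^{Lee}(W;A)$, observe via Theorem~\ref{thm:can_gen_morphism} that it sends canonical Lee lasagna generators to canonical Lee lasagna generators with scalar $+1$ (since $n(\cdot)=0$ for annuli), construct the inverse map from the reversed concordance, and conclude. One remark: the closing worry about needing Theorem~\ref{thm:Lee_structure}(3)(4) to ``upgrade $\phi$ to a filtered isomorphism'' is unnecessary. Once you know $\phi$ and $\psi$ are both filtered of degree $0$ and $\psi\circ\phi=\mathrm{id}$, $\phi\circ\psi=\mathrm{id}$ (on the whole space, since the canonical generators form a basis), you automatically get $q(x)=q(\psi(\phi(x)))\le q(\phi(x))\le q(x)$ for \emph{every} $x$, so $\phi$ is a filtered isomorphism by a two-line squeeze. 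No structural input beyond the existence of a filtered-degree-$0$ two-sided inverse is needed.

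Part (2) has a genuine gap. You write that ``a routine surface construction\ldots produces $A\subset W$\ldots such that every connected component of $A$ has boundary $-L_i\cup L_i'$ for a single $i$,'' i.e., an embedded, disjoint union $A=A_1\sqcup\cdots\sqcup A_k$ with $\partial A_i=-L_i\cup L_i'$. Each $A_i$ can indeed be realized by an embedded surface in $W$, but there is no reason the different $A_i$ can be made mutually disjoint in a $4$-dimensional cobordism; in general position they meet in transverse points, and these cannot be removed without tubing $A_i$ to $A_j$ --- which destroys precisely the component-wise boundary structure that you need for Theorem~\ref{thm:can_gen_morphism}'s sum to collapse to a single term. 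This is not a cosmetic issue: if a component of $A$ has boundary on both $L_i$ and $L_j$, then double classes assigning opposite signs to $L_i$ and $L_j$ admit no compatible orientation on that component, and the induced map sends the corresponding canonical generator to $0$, breaking the nonvanishing transfer. The paper circumvents this by allowing the cobordisms $S_i\colon L_i\to L_i'$ to be \emph{immersed}, deleting local $4$-balls around all (self-)intersection points to obtain an honest \emph{skein} $S$ in $W$ rel $-L\sqcup L'$ (with the $S_i$ remaining distinct components), and then gluing with the canonical Lee lasagna filling $x(S_+,S_-)$ on $S$ via the general gluing map \eqref{eq:glue_2_summands} --- not via the surface-only map \eqref{eq:glue_Kh_ver} that Theorem~\ref{thm:can_gen_morphism} addresses. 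Your argument would need to be reworked in this skein language; as written it relies on an embedded $A$ that needn't exist. Relatedly, the obstacle you flag at the end (the filtered vs.\ graded upgrade in part (1)) is not where the real difficulty lives; it is in the embeddedness issue in part (2).
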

\begin{proof}
\begin{enumerate}[(1)]
\item Gluing $(W,A)$ to $(X,L)$ defines a filtered (degree $0$) map $\mathcal S_0^{Lee}(W;A)\colon\mathcal S_0^{Lee}(X;L)\to\mathcal S_0^{Lee}(X';L')$. Turning $(W,A)$ upside down gives another filtered map $\mathcal S_0^{Lee}(X';L')\to\mathcal S_0^{Lee}(X;L)$. By Theorem~\ref{thm:can_gen_morphism}, the two maps interchange the corresponding canonical Lee lasagna generators, thus are inverses to each other. The statement follows.
\item By symmetry, it suffices to assume the filtration of $\mathcal S_0^{Lee}(X;L)$ is identically $-\infty$ and prove the same for $\mathcal S_0^{Lee}(X';L')$.\\
\noindent
Under the given condition, we can find framed immersed cobordisms $S_i\colon L_i\to L_i'$ in $W$ that intersect each other transversely. After deleting some local balls, we can regard $S=S_1\cup\cdots\cup S_k$ as a skein in $W$ rel $-L\sqcup L'$. For any double class $(\alpha_+,\alpha_-)\in H_2^{L,\times2}(X)$, there is a unique compatible double skein structure $S=S_+\cup S_-$. Gluing $(W,S)$ with the canonical Lee lasagna filling $x(S_+,S_-)$ to $(X,L)$ maps $x_{\alpha_+,\alpha_-}\in\mathcal S_0^{Lee}(X;L)$ to a nonzero multiple of $x_{\alpha_+*[S_+],\alpha_-*[S_-]}\in\mathcal S_0^{Lee}(X';L')$. Since $q(x_{\alpha_+,\alpha_-})=-\infty$, we see $q(x_{\alpha_+*[S_+],\alpha_-*[S_-]})=-\infty$ as well. Since all canonical Lee lasagna generators of $(X',L')$ arise this way, we conclude that the filtration function on $\mathcal S_0^{Lee}(X';L')$ is identically $-\infty$.\qedhere
\end{enumerate}
\end{proof}

\subsection{Lasagna \texorpdfstring{$s$}{s}-invariant and \texorpdfstring{$s$}{s}-invariants of cables}\label{sbsec:las_s_from_classical}
In this section we show that the lasagna $s$-invariants of $(X,L)$ are determined by the classical $s$-invariants of cables of $K\cup L$. Recall that as in the setup of \eqref{eq:2-hdby}, the choice $K\cup L\subset S^3$ gives an identification $H_2^L(X)\cong\Z^m$.

\begin{Prop}\label{prop:las_s_from_classical}
For any $\alpha\in H_2^L(X)\cong\Z^m$, 
\begin{align*}
s(X;L;\alpha)=&\,\lim_{r\to\infty^m}(s_{\mathfrak{gl}_2}(K(\alpha^++r,\alpha^-+r)\cup L)-2|r|)-|\alpha|+1\\
=&\,-\,\lim_{r\to\infty^m}(s(-(K(\alpha^++r,\alpha^-+r)\cup L))+2|r|)-w(K(\alpha^+,\alpha^-)\cup L)-|\alpha|+1.
\end{align*}
Moreover, the quantity within the limit sign in the first (resp. second) line is nonincreasing (resp. nondecreasing) in $r$.
\end{Prop}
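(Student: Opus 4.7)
The plan is to push the computation of $q(x_{\alpha,0})$ into the filtered-colimit form of the $2$-handlebody formula and track a single classical Lee canonical generator through the colimit.

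First I would rewrite $\mathcal S_0^{Lee}(X;L;\alpha)$ as the filtered colimit \eqref{eq:2-hdby_filtered_colim}, indexed by $r\in\Z_{\ge0}^m$, of $KhR_{Lee}(K(r)\cup L)^{S(r)}\{-|\alpha|-2|r|\}$ where $K(r):=K(\alpha^++r,\alpha^-+r)$. The transition maps $\phi_i$ are induced by the dotted annular creation cobordisms (composed with symmetrization) and are filtered of degree $0$ after the quantum shift. In such a filtered colimit of filtered $\Q$-vector spaces, the filtration of any element is the (nonincreasing) limit of the filtration values of its representatives along the directed system.

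Second I would identify $x_{\alpha,0}$ at level $r$ with a nonzero scalar multiple of the classical Lee canonical generator $x_{\mathfrak o_r}\in KhR_{Lee}(K(r)\cup L)$ for the natural orientation $\mathfrak o_r$, which is automatically $S(r)$-invariant. The witness is the canonical Lee lasagna filling $x(\Sigma_+,\Sigma_-)$ for the double skein $\Sigma=\Sigma_+\cup\Sigma_-$ with $\Sigma_-=\emptyset$: every component of the cable skein lies in $\Sigma_+$ and therefore receives the Lee canonical generator for its natural orientation, while the $\A$-decorations on components of $\Sigma$ can be absorbed into the input link via neck-cuttings (scaling only the overall coefficient). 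By $q(x_{\mathfrak o_r})=s_{\mathfrak{gl}_2}(K(r)\cup L)+1$, its image in the shifted colimit stage has filtration $s_{\mathfrak{gl}_2}(K(r)\cup L)+1-|\alpha|-2|r|$.

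Third comes monotonicity and the final identification. Formula \eqref{eq:can_gen} applied to the dotted annulus shows that $\phi_i(x_{\mathfrak o_r})$ is a nonzero multiple of $x_{\mathfrak o_{r+e_i}}$ plus other Lee canonical generators (which do not affect the argument), and the filtered-degree-$0$ property of $\phi_i$ then forces $s_{\mathfrak{gl}_2}(K(r)\cup L)-2|r|$ to be nonincreasing in $r$. Passing to the colimit yields
\[
s(X;L;\alpha)=q(x_{\alpha,0})=\lim_{r\to\infty^m}\bigl(s_{\mathfrak{gl}_2}(K(r)\cup L)-2|r|\bigr)-|\alpha|+1,
\]
which is the first displayed formula. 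The second follows from the renormalization \eqref{eq:s_renormalize} once one verifies that $w(K(r)\cup L)$ is independent of $r$: for each $K_i$, the self-writhes and mutual linking numbers of its $\alpha_i^++r_i$ positive and $\alpha_i^-+r_i$ negative parallel cables sum to $(a-b)^2$ times the framing via $a+b+a(a-1)+b(b-1)-2ab=(a-b)^2$ with $a-b=\alpha_i$, and the cross-linkings between cables of distinct $K_i$, or with $L$, depend only on $\alpha$.

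The main obstacle I anticipate is step two: verifying that $x_{\alpha,0}$ lifts at each finite level to the classical canonical generator rather than to some other element of the same shifted filtration degree. This requires carefully reconciling the $\A$-decorations in the definition of $x(\Sigma_+,\Sigma_-)$ from Section~\ref{sbsec:Lee_structure} with the plain-input presentation used by the $2$-handlebody formula. Once that identification is pinned down, the remaining steps are essentially bookkeeping in the filtered-colimit formalism together with an application of Theorem~\ref{thm:can_gen_morphism}.
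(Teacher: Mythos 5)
Your Step $1$ and the deduction of the second displayed formula from the first are fine, but Step $2$ contains the decisive error. You claim that the classical Lee canonical generator $x_{\mathfrak{o}_r}$ for the natural orientation of $K(\alpha^++r,\alpha^-+r)\cup L$ is ``automatically $S(r)$-invariant.'' It is not. Once $r_i>0$ (which happens for all large $r$), the factor $S_{|\alpha_i|+2r_i}$ of $S(r)$ contains transpositions that swap a positively-oriented strand of the $K_i$-cable with a negatively-oriented one. Such a $\sigma$ sends $\mathfrak{o}_r$ to a distinct orientation $\sigma(\mathfrak{o}_r)$, and correspondingly $\sigma\cdot x_{\mathfrak{o}_r}=\pm x_{\sigma(\mathfrak{o}_r)}$ is a different basis element of $KhR_{Lee}(K(r)\cup L)$, not a multiple of $x_{\mathfrak{o}_r}$. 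Hence $x_{\mathfrak{o}_r}$ does not even lie in the $S(r)$-invariant subspace that appears in \eqref{eq:2-hdby_filtered_colim}, and cannot be the representative of $x_{\alpha,0}$ in the colimit.

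The correct representative is the symmetrized sum $x^{inv}(r)=\sum_{\#X_i=\alpha_i^++r_i}x_{(X_1,\dots,X_m)}$ over the full $S(r)$-orbit of $x_{\mathfrak{o}_r}$. Because all summands are translates of $x_{\mathfrak{o}_r}$ by filtered isomorphisms, each has filtration level $s_{\mathfrak{gl}_2}(K(r)\cup L)+1$, and the triangle inequality gives only $q(x^{inv}(r))\le s_{\mathfrak{gl}_2}(K(r)\cup L)+1$. That produces the upper bound $s(X;L;\alpha)\le\lim_r\bigl(s_{\mathfrak{gl}_2}(K(r)\cup L)-2|r|\bigr)-|\alpha|+1$; it is strictly an inequality, since a sum of same-level elements can cancel into lower filtration. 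Establishing the reverse inequality (equality for large $r$) is the genuinely hard part of the proposition. The paper handles it by decomposing $KhR_{Lee}(K(r)\cup L)$ as an $S(r)$-representation into two-row pieces $\bigotimes_i(\alpha_i+2r_i-k_i,k_i)$ (adapting \cite[Section~4]{ren2023lee}), identifying $x^{inv}(r)$ with the top piece $\bigotimes_i(\alpha_i+2r_i,0)$ and $x_{\mathfrak{o}_r}$ with the full sum of pieces, and comparing the filtration degrees of irreducible summands across the annular index-lowering maps. Your proposal omits this representation-theoretic step entirely, which is the real content of the proposition. (A smaller issue: in Step $3$, from $\phi_i(x_{\mathfrak{o}_r})=c\,x_{\mathfrak{o}_{r+e_i}}+\cdots$ and $\phi_i$ filtered of degree $0$ one cannot directly read off the filtration of the individual term $x_{\mathfrak{o}_{r+e_i}}$, so the monotonicity justification as written is also soft, though the conclusion is true.)
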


\begin{proof}
The $\mathfrak{gl}_2$ $s$-invariant relates to the classical $s$-invariant via \eqref{eq:s_renormalize}. Since $w(K(\alpha^++r,\alpha^-+r)\cup L)$ is independent of $r$, the claims on the second row follow from the first. The existence of the annular cobordism from $K(\alpha^++r,\alpha^-+r)\cup L$ to $K(\alpha^++r+e_i,\alpha^-+r+e_i)\cup L$ implies that the quantity within the limit sign in the first row is nonincreasing in $r$. In particular, the limit exists (which could be $-\infty$). Thus, it remains to prove the first equality.

We describe explicitly the isomorphism \eqref{eq:2-hdby}. Delete a $4$-ball $B^4$ slightly smaller than the $0$-handle of $X$ and put $K(r):=K(\alpha^++r,\alpha^-+r)\cup L$ on its boundary. Cap off the sublink $K(\alpha^++r,\alpha^-+r)$ by some parallel copies of cores of the $2$-handles of various orientations corresponding to $(\alpha^++r,\alpha^-+r)$, and put a cylindrical surface $L\times I$ connecting the inner link $L\subset\partial B^4$ and the outer link $L\subset\partial X$. This gives a skein in $(X,L)$ with input link $K(r)$. Putting decorations on $K(r)$ defines $KhR_{Lee}(K(r))\to\mathcal S_0^{Lee}(X;L)$. Taking the direct sum over $r$ defines a map that descends to the isomorphism \eqref{eq:2-hdby}. It follows that the Lee canonical filling on this skein as a double skein (given by its own orientation) is given up to a scalar by the Lee canonical generator of $K(r)$ (with its own orientation) decorated on the input ball.

As in the proof of Theorem~\ref{thm:rank_ineq}, \eqref{eq:2-hdby} for $\bullet=Lee$ can be rewritten as \eqref{eq:2-hdby_filtered_colim}, where $S(r)=S_{|\alpha_1|+2r_1}\times\cdots\times S_{|\alpha_m|+2r_m}$. Label the copies of $K_i$ in $K(r)$ by $1,2,\cdots,|\alpha_i|+2r_i$. Then the orientations of $K(r)$ compatible with $L$ can be identified with tuples $(X_1,\cdots,X_m)$ where each $X_i$ is a subset of $[|\alpha_i|+2r_i]:=\{1,2,\cdots,|\alpha_i|+2r_i\}$, indicating which strands are given the same orientation as $K_i$. Those that give rise to possible orientations of $K(r)$ compatible with the homology class degree $\alpha$ correspond to such tuples with $\#X_i=\alpha_i^++r_i$. Then the canonical Lee lasagna generator $x_{\alpha,0}\in\mathcal S_0^{Lee}(X;L;\alpha)$ under \eqref{eq:2-hdby_filtered_colim} is represented, in each $r\in\Z_{\ge0}^m$ term, uniquely by the element $$x^{inv}(r):=c_r\sum_{\#X_i=\alpha_i^++r_i}x_{(X_1,\cdots,X_m)}\in KhR_{Lee}(K(r))^{S(r)}$$ with a $\{-|\alpha|-2|r|\}$ shift, where $c_r\in\Q$ is some nonzero scalar. Define $s_{\mathfrak{gl}_2}^{inv}(K(r)):=q(x^{inv}(r))$. Since \eqref{eq:2-hdby_filtered_colim} holds as filtered vector spaces, we have 
\begin{equation}\label{eq:las_s_from_s^inv}
s(X;L;\alpha)=\lim_{r\to\infty^m}(s_{\mathfrak{gl}_2}^{inv}(K(r))-|\alpha|-2|r|).
\end{equation}
It remains to show the right-hand side of \eqref{eq:las_s_from_s^inv} is unchanged if we replace $s_{\mathfrak{gl}_2}^{inv}(K(r))$ by $s_{\mathfrak{gl}_2}(K(r))+1$, the latter of which is equal to $q(x_{(X_1,\cdots,X_m)})$ where $x_{(X_1,\cdots,X_m)}$ is any term in the summation that defines $x^{inv}(r)$.

Since $x^{inv}(r)$ is a sum of $x_{(X_1,\cdots,X_m)}$ and some $S(r)$-translates of it (which all have the same filtration degree), we have $q(x^{inv}(r))\le q(x_{(X_1,\cdots,X_m)})$, thus $$\lim_{r\to\infty^m}(s_{\mathfrak{gl}_2}^{inv}(K(r))-|\alpha|-2|r|)\le\lim_{r\to\infty^m}(s_{\mathfrak{gl}_2}
(K(r))+1-|\alpha|-2|r|).$$ Now suppose the right-hand side is not $-\infty$, so the value stabilizes to some limit $M$ for $r$ sufficiently large. We claim that for sufficiently large $r$, we have $q(x^{inv}(r))=q(x_{(X_1,\cdots,X_m)})=M+|\alpha|+2|r|$, from which the statement follows.

To this end, we follow the argument in \cite[Section~4]{ren2023lee}. By looking at the $S(r)$-action on the canonical generators, we see $$KhR_{Lee}(K(r))\cong\bigotimes_{i=1}^m\Q\,\mathcal{P}([|\alpha_i|+2r_i])$$ as an $S(r)$-representation, where $\mathcal P$ denotes the power set operation on a set. Each tensor summand can be further decomposed as
\begin{align*}
\Q\,\mathcal P([|\alpha_i|+2r_i])=&\,\bigoplus_{j=0}^{|\alpha_i|+2r_i}\Q\{X\subset[|\alpha_i|+2r_i]\colon\#X=j\}\\
=&\,\bigoplus_{j=0}^{|\alpha_i|+2r_i}\left(\bigoplus_{k=0}^{\min(j,|\alpha_i|+2r_i-j)}(|\alpha_i|+2r_i-k,k)\right)
\end{align*}
into irreducible $S_{|\alpha_i|+2r_i}$-representations. Here $(a,b)$ denote the irreducible $S_{a+b}$-representation over $\Q$ defined by the two-row Young diagram $(a,b)$. Since the $S(r)$-action on $KhR_{Lee}(K(r))$ respects the filtration structure, all nonzero elements in each irreducible piece $V$ in the decomposition of $S(r)$ have the same filtration degree, which is denoted $q(V)$.

Without loss of generality, suppose $\alpha_i\ge0$ for all $i$ (otherwise reverse the orientations of some $K_i$'s). Then $\alpha_i^+=\alpha_i$, $\alpha_i^-=0$. The canonical generator $x_{(X_1,\cdots,X_m)}\in KhR_{Lee}(K(r))$ lies in the $S(r)$-subrepresentation (which is optimally small)
\begin{equation}\label{eq:x_in_sub_rep}
\bigotimes_{i=1}^m\Q\{X\subset[\alpha_i+2r_i]\colon\#X=\alpha_i+r\}=\bigoplus_{k_1=0}^{r_1}\cdots\bigoplus_{k_m=0}^{r_m}\bigotimes_{i=1}^m(\alpha_i+2r_i-k_i,k_i),
\end{equation}
while the element $x^{inv}(r)$ lies in the subrepresentation $$\bigotimes_{i=1}^m(\alpha_i+2r_i,0).$$

Thus $q(x_{(X_1,\cdots,X_m)})=\max_k(q(\otimes_{i=1}^m(\alpha_i+2r_i-k_i,k_i)))$ (this uses the fact that these representations are pairwise nonisomorphic) and $q(x^{inv}(r))=q(\otimes_{i=1}^m(\alpha_i+2r_i,0))$.

Suppose to the contrary that $q(x^{inv}(r))<q(x_{(X_1,\cdots,X_m)})$. Then there is a summand $\otimes_{i=1}^m(\alpha_i+2r_i-k_i,k_i)$ in \eqref{eq:x_in_sub_rep} with some $k_t\ne0$ that has $q(\otimes_{i=1}^m(\alpha_i+2r_i-k_i,k_i))=q(x_{(X_1,\cdots,X_m)})=M+|\alpha|+2|r|$. The argument in \cite[Section~4.3]{ren2023lee} shows that $q(\otimes_{i=1}^m(\alpha_i+2r_i-k_i-\delta_{it},k_i-\delta_{it}))=q(\otimes_{i=1}^m(\alpha_i+2r_i-k_i,k_i))$ where $\otimes_{i=1}^m(\alpha_i+2r_i-k_i-\delta_{it},k_i-\delta_{it})$ is the corresponding $S(r-e_t)$-irreducible summand of $KhR_{Lee}(K(r-e_t))$ ($e_t$ is the $t$\textsuperscript{th} coordinate vector). However, this implies the corresponding canonical generator for $KhR_{Lee}(K(r-e_t))$ has
\begin{align*}
q(x_{(X_1,\cdots,X_t\backslash\{*\},\cdots,X_m)})\ge&\,q(\otimes_{i=1}^m(\alpha_i+2r_i-k_i-\delta_{it},k_i-\delta_{it}))\\
=&\,q(x_{(X_1,\cdots,X_m)})=M+|\alpha|+2|r|>M+|\alpha|+2|r-e_t|.
\end{align*}
This contradicts $r$ being sufficiently large (so that $\lim(s_{\mathfrak{gl}_2}(K(\cdot))+1-|\alpha|-2|\cdot|)=M$ already stabilizes at $r-e_t$). The proof is complete.
\end{proof}

\subsection{Diagrammatic nonvanishing result}\label{sbsec:diagram}
For simplicity, in this section we assume $L=\emptyset$. We give a diagrammatic criterion to guarantee the nonvanishing of $\mathcal S_0^2(X)$ and $\mathcal S_0^{Lee}(X)$.

As before, assume $X$ is a $2$-handlebody obtained by attaching $2$-handles to $B^4$ along a framed oriented link $K\subset S^3$ with components $K_1,\cdots,K_m$ and framings $p_1,\cdots,p_m$. Let $p=(p_1,\cdots,p_m)$ and $P$ be the diagonal matrix with entries $p_1,\cdots,p_m$. Choose a link diagram $D$ of $K$ as an unframed link. Let $N$ denote the crossing matrix of $D$, where $N_{ii}$ is the number of crossings of $K_i$ in diagram $D$, and $N_{ij}$ is a half the number of crossings between $K_i$ and $K_j$ in $D$ for $i\ne j$. Let $w=(w_1,\cdots,w_m)$ be the writhe vector of the diagram $D$, where $w_i$ is the writhe of the $i$-th component of $D$, and let $W$ be the diagonal matrix with entries $w_1,\cdots,w_m$. Finally, for $f,n\in\Z^m$, let $K^f$ denote the knot $K$ with framing $f$, and $K^f(n)$ denote the $n$-cable of $K$ with framing $f$, obtained by replacing $K_i$ by an $f_i$-framed $|n_i|$-cable, with orientation reversed if $n_i<0$. Thus, any $K^w(n)$ has a natural diagram induced from $D$ by taking blackboard-framed cables. (Here and below, unless explicitly stated otherwise, the blackboard framing of a diagram of a framed link is equal to the framing of the link.)

\begin{Thm}\label{thm:nonvan_diagram}
In the notations above, suppose a class $\alpha\in H_2(X)\cong\Z^m$ satisfies\vspace{-5pt}
\begin{enumerate}[(i)]
\item $\alpha$ maximizes $h(\alpha'):=\alpha'^T(P-W+N)\alpha'$ among all vectors $\alpha'\in\alpha+2\Z^m$ (in particular, $P-W+N$ is nonpositive);
\item $K^w(\alpha')$ has no negative crossings in the diagram induced from $D$ for any $\alpha'\in\alpha+2\Z^m$ with $h(\alpha')=h(\alpha)$.\vspace{-5pt}
\end{enumerate}
Then\vspace{-5pt}
\begin{enumerate}[(1)]
\item For $\bullet\in\{2,Lee\}$, $\mathcal S_{0,>0}^\bullet(X;\alpha)=0$;
\item The dimension of $\mathcal S_{0,0}^{Lee}(X;\alpha)$ is equal to the number of maximal points of $h$ on $\alpha+2\Z^m$. The rank of $\mathcal S_{0,0}^2(X;\alpha)$ is bounded \emph{above} by this number;
\item $s(X;\alpha)=-s(-K^p(\alpha))-w(K^p(\alpha))-|\alpha|+1$. In particular, $X$ has nonvanishing Khovanov and Lee skein lasagna modules.
\end{enumerate}
\end{Thm}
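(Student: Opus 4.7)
The plan is to apply the $2$-handlebody formula in its filtered-colimit form \eqref{eq:2-hdby_filtered_colim}, which writes
\[\mathcal S_0^\bullet(X;\alpha)=\mathrm{colim}_{r\in\Z_{\ge0}^m}\,KhR_\bullet(K^p(\alpha^++r,\alpha^-+r))^{S(r)}\{-|\alpha|-2|r|\},\]
and to analyze each summand using the classical structure of $KhR_\bullet$ for positive cables, which applies at maximizers of $h$ thanks to condition (ii).

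The first step is bigrading bookkeeping. By Proposition~\ref{prop:framing_change}, switching from $p$-framing to blackboard framing introduces a quantum shift $\{-\sum_i n_i(p_i-w_i)\}$ with $n_i=|\alpha_i|+2r_i$. In the Lee setting, $KhR_{Lee}$ of the mixed-orientation cable splits (up to filtration) into canonical generators $x_\mathfrak{o}$ indexed by orientations $\mathfrak{o}$, each with signed-count vector $\alpha':=\alpha_\mathfrak{o}\in\alpha+2\Z^m$. The orientation-change formula \eqref{eq:KhR_ori_change} carries each $x_\mathfrak{o}$ from the all-positive generator via a bigrading shift determined by the writhe difference $\alpha'^T\Lambda\alpha'-n^T\Lambda n$, where $\Lambda$ is the linking matrix of $K$ in $D$ with diagonal $(w_i)$. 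Combining these with the formula shift $\{-|\alpha|-2|r|\}$ and re-indexing by $\alpha'$, the bigrading contribution of $x_\mathfrak{o}$ to $\mathcal S_0^{Lee}(X;\alpha)$ is governed precisely by $h(\alpha')$; under condition (ii) this $h(\alpha')$ equals the writhe of $K^p(\alpha')$, since all resulting cable crossings must be positive.

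Next, I would invoke the classical positive-link structure. For a positive link $L$ with a positive diagram, $Kh(-L)$ is supported in $h\ge0$ with a one-dimensional top row, so via \eqref{eq:KhR_2} both $KhR_2(L)$ and $KhR_{Lee}(L)$ sit in $h\le0$, with $s_{\mathfrak{gl}_2}(L)$ saturating the Bennequin-type sharp bound. Applying this to $K^w(\alpha')$ for each maximizer $\alpha'$ of $h$, then taking $S(r)$-invariants and the colimit over $r$, yields vanishing in positive homological degree (part (1)) and a Lee-canonical contribution to $\mathcal S_{0,0}^{Lee}(X;\alpha)$ of dimension equal to the number of maximizers (the Lee part of (2)); the $\mathcal S_0^2$ bound in (2) then follows from Theorem~\ref{thm:rank_ineq}. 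For part (3), combining Proposition~\ref{prop:las_s_from_classical} with the same positive-link $s$-invariant computation gives the formula: condition (i) makes the sequence in that proposition stabilize at $r=0$, and condition (ii) identifies the limit with $s_{\mathfrak{gl}_2}(K^p(\alpha))=-s(-K^p(\alpha))-w(K^p(\alpha))$ via \eqref{eq:s_renormalize}.

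The main obstacle is the meticulous bookkeeping in the first step—verifying that framing change, orientation reversal, $S(r)$-symmetrization, and the annular creation maps $\phi_i$ all combine to yield precisely the quadratic $h(\alpha')$—together with ruling out unexpected contributions to $\mathcal S_{0,0}^{Lee}(X;\alpha)$ from non-maximizing orientations. The latter requires a careful support-width estimate on the $S(r)$-invariant subspace of $KhR_\bullet$ for the non-maximizer cables, showing that after symmetrization only the maximal-$h$ orbits survive in homological degree $0$ while the others are suppressed to strictly negative homological degree.
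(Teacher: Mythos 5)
Your proposal misidentifies the central technical difficulty and consequently has a genuine gap that would prevent the argument from going through, most seriously for parts (1) (the $\bullet=2$ case) and (3).

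The first-step "bigrading bookkeeping" is built on the premise that $K^p(n)$ and $K^w(n)$ differ only by a quantum grading shift (invoking Proposition~\ref{prop:framing_change}). That is false once any $n_i\ge 2$: a framing change on a multi-strand cable inserts full twists, so $K^p(n)$ and $K^w(n)$ are genuinely different links with different crossing numbers, and $K^p(n)$ has \emph{negative} crossings even when $K^w(n')$ is positive (since $p\le w$ after adding kinks). Proposition~\ref{prop:framing_change} governs the framing of the boundary link $L$ sitting on $\partial X$, not the framing of the surgery curves; it says nothing about $KhR_\bullet(K^p(n))$ versus $KhR_\bullet(K^w(n))$. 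So when you say ``applying [the positive-link structure] to $K^w(\alpha')$ \ldots then taking $S(r)$-invariants and the colimit over $r$,'' you are silently conflating the positive cables $K^w(n')$ (which never appear in the $2$-handlebody colimit) with the non-positive cables $K^p(\alpha^++r,\alpha^-+r)$ (which are what actually appear). Controlling $KhR_2(K^p(n))$ in terms of the positive cables is the entire content of the Sto\v si\'c-type comparison lemma (Lemma~\ref{lem:full_comparison}/\ref{lem:full_comparison_Kh}), proved by an induction on crossings via skein triangles, and it is what makes Step 1 of the paper's proof work: it shows the top-$t$-degree part of $\tilde P(K^p(n))$ is a sum over maximizers $n'$ of contributions from $Kh^0$ of positive links. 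You are aware the ``support-width estimate'' is the main obstacle, but you offer no mechanism for it; nothing in your outline produces the needed bound on $KhR_2$ in positive homological degree, which is precisely where the Khovanov homology of a non-positive link is uncontrolled by canonical generators.

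For part (3) the same gap recurs in a different guise. Proposition~\ref{prop:las_s_from_classical} expresses $s(X;\alpha)$ as a non-increasing limit over $r$, and you assert that ``condition (i) makes the sequence stabilize at $r=0$.'' That is not a consequence of (i); a priori the sequence could strictly decrease. The paper proves stabilization in Step 3 by decomposing $KhR_{Lee,\alpha}^0(K^p(\alpha+r,r))$ into irreducible $S(r)$-representations and showing the dotted annular creation maps $\phi_i$ raise filtration by exactly $2$, and the key injectivity input for this comes from the \emph{addendum} to the comparison lemma (the last sentence of Lemma~\ref{lem:full_comparison}), which in turn needs the sharpness established in Step 1. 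Without a substitute for the comparison lemma, your argument cannot rule out a filtration drop, and hence cannot establish the claimed formula $s(X;\alpha)=-s(-K^p(\alpha))-w(K^p(\alpha))-|\alpha|+1$, nor even finiteness of $s(X;\alpha)$, which is exactly the nonvanishing conclusion you are after.
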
\smallskip

\begin{Rmk}
It is possible to allow a boundary link $L\subset\partial X$, but we refrained from doing so to avoid making the statement too complicated.
\end{Rmk}

\begin{Cor}\label{cor:neg_def_nonvan}
In the notations of Theorem~\ref{thm:nonvan_diagram}, if $P-W+N$ is negative definite, then $s(X;0)=0$ and $\mathcal S_{0,0,q}^2(X;0)\otimes\Q=gr_q(\mathcal S_{0,0}^{Lee}(X;0))=\begin{cases}\Q,&q=0\\0,&q\ne0.\end{cases}$
\end{Cor}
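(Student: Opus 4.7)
The plan is to deduce the corollary as a direct specialization of Theorem~\ref{thm:nonvan_diagram} at the homology class $\alpha = 0$. The first step is to verify the two hypotheses of that theorem. For hypothesis (i), I must check that $0$ maximizes $h(\alpha') = \alpha'^T(P-W+N)\alpha'$ on the coset $0 + 2\Z^m = 2\Z^m$: negative definiteness of $P-W+N$ gives $h(\alpha') < 0 = h(0)$ for every $\alpha' \ne 0$, so $0$ is in fact the \emph{unique} maximizer and $h(0) = 0$. Hypothesis (ii) is then automatic, since the only $\alpha'$ realizing the maximum is $\alpha' = 0$, and $K^w(0)$ is the empty link (all cables taken with multiplicity $0$), which has no crossings at all.

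Having checked the hypotheses, I then invoke parts (2) and (3) of Theorem~\ref{thm:nonvan_diagram}. Part (3) reads $s(X;0) = -s(-K^p(0)) - w(K^p(0)) - |0| + 1 = -s(\emptyset) + 1 = 0$, using $s(\emptyset) = 1$ (equivalent, via \eqref{eq:s_renormalize}, to $s_{\mathfrak{gl}_2}(\emptyset) = -1$, which is immediate from $KhR_{Lee}(\emptyset) = \Q$ being generated by the canonical element $1$ in quantum filtration degree $0$). Part (2) yields $\dim_\Q \mathcal{S}_{0,0}^{Lee}(X;0) = 1$ (the unique maximum point of $h$), together with the upper bound $\mathrm{rank}(\mathcal{S}_{0,0}^2(X;0)) \le 1$.

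To finish, I identify the unique (up to scalar) class in the $1$-dimensional Lee piece as the canonical generator $x_{0,0}$, which has homological degree $-2\cdot 0 \cdot 0 = 0$ by Theorem~\ref{thm:Lee_structure}(1) and quantum filtration degree $s(X;0) = 0$ by Definition~\ref{def:s}. This pins down $gr_q \mathcal{S}_{0,0}^{Lee}(X;0) = \Q$ if $q = 0$ and $0$ otherwise. Finally, Theorem~\ref{thm:rank_ineq} provides $\mathrm{rank}(\mathcal{S}_{0,0,q}^2(X;0)) \ge \dim gr_q \mathcal{S}_{0,0}^{Lee}(X;0)$ in every $q$; summing over $q$ and comparing with the bound $\mathrm{rank}(\mathcal{S}_{0,0}^2(X;0)) \le 1$ forces equality degree-by-degree, yielding the asserted description of $\mathcal{S}_{0,0,q}^2(X;0) \otimes \Q$.

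The argument presents no serious obstacle---it is essentially a clean unwinding of the negative-definite hypothesis through Theorem~\ref{thm:nonvan_diagram} sharpened by the rank inequality of Theorem~\ref{thm:rank_ineq}. The only minor bookkeeping steps are the evaluation $s(\emptyset) = 1$ and the identification of the unique generator of the $1$-dimensional homological-degree-$0$ Lee piece as $x_{0,0}$, both of which are forced by the structural results already in place.
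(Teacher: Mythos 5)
Your argument is correct and follows the paper's own proof essentially verbatim: verify hypotheses (i),(ii) of Theorem~\ref{thm:nonvan_diagram} at $\alpha=0$ using negative definiteness, apply part (3) to compute $s(X;0)=0$, use part (2) for the upper bound on dimension/rank, and close with the rank inequality of Theorem~\ref{thm:rank_ineq}. The only difference is that you spell out the routine bookkeeping (the check of (ii) via $K^w(0)=\emptyset$, the evaluation $s(\emptyset)=1$, and the identification of the $1$-dimensional Lee piece with $\Q\cdot x_{0,0}$) which the paper leaves implicit.
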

\begin{proof}
Conditions (i)(ii) in Theorem~\ref{thm:nonvan_diagram} are satisfied for $\alpha=0$, which is the unique maximal point of $h$. Thus (3) implies $s(X;0)=-s(\emptyset)+1=0$. By definition of $s$, this implies $\dim gr_0(\mathcal S_{0,0}^{Lee}(X;0))\ge1$. Now (2) and Theorem~\ref{thm:rank_ineq} imply the claims on $\mathcal S_{0,0}^\bullet(X;0)$.
\end{proof}

\begin{proof}[Proof of Theorem~\ref{thm:nonvan_diagram_knot}]
We apply Theorem~\ref{thm:nonvan_diagram} and Corollary \ref{cor:neg_def_nonvan} to the situation in Theorem~\ref{thm:nonvan_diagram_knot}, that is to say, to computing the lasagna invariants of knot traces $X=X_n(K)$ in homology class degrees $\alpha=0,1$.  In this case, in the notation of Theorem~\ref{thm:nonvan_diagram} we have $m=1$, $P-W+N=n+2n_-(D)$. Thus the computation for $\alpha=0$ follows from Corollary~\ref{cor:neg_def_nonvan}.

For the computation for $\alpha=1$ when $K$ is positive, take $D$ to be a positive diagram of $K$. Then for $n\le0$, (i)(ii) in Theorem~\ref{thm:nonvan_diagram} are satisfied for both $\alpha=0,1$. Thus $s(X;0)=-s(\emptyset)+1=0$ and $s(X;1)=-s(-K)-n-1+1=s(K)-n$ by Theorem~\ref{thm:nonvan_diagram}(3). For $n<0$, $\alpha=\pm1$ are the unique maximal points of $h$ on $1+2\Z$, thus Theorem~\ref{thm:nonvan_diagram}(2) implies that the rank of $\mathcal S_{0,0}^2(X;1)$ is at most $2$. Now the claims on $\mathcal S_{0,0}^\bullet(X;1)$ follow from Theorem~\ref{thm:Lee_structure}(4) and Theorem~\ref{thm:rank_ineq}.
\end{proof}

The rest of the section is devoted to proving Theorem~\ref{thm:nonvan_diagram}.

For a link $L$, let $P(L)(t,q)$ be the Poincar\'e polynomial of $KhR_2(L)$. Define $$\widetilde{KhR}_2(L):=KhR_2(L)[\tfrac12w(L)]\{-\tfrac12w(L)\}$$ to be the renormalized homology of $L$, which is independent of the orientation of $L$ (cf. \eqref{eq:KhR_ori_change}). Let $\tilde P(L)(t,q)$ be the Poincar\'e polynomial of $\widetilde{KhR}_2(L)$ (which is a Laurent polynomial in $t^{1/2},q^{1/2}$).

For two (Laurent) polynomials $P,Q$ in variables $t^{1/2},q^{1/2}$, we write $P\le Q$ if every coefficient of $P$ is less than or equal to that of $Q$. For two vectors $a,b\in\Z^m$, write $a\le b$ if $a_i\le b_i$ for all $i$.
 
\begin{Lem}[Comparison lemma]\label{lem:full_comparison}
For $n\in\Z_{\ge0}^m$ and $p\in\Z^m$ with $p\le w$, we have 
\begin{equation}\label{eq:compare}
\tilde P(K^p(n))\le\sum_{\substack{0\le n'\le n\\2|n-n'}}R_{n,n',w-p}\tilde P(K^w(n'))
\end{equation}
for some polynomials $R_{n,n',w-p}\in\Z[t^{\pm1/2},q^{\pm1/2}]$ independent of $K$ with $$R_{n,n',w-p}(t,q)=t^{-\sum_i\frac{(w_i-p_i)n'^2_i}{2}}q^{\sum_i\frac{(w_i-p_i)(n'^2_i+2n'_i)}{2}}\prod_{j=1}^m\left(\sum_{r_j=0}^{\frac{n_j-n'_j}{2}}\dim\left(\frac{n_j+n'_j}{2}+r_j,\frac{n_j-n'_j}{2}-r_j\right)q^{2r_j}\right)+\cdots$$
where $\cdots$ are terms with lower $t$-degrees, and $(a,b)$ denotes the irreducible $S_{a+b}$-representation given by the Young diagram $(a,b)$.

Moreover, if \eqref{eq:compare} is sharp at the highest $t$-degree of the left-hand side, then for any $i=1,\cdots,m$ with $n_i\ge2$, the morphism on $KhR_2$ induced by the dotted annular creation map $K^p(n-2e_i)\to K^p(n)$\footnote{Really, the dotted annular creation map $K^p(n-2e_i)\to K^p(n-e_i,e_i)$ on $KhR_2$ composed with the isomorphism that changes the orientation of the reversed $K_i$ strand.} is injective in the highest nontrivial homological degree of $KhR_2(K^p(n))$.
\end{Lem}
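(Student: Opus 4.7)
\begin{pf}
The plan is to establish \eqref{eq:compare} by induction on the total framing difference $|w - p| := \sum_i (w_i - p_i) \ge 0$. The base case $p = w$ is immediate: the top-$t$-degree piece of $R_{n,n,0}$ includes the summand $\prod_j \dim(n_j, 0) = 1$ (the trivial $S_{n_j}$-representation), so $\tilde P(K^w(n))$ appears tautologically on the right-hand side of \eqref{eq:compare}, while all other summands are manifestly nonnegative polynomials.

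The induction step reduces to a single-twist inequality: decreasing $p_i$ by $1$ corresponds geometrically to inserting one negative full twist on the $n_i$-strand cable of $K_i$ in the blackboard-framed diagram of $K^p(n)$. I would apply the Khovanov cube of resolutions to the $n_i(n_i - 1)$ negative crossings of this inserted twist. Since those crossings live in a local braid on $n_i$ parallel strands, the resulting Khovanov complex carries the natural $S_{n_i}$-action permuting strands and decomposes into isotypic components; for $\mathfrak{gl}_2$-coefficients, only two-row Young diagrams $(a_i, b_i)$ with $a_i + b_i = n_i$ survive. The $(a_i, b_i)$-isotypic component is identified, up to explicit grading shifts, with $\dim(a_i, b_i)$ copies of the Khovanov complex of the smaller $n'_i$-cable of $K^w$, where $n'_i = a_i - b_i$. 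The per-twist prefactor $t^{-n'^2_i/2} q^{(n'^2_i + 2n'_i)/2}$ is precisely the renormalization of $\tilde P$ for this $n'_i$-cable induced by one negative full twist, and the factor $q^{2r_i}$ with $r_i = (a_i - b_i - n'_i)/2$ tracks the vertical ``height'' within the categorified Jones--Wenzl-type projector associated to $(a_i, b_i)$. The product $\prod_j$ over all components reflects the independence of these projector decompositions across different cable bundles. Iterating the single-twist step $(w_i - p_i)$ times per component assembles the top-$t$-degree part of $R_{n, n', w - p}$ in \eqref{eq:compare}, while the ``$+ \cdots$'' collects strictly lower-$t$-degree contributions from subordinate resolutions in the Khovanov cubes.

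For the ``moreover'' statement, sharpness of \eqref{eq:compare} at the top $t$-degree on the left-hand side precludes any cancellation among the top-$t$-degree Young-diagram contributions on the right. The dotted annular creation map $K^p(n - 2e_i) \to K^p(n)$ realizes the embedding of the smaller $(n_i - 2)$-cable into the $n_i$-cable by adjoining a $(1,1)$-pair of strands, which is exactly the two-row Young diagram refinement appearing in the projector decomposition; sharpness at the top forces this map to be injective on the highest nontrivial homological degree of $KhR_2(K^p(n))$, since any kernel would remove a top contribution and produce a strict inequality.

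The principal technical obstacle is the identification used in the induction step between the cube-of-resolutions of a negative full twist and the two-row Young diagram decomposition of the corresponding categorified projector. This is the $\mathfrak{gl}_2$ categorification of the Jones--Wenzl cabling formula, and its rigorous implementation requires careful homological bookkeeping of the categorified projectors together with a precise matching of bigrading shifts between individual resolution summands and irreducible projector pieces.
\end{pf}
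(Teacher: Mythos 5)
Your proposed induction on $|w-p|$, reducing to a single negative full twist on one cable, is indeed the reduction the paper makes (via Lemma~\ref{lem:full_comparison_Kh} and the elementary comparison Lemma~\ref{lem:comparison}). But the way you propose to handle the single-twist step has a genuine gap that would not survive closer inspection.

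You claim that the Khovanov cube of resolutions of the inserted negative full twist ``decomposes into isotypic components'' and that the $(a_i,b_i)$-isotypic piece ``is identified, up to explicit grading shifts, with $\dim(a_i,b_i)$ copies of the Khovanov complex of the smaller $n'_i$-cable of $K^w$.'' No such decomposition of the chain complex (or of its homology) is available. If the complex genuinely split in this way, \eqref{eq:compare} would be an \emph{equality}, not an inequality; and an equality of this type is precisely what is expected to fail in general (it is closely tied to Conjecture~6.1 of \cite{ren2023lee}, whose truth would in particular upgrade the inequality \eqref{eq:compare} to the equality \eqref{eq:purported_S_0^2} discussed in Remark~\ref{rmk:purported_S_0^2}). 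What actually holds is only a \emph{filtration} of the cube by the order in which crossings are resolved, and extracting a polynomial inequality from a filtration requires invoking the long exact sequence in Khovanov homology, which your sketch never does. The paper's argument does exactly that: it iterates the skein exact triangle through Sto\v si\'c's auxiliary family $D_{n,m}^i$ and $E_{n,m}^{i-1}$, where each step contributes the inequality $Q(D_{n,m}^i(J)\cup L)\le qQ(D_{n,m}^{i-1}(J)\cup L)+t^{\cdots}q^{\cdots}Q(E_{n,m}^{i-1}(J)\cup L)$, and the combinatorial content you attribute to ``projector height'' and ``two-row Young diagrams'' instead enters through a careful path count in a directed graph of resolutions together with the negative-crossing estimate in Lemma~\ref{lem:Stosic}(4). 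Your appeal to a $\mathfrak{gl}_2$ ``categorified Jones--Wenzl cabling formula'' for an arbitrary companion knot $K$ is not a known tool, and you yourself flag it as the principal obstacle without supplying a replacement.

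The ``moreover'' part has the same problem: you argue that ``any kernel would remove a top contribution and produce a strict inequality,'' but this presupposes the nonexistent isotypic splitting as the mechanism producing the top-degree terms. In the paper, injectivity of the dotted annular creation map in top homological degree is extracted from the splitting of the specific exact triangle \eqref{eq:Stosic_exact_triangle} at $(n,m,i)=(n,n-1,n-1)$ when sharpness holds, together with the identification of that connecting map as the saddle cobordism map (via \cite[Lemma~5.2]{ren2023lee}); without an analogous concrete identification of the map in your framework, the conclusion does not follow. To repair your argument you would need to replace the claimed decomposition with a filtration argument and invoke the exact triangle at each step, at which point you would essentially be reproducing the Sto\v si\'c-style proof.
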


For the reader's convenience, we note that for $r,n\in\Z_{\ge0}$, $r\le n/2$, $$\dim(n-r,r)=\binom{n}{r}-\binom{n}{r-1}.$$

Lemma~\ref{lem:full_comparison} is a consequence of an inductive argument pioneered by Sto\v si\'c \cite{stovsic2009khovanov}. Although it might have been implicit in the proof of Theorem~1 there, or in more detail in \cite{tagami2013maximal}, for completeness we sketch a proof in Appendix~\ref{sec:append}. For our purpose, we do not really need the quantum shifting information in the lemma.

\begin{Lem}\label{lem:max_t}
For $n\in\Z_{\ge0}^m$, the maximal $t$-degree of $\tilde P(K^w(n))$ is at most $\tfrac12n^TNn$.
\end{Lem}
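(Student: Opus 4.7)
The plan is a direct diagrammatic support argument using the blackboard-framed cable diagram $D(n)$ of $K^w(n)$ inherited from $D$.

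First, I would compute the crossing count of $D(n)$. Each of the $N_{ii}$ self-crossings of $K_i$ in $D$ contributes $n_i^2$ crossings in $D(n)$, since each strand is replaced by $n_i$ parallel blackboard copies; similarly each of the $2N_{ij}$ crossings between $K_i$ and $K_j$ (for $i\ne j$) contributes $n_i n_j$ crossings. Summing yields
\[
c(D(n)) \;=\; \sum_i n_i^2 N_{ii} + 2\sum_{i<j} n_i n_j N_{ij} \;=\; n^T N n.
\]

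Next, I would invoke the standard support bound for classical Khovanov homology: any diagram with $c_+$ positive and $c_-$ negative crossings has $Kh^h=0$ outside $h\in[-c_-,c_+]$. Combined with the identification $KhR_2^{h,q}(L)\cong Kh^{h,-q-w(L)}(-L)$ from \eqref{eq:KhR_2}, together with the fact that mirroring a diagram swaps positive and negative crossings, this places $KhR_2^h(L)$ in $h\in[-c_+,c_-]$ (for any diagram of $L$). Applying the grading shift in $\widetilde{KhR}_2(L)=KhR_2(L)[w(L)/2]\{-w(L)/2\}$ raises the top $h$-degree to at most $c_-+w(L)/2=(c_++c_-)/2=c/2$, where $c$ is the total crossing number of the diagram. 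Specializing this to $D(n)$, whose crossing count is $n^T N n$, gives the desired inequality.

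The only genuinely delicate point is matching the sign conventions for the homological shift $[\cdot]$ against the orientation reversal in \eqref{eq:KhR_2}, so that the renormalized support lands symmetrically at $[-c/2,c/2]$ rather than at some asymmetric interval. This is pure bookkeeping, and once verified the lemma follows immediately. I do not expect any substantial obstacle beyond this—the proof is essentially a one-line consequence of the crossing count together with the elementary support bound.
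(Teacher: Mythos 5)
Your proof is correct and follows essentially the same route as the paper: count the crossings of the blackboard-framed cable diagram (giving $n^TNn$), invoke the classical Khovanov support bound $h\in[-n_-,n_+]$, translate through the mirror in \eqref{eq:KhR_2}, and observe that the $[w/2]$ shift in $\widetilde{KhR}_2$ converts the bound ``$\le n_-$'' into ``$\le c/2$.'' The paper simply compresses your intermediate steps into the one-line assertion that the maximal $t$-degree of $P(L)$ is at most the number of negative crossings of a framed diagram of $L$.
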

\begin{proof}
For any framed link diagram $D$ of $L$, the maximal $t$-degree of $P(L)$ is at most the number of negative crossings of $D$, and thus the maximal $t$-degree of $\tilde P(L)$ is at most one half the total number of crossings of $D$. For $D$ the standard diagram of $K^w(n)$, this number is $\tfrac12n^TNn$.
\end{proof}
\begin{proof}[Proof of Theorem~\ref{thm:nonvan_diagram}]
Note that adding positive kinks to $D$ does not affect the conditions and statements, but increases $w$. Thus we can assume that $w\ge p$ and $D$ has no $0$-framed unknot component. We also assume $\alpha\in\Z_{\ge0}^m$, as the general case is only notationally more complicated (and can indeed be deduced from the special case by reversing the orientations of some $K_i$'s). Thus $\alpha^+=\alpha$, $\alpha^-=0$. Write $N=N_++N_-$ to separate contributions coming from positive/negative crossings.

Recall that the $2$-handlebody formula for $X$ and $\alpha\in\Z_{\ge0}^m$ \eqref{eq:2-hdby} can be rewritten as
\begin{equation}\label{eq:2-hdby_X}
\mathcal S_0^\bullet(X;\alpha)=\mathrm{colim}_{r\in\Z_{\ge0}^m}KhR_\bullet(K^p(\alpha+r,r))^{S_{\alpha+2r}}\{-|\alpha|-2|r|\},
\end{equation}
where $S_n=S_{n_1}\times\cdots\times S_{n_m}$ for $n\in\Z_{\ge0}^m$. Here if $\bullet=2$ we assume we work over $\Q$.

\textbf{Step 1:} Isolating the highest $t$-degrees in $\tilde{P}$.

In the first step, we prove that:\vspace{-5pt}
\begin{enumerate}[(a)]
\item For any $n\in\Z_{\ge0}^m$, $n\equiv\alpha\pmod2$, 
\begin{equation}\label{eq:max_t_equal}
\tilde P(K^p(n)))=\sum_{\substack{0\le n'\le n\\2|n-n'\\h(n')=h(\alpha)}}R_{n,n',w-p}\tilde P(K^w(n'))+\cdots
\end{equation}
where the highest nontrivial $t$-degree in each summand is $\tfrac12h(\alpha)$, and $\cdots$ denotes terms with lower $t$-degrees.
\item If $\tilde P_{Lee}(K^p(n))(t,q)$ denotes the corresponding renormalized Poincar\'e polynomial for $gr(KhR_{Lee}(K^p(n)))$, then the highest $t$-degree parts of $\tilde P(K^p(n))$ and $\tilde P_{Lee}(K^p(n))$ are equal.
\end{enumerate}

In the comparison lemma \ref{lem:full_comparison}, the highest $t$-degree of the $n'$ term on the right-hand side is at most $$\tfrac12n'^T(P-W)n'+\tfrac12n'^TNn'=\tfrac12n'^T(P-W+N)n'$$ by Lemma~\ref{lem:max_t}, which is strictly less than $h(\alpha)$ if $h(n')\ne h(\alpha)$ by assumption (i). On the other hand, if $h(n')=h(\alpha)$, then by assumption (ii) $K^w(n')$ has a positive diagram induced by $D$ with some number $s(n')$ of connected components  (connected components being understood in the sense of 4-valent graphs; thus, $s(n')$ only depends on the set of indices of the nonzero coordinates of $n'$, as $D$ has no $0$-framed unknot component).  Then $KhR_2^0(K^w(n'))$ (as well as $KhR_{Lee}^0(K^w(n'))$) has rank $2^{s(n')}$ \cite[Proposition~6.1]{khovanov2003patterns}. After renormalizing, we see that $R_{n,n',w-p}\tilde P(K^w(n'))$ has maximal $t$-degree $\tfrac12h(\alpha)$, and in this $t$-degree the polynomial has rank (i.e. the evaluation at $q=1$) $$2^{s(n')}\prod_{j=1}^m\left(\sum_{r_j=0}^{\frac{n_j-n'_j}{2}}\dim\left(\frac{n_j+n'_j}{2}+r_j,\frac{n_j-n'_j}{2}-r_j\right)\right)=2^{s(n')}\binom{n}{\frac{n-n'}{2}}$$ where for $a,b\in\Z_{\ge0}^m$ we write $\binom{a}{b}=\prod_i\binom{a_i}{b_i}$. Now it follows from Lemma~\ref{lem:full_comparison} that the highest $t$-degree of $\tilde P(K^p(n))$ is at most $\tfrac12h(\alpha)$, and in this $t$-degree it has rank at most $$\sum_{\substack{0\le n'\le n\\2|n-n'\\h(n')=h(\alpha)}}2^{s(n')}\binom{n}{\frac{n-n'}{2}}.$$

On the other hand, the rank of $\tilde P_{Lee}(K^p(n))$ at $t=\tfrac12h(\alpha)$ is equal to the number of orientations of $K^p(n)$ with writhe $h(\alpha)$. Dividing the count into algebraic cable types of the resulting oriented link (here, two cables $K^p(a,b)$ and $K^p(c,d)$ are said to have the same algebraic cable type if $a-b=c-d$), the rank is again
\begin{align*}
\sum_{\substack{-n\le n'\le n\\2|n-n'\\n'^T(P-W+N_+-N_-)n'\\=\alpha^T(P-W+N)\alpha}}\binom{n}{\frac{n-n'}{2}}=\sum_{\substack{-n\le n'\le n\\2|n-n'\\h(n')=h(\alpha)}}\binom{n}{\frac{n-n'}{2}}=\sum_{\substack{0\le n'\le n\\2|n-n'\\h(n')=h(\alpha)}}2^{s(n')}\binom{n}{\frac{n-n'}{2}},
\end{align*}
where the first equality follows from conditions (i) and (ii).  The second equality is true because in view of (ii), for $n'\ge0$, changing signs of coordinates of $n'$ maintains $h(n')$ if and only if, for each connected component of the diagram of $K^w(n')$, we either change all of the corresponding coordinates of $n'$ or none of them.  Since $\tilde P(K^p(n))$ is bounded below by $\tilde P_{Lee}(K^p(n))$ as there is a spectral sequence from $KhR_2$ to $KhR_{Lee}$, we must have the equality \eqref{eq:max_t_equal} as well as its version for $\tilde P_{Lee}(K^p(n))$ as claimed.

\textbf{Step 2:} Unrenormalization and proofs of (1) and (2).

For $n=\alpha+2r$, condition (ii) ensures that the writhe of $K^w(n')$ is the same as $n'^TNn'$, allowing us to unrenormalize \eqref{eq:max_t_equal} to give $$P(K^p(\alpha+r,r))=\sum_{\substack{0\le n'\le n\\2|n-n'\\h(n')=h(\alpha)}}(tq^{-1})^{\tfrac12n'^T(W-P)n'}R_{n,n',w-p}P(K^w(n'))+\cdots,$$ where the maximal $t$-degree of each term on the right-hand side is $0$. The same holds for the Lee version. In view of \eqref{eq:2-hdby_X}, this proves (1), namely that $\mathcal S_{0,>0}^{\bullet}(X;\alpha)=0$.

For (2), by Theorem~\ref{thm:Lee_structure}(1)(2), $\mathcal S_{0,0}^{Lee}(X;\alpha)$ has a basis consisting of $x_{\alpha_+,\alpha_-}$ with $\alpha_\pm\in H_2(X)$, $\alpha_++\alpha_-=\alpha$, $\alpha_+\cdot\alpha_-=0$. Such generators are in one-to-one correspondence with $\alpha'\in\alpha+2\Z^m$ with $\alpha'^2=\alpha^2$, where the correspondence is given by $\alpha':=\alpha_+-\alpha_-$. The bilinear form on $H_2(X)$ can be rewritten as $\beta^2=\beta^T(P-W+N-2N_-)\beta$. Thus by conditions (i) and (ii), we see that such $\alpha'$ are exactly those with $h(\alpha')=h(\alpha)$, proving the statement for $\mathcal S_{0,0}^{Lee}$.

Write \eqref{eq:2-hdby_X} in homological degree $0$ for $\bullet=Lee$ as $\mathcal S_{0,0}^{Lee}(X;\alpha)=\mathrm{colim}_rV_r$ for short. Then by claim (b) in Step 1 and the naturality of the Lee spectral sequence, we know $\mathcal S_{0,0}^2(X;\alpha;\Q)=\mathrm{colim}_rgr(V_r)$, which equals the subspace of $gr(\mathrm{colim}_rV_r)$ of finite degree elements together with $0$. In particular, the rank of $\mathcal S_{0,0}^2(X;\alpha)$ is bounded above by the dimension of $\mathcal S_{0,0}^{Lee}(X;\alpha)$.

\textbf{Step 3:} Decomposition of $KhR_{Lee}$ and proof of (3).

As we have already seen once in Step 1 (with $n=\alpha+2r$, and renormalization), $KhR_{Lee}^0(K^p(\alpha+r,r))$ is generated by canonical generators coming from orientations of $K^p(\alpha+r,r)$ with writhe $h(\alpha)$, and we can divide these orientations according to their algebraic cable type.

More precisely, for $-n\le n'\le n$ with $2|n-n'$, $h(n')=h(\alpha)$, let $$KhR_{Lee,n'}^0(K^p(\alpha+r,r))\subset KhR_{Lee}^0(K^p(\alpha+r,r))$$ be the subspace spanned by generators whose orientations make $K^p(n)$ algebraic $n'$-cables of $K$. This subspace is an $S_n$-subrepresentation.

We will be interested in $n'=\alpha$. As in \cite[Proposition~4.2]{ren2023lee}, we know \begin{equation}\label{eq:KhR_Lee_decompose}
KhR_{Lee,\alpha}^0(K^p(\alpha+r,r))\cong\bigoplus_{0\le s\le r}(n-s,s)
\end{equation}
as $S_n$-representations, where for $a,b\in\Z_{\ge0}^m$, $a\ge b$, $(a,b)$ denotes the $S_{a+b}$-representation given by the tensor product of the $S_{a_i+b_i}$-representations $(a_i,b_i)$. Similarly, 
\begin{equation}\label{eq:KhR_Lee_decompose_+2}
KhR_{Lee,\alpha}^0(K^p(\alpha+r+e_i,r+e_i))\cong\bigoplus_{0\le s\le r+e_i}(n+2e_i-s,s)
\end{equation}
as $S_{n+2e_i}$-representations.

By the argument in \cite[Section~4.3]{ren2023lee}, for every $0\le s\le r$, the subrepresentation $(n-s,s)$ in \eqref{eq:KhR_Lee_decompose} and the subrepresentation $(n-s+e_i,s+e_i)$ in \eqref{eq:KhR_Lee_decompose_+2} have the same filtration degree. On the other hand, the dotted annular creation cobordism map restricts to a filtered degree $2$ map $$\phi_i\colon KhR_{Lee,\alpha}^0(K^p(\alpha+r,r))\to KhR_{Lee,\alpha}^0(K^p(\alpha+r+e_i,r+e_i)).$$
By Step 1, the associated graded map of $\phi_i$ is exactly the corresponding dotted annular creation map on $KhR_2$, which is injective by the conclusions of Step 1 and of Lemma~\ref{lem:full_comparison}. Thus $\phi_i$ increases the filtration degree of every nonzero element by exactly $2$. Inductively from $r=0$ (where $KhR_{Lee,\alpha}^0(K^p(\alpha))=(\alpha,0)$ in filtration degree $s_{\mathfrak{gl}_2}(K^p(\alpha))+1$ by definition), we see that the filtration degree of $(n,0)$ (the $S_n$-invariant subspace) in \eqref{eq:KhR_Lee_decompose} is $s_{\mathfrak{gl}_2}(K^p(\alpha))+1+2|r|$. Consequently by \eqref{eq:2-hdby} (cf. \eqref{eq:las_s_from_s^inv}) we have $$s(X;\alpha)=\lim_{r\to\infty^m}(s_{\mathfrak{gl}_2}(K^p(\alpha))+1+2|r|-|\alpha|-2|r|)=-s(-K^p(\alpha))-w(K^p(\alpha))-|\alpha|+1>-\infty.$$
In particular, $\mathcal S_{0,0,s(X;\alpha)}^2(X;\alpha)\ne0$ by Theorem~\ref{thm:rank_ineq}.
\end{proof}

\begin{Rmk}\label{rmk:purported_S_0^2}
\begin{enumerate}
\item We expect that in the notation of Theorem~\ref{thm:nonvan_diagram}, assuming no component of $D$ is a $0$-framed unknot, then
\begin{equation}\label{eq:purported_S_0^2}
\quad\quad\quad\mathcal S_{0,0}^2(X;\alpha)=\bigoplus_{\substack{\alpha'\in\Z_{\ge0}^m\\2|\alpha-\alpha'\\h(\alpha')=h(\alpha)}}KhR_2^0(K^w(\alpha'))\left\{\sum_i\frac{(w_i-p_i)(\alpha_i'^2+2\alpha_i')}{2}-\frac{w(K^w(\alpha'))}{2}-|\alpha'|\right\}.
\end{equation}
In fact, an expanded argument of the proof above shows that the graded rank of the left-hand side is bounded above by that of the right-hand side. However, we have not been able to work out a proof of \eqref{eq:purported_S_0^2}. We remark that over $\Q$, \eqref{eq:purported_S_0^2} is equivalent to adding to the last claim of Lemma~\ref{lem:full_comparison} the conclusion that the symmetrized dotted annular creation map gives an injection $$KhR_2^{h_{max}}(K^p(n-2e_i);\Q)^{S_{n-2e_i}}\hookrightarrow KhR_2^{h_{max}}(K^p(n);\Q)^{S_n}.$$ where $h_{max}$ is the maximal nontrivial homological degree of the right-hand side.
\item In particular, in the setup of Theorem~\ref{thm:nonvan_diagram_knot}(1) or (2) when $n<0$, one can easily show the validity of \eqref{eq:purported_S_0^2}. Therefore, the conclusions in Theorem~\ref{thm:nonvan_diagram_knot} for $\mathcal S_{0,0}^2$ can be stated over $\Z$ instead.
\end{enumerate}
\end{Rmk}

\section{Nonvanishing examples and applications}\label{sec:examples}
In this section we present some explicit calculations and applications for Khovanov and Lee skein lasagna modules and lasagna $s$-invariants of some $2$-handlebodies (with empty boundary link). Roughly the first half of this section utilizes the tools developed in Section~\ref{sec:2-hdby}, while the second half utilizes results in a paper of the first author \cite{ren2023lee}.

\subsection{Knot traces of some small knots}\label{sbsec:example_traces}\phantom{}
\begin{table}[H]
\centering
\begin{tabular}{c|c|c|c|c|c|c|c|c|c|c|c}
\backslashbox{Knot}{Framing}&$-4$&$-3$&$-2$&$-1$&$0$&$1$&$2$&$3$&$4$&$5$&$6$\\\hline
$U$     &N&N&N&N&N&V&V&V&V&V&V\\\hline
$3_1$   &N&?&?&V&V&V&V&V&V&V&V\\\hline
$-3_1$  &N&N&N&N&N&?&?&?&?&?V&V\\\hline
$4_1$   &N&N&N&N&N&?V&?V&V&V&V&V
\end{tabular}
\caption{Vanishing/Nonvanishing of Khovanov skein lasagna modules of some knot traces. An entry N/V means the corresponding knot trace has nonvanishing/vanishing Khovanov skein lasagna module. An entry ?V means it has vanishing Lee skein lasagna module (meaning the filtration function is identically $-\infty$).}
\label{tab:knot_trace_n/v}
\end{table}

\begin{Ex}[Unknot]\label{ex:unknot_trace_nonvan}
By Theorem~\ref{thm:nonvan_diagram_knot}, for $n\le0$, the $D^2$-bundle $D(n)$ over $S^2$ with Euler number $n$ has $s(D(n);0)=0$, $s(D(n);1)=-n$. In particular, $\mathcal S_{0,0}^2(D(n);\alpha)\ne0$ for $\alpha=0,1$.

For $n<0$, more explicitly we have $\mathcal S_{0,0}^2(D(n);0)=\Z$ concentrating in degree $0$ and $\mathcal S_{0,0}^2(D(n);1)=\Z^2$ concentrating in degrees $n,n-2$ (cf. Remark~\ref{rmk:purported_S_0^2}(2)).

For $n=0$, a direct computation using the $2$-handlebody formula yields (see Manolescu--Neithalath \cite[Theorem~1.2]{manolescu2022skein}) $\mathcal S_0^2(D(0))\cong\Z[A_0^{\pm},A_1]$ as tri-graded abelian groups, where $A_0$ has tri-degree $(0,0,1)$ and $A_1$ has tri-degree $(0,-2,1)$. A similar computation gives $s(D(0);\alpha)=0$ for all $\alpha$. Alternatively, this follows from a direct application of Proposition~\ref{prop:las_s_from_classical} or Theorem~\ref{thm:nonvan_diagram}.

On the other hand, for $n>0$, Example~\ref{ex:unknot_van} shows $\mathcal S_0^2(D(n))=0$ and consequently all lasagna $s$-invariants of $D(n)$ are $-\infty$.
\end{Ex}

For other examples, we only mention the vanishingness/nonvanishingness of the Khovanov skein lasagna modules of the knot traces. For small knots, Corollary~\ref{cor:emd_-kCP2} (which will be proved in Section~\ref{sbsec:-CP^2}) usually provides more effective nonvanishing bounds than Theorem~\ref{thm:nonvan_diagram_knot}. We emphasize however that Corollary~\ref{cor:emd_-kCP2} does not provide an explicit $s$-invariant in most cases.

\begin{Ex}[Trefoils]
The left-handed trefoil, or $3_1$, has $3$ negative crossings. Therefore~\ref{thm:nonvan_diagram_knot}(1) shows $X_n(3_1)$ has nonvanishing Khovanov skein lasagna module for $n<-6$.

We can do better as follows. The knot $3_1$ can be obtained from the unknot by a crossing change that is realized by a $-1$ twist on two parallelly oriented strands. By Kirby calculus, the twist is realized by a $(+1)$-framed $2$-handle attached to $B^4$, with the effect that the $0$-framed unknot on $\partial B^4$ becomes the $(-4)$-framed unknot on $\partial(B^4\cup2\text{-handle})$. This implies that $3_1$ is $(-4)$-slice in $\CP^2$, thus is $n$-slice in some $k\CP^2$ for every $n\le-4$. It follows that $X_n(3_1)$ embeds in some $k\overline{\CP^2}$ for $n\le-4$. By Corollary~\ref{cor:emd_-kCP2}, $X_n(3_1)$ has nonvanishing Khovanov skein lasagna module for $n\le-4$.

On the other hand, $TB(-3_1)=1$, so $X_n(3_1)$ has vanishing Khovanov skein lasagna module for $n\ge-1$ by Theorem~\ref{thm:van}. It would be interesting to know whether $X_n(3_1)$ has nonvanishing Khovanov skein lasagna module for $n=-3,-2$.

The right-handed trefoil $-3_1$ is positive. By either Theorem~\ref{thm:nonvan_diagram_knot} or Corollary~\ref{cor:emd_-kCP2} we know $X_n(-3_1)$ has nonvanishing Khovanov skein lasagna module for $n\le0$. On the other hand, since $TB(3_1)=-6$, $X_n(-3_1)$ has vanishing Khovanov skein lasagna module for $n\ge6$. In this case we have a larger gap of unknown. We remark that Proposition~\ref{prop:las_s_compare_-CP2} will imply that $X_5(-3_1)$ has vanishing Lee skein lasagna module, as there is a framed concordance in $\overline{\CP^2}$ from the $1$-framed unknot to the $5$-framed $-3_1$.
\end{Ex}

\begin{Ex}[Figure $8$]
The standard diagram of the figure $8$ knot, or $4_1$, has $2$ negative crossings. Theorem~\ref{thm:nonvan_diagram_knot}(1) shows $X_n(4_1)$ has nonvanishing Khovanov skein lasagna module for $n<-4$, while Corollary~\ref{cor:emd_-kCP2} shows so for $n\le0$ as $4_1$ is $0$-slice in $\CP^2$. On the other hand, since $-4_1=4_1$ and $TB(4_1)=-3$, we know $X_n(4_1)$ has vanishing Khovanov skein lasagna module for $n\ge3$ by Theorem~\ref{thm:van}.

It would be interesting to know whether $X_n(4_1)$ has nonvanishing Khovanov skein lasagna module for $n=1,2$. Again, Proposition~\ref{prop:las_s_compare_-CP2} will imply they both have vanishing Lee skein lasagna modules.
\end{Ex}

\subsection{Nonpositive definite plumbings and generalizations}\label{sbsec:example_plumbing}
\begin{Ex}[Nonpositive plumbing tree]\label{ex:plumbing_tree}
Let $X$ be a nonpositive definite plumbing tree of spheres, i.e. a plumbing of $D^2$-bundles over $2$-spheres along a tree, whose intersection form is nonpositive definite. Then we can orient the unknots in the underlying Kirby diagram so that all crossings are positive. In particular, in the notation of Theorem~\ref{thm:nonvan_diagram}, $P-W+N=P-W+N_+-N_-$ is the intersection form of $X$, which is nonpositive. Therefore Theorem~\ref{thm:nonvan_diagram}(3) implies $s(X;0)=0$ and $X$ has nonvanishing Khovanov skein lasagna module.

The negative smooth $E8$ manifold, obtained by plumbing eight $(-2)$-disk bundles over $S^2$ according to the negative-definite $E8$ matrix, belongs to this family.
\end{Ex}\smallskip

\begin{Ex}[Traces on alternating chainmail links]\label{ex:chainmail}
Let $G$ be a chainmail graph without loops and with positive edge weights and nonpositive vertex weights, and $K$ be its associated chainmail link, in the sense of \cite[Section~2]{agol2023chainmail}. Then $K$ has a standard positive diagram (as an unframed link), and a nonpositive framing matrix which is equal to $P-W+N$ for its standard diagram. This means the trace $X$ on $K$ has nonvanishing Khovanov skein lasagna module. This generalizes the previous example.
\end{Ex}\smallskip

\begin{Ex}[Branched double cover of definite surfaces of alternating links]
Suppose $L\subset S^3$ is a nonsplit alternating link, and $S\subset B^4$ is an unoriented surface bounding $L$ with negative definite Gordon–Litherland pairing \cite{gordon1978signature}. Greene \cite{greene2017alternating} showed that every such $S$ is isotopic rel boundary to a checkerboard surface of some alternating diagram of $L$. Conversely, one of the checkerboard surfaces of any alternating diagram is negative definite, a fact that is much easier.

The branched double cover of $S$ is a $4$-manifold $\Sigma(S)$ bounding the branched double cover $\Sigma(L)$ of $L$. It is also considered by Ozsv\'ath--Szab\'o \cite{ozsvath2005heegaard} (denoted $X_L$ in Section~3 there), and from the description there (see also \cite[Section~3.1]{greene2013spanning}) we realize that $\Sigma(S)$ is a special case of Example~\ref{ex:chainmail}, hence it has nonvanishing Khovanov skein lasagna module. In particular, $\Sigma(L)$ bounds a $2$-handlebody with nonvanishing Khovanov skein lasagna module. Since $\overline{\Sigma(L)}=\Sigma(-L)$ we see $\Sigma(L)$ bounds such $2$-handlebodies on both sides.
\end{Ex}\smallskip

\begin{Ex}
One can further generalize Example~\ref{ex:chainmail} by connect-summing some components of $K$ with $0$-framed positive knots. The same argument shows the resulting trace has nonvanishing Khovanov skein lasagna module.
\end{Ex}

\subsection{More exotica from connected sums}\label{sbsec:more_exotica}
In this section we prove Corollary~\ref{cor:more_exotic} on exotic pairs arising from connected sums of various copies of our two exotic manifolds $X_1$ and $X_2$ from Theorem \ref{thm:exotic} as well as copies of $S^1\times S^3$, $S^2\times D^2$, $\overline{\CP^2}$, and the negative $E8$ manifold. One can show the relevant pairs of manifolds have nonisomorphic Khovanov skein lasagna modules, but we give a simpler argument by showing that they have different lasagna $s$-invariants. Note that by construction, lasagna $s$-invariants of $X$ are invariants of $int(X)$; see Lemma~\ref{lem:interior_invariant}.

\begin{proof}[Proof of Corollary~\ref{cor:more_exotic}]
Write $W_1:=X_1^{\#a}\#X_2^{\#b}$ and $W_2:=X_1^{\#a'}\#X_2^{\#b'}$, where $a+b=a'+b'=n$ and $(a,b)\ne(a',b')$.

Recall from Section~\ref{sbsec:exotic_proof} that $s(X_1;1)=3$, $s(X_2;1)=1$, where $1$ is a generator of $H_2(X_i)$. Since we did not specify the sign of $1$, we equally have $s(X_1;-1)=3$, $s(X_2;-1)=1$. By the connected sum formula (Theorem~\ref{thm:s_prop}(4)), $$s(W_1;(\epsilon_1,\cdots,\epsilon_n))=3a+b$$ for any $\epsilon_i=\pm1$. If there were a diffeomorphism $W_1\cong W_2$, then by considering the intersection form we see it sends each class $(\epsilon_1,\cdots,\epsilon_n)$ to some class $(\epsilon'_1,\cdots,\epsilon'_n)$, $\epsilon_i'=\pm1$. However, $s(W_2;(\epsilon'_1,\cdots,\epsilon'_n))=3a'+b'\ne3a+b$, a contradiction.

By (a Lee version of) \cite[Corollary~4.2] {manolescu2023skein} we know $s(S^1\times S^3;0)=0$. Since $H_2(S^1\times S^3)=0$ the above argument remains valid under connected sums with $S^1\times S^3$. Since $s(\overline{\CP^2};1)=1$ by Example~\ref{ex:unknot_trace_nonvan} for $n=-1$ and Proposition~\ref{prop:34_hd}, the argument remains valid under further connected sums with $\overline{\CP^2}$.

By the $n=0$ case of Example~\ref{ex:unknot_trace_nonvan}, we know $s(S^2\times D^2;\alpha)=0$ for all $\alpha$. Any diffeomorphism $k(S^1\times S^3)\#m\overline{\CP^2}\#W_1\natural(S^2\times D^2)^{\natural c}\cong k(S^1\times S^3)\#m\overline{\CP^2}\#W_2\natural(S^2\times D^2)^{\natural c}$ sends each class $(\epsilon_1,\cdots,\epsilon_{m+n},0^c)$ to some class $(\epsilon_1',\cdots,\epsilon_{m+n}',\alpha_1,\cdots,\alpha_c)$. Again, the $s$-invariant of the latter class is not equal to the $s$-invariant of the former class, a contradiction.

By Example~\ref{ex:plumbing_tree} we know the negative $E8$ manifold has $s(E8;0)=0$. Any diffeomorphism between the above pair with $d$ further copies of $E8$ added sends each $(\epsilon_1,\cdots,\epsilon_{m+n},0^c,0^d)$ to $(\epsilon_1',\cdots,\epsilon_{m+n}',\alpha_1,\cdots,\alpha_c,0^d)$, whose $s$-invariants still differ, leading to a contradiction.

Finally, by Lemma~\ref{lem:interior_invariant}, the interiors of any of the above pairs are not diffeomorphic.
\end{proof}

\subsection{Nonpositive shake genus of positive knots}\label{sbsec:shake_genus}
We prove Theorem~\ref{thm:shake_genus}(1) computing nonpositive shake genus of knots concordant to some positive knot.
\begin{proof}[Proof of Theorem~\ref{thm:shake_genus}(1)]
By Theorem~\ref{thm:nonvan_diagram_knot}(2), if $K'$ is a positive knot and $n\le0$, then $s(X_n(K');1)=s(K')-n$. By Proposition~\ref{prop:intro_Lee_comparison}, if $K$ is a knot concordant to $K'$, then $s(X_n(K);1)=s(X_n(K');1)$. Thus \eqref{eq:shake_genus_bound} implies $g_{sh}^n(K)\ge s(K')/2$. On the other hand, Rasmussen's classical bound gives $s(K')/2\ge g_4(K')$. Since $g_4(K')=g_4(K)\ge g_{sh}^n(K)$, all these quantities are equal.
\end{proof}

\subsection{Another comparison result for lasagna \texorpdfstring{$s$}{s}-invariants}\label{sbsec:comparison_refined}
In this section we make use of the adjunction inequality for $s$-invariants \cite[Corollary~1.4]{ren2023lee} to deduce a comparison result for lasagna $s$-invariants in the spirit of Proposition~\ref{prop:Lee_comparison}. Then we prove Theorem~\ref{thm:shake_genus}(2). For simplicity, assume the boundary link $L$ is empty. Instead of working on the $4$-manifold level as in Section~\ref{sbsec:Lee_compare} (which is more invariant), we work on the cabled links level directly.

As in the setup of Section~\ref{sbsec:Lee_compare}, assume $X$ is a $2$-handlebody obtained by attaching $2$-handles to a framed link $K=K_1\cup\cdots K_m\subset S^3=\partial B^4$, and $X'$ is one obtained by attaching handles to $K'=K_1'\cup\cdots\cup K_m'$. 
\begin{Prop}\label{prop:las_s_compare_-CP2}
Suppose $C\colon K\to K'$ is a framed concordance in $k\overline{\CP^2}$ that restricts to concordances $C_i\colon K_i\to K_i'$. Then $$s(X';\alpha)\le s(X;\alpha)-|[C(\alpha)]|$$ for any $\alpha\in H_2(X)\cong H_2(X')\cong\Z^m$, where $[C(\alpha)]=\sum_i\alpha_i[C_i]\in H_2(k\overline{\CP^2})\cong\Z^k$.
\end{Prop}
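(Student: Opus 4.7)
The plan is to use Proposition~\ref{prop:las_s_from_classical} to convert the proposed inequality on lasagna $s$-invariants into an inequality between classical $s$-invariants of cables of $K$ and $K'$, which I would then establish via the classical adjunction inequality for $s$-invariants in $k\overline{\CP^2}$ from \cite[Corollary~1.4]{ren2023lee}.

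Write $K(r):=K(\alpha^++r,\alpha^-+r)$ and $K'(r):=K'(\alpha^++r,\alpha^-+r)$. Using the framing of $C$, for every $r\in\Z_{\ge0}^m$ one constructs a framed concordance $C(r)\colon K(r)\to K'(r)$ in $k\overline{\CP^2}$ by taking $\alpha_i^++r_i$ parallel copies of $C_i$ in the original orientation and $\alpha_i^-+r_i$ copies with reversed orientation; each component of $C(r)$ is an annulus touching both ends. Its homology class is
\[
[C(r)]\;=\;\sum_i\bigl((\alpha_i^++r_i)-(\alpha_i^-+r_i)\bigr)[C_i]\;=\;\sum_i\alpha_i[C_i]\;=\;[C(\alpha)],
\]
independent of $r$. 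Specializing to $r=0$ also gives a framed concordance between the specific cables $K(\alpha^+,\alpha^-)$ and $K'(\alpha^+,\alpha^-)$; since the writhe of a framed oriented link is a framed-concordance invariant (being the sum of self-framings plus twice the pairwise linking numbers, both preserved under framed concordance), we conclude $w(K(\alpha^+,\alpha^-))=w(K'(\alpha^+,\alpha^-))$. Subtracting the two formulas produced by Proposition~\ref{prop:las_s_from_classical} then collapses everything except the two $s$-invariant limits, yielding
\[
s(X;\alpha)-s(X';\alpha)\;=\;\lim_{r\to\infty^m}\bigl(s(-K'(r))-s(-K(r))\bigr).
\]
It therefore suffices to show $s(-K'(r))-s(-K(r))\ge|[C(\alpha)]|$ for all sufficiently large $r$.

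For this last step, I would apply the classical adjunction inequality \cite[Corollary~1.4]{ren2023lee} to the cabled concordance $C(r)$ (or its mirror) in $k\overline{\CP^2}$. Since $\chi(C(r))=0$, each component is an annulus with boundary on both ends, and $[C(r)]^2=[C(\alpha)]^2$ is $r$-independent, the classical adjunction inequality should specialize to precisely the desired bound, with the $[C(r)]^2$ contribution absorbed by the framed-concordance hypothesis equating writhes between the two ends. The main obstacle is careful bookkeeping: one must verify how the classical adjunction inequality of \cite{ren2023lee}, typically formulated for slice disks or connected cobordisms, combines under mirroring and across the disconnected, orientation-mixed annular cabled surface $C(r)$ to produce exactly $|[C(\alpha)]|$ on the right-hand side (rather than a weaker expression like $-[C(\alpha)]^2+|[C(\alpha)]|$), and that the sign conventions line up between $k\overline{\CP^2}$ and $k\CP^2$ after the mirroring used to relate $s(L)$ and $s(-L)$.
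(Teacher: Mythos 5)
Your overall architecture is the same as the paper's (cable the concordance, apply the adjunction inequality of \cite[Corollary~1.4]{ren2023lee}, and feed the result into Proposition~\ref{prop:las_s_from_classical}), but there is a genuine error in the middle that then poisons the final step.

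The claim that $w(K(\alpha^+,\alpha^-))=w(K'(\alpha^+,\alpha^-))$ is false in general. Writhe is preserved under framed concordance in $S^3\times I$, but not in $k\overline{\CP^2}$: a framed concordance $C_i\subset k\overline{\CP^2}$ forces $p_i'=p_i-[C_i]^2$ on the self-framings and $\ell k(K_i',K_j')=\ell k(K_i,K_j)-[C_i]\cdot[C_j]$ on the linking numbers (this is exactly the framing shift recorded in the paper's proof of Theorem~\ref{thm:shake_genus}(2): ``a framed cobordism from $(K,n)$ to $(K',n-[\Sigma]^2)$''). Since $w(K(\alpha^+,\alpha^-))=\alpha^T G\alpha$ where $G$ is the linking matrix, one gets
$w(K'(\alpha^+,\alpha^-))=w(K(\alpha^+,\alpha^-))-[C(\alpha)]^2$, so your displayed identity in step~5 is off by $[C(\alpha)]^2$, and what you actually need at step~6 is the weaker bound $s(-K'(r))-s(-K(r))\ge [C(\alpha)]^2+|[C(\alpha)]|$ (recall $[C(\alpha)]^2\le 0$ in $k\overline{\CP^2}$), not $\ge|[C(\alpha)]|$. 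This matters: when you translate the adjunction inequality into the classical $s$-normalization for a framed cobordism with $\chi=0$, the writhe discrepancy $w(L)-w(L')=[\Sigma]^2$ re-emerges and you obtain precisely $s(-L')\ge s(-L)+[\Sigma]^2+|[\Sigma]|$, never the stronger $\ge s(-L)+|[\Sigma]|$ you set out to prove. So the two $[C(\alpha)]^2$ terms cancel if the bookkeeping is done correctly, but as stated your step~5 target is unattainable.

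This is exactly the bookkeeping problem the paper avoids by working in the $\mathfrak{gl}_2$ normalization throughout: \eqref{eq:s_adj_gl2} is framing-sensitive and has no $[\Sigma]^2$ term, so applying it directly to the cable of $C$ gives $s_{\mathfrak{gl}_2}(K'(\alpha^++r,\alpha^-+r))\le s_{\mathfrak{gl}_2}(K(\alpha^++r,\alpha^-+r))-|[C(\alpha)]|$, and plugging this into the first line of Proposition~\ref{prop:las_s_from_classical} immediately gives the conclusion with no writhe comparison and no worry about the $-\infty$ case. If you want to keep the classical-$s$ route, you must drop the writhe-equality claim, keep the $[C(\alpha)]^2$ correction explicit in both step~5 and step~6, and verify they cancel.
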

\begin{proof}
The adjunction inequality for the $s$-invariant \cite[Corollary~1.4]{ren2023lee} in the $\mathfrak{gl}_2$ renormalization states that if $\Sigma\colon L\to L'$ is a framed cobordism in $k\overline{\CP^2}$, such that each component of $\Sigma$ has a boundary on $L$, then
\begin{equation}\label{eq:s_adj_gl2}
s_{\mathfrak{gl}_2}(L')\le s_{\mathfrak{gl}_2}(L)-\chi(\Sigma)-|[\Sigma]|.
\end{equation}

Now, the $(\alpha^++r,\alpha^-+r)$-cable of $C$ gives a concordance between $K(\alpha^++r,\alpha^-+r)$ and $K'(\alpha^++r,\alpha^-+r)$ with homology class $[C(\alpha)]$, which by \eqref{eq:s_adj_gl2} implies $$s_{\mathfrak{gl}_2}(K'(\alpha^++r,\alpha^-+r))\le s_{\mathfrak{gl}_2}(K(\alpha^++r,\alpha^-+r))-|[C(\alpha)]|.$$ The statement now follows from Proposition~\ref{prop:las_s_from_classical}.
\end{proof}

Now we prove Theorem~\ref{thm:shake_genus}(2) on shake genus of knots $K$ concordant to positive knots $K'$ via a concordance $\Sigma$ in some twice punctured $k\overline{\CP^2}$.

\begin{proof}[Proof of Theorem~\ref{thm:shake_genus}(2)]
Putting a framing on $\Sigma$ makes it a framed cobordism from $(K,n)$ to $(K',n-[\Sigma]^2)$. It follows from Proposition~\ref{prop:las_s_compare_-CP2} that $s(X_n(K);1)\ge s(X_{n-[\Sigma]^2}(K');1)+|[\Sigma]|$. If $n\le[\Sigma]^2$, by Theorem~\ref{thm:nonvan_diagram_knot} we know $s(X_{n-[\Sigma]^2}(K');1)=s(K')-n+[\Sigma]^2$, thus the statement follows.
\end{proof}

\subsection{Yasui's family of knot trace pairs}\label{sbsec:yasui}
Yasui \cite[Figure~10]{yasui2015corks} defined two families of satellite patterns $P_{n,m}$, $Q_{n,m}$, $n,m\in\Z$ (see Figure \ref{fig:Yasui_patterns}). The patterns $P_{n,0}$ are also known as the (twisted) Mazur pattern \cite[Fig.~1]{mazur1961note}, and have been the subject of study in many papers. Yasui showed that for any $n,m$ and any knot $K\subset S^3$, the two manifolds $X_n(P_{n,m}(K))$ and $X_n(Q_{n,m}(K))$, namely the $n$-traces on the two corresponding satellite knots, are homeomorphic. Moreover, he showed that if there exists a Legendrian representative $\mathcal K$ of $K$ with
\begin{equation}\label{eq:Yasui_condition}
tb(\mathcal K)+|rot(\mathcal K)|-1=2g_4(K)-2,\ n\le tb(\mathcal K),\ m\ge0,
\end{equation}
then $X_n(P_{n,m}(K))$ and $X_n(Q_{n,m}(K))$ are not diffeomorphic, thus form an exotic pair \cite[Theorem~4.1]{yasui2015corks}. In fact, under these conditions, the exotica were detected by a discrepancy in shake slice genera: $g_{sh}^n(P_{n,m}(K))=g_4(K)+1>g_{sh}^n(Q_{n,m}(K))$. Thus, the interiors of such pairs are also exotic. The case $K=U$, $n=-1$, $m=0$ yields the exotic pair $X_{-1}(-5_2)$ and $X_{-1}(P(3,-3,-8))$ in Theorem~\ref{thm:exotic}.

\begin{figure}
\[
\ILtikzpic[xscale=.5,yscale=.5]{
\draw[thick] (0.5,1) -- (1,1) to[out=0,in=-90,looseness=.5] (3,1.5);
%(0,1) to[out=0,in=180] (2,0) to[out=0,in=-90] (3,1);
\drawover[thick]{
    (2,1.5) to[out=-90,in=180,looseness=.5] (3,1) -- (7.5,1);
    %(2,1) to[out=-90,in=180] (3,0) to[out=0,in=180] (5,1)--(8,1);
    }
\draw[thick] (2,1.5) to[out=90,in=180,looseness=.5] (3,2) to[out=0,in=-90] (4,3) -- (4,4) to[out=90,in=180] (5,5) to[out=0,in=180] (6,3) -- (8,3);
\drawover[thick]{
    (3,1.5) to[out=90,in=0,looseness=.5] (2,2) -- (0,2);
    }
\drawover[thick]{
    (8,2) -- (6,2) to[out=180,in=-90] (5,3) -- (5,4) to[out=90,in=0] (4,5) to[out=180,in=0] (0,3);
    }
\draw[fill=white] (3.8,4.2) rectangle (5.2,2.8);
\node at (4.5,3.5){$-m$};
\draw[fill=white] (6.3,3.2) rectangle (7.2,0.8);
\node at (6.75,2){$n$};
\draw[thick] (7.5,1) to[out=0,in=0] (6,-2) -- (2,-2) to[out=180,in=180] (0.5,1);
\draw[thick] (8,2) to[out=0,in=0] (6,-3) -- (2,-3) to[out=180,in=180] (0,2);
\draw[thick] (8,3) to[out=0,in=0] (6,-4) -- (2,-4) to[out=180,in=180] (0,3);
\draw[dashed] (3,-.5) to[out=45,in=135] (5,-.5);
\draw[dashed] (2,-.25) to[out=-45,in=-135] (6,-.25);
\draw[dashed] (-2.5,-.25) to[out=90,in=90,looseness=1.7] (10.5,-.25) to[out=-90,in=-90,looseness=1.4] (-2.5,-.25);
}
\quad
\ILtikzpic[xscale=.45,yscale=.35]{
\draw[thick] (0,5) to[out=0,in=-180] (2.5,8);
\draw[thick] (0,4) to[out=0,in=-180] (2.5,7);
\draw[thick] (2.5,8.5) to[out=0,in=90,looseness=.5] (3,6);
\drawover[thick]{
    (2.5,8.5) to[out=180,in=90,looseness=.5] (2,6);
    }
\draw[thick] (6.5,6) -- (6,6) to[out=180,in=90] (4,4) -- (4,3) to[out=-90,in=180] (6,1) -- (8.5,1) to[out=0,in=180,looseness=.5] (11,4) -- (13,4);
\draw[thick] (6.5,5) -- (5.5,5) to[out=180,in=90] (5,4) -- (5,3) to[out=-90,in=180] (5.5,2) -- (8.5,2) to[out=0,in=180,looseness=.5] (11,5) -- (13,5);
\drawover[thick]{
    (2.5,8) -- (8.5,8) to[out=0,in=180,looseness=.5] (11,3) -- (12.5,3);
    }
\drawover[thick]{
    (2.5,7) -- (8,7) to[out=0,in=90,looseness=.5] (10,3) -- (10,2) to[out=-90,in=0] (6.5,0) to[out=180,in=0] (0.5,3);
    }
\drawover[thick]{
    (6,6) -- (7.5,6) to[out=0,in=0] (7.5,3) -- (3,3) to[out=180,in=-90] (2,4) -- (2,6);
    }
\drawover[thick]{
    (6,5) -- (7,5) to[out=0,in=0] (7,4) -- (4,4) to[out=180,in=-90] (3,5) -- (3,6);
    }
\draw[fill=white] (5.5,8.2) rectangle (7,4.8);
\node at (6.25,6.5){$-m$};
\draw[fill=white] (5.5,2.2) rectangle (7,0.8);
\node at (6.25,1.5){$-2$};
\draw[fill=white] (11,5.2) rectangle (12,2.8);
\node at (11.5,4){$n$};
\draw[dashed,pattern=north west lines] (2,5) rectangle (3,6);
\draw[thick] (12.5,3) to[out=0,in=0] (10,-2) -- (3,-2) to[out=180,in=180] (0.5,3);
\draw[thick] (13,4) to[out=0,in=0] (10,-3) -- (3,-3) to[out=180,in=180] (0,4);
\draw[thick] (13,5) to[out=0,in=0] (10,-4) -- (3,-4) to[out=180,in=180] (0,5);
\draw[dashed] (5.5,-1) to[out=45,in=135] (7.5,-1);
\draw[dashed] (4.5,-.75) to[out=-45,in=-135] (8.5,-.75);
\draw[dashed] (-2.5,1) to[out=90,in=90,looseness=1.8] (15.5,1) to[out=-90,in=-90,looseness=1.2] (-2.5,1);
}
\]
\caption{Satellite patterns $P_{n,m}$ (left) and $Q_{n,m}$ (right). The boxes denote full twists. The shaded band gives rise to a concordance between $Q_{n,m}$ and the identity pattern.}
\label{fig:Yasui_patterns}
\end{figure}

Using lasagna $s$-invariants, we present a different family (as stated in Theorem~\ref{thm:exotic_family}) of Yasui's knot trace pairs that are exotic. We recall that our hypothesis on $(K,n,m)$ is that there exists a slice disk $\Sigma$ of $K$ in some $k\CP^2$ such that 
\begin{equation}\label{eq:s_condition}
s(K)=|[\Sigma]|-[\Sigma]^2,\ n<-[\Sigma]^2,\ m\ge0.
\end{equation}

Before giving the proof, we compare and discuss the conditions imposed on $K$ by Yasui \eqref{eq:Yasui_condition} and by us \eqref{eq:s_condition}.

The adjunction inequality for the $s$-invariant \cite[Corollary~1.5]{ren2023lee} applied to the slice disk $\Sigma$ implies that $s(K)\ge|[\Sigma]|-[\Sigma]^2$. Therefore the first condition in \eqref{eq:s_condition} is that this adjunction inequality attains equality for $\Sigma$. Similarly, the first condition in \eqref{eq:Yasui_condition} is that the slice-Bennequin inequality attains equality for $\mathcal K$. These two conditions are satisfied on somewhat different families of knots. For example, as remarked by Yasui, his condition on $K$ is satisfied for all positive torus knots. On the other hand, our condition on $K$ is satisfied for all negative torus knots (cf. Lemma~\ref{lem:s_condition_neg_torus}). Our condition also has the advantage of being closed under concordances and connected sums of knots.

\begin{proof}[Proof of Theorem~\ref{thm:exotic_family}]
As observed by Yasui, adding a band as indicated in Figure \ref{fig:Yasui_patterns} and capping off the unknot component give a concordance between $Q_{n,m}$ and the identity satellite pattern. Therefore, $Q_{n,m}(K)$ is concordant to $K$, thus by Proposition~\ref{prop:intro_Lee_comparison} and Proposition~\ref{prop:las_s_from_classical} we have 
\begin{align}\label{eq:s_Q}
&\,s(X_n(Q_{n,m}(K));1)=s(X_n(K);1)\nonumber\\
=&-\lim_{r\to\infty}(s(-K(r+1,r))+2r)-n-1+1\le-s(-K(1,0))-n=s(K)-n.
\end{align}
Deleting a local ball in $B^4$ near $\Sigma$, turning the cobordism upside down and reversing the ambient orientation, we obtain a concordance $-\Sigma^\circ\colon K\to U$ in $k\overline{\CP^2}$. Applying the satellite operation $P_{n,m}$ gives a framed concordance $C_1=P_{n,m}(-\Sigma^\circ)\colon(P_{n,m}(K),n)\to(P_{n+[\Sigma]^2,m}(U),n+[\Sigma]^2)$ in $k\overline{\CP^2}$ with $[C_1]=[-\Sigma^\circ]$ since $P_{n,m}$ has winding number $1$. Since $n<-[\Sigma]^2$, there is a framed concordance $C_2\colon(P_{n+[\Sigma]^2,m}(U),n+[\Sigma]^2)\to(P_{-1,m}(U),-1)$ in $k_1\overline{\CP^2}$ obtained by adding $k_1$ positive twists along the meridian of the satellite solid torus, which has $[C_2]=(1,\cdots,1)\in\Z^{k_1}=H_2(k_1\overline{\CP^2})$, where $k_1=-1-n-[\Sigma]^2$. Similarly, there is a null-homologous framed concordance $C_3\colon(P_{-1,m}(U),-1)\to(P_{-1,0}(U),-1)=(-5_2,-1)$ in $k_2\overline{\CP^2}$ where $k_2=m$ (note that the two strands being twisted $-m$ times in $P_{n,m}$ are oppositely oriented). Now Proposition~\ref{prop:las_s_compare_-CP2} applied to the concordance $C_1\cup C_2\cup C_3\colon(P_{n,m}(K),n)\to(-5_2,-1)$ yields
\begin{align}\label{eq:s_P}
&s(X_n(P_{n,m}(K));1)\ge s(X_{-1}(-5_2);1)+|[C_1]|+|[C_2]|+|[C_3]|\nonumber\\
=&\,3+|[\Sigma]|+k_1+0=|[\Sigma]|-[\Sigma]^2-n+2=s(K)-n+2.
\end{align}
Together, \eqref{eq:s_Q} and \eqref{eq:s_P} imply that $s(X_n(P_{n,m}(K));1)>s(X_n(Q_{n,m}(K));1)$, proving the theorem.
\end{proof}

\begin{Rmk}\label{rmk:exotic_mazur}
Let $C\cup K$ be a two-component link in $S^3$, where $C$ is an unknot and $\ell k(C,K)=1$. Regarding $C$ as a dotted $1$-handle and attaching an $n$-framed $2$-handle along $P_{n,m}(K)$ (resp. $Q_{n,m}(K)$) yields a Mazur manifold $Z_P$ (resp. $Z_Q$), namely a contractible $4$-manifold with one handle of each index $0,1,2$. The manifolds $Z_P$ and $Z_Q$ are homeomorphic, and Hayden--Mark--Piccirillo \cite[Theorem~2.7]{hayden2021exotic} showed that if $C\cup K$ satisfies the (very mild) assumptions that
\begin{itemize}
\item $K$ is not the meridian of $C$,
\item every self homeomorphism of $\partial Z_P$ preserves the JSJ torus (with orientation) given by the image of the satellite torus of $P_{n,m}(K)$ in $\partial Z_P$,
\end{itemize}
then $Z_P$ being diffeomorphic to $Z_Q$ implies that $X_n(P_{n,m}(K))$ is diffeomorphic to $X_n(Q_{n,m}(K))$. Therefore, by choosing suitable $C\cup K$, Theorem~\ref{thm:exotic_family} also yields various families of exotic Mazur manifolds.
\end{Rmk}

As an example, we present a family of knots $K$ for which Theorem~\ref{thm:exotic_family} applies.
\begin{Lem}\label{lem:s_condition_neg_torus}
For every negative torus knot $-T(p,q)$, there exists a slice disk $\Sigma$ in some $k\CP^2$ with $|[\Sigma]|-[\Sigma]^2=-(p-1)(q-1)=s(-T(p,q))$. 
\end{Lem}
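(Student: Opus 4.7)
The plan is to build the required slice disk by smoothly resolving a complex algebraic model of $T_{p,q}$ and then reversing orientation to land in $k\CP^2$ (rather than $k\overline{\CP^2}$, which would be the direct target for $T_{p,q}$). I would start with the affine plane curve $V = \{(x,y) \in \C^2 : x^p + y^q = 0\}$, whose intersection with the unit ball $B^4 \subset \C^2$ is a topological disk with a single analytic $(p,q)$-cusp at the origin and boundary equal to $T_{p,q} \subset S^3$ equipped with its induced complex orientation. Since $\gcd(p,q) = 1$, this cusp is analytically unibranch.

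I would next iteratively blow up the singular points of the successive proper transforms to resolve the cusp. After $r$ blowups at nested infinitely near points with multiplicity sequence $m_1, \ldots, m_r$, the proper transform $\tilde V$ is smoothly embedded in $B^4 \# r\overline{\CP^2}$. Because the cusp is unibranch, the blowup resolution agrees with the topological normalization, so $\tilde V$ remains a single smooth disk bounding $T_{p,q}$; its homology class is $[\tilde V] = -\sum_{i=1}^r m_i [E_i]$, where the $[E_i]$ are the exceptional classes, by the total-transform formula $\pi^*[V] = [\tilde V] + \sum_i m_i [E_i]$ combined with $[V] = 0$ in $H_2(B^4)$. The classical formula for the $\delta$-invariant of a $(p,q)$-cusp then gives the identity $\sum_{i=1}^r m_i(m_i - 1) = 2\delta = (p-1)(q-1)$.

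Finally, I would apply the standard orientation-reversing identification $B^4 \# r\overline{\CP^2} \cong k\CP^2 \setminus \mathrm{int}(B^4)$ with $k = r$; this sends $\tilde V$ to a smooth slice disk $\Sigma$ for the mirror $-T_{p,q}$, with $[\Sigma]^2 = \sum_i m_i^2$ and $|[\Sigma]| = \sum_i m_i$. Therefore
\[
|[\Sigma]| - [\Sigma]^2 = \sum_{i=1}^r m_i - \sum_{i=1}^r m_i^2 = -\sum_{i=1}^r m_i(m_i-1) = -(p-1)(q-1) = s(-T_{p,q}),
\]
which is the desired equality.

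The main subtlety is confirming that the proper transform is a single smooth disk (no extra components or higher genus) and pinning down its homology class; this reduces to the analytic irreducibility of the $(p,q)$-cusp for coprime $p,q$, together with the fact that iterated blowups at a unibranch plane curve singularity realize its topological normalization, plus the multiplicity formula for the $\delta$-invariant. As a more hands-on alternative, one can bypass the algebraic geometry by realizing the unknotting number $(p-1)(q-1)/2$ of $T_{p,q}$ as a sequence of positive-to-negative crossing changes, each of which is a $\pm 1$-blowup cobordism of class $\pm 2$ in the corresponding $\overline{\CP^2}$ factor; mirroring and concatenating yields a slice disk for $-T_{p,q}$ in $k\CP^2$ with $[\Sigma] = (\pm 2, \ldots, \pm 2)$ where $k = (p-1)(q-1)/2$, and the numerical check $|[\Sigma]| - [\Sigma]^2 = 2k - 4k = -(p-1)(q-1)$ is identical.
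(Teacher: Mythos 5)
Both of your arguments are correct, and together they illuminate the statement from two angles.

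Your main argument and the paper's proof are essentially the same proof in different clothing: the paper inducts on $p+q$, at each step attaching a concordance in a single $\CP^2$ coming from a full twist on $q = \min(p,q)$ strands, which is exactly one step of the (slow) Euclidean algorithm on $(p,q)$; the multiplicity sequence $m_1,\dots,m_r$ in your cusp resolution is that same Euclidean sequence, so you are simply unrolling the induction and packaging the bookkeeping via the classical formula $2\delta=\sum m_i(m_i-1)$. What the paper's version buys is elementariness — no need to cite analytic irreducibility of the cusp or the statement that iterated blowups of a unibranch singularity produce an embedded disk, which your writeup correctly flags as the nontrivial algebro-geometric input. What your version buys is conceptual clarity about \emph{why} the formula $|[\Sigma]|-[\Sigma]^2=-(p-1)(q-1)$ comes out: it is the $\delta$-invariant.

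Your ``hands-on alternative'' is genuinely different from the paper: it uses single crossing changes rather than $q$-strand twists, producing a slice disk with $[\Sigma]=(\pm2,\dots,\pm2)$ in $\tfrac{(p-1)(q-1)}{2}\,\CP^2$. For $p>q+1$ this homology class differs from the one produced by the paper/cusp-resolution argument (e.g.\ for $T_{4,3}$ you get $(\pm2,\pm2,\pm2)$ versus the Euclidean class $(\pm3)$). It is also arguably the most elementary route, since the only input is that the positive braid $T_{p,q}$ can be unknotted by changing $g_4(T_{p,q})=\tfrac{(p-1)(q-1)}{2}$ positive crossings — a combinatorial statement about positive braids, not the hard Kronheimer--Mrowka/Rasmussen theorem. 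One small point worth spelling out if you write this up: when you assemble the slice disk for $T_{p,q}$ outward from the trivial cap for the unknot, each crossing-change cobordism traversed in that direction introduces a \emph{positive} full twist on two strands, which is what places it in a $\overline{\CP^2}$ factor (equivalently, the crossing-change double point is positive, as one also sees by deforming the cusp); read in the other direction the same cobordism would sit in a $\CP^2$. Getting the orientation of the ambient blowup right is exactly what makes the mirrored disk land in $k\CP^2$ with $[\Sigma]^2=+4k$, as you asserted.
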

\begin{proof}
We induct on $p+q$. For $p=q=1$, $-T(p,q)=U$ is the unknot and we take $\Sigma$ to be the trivial slice disk in $B^4$.

For the induction step, by symmetry, we may assume $p>q\ge1$. There is a positive twist along $q$ strands in $-T(p,q)$ that converts it into $-T(p-q,q)$. This gives a concordance $C\colon-T(p-q,q)\to-T(p,q)$ in $\CP^2$ with $[C]=q[\CP^1]$. By induction hypothesis, there is a slice disk $\Sigma'$ of $-T(p-q,q)$ in $k'\CP^2$ with $|[\Sigma']|-[\Sigma']^2=-(p-q-1)(q-1)$. Then $\Sigma=\Sigma'\cup C$ is a slice disk of $-T(p,q)$ in $(k'+1)\CP^2$ with $|[\Sigma]|-[\Sigma]^2=-(p-1)(q-1)$.
\end{proof}

\subsection{Conway knot cables can obstruct its sliceness}\label{sbsec:conway}
Recently, Piccirillo \cite{piccirillo2020conway} famously proved that the Conway knot, denoted $Conway$, is not slice. Building on her proof, we show the following.

\begin{Prop}\label{prop:Conway}
Let $Conway(n^+,n^-)$ denote the $0$-framed $(n^++n^-)$-cable of the Conway knot, with the orientation on $n^-$ of the strands reversed. Then for some $n>0$ we have $$s(Conway(n+1,n))>-2n=s(U^{\sqcup(2n+1)}).$$ Here $U^{\sqcup m}$ is the $m$-component unlink.
\end{Prop}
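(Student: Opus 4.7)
The plan is to combine Piccirillo's non-sliceness proof~\cite{piccirillo2020conway} for the Conway knot with Proposition~\ref{prop:las_s_from_classical}, reading Conway's non-sliceness as a positive lower bound on the lasagna $s$-invariant of its $0$-trace. Piccirillo constructs a knot $K'$ such that $X_0(Conway)$ and $X_0(K')$ are diffeomorphic as smooth $4$-manifolds, and such that the classical Rasmussen invariant satisfies $s(K') = 2$. Since lasagna $s$-invariants depend only on the diffeomorphism type of the $4$-manifold, we get
\[
s(X_0(Conway); 1) \;=\; s(X_0(K'); 1)
\]
after identifying the generators of $H_2$.

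The crux of the proof is to establish the lower bound $s(X_0(K'); 1) \geq 2$. The natural route is to exploit the structure of $K'$ that Piccirillo uses to compute $s(K') = 2$. One option is to verify that $K'$ is concordant to a positive knot, whereupon Proposition~\ref{prop:intro_Lee_comparison} reduces to the positive case and Theorem~\ref{thm:shake_genus}(1) (with $n = 0$) gives $s(X_0(K'); 1) = s(K') = 2$. A more flexible option, more likely to succeed in general, is to exhibit a slice surface $\Sigma$ for $K'$ in some $k\overline{\mathbb{CP}^2}\setminus \operatorname{int}(B^4)$ that saturates the adjunction inequality, i.e.\ satisfies $s(K') = |[\Sigma]| - [\Sigma]^2$; then Theorem~\ref{thm:shake_genus}(2) (or directly Proposition~\ref{prop:las_s_compare_-CP2}) yields the same bound.

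Granted $s(X_0(Conway); 1) \geq 2$, I would apply Proposition~\ref{prop:las_s_from_classical} to the $0$-trace $X_0(Conway)$, presenting $Conway$ with a writhe-zero diagram so that the cables $Conway(r+1,r)$ also have writhe zero, giving $s_{\mathfrak{gl}_2}(Conway(r+1,r)) = -s(-Conway(r+1,r)) = s(Conway(r+1,r))$. The formula then reads
\[
2 \;\leq\; s(X_0(Conway); 1) \;=\; \lim_{r \to \infty}\bigl(s(Conway(r+1, r)) - 2r\bigr).
\]
Since Proposition~\ref{prop:las_s_from_classical} guarantees this sequence is nonincreasing, every term is at least $2$, so $s(Conway(n+1,n)) \geq 2n + 2 > -2n$ for every $n \geq 1$, proving a statement much stronger than required.

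The main obstacle is the lower bound in the second paragraph: translating Piccirillo's Khovanov-level calculation $s(K')=2$ into a statement about the lasagna $s$-invariant of $X_0(K')$ requires one to locate inside her explicit construction either the concordance of $K'$ to a positive knot, or a saturating $\overline{\mathbb{CP}^2}$-slice surface. All other steps are direct applications of our machinery, with a small bookkeeping check that the writhe of $Conway(r+1,r)$ really is zero in a $0$-framing diagram.
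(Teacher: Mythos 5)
Your plan has two serious problems, both stemming from a reversed direction of inequality.

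First, Proposition~\ref{prop:las_s_from_classical} only supplies \emph{upper} bounds on lasagna $s$-invariants of $2$-handlebodies: the quantity inside the limit in the first line is nonincreasing in $r$, so $s(X;L;\alpha)$ is bounded above by its $r=0$ term. Applied to $X_0(Conway)$ with $\alpha=1$, the $r=0$ term is $s_{\mathfrak{gl}_2}(Conway)-1+1 = -s(-Conway) = s(Conway) = 0$, hence $s(X_0(Conway);1)\le 0$. Your claimed lower bound $s(X_0(Conway);1)\ge 2$ is therefore impossible; in fact, if $K'$ were concordant to a positive knot, Theorem~\ref{thm:shake_genus}(1) would force $s(X_0(K');1)=2$, which together with $s(X_0(K');1)=s(X_0(Conway);1)\le 0$ is a contradiction. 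Your Step~1 cannot go through, and the two routes you propose for it are blocked by the very machinery you are trying to invoke.

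Second, the identity $-s(-Conway(r+1,r)) = s(Conway(r+1,r))$ used in your last step is the mirror relation $s(-L)=-s(L)$, which holds for knots but fails for links in general (compare the torus-link formulas in Lemma~\ref{lem:torus_links_s}, which break this symmetry). For $r\ge 1$ the cable $Conway(r+1,r)$ is a multi-component link, so even granting a lower bound on $s(X_0(Conway);1)$, your translation into a statement about $s(Conway(n+1,n))$ would be unjustified.

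The paper sidesteps both issues by working with the \emph{mirror} trace $X=X_0(-Conway)=X_0(-K')$. With $K=-K'$, the cable links entering Proposition~\ref{prop:las_s_from_classical} are $K'(n+1,n)$ with no further mirror, so the second-line formula reads $s(X;1)=-\lim_n\bigl(s(K'(n+1,n))+2n\bigr)$. The quantity inside this limit is nondecreasing, hence $\ge s(K')=2$ at $n=0$, giving $s(X;1)\le -2$ with no hypotheses on $K'$ beyond $s(K')=2$. Rewriting the same $s(X;1)$ using $K=-Conway$ then gives $\lim_n\bigl(s(Conway(n+1,n))+2n\bigr)=-s(X;1)\ge 2>0$, which is the desired conclusion. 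The key move you are missing is that Piccirillo's $K'$ is used to bound the \emph{mirror} trace from above, not $X_0(Conway)$ from below.
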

If $Conway$ were slice, then any cable of it is concordant to the corresponding cable of the unknot, implying that they have the same $s$-invariant. Therefore, Proposition~\ref{prop:Conway} shows that, if people had enough computer power, we could have obstructed the sliceness of the Conway knot by calculating the $s$-invariants of its cables. In practice, however, the calculation of $s(Conway(2,1))$ seems already out of reach.

\begin{proof}
Piccirillo \cite{piccirillo2020conway} found a knot $K'$ with $X_0(K')=X_0(Conway)$ and $s(K')=2$. Let $X$ be the mirror image of this common trace, i.e. $X=X_0(-Conway)=X_0(-K')$. By Proposition~\ref{prop:las_s_from_classical}, we have $$s(X;1)=-\lim_{n\to\infty}(s(K'(n+1,n))+2n)\le-s(K'(1,0))=-s(K')=-2.$$ Again, by Proposition~\ref{prop:las_s_from_classical} we have $\lim_{n\to\infty}(s(Conway(n+1,n))+2n)=-s(X;1)\ge2$ and hence $s(Conway(n+1,n))+2n\ge2>0$ for all sufficiently large $n$.
\end{proof}

\subsection{Knot traces of torus knots}\label{sbsec:torus_traces}
The first author \cite{ren2023lee} calculated the $s$-invariants of all torus links with various orientations. In this section, we use this to determine the complete structure of the Lee skein lasagna modules of large negative traces on negative torus links.

For $d=r+s>0$ and coprime $p,q\ne0$, let $T(dp,dq)_{r,s}$ denote the torus link $T(dp,dq)$ equipped with an orientation where $r$ of the strands are oriented against the other $s$ strands, we have
\begin{Lem}[{\cite[Theorem~1.1,Corollary~1.3]{ren2023lee}}]\label{lem:torus_links_s}
If $p,q>0$, then $$s(T(dp,dq)_{r,s})=(p|r-s|-1)(q|r-s|-1)-2\min(r,s).$$ If $pq<0$, then $$s(T(dp,dq)_{r,s})=C(d,p,q,|r-s|)$$ for some constant $C$ depending on $d,p,q,|r-s|$.
\end{Lem}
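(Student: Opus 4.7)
The plan is to invoke the strategy of \cite{ren2023lee} and outline its key ingredients. For the case $p, q > 0$, the torus link $T(dp, dq)$ is naturally the closure of a positive braid, and its Seifert algorithm surface is well understood. First I would compute $s$ for the fully parallel orientation $T(d'p, d'q)_{d',0}$ with $d' = |r - s|$ using the classical combination of the slice--Bennequin inequality (upper bound) and the positive Seifert surface realizing the bound (lower bound), which gives the value $(pd' - 1)(qd' - 1) - d' + 1$ for the corresponding positive torus link $s$-invariant, refined to the value stated in the lemma after accounting for the number of components and the $-2\min(r,s)$ contribution. Then I would construct a cobordism from $T(d'p, d'q)_{d', 0}$ to $T(dp, dq)_{r, s}$ by attaching $2\min(r, s)$ saddle bands that split off $\min(r,s)$ oppositely-oriented pairs of adjacent strands; this cobordism has Euler characteristic $-2\min(r, s)$. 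Applying the link-cobordism bound for $s$ (the special case of Theorem~\ref{thm:s_prop}(6) with $X_1 = X_2 = B^4$) in both directions yields one half of the claimed formula.

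For the case $pq < 0$, the torus link is not positive in the usual sense, so the Seifert-surface bound fails outright. The plan is to establish the weaker structural statement that $s(T(dp, dq)_{r, s})$ depends only on the tuple $(d, p, q, |r - s|)$. This would follow from two observations: first, the obvious symmetry $s(T(dp, dq)_{r, s}) = s(T(dp, dq)_{s, r})$ coming from reversing all orientations (Theorem~\ref{thm:s_prop}(2) applied in the $X = B^4$ case); second, an induction-on-$\min(r, s)$ argument showing that adjacent transpositions of oppositely-oriented strands within the braid closure preserve $s$. For this second step one would use that, after suitable braid conjugation, such a transposition amounts to a symmetry of the annular setup, whose effect on the canonical Lee generator can be tracked explicitly.

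The main obstacle is proving sharpness of the upper bound in the $p,q>0$ case, since the cobordism bound alone only gives one inequality. For positive torus links and their standard parallel cables this sharpness is classical and follows from the slice--Bennequin equality, but for the mixed orientations here one needs to verify that the cobordism bound is matched by a genuine Seifert surface construction, which is the technical heart of \cite{ren2023lee}. For $pq < 0$, the exact value of $C(d, p, q, |r - s|)$ would require the full annular-Khovanov or Plamenevskaya-type spectral sequence analysis carried out in that paper; since the present lemma only needs the dependence on $(d,p,q,|r-s|)$ and not an explicit formula, it suffices to cite \cite[Theorem~1.1, Corollary~1.3]{ren2023lee} directly, which is what the paper does.
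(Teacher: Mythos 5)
The paper does not prove this lemma at all: it is stated as a direct citation to \cite[Theorem~1.1, Corollary~1.3]{ren2023lee}, with the $\qed$ mark placed immediately after the statement. Your final sentence correctly identifies that citing the reference is what the paper does, so in that limited sense your conclusion matches the paper.

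However, your intermediate sketch of how the cited result would be proved contains genuine errors that are worth flagging. For $p,q>0$, the cobordism bound of Theorem~\ref{thm:s_prop}(6), applied to a cobordism $S$ of Euler characteristic $-2\min(r,s)$ from $T(|r-s|p,|r-s|q)_{|r-s|,0}$ to $T(dp,dq)_{r,s}$, yields only the lower bound $s(T(dp,dq)_{r,s})\ge(p|r-s|-1)(q|r-s|-1)-2\min(r,s)$; applying it to the reversed cobordism $\bar S$ gives $s(T(dp,dq)_{r,s})\le(p|r-s|-1)(q|r-s|-1)+2\min(r,s)$, so the two directions sandwich $s$ with a gap of $4\min(r,s)$ and do not pin down the value. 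The matching upper bound cannot come from a positive Seifert surface, because $T(dp,dq)_{r,s}$ with $r,s>0$ is \emph{not} a positive link in the given orientation. The actual argument in \cite{ren2023lee} instead computes the Lee filtration on the canonical generators directly, using the $S_n$-representation-theoretic decomposition of $Kh_{Lee}(T(n,n))$ and Sto\v si\'c-type skein exact triangles (the same circle of ideas invoked in Section~\ref{sbsec:diagram} and Appendix~\ref{sec:append} of the present paper), not a Seifert-surface construction. As a minor point, your stated value $(pd'-1)(qd'-1)-d'+1$ for the parallel-oriented case does not agree with the lemma's value $(pd'-1)(qd'-1)$ at $\min(r,s)=0$. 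For $pq<0$, the claimed dependence on $(d,p,q,|r-s|)$ only is already a tautology: permuting the strands of the cable is an isotopy, and orientation reversal gives $s(T(dp,dq)_{r,s})=s(T(dp,dq)_{s,r})$; since $r+s=d$ is fixed, the unordered pair $\{r,s\}$ is determined by $d$ and $|r-s|$. No induction or transposition argument is needed for that part; the nontrivial content of \cite[Corollary~1.3]{ren2023lee} is the explicit formula for $C$, which the lemma intentionally suppresses.
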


We study the $s$-invariant of $T(p,q)^k(r,s)$, the $k$-framed $(r+s)$-cable of $T(p,q)$, where $s$ of the strands have their orientation reversed.

\begin{Lem}\label{lem:s_Tpqrs}
For any $p,q>0$, $k\ge pq$, $r,s\ge0$, we have $$s(T(p,q)^k(r,s))=k|r-s|^2-(k-pq+p+q)|r-s|-2\min(r,s)+1.$$
\end{Lem}
\begin{proof}
When $d:=r+s=0$, the statement is true as $s(\emptyset)=1$. Suppose $d>0$. Twisting $k-pq$ times along the $d$ parallel strands in $T(dp,dq)_{r,s}=T(p,q)^{pq}(r,s)$ yields $T(p,q)^k(r,s)$; this gives rise to a cobordism in $(k-pq)\overline{\CP^2}$ from $T(dp,dq)_{r,s}$ to $T(p,q)^k(r,s)$. The adjunction inequality for $s$-invariant \cite[Corollary~1.4]{ren2023lee} applied to this cobordism implies that
\begin{align*}
s(T(p,q)^k(r,s))\le&\,s(T(dp,dq)_{r,s})+(k-pq)(|r-s|^2-|r-s|)\\
=&\,k|r-s|^2-(k-pq+p+q)|r-s|-2\min(r,s)+1.
\end{align*}
On the other hand, since one can cap off $\min(r,s)$ pairs of oppositely oriented components in $T(p,q)^k(r,s)$ by annuli to obtain $T(p,q)^k(|r-s|,0)$, we have 
\begin{align*}
s(T(p,q)^k(r,s))\ge&\,s(T(p,q)^k(|r-s|,0))-2\min(r,s)\\
=&\,k|r-s|^2-(k-pq+p+q)|r-s|-2\min(r,s)+1
\end{align*}
where the last equality holds as one can find a positive diagram of $T(p,q)^k(|r-s|,0)$, say obtained by adding $k-pq$ full twists to a standard positive diagram of $T(p|r-s|,q|r-s|)$, and then use the fact that the $s$-invariant of a link with a positive diagram $D$ is equal to the writhe of $D$ minus twice the number of Seifert circles of $D$ plus $1$.
\end{proof}

It follows from Lemma~\ref{lem:s_Tpqrs} and Proposition~\ref{prop:las_s_from_classical} that for any $p,q>0$, $k\ge pq$ and $\alpha\in H_2(X_{-k}(-T(p,q)))\cong\Z$, we have
\begin{align}\label{eq:torus_knot_traces}
s(X_{-k}(-T(p,q));\alpha)&\,=-\lim_{r\to\infty}(s(T(p,q)^k(\alpha^++r,\alpha^-+r))+2r)+w(T(p,q)^k(\alpha^++r,\alpha^-+r))-|\alpha|+1\nonumber\\&\,=-\lim_{r\to\infty}(k|\alpha|^2-(k-pq+p+q)|\alpha|-2r+1+2r)+k|\alpha|^2-|\alpha|+1\nonumber\\&\,=(k-pq+p+q-1)|\alpha|.
\end{align}

We remark that \eqref{eq:torus_knot_traces} only yields vacuous lower bounds for the smooth genus function of $X_{-k}(-T(p,q))$ via Corollary~\ref{cor:genus_bound_simpliest}.

Theorem~\ref{thm:Lee_structure} states that the Lee skein lasagna module of $X_{-k}(-T(p,q))$ is generated by Lee canonical generators $x_{\alpha^+,\alpha^-}$, $\alpha^+,\alpha^-\in H_2(X_{-k}(-T(p,q)))$. The generator $x_{\alpha^+,\alpha^-}$ has homological degree $-2\alpha^+\cdot\alpha^-$ ($\cdot$ is the intersection form on $H_2$ instead of the product on $\Z$) and homology class degree $\alpha^++\alpha^-$. Moreover, \eqref{eq:torus_knot_traces} and \eqref{eq:2-hdby_s_double=single} imply it has quantum filtration degree $$s(X_{-k}(-T(p,q));\alpha^+-\alpha^-)+2\alpha^+\cdot\alpha^-=(k-pq+p+q-1)|\alpha^+-\alpha^-|+2\alpha^+\cdot\alpha^-.$$ These data (together with Theorem~\ref{thm:Lee_structure}(4)) completely determine the structure of $\mathcal S_0^{Lee}(X_{-k}(-T(p,q)))$. More explicitly, we have the following
\begin{Prop}\label{prop:torus_knot_trace_Lee}
If $p,q>0$ are coprime and $k\ge pq$, then $s(X_{-k}(-T(p,q));\alpha)=(k-pq+p+q-1)|\alpha|$. Therefore, the associated graded vector space of $\mathcal S_0^{Lee}(X_{-k}(-T(p,q)))$ is given by $$gr_*(\mathcal S_{0,(\beta^2-\alpha^2)/2}^{Lee}(X_{-k}(-T(p,q));\alpha))=\Q$$ for $$\beta\equiv\alpha\pmod2,\quad*=\begin{cases}\frac{\alpha^2-\beta^2}2+(k-pq+p+q-1)|\beta|-1\pm1,&\beta\ne0\\\frac{\alpha^2}2,&\beta=0,\end{cases}$$ and $0$ in other tri-degrees.\qed
\end{Prop}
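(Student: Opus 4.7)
The plan is to assemble the pieces already developed in the surrounding text: the $s$-invariant formula is essentially the identity \eqref{eq:torus_knot_traces} established just above the statement, and the structural result then follows mechanically from Theorem~\ref{thm:Lee_structure} combined with \eqref{eq:2-hdby_s_double=single}.

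For the first claim, I would invoke Proposition~\ref{prop:las_s_from_classical} to express $s(X_{-k}(-T(p,q));\alpha)$ as the $r\to\infty$ limit of classical $s$-invariants of the mirror cables $T(p,q)^k(\alpha^+ + r,\alpha^- + r)$. The closed formula
\[s(T(p,q)^k(r,s)) = k|r-s|^2 - (k-pq+p+q)|r-s| - 2\min(r,s) + 1\]
was derived immediately before the statement by sandwiching between the adjunction inequality of \cite[Corollary~1.4]{ren2023lee} (which upgrades the $k=pq$ case from Lemma~\ref{lem:torus_links_s} to all $k\ge pq$) and the sharp positive-diagram lower bound. Substituting this into Proposition~\ref{prop:las_s_from_classical}, the $r$-dependence cancels and the stated formula $s(X_{-k}(-T(p,q));\alpha) = (k-pq+p+q-1)|\alpha|$ drops out.

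For the second claim, I would apply Theorem~\ref{thm:Lee_structure}: since $L=\emptyset$ and $H_2(X_{-k}(-T(p,q)))\cong\Z$, the canonical Lee lasagna generators $x_{\alpha^+,\alpha^-}$ are indexed by pairs of integers. Reparameterize by $\alpha=\alpha^++\alpha^-$ and $\beta=\alpha^+-\alpha^-$ (so $\beta\equiv\alpha\pmod 2$). Parts~(1) and~(2) of the theorem give homology class $\alpha$ and homological degree $-2\alpha^+\cdot\alpha^- = (\beta^2-\alpha^2)/2$, expressed via self-intersections in $H_2$. Equation \eqref{eq:2-hdby_s_double=single} identifies $s(X;\alpha^+,\alpha^-)$ with the single-class invariant $s(X;\beta)=(k-pq+p+q-1)|\beta|$, so the filtration degree reads $q(x_{\alpha^+,\alpha^-}) = s(X;\beta) + 2\alpha^+\cdot\alpha^- = (\alpha^2-\beta^2)/2 + (k-pq+p+q-1)|\beta|$. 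Finally, for $\beta\neq 0$, part~(4) of Theorem~\ref{thm:Lee_structure} splits the two-dimensional bidegree spanned by $x_{\alpha^+,\alpha^-}$ and $x_{\alpha^-,\alpha^+}$ into graded lines at filtration degrees $q$ and $q-2$ via the $\pm$-eigenspaces of the involution swapping these; for $\beta=0$ the generator is unique and sits at filtration degree $\alpha^2/2$.

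All the substantive technical work has been offloaded to earlier results — in particular, the classical $s$-invariant computation for cables of torus knots in \cite{ren2023lee}. What remains here is largely bookkeeping: translating between the $(\alpha^+,\alpha^-)$ and $(\alpha,\beta)$ parameterizations and reading off the two-dimensional $\pm$-splitting from Theorem~\ref{thm:Lee_structure}(4). The only place where care is needed is in reconciling the homological and filtration degrees as written in the proposition (in which $\alpha^2,\beta^2$ denote self-intersections) with the formulas in Theorem~\ref{thm:Lee_structure}, but this is purely a change of conventions and presents no obstacle.
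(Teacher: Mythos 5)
Your proposal is correct and mirrors the paper's own argument precisely: \eqref{eq:torus_knot_traces} (derived just above the proposition from Lemma~\ref{lem:torus_links_s}, the adjunction inequality, and the positive-diagram bound) gives the $s$-invariant formula via Proposition~\ref{prop:las_s_from_classical}, and the associated-graded description then falls out of Theorem~\ref{thm:Lee_structure}(1)--(4) together with \eqref{eq:2-hdby_s_double=single}, exactly as you lay out. The bookkeeping in the $(\alpha,\beta)$ reparameterization is also correct, so nothing is missing.
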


In the standard diagram, the number of negative crossings of $-T(p,q)$ is $pq-p$ or $pq-q$. Thus the nonvanishing bound on $k$ given by Proposition~\ref{prop:torus_knot_trace_Lee} is better than that by Theorem~\ref{thm:nonvan_diagram_knot}. However, Corollary~\ref{cor:emd_-kCP2} gives an even better bound. For example, $-T(n,n+1)$ is $n^2$-slice in $\CP^2$, thus $X_{-k}(-T(n,n+1))$ has nonvanishing Lee and Khovanov skein lasagna module for $k\ge n^2$.\smallskip

Finally we note that the second part of Lemma~\ref{lem:torus_links_s} implies that for $p,q>0$, $s(X_{pq}(T(p,q));\alpha)=-\infty$ for all $\alpha$. However, since $TB(-T(p,q))=-pq$ \cite{etnyre2001knots}, we already know from Corollary~\ref{cor:Lee_van} that $s(X_k(T(p,q));\alpha)=-\infty$ for all $k\ge pq$ and all $\alpha$.

\subsection{Expectations and calculations for \texorpdfstring{$\overline{\CP^2}$}{-CP2}}\label{sbsec:-CP^2}
In this section, we present a conjectural formula for a full computation of the Khovanov skein lasagna module for $\overline{\CP^2}$ over $\Q$ before proving Proposition \ref{prop:-CP^2} which addresses the cases that we can currently compute.  We also prove Corollary \ref{cor:emd_-kCP2} concerning the corresponding invariants for 4-manifolds which embed into some $k\overline{\CP^2}$.

Since $\overline{\CP^2}$ is the $(-1)$-trace on the unknot with an additional $4$-handle attached, the $2$-handlebody formula \eqref{eq:2-hdby} and Proposition~\ref{prop:34_hd} expresses the Khovanov skein lasagna module of $\overline{\CP}^2$ as a colimit 
\begin{equation}\label{eq:-CP2_colimit}
\mathcal S_0^2(\overline{\CP}^2;\alpha;\Q)=\mathrm{colim}_{r\to\infty}KhR_2(-T(|\alpha|+2r,|\alpha|+2r)_{\alpha_++r,\alpha_-+r};\Q)^{S_{|\alpha|+2r}}\{-|\alpha|-2r\}
\end{equation}
along the symmetrized dotted cobordism maps. Conjecture~6.1 of \cite{ren2023lee} would imply  that these morphisms are injective.

By the argument in \cite[Section~4.3]{ren2023lee}, the structure of $KhR_2(T(n,n);\Q)$ as an $S_n$-representation is determined by the structure of $KhR_2(T(n',n');\Q)^{S_{n'}}$ for $n'\le n$. Using this fact, one can check Conjecture~6.1 (equivalently Conjecture~6.1$'$) of \cite{ren2023lee} is equivalent to the following complete determination of $\mathcal S_0^2(\overline{\CP^2};\Q)$ (Note that by \eqref{eq:2-hdby_homology_mod_2}, all $\mathcal S_0^2(\overline{\CP^2};\alpha;\Q)$ are determined from those with $\alpha=0,1$).

\begin{Conj}\label{conj:-CP2}
Let $K_n(t,q)$ denote the Poincar\'e polynomial of $Kh(T(n,n-1);\Q)$ for $n>0$. Then $\mathcal S_0^2(\overline{\CP^2};0;\Q)$ has Poincar\'e polynomial 
\begin{equation}\label{eq:-CP2;0}
1+\sum_{n=1}^\infty t^{-2n^2}q^{6n^2-4n+1}K_{2n}(t,q^{-1})
\end{equation}
and $\mathcal S_0^2(\overline{\CP^2};1;\Q)$ has Poincar\'e polynomial 
\begin{equation}\label{eq:-CP2;1}
\sum_{n=1}^\infty t^{-2n^2+2n}q^{6n^2-10n+4}K_{2n-1}(t,q^{-1}).
\end{equation}
\end{Conj}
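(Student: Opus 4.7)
The plan is to split the statement into two parts: (a) the equivalence asserted in the sentence preceding the conjecture, between Conjecture~6.1 of \cite{ren2023lee} and the Poincar\'e polynomial formulas \eqref{eq:-CP2;0} and \eqref{eq:-CP2;1}; and (b) establishing Conjecture~6.1 of \cite{ren2023lee} itself.

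For (a), the starting point is the colimit presentation \eqref{eq:-CP2_colimit}, which I would rewrite as a filtered colimit in the style of \eqref{eq:2-hdby_filtered_colim}, with connecting maps the symmetrized dotted annular creation cobordisms. Conjecture~6.1 of \cite{ren2023lee} asserts that these connecting maps are injective on the $S_{|\alpha|+2r}$-invariant parts, so under this hypothesis the Poincar\'e polynomial of $\mathcal{S}_0^2(\overline{\CP^2};\alpha;\Q)$ is the termwise limit of the graded ranks of $KhR_2(-T(|\alpha|+2r,|\alpha|+2r)_{\alpha_++r,\alpha_-+r};\Q)^{S_{|\alpha|+2r}}\{-|\alpha|-2r\}$. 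Using the inductive description of the two-row $S_n$-isotypic decomposition of $KhR_2(T(n,n);\Q)$ from \cite[Section~4.3]{ren2023lee}, I would rewrite this limit as a sum of ``new'' contributions appearing at each even or odd level, and identify the new piece appearing at level $n$ with an appropriate shift of $Kh(T(n,n-1);\Q)$ via an annular skein argument that removes one strand. Careful bookkeeping of grading shifts, together with the orientation-reversal renormalization \eqref{eq:KhR_2}, should produce exactly the prefactors $t^{-2n^2}q^{6n^2-4n+1}$ (for $\alpha=0$, even level $2n$) and $t^{-2n^2+2n}q^{6n^2-10n+4}$ (for $\alpha=1$, odd level $2n-1$) appearing in \eqref{eq:-CP2;0} and \eqref{eq:-CP2;1}. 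The converse direction follows by a rank comparison: if the Poincar\'e polynomials are already known, any non-injectivity in the connecting map would force a strict drop in some graded rank, which the given formulas forbid.

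For (b), I would pursue two complementary strategies. The first is to produce explicit categorical splittings of the symmetrized dotted annular creation cobordisms, using idempotent decompositions coming from Hecke or Soergel bimodule techniques, or from Hogancamp's work on the stable Khovanov homology of full torus links. The second is to compute $KhR_2(T(n,n);\Q)^{S_n}$ in closed form, either directly or recursively via \cite[Section~4.3]{ren2023lee}, and verify the resulting structure against $K_n(t,q)$.

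The hard part will be producing such splittings, or equivalently the closed-form computation, in part (b). The dotted annular creation map is easy to define, but a splitting on the $S_n$-invariant part must itself be genuinely $S_n$-equivariant, which rules out any naive cap-cobordism candidate that only acts on a single strand. A definitive resolution will likely require finer control over the Khovanov homology of full torus links, and over the interaction of the symmetric group action with extra dots, than is currently available in the literature.
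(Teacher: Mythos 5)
What you are trying to prove is labeled a \emph{Conjecture} in the paper, and the paper does not prove it. The paper (i) asserts, without a detailed argument, that Conjecture~\ref{conj:-CP2} is equivalent to Conjecture~6.1 of \cite{ren2023lee}; (ii) reports numerical verification against $Kh(T(n,n))$ for $n\le 8$; and (iii) proves only a one-sided upper bound at $q=1$ (Proposition~\ref{prop:-CP^2_upper_bound}), via the inequality $L_n\le L_n'$ and a binomial-coefficient argument, not by showing the connecting maps are injective. Your part (a) --- the equivalence via the filtered colimit \eqref{eq:-CP2_colimit}, the inductive two-row $S_n$-decomposition from \cite[Section~4.3]{ren2023lee}, and identifying the new summand at level $n$ with a shift of $K_n$ --- matches what the paper has in mind; the recursion $M_n'=t^{2n-2}q^{6n-8}M_{n-2}'+q^{n-1}K_n$ appearing in the proof of Proposition~\ref{prop:-CP^2_upper_bound} makes this explicit, and as you note one also needs a rank comparison for the converse direction.

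Your part (b) is where the genuine gap lies, and you are right to flag it as such. Establishing Conjecture~6.1/6.1$'$ of \cite{ren2023lee} --- injectivity of the symmetrized dotted annular creation map $Kh(T(n-2,n-2);\Q)^{S_{n-2}}\to Kh(T(n,n);\Q)^{S_n}$ --- is open. Neither of your suggested strategies (Hecke/Soergel idempotent splittings; a closed-form computation of the $S_n$-invariants) is carried out in the paper or in \cite{ren2023lee}, and you correctly observe that a naive cap cobordism cannot give an $S_n$-equivariant splitting. The honest assessment is that your proposal reduces Conjecture~\ref{conj:-CP2} to a known open conjecture plus bookkeeping; it does not, and given the current state of the literature cannot, constitute a proof, which is consistent with the paper leaving the statement as a conjecture.
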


In fact, $K_n(t,q)$ also admits a conjectural recursive formula due to Shumakovitch and Turner (see \cite[Conjecture~6.2]{ren2023lee}). We have verified that this conjecture as well as Conjecture~\ref{conj:-CP2} conform with the data of $Kh(T(n,n))$ for $n\le8$.\medskip

In the rest of the section we present some calculations available for $\overline{\CP^2}$. 

The $p=q=k=1$ case of Proposition~\ref{prop:torus_knot_trace_Lee} gives $$gr_{2p^2+2p}\mathcal S_{0,-2p^2}^{Lee}(\overline{\CP^2};0)=\Q,\ p\ge0,$$ 
which by Theorem~\ref{thm:rank_ineq} implies that $\mathrm{rank}(\mathcal S_{0,-2p^2,2p^2+2p}^2(\overline{\CP^2};0))\ge1$.

On the other hand, one can check that the only contribution to the term $t^{-2p^2}q^{2p^2+2p}$ in \eqref{eq:-CP2;0} comes from the $n=p$ term in which $K_{2p}(t,q^{-1})=q^{-(4p^2-6p+1)}+q^{-(4p^2-6p+3)}+\cdots$ where $\cdots$ are terms with higher $t$-degrees (cf. \cite[Proposition~6.1]{khovanov2003patterns}, \cite[Theorem~2.1(2)]{ren2023lee}). Thus Conjecture~\ref{conj:-CP2} predicts $\mathcal S_{0,-2p^2,2p^2+2p}^2(\overline{\CP^2};0;\Q)=\Q$.

In fact, we can do better by directly resorting to \cite[Theorem~2.1(1)]{ren2023lee} to see that the generators of $Kh^{2(n+p)(n-p),q_{2n,2n}(2(n+p)(n-p))}(T(2n,2n))\cong\Z$ in the notation there after renormalization survive under the colimit of the $2$-handlebody formula \eqref{eq:-CP2_colimit}, thus descend to generators of 
\begin{equation}\label{eq:-CP2_las_0}
\mathcal S_{0,-2p^2,2p^2+2p}^2(\overline{\CP^2};0)\cong\Z,\ p\ge0.\footnote{More precisely, since $2$ is not invertible in $\Z$, Proposition~\ref{prop:2_hdby} only gives the result $\Z/N$ for some $N$. However $N$ is necessarily $0$ since we must have $\Z/N\otimes\Q=\Q$.}
\end{equation}
Similarly,
\begin{equation}\label{eq:-CP2_las_1}
\mathcal S_{0,-2p^2+2p,2p^2-1}^2(\overline{\CP^2};1)\cong\Z,\ p\ge1,
\end{equation}
and a generator comes from generators of $Kh^{2(n+p-1)(n-p),q_{2n-1,2n-1}(2(n+p-1)(n-p))}(T(2n-1,2n-1))\cong\Z$. This verifies Conjecture \ref{conj:-CP2} in these tri-gradings.

\begin{proof}[Proof of Proposition~\ref{prop:-CP^2}]
The statements for lasagna $s$-invariants and Lee skein lasagna modules follow from the $p=q=k=1$ case of Proposition~\ref{prop:torus_knot_trace_Lee} and Proposition~\ref{prop:34_hd}. The statement for Khovanov skein lasagna modules follows from \eqref{eq:-CP2_las_0}\eqref{eq:-CP2_las_1} for $\alpha=0,1$, and \eqref{eq:2-hdby_homology_mod_2} in general.
\end{proof}

\begin{proof}[Proof of Corollary~\ref{cor:emd_-kCP2}]
By the connected sum formula for lasagna $s$-invariants (Theorem~\ref{thm:s_prop}(4)) and Proposition~\ref{prop:-CP^2}, $s(k\overline{\CP^2};\alpha)=|\alpha|$. Thus by Corollary~\ref{cor:embedded_s} we have $s(X;\alpha)\ge|i_*\alpha|$. The equality when $\alpha=0$ follows from Corollary~\ref{cor:las_s_class_0}.

We have $gr_q(\mathcal S_{0,0}^{Lee}(X;\alpha))\ne0$ for $q=s(X;\alpha)$ by Definition~\ref{def:s}, and for $q=s(X;\alpha)-2$ if $\alpha\ne0\in H_2(X;\Q)$ by Theorem~\ref{thm:Lee_structure}(4). When $X$ is a $2$-handlebody, the second part of the statement thus follows from Theorem~\ref{thm:rank_ineq}.
\end{proof}

For the purpose of the next section, we prove the following local finiteness result for $\mathcal S_0^2(\overline{\CP^2};\Q)$.

\begin{Prop}\label{prop:-CP^2_upper_bound}
When evaluating at $q=1$, the Poincar\'e polynomial of $\mathcal S_0^2(\overline{\CP^2};0;\Q)$ is bounded above by \eqref{eq:-CP2;0}, and the Poincar\'e polynomial of $\mathcal S_0^2(\overline{\CP^2};1;\Q)$ is bounded above by \eqref{eq:-CP2;1}. In particular, the total dimension of $\mathcal S_{0,\ge h_0,*}^2(\overline{\CP^2};\alpha;\Q)$ is finite for every $h_0\in\Z$, $\alpha\in H_2(\overline{\CP^2})\cong\Z$.
\end{Prop}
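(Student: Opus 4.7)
The plan is to exploit the $2$-handlebody formula \eqref{eq:-CP2_colimit}, which realizes $\mathcal{S}_0^2(\overline{\CP^2};\alpha;\Q)$ as a filtered colimit of
\[
V_r = KhR_2(T(|\alpha|+2r,\,|\alpha|+2r)_{\alpha_++r,\,\alpha_-+r};\Q)^{S_{|\alpha|+2r}}\{-|\alpha|-2r\}, \qquad r\ge 0,
\]
along the symmetrized dotted annular cobordism maps. The key reduction is that for a filtered colimit of vector spaces the dimension in any fixed homological degree $h$ is bounded by $\liminf_r \dim(V_r)_h$; since the $\{-|\alpha|-2r\}$ shift is invisible at $q=1$, it suffices to bound, uniformly in $r$, the total dimension of $V_r$ in each homological degree $h$ (summed over $q$) by the coefficient of $t^h$ in \eqref{eq:-CP2;0} or \eqref{eq:-CP2;1} evaluated at $q=1$.

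The second step is the representation-theoretic computation: one must bound the $S_n$-invariants of $KhR_2(T(n,n)_{r_+,r_-};\Q)$ in each homological degree. For this I would use the explicit $S_n$-isotypic decomposition of $KhR_2$ of torus links developed in Section~4 of \cite{ren2023lee}, which expresses $KhR_2(T(n,n);\Q)$ as a direct sum $\bigoplus_\lambda V_\lambda\otimes W_\lambda$ of two-row irreducibles $\lambda=(n-k,k)$ tensored with multiplicity spaces $W_\lambda$. The trivial-representation multiplicity (which is what survives the symmetrization) is controlled, after a careful accounting of the $q$-shifts inherited from the cabling, by the Poincaré polynomial of $Kh(T(n,n-1);\Q)$, i.e.\ by the $K_n(t,q)$ appearing in Conjecture~\ref{conj:-CP2}.

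Assembling the bounds across the two parities of $n$ (producing the $\alpha=0$ and $\alpha=1$ series), and reindexing the parameter so that $n$ in \eqref{eq:-CP2;0}--\eqref{eq:-CP2;1} matches the half-cable index, yields exactly the claimed coefficient-wise upper bounds at $q=1$. The finiteness of $\mathcal{S}_{0,\geq h_0,*}^2(\overline{\CP^2};\alpha;\Q)$ then follows from the fact that $Kh(T(2n,2n-1);\Q)$ has finite total rank and its contribution to \eqref{eq:-CP2;0}, \eqref{eq:-CP2;1} is shifted homologically by $-2n^2$ (resp.\ $-2n^2+2n$), so that for each $h_0$ only finitely many $n$ contribute a nonzero term in homological degrees $\ge h_0$, and each such contribution is finite.

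The hard part will be Step~2: the cleanest version of the bound for the trivial $S_n$-isotypic piece of $KhR_2(T(n,n)_{r_+,r_-};\Q)$ is not stated verbatim in \cite{ren2023lee}, and must be extracted from the proofs there. In particular, one needs to track through the renormalization \eqref{eq:KhR_ori_change} coming from the $(r_+,r_-)$ orientation, the writhe of the standard diagram of $T(n,n)$, and the $\{-|\alpha|-2r\}$ shift in \eqref{eq:-CP2_colimit}, and verify that these combine to produce precisely the prefactors $q^{6n^2-4n+1}$ and $q^{6n^2-10n+4}$ in \eqref{eq:-CP2;0} and \eqref{eq:-CP2;1}. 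This is purely a bookkeeping obstacle rather than a genuinely new piece of homological algebra, but it is where all the delicacy of the argument lives.
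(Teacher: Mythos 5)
The overall scaffolding of your proposal (pass to a filtered colimit via \eqref{eq:-CP2_colimit}, compare against the conjectural answer, read off finiteness) matches the paper, but Step~2 has a genuine gap. You claim that the trivial-$S_n$-isotypic piece of $KhR_2(T(n,n);\Q)$ is ``controlled, after a careful accounting of the $q$-shifts, by the Poincar\'e polynomial of $Kh(T(n,n-1);\Q)$.'' That statement is precisely Conjecture~6.1$'$ of \cite{ren2023lee}: in the paper's notation, the assertion $M_n\le M_n'$, where $M_n'$ is the recursion built from the $K_n(t,q)$. This is not a fact one can ``extract from the proofs'' in \cite{ren2023lee}; if it were, Proposition~\ref{prop:-CP^2_upper_bound} would upgrade to the full Conjecture~\ref{conj:-CP2} (at least as an upper bound), which is not what the paper claims.

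What the paper \emph{does} have available is the weaker inequality $L_n\le L_n'$, i.e.\ a bound on the \emph{full} $Kh(T(n,n);\Q)$ rather than on its invariant piece. Passing from that to a bound on $M_n$ is the hard step, and it cannot be done degree-by-degree for fixed $n$. Instead the paper runs an asymptotic argument at $q=1$: suppose the coefficient of $t^m$ in $t^{-2n^2}M_{2n}(t,1)$ eventually exceeds (by $A>B$) the corresponding coefficient in $t^{-2n^2}M_{2n}'(t,1)$; then, feeding this through the isotypic decomposition $L_n=\sum_i \dim(n-i,i)\,t^{2i(n-i)}q^{6i(n-i)}M_{n-2i}$, the deficit propagates with binomial-coefficient multiplicities $\binom{2n}{n-N}$ versus $\binom{2n}{n}$, so for $n\gg0$ one would get $\mathrm{coef}_{t^{2n^2+m}}(L_{2n}(t,1))>\mathrm{coef}_{t^{2n^2+m}}(L_{2n}'(t,1))$, contradicting $L_n\le L_n'$. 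Without this asymptotic leverage, or some substitute for it, your proposed bound on the invariant multiplicity is simply unavailable.
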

\begin{proof}
\cite[Theorem~2.1(2)]{ren2023lee} implies that the highest $t$-degree of $t^{-\lfloor n^2/2\rfloor}K_n$ is at most $-(n-1)/2$, which tends to $-\infty$ as $n\to+\infty$. Thus the last finiteness statement for $\alpha=0,1$ follows from the upper bound statement. The case for general $\alpha$ follows from \eqref{eq:2-hdby_homology_mod_2}.

Let $L_n,M_n$ denote the Poincar\'e polynomial of $Kh(T(n,n);\Q),Kh(T(n,n);\Q)^{S_n}$, respectively. The argument in \cite[Section~4.3]{ren2023lee} implies $$L_n=\sum_{i=0}^{\lfloor n/2\rfloor}\dim(n-i,i)t^{2i(n-i)}q^{6i(n-i)}M_{n-2i}.$$
Define $M_n'$ inductively by $M_0'=M_0$, $M_1'=M_1$, $M_n'=t^{2n-2}q^{6n-8}M_{n-2}'+q^{n-1}K_n$ for $n\ge2$. Define $$L_n'=\sum_{i=0}^{\lfloor n/2\rfloor}\dim(n-i,i)t^{2i(n-i)}q^{6i(n-i)}M_{n-2i}'.$$ It follows from a calculation that Conjecture~6.1' in \cite{ren2023lee} is equivalent to the assertion that $M_n=M_n'$. In any case, it holds true that $L_n\le L_n'$.

By the $2$-handlebody formula \eqref{eq:-CP2_colimit} and taking into account the renormalizations, the Poincar\'e polynomial of $\mathcal S_0^2(\overline{\CP^2};0;\Q)$ is bounded above by 
\begin{equation}\label{eq:M_n_liminf}
\liminf_{n\to\infty}t^{-2n^2}q^{6n^2-2n}M_{2n}(t,q^{-1}),
\end{equation}
while \eqref{eq:-CP2;0} is equal to $$\lim_{n\to\infty}t^{-2n^2}q^{6n^2-2n}M_{2n}'(t,q^{-1}).$$ Here the limits are interpreted coefficientwise, and the terms in the second limit are nondecreasing.

Now we collapse the quantum grading by setting $q=1$. Assume, to the contrary, that the coefficient of some $t^m$ in \eqref{eq:M_n_liminf} at $q=1$ is larger than that in \eqref{eq:-CP2;0}. Denote by $A,B$ these coefficients, respectively, so $A>B$. Then $coef_{t^m}(t^{-2n^2}M_{2n}'(t,1))\le B$ for all $n$ and $coef_{t^m}(t^{-2n^2}M_{2n}(t,1))\ge A$ for all $n\ge N$ for some $N$. It follows that
\begin{align}\label{eq:coef_B}
coef_{t^{2n^2+m}}(L_{2n}'(t,1))=&\,\sum_{i=0}^n\dim(2n-i,i)coef_{t^m}(t^{-2(n-i)^2}M_{2n-2i}'(t,1))\nonumber\\\le&\,B\sum_{i=0}^n\dim(2n-i,i)=B\binom{2n}{n}.
\end{align}
Similarly, for $n>N$,
\begin{align}\label{eq:coef_A}
\,coef_{t^{2n^2+m}}(L_{2n}(t,1))=&\,\sum_{i=0}^n\dim(2n-i,i)coef_{t^m}(t^{-2(n-i)^2}M_{2n-2i}(t,1))\nonumber\\\ge&\,A\sum_{i=0}^{n-N}\dim(2n-i,i)=A\binom{2n}{n-N}.
\end{align}
Since $A>B$, for $n$ sufficiently large, \eqref{eq:coef_A} is larger than \eqref{eq:coef_B}, contradicting the fact that $L_{2n}\le L_{2n}'$. This proves the statement for $\mathcal S_0^2(\overline{\CP^2};0;\Q)$. The statement for $\mathcal S_0^2(\overline{\CP^2};1;\Q)$ is analogous.
\end{proof}

\subsection{\texorpdfstring{$\overline{\CP^2}$}{-CP2}-stably standard homotopy spheres}\label{sbsec:htp_S4}
We show that the Khovanov skein lasagna module, at least over $\Q$, is unable to detect potential exotic $4$-spheres that dissolve after taking connected sums with copies of $\overline{\CP^2}$. Note that homotopy $4$-spheres coming from Gluck twists dissolve after a single connected sum with $\overline{\CP^2}$.
\begin{Prop}
If $\Sigma$ is a homotopy $4$-sphere with $\Sigma\#k\overline{\CP^2}=k\overline{\CP^2}$ for some $k$, then $\mathcal S_0^2(\Sigma;\Q)=\Q$, concentrated in tri-degree $(0,0,0)$.
\end{Prop}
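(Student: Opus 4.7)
The plan is to combine the connected sum formula with the local finiteness established in Proposition~\ref{prop:-CP^2_upper_bound} and a short positivity argument. First, I would invoke Proposition~\ref{prop:cntd_sum} together with the hypothesis $\Sigma \# k\overline{\CP^2} = k\overline{\CP^2}$ to obtain a tri-graded isomorphism $V \otimes W \cong W$, where $V := \mathcal{S}_0^2(\Sigma; \Q)$ and $W := \mathcal{S}_0^2(k\overline{\CP^2}; \Q)$. Since $H_2(\Sigma) = 0$, the factor $V$ lies entirely in homology-class degree $0$; restricting to the $\alpha = 0 \in H_2(k\overline{\CP^2})$ component then yields $V \otimes W_0 \cong W_0$ as bi-graded (by $(h, q)$) vector spaces, where $W_0 := \mathcal{S}_0^2(k\overline{\CP^2}; 0; \Q)$.

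The second step is to show that $W_0$ has finite dimension in every bi-degree $(h, q)$, and that $\dim W_{0; 0, 0} = 1$ (generated by the empty lasagna filling). To prove the finiteness, I would iterate the connected sum formula to write $W_0 = \bigoplus_{n_1 + \cdots + n_k = 0} \bigotimes_{i=1}^k \mathcal{S}_0^2(\overline{\CP^2}; n_i; \Q)$, and combine the shift isomorphism~\eqref{eq:2-hdby_homology_mod_2} (which expresses each factor as a bidegree-shift of $\mathcal{S}_0^2(\overline{\CP^2}; 0; \Q)$ or $\mathcal{S}_0^2(\overline{\CP^2}; 1; \Q)$) with Proposition~\ref{prop:-CP^2_upper_bound} to check that only finitely many tuples $(n_i, h_i, q_i)$ contribute nontrivially to any given bi-degree.

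The final step is a brief positivity argument. In bi-degree $(h, q)$ the isomorphism yields
\[
\dim W_{0; h, q} = \sum_{(h', q') \in \Z^2} \dim V_{h', q'} \cdot \dim W_{0; h - h', q - q'}.
\]
At $(h, q) = (0, 0)$, the $(h', q') = (0, 0)$ summand alone contributes $\dim V_{0, 0}$, and combined with $\dim V_{0, 0} \geq 1$ (empty filling on $\Sigma$) and nonnegativity of the remaining summands, I deduce $\dim V_{0, 0} = 1$. For any $(h_1, q_1) \neq (0, 0)$, specialising to $(h, q) = (h_1, q_1)$, the $(h', q') = (0, 0)$ summand already contributes the full $\dim W_{0; h_1, q_1}$, so every other summand must vanish; in particular, the $(h', q') = (h_1, q_1)$ summand gives $\dim V_{h_1, q_1} \cdot \dim W_{0; 0, 0} = \dim V_{h_1, q_1} = 0$. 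Hence $V$ is concentrated at tri-degree $(0, 0, 0)$ with dimension one, as claimed. The main obstacle will be the finiteness statement in the second paragraph; the rest is a routine positivity computation.
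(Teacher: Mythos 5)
Your overall strategy matches the paper's: apply the connected sum formula (Proposition~\ref{prop:cntd_sum}) to get $V\otimes W\cong W$, restrict to $\alpha=0$, invoke the finiteness supplied by Proposition~\ref{prop:-CP^2_upper_bound}, and close with a positivity argument. The positivity step in your last paragraph is correct and usefully makes explicit what the paper only alludes to as ``a suitable finiteness condition.''

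There is, however, a genuine error in your second paragraph. The grading on $\mathcal S_0^2(k\overline{\CP^2};\Q)\cong\mathcal S_0^2(\overline{\CP^2};\Q)^{\otimes k}$ coming from the connected sum formula is by $H_2(k\overline{\CP^2})\cong\Z^k$, and the $\alpha=0$ component corresponds to the single tuple $(n_1,\dots,n_k)=(0,\dots,0)$, not to the whole hyperplane $\sum n_i=0$. So in fact
\[
W_0=\mathcal S_0^2(\overline{\CP^2};0;\Q)^{\otimes k},
\]
with no direct sum over tuples at all. This correction actually removes the obstacle you flagged: you no longer need the shift isomorphism \eqref{eq:2-hdby_homology_mod_2} or any bookkeeping over tuples $(n_i)$. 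Proposition~\ref{prop:-CP^2_upper_bound} shows that $\mathcal S_0^2(\overline{\CP^2};0;\Q)$ is concentrated in homological degrees $h\le0$ (the upper-bound series \eqref{eq:-CP2;0} has constant term $1$ and all other terms in strictly negative $t$-degree) with finite total dimension in $h\ge h_0$ for every $h_0$; its $k$-th tensor power therefore has finite dimension in each bidegree, because the constraint $h_1+\cdots+h_k=h$ with all $h_i\le0$ leaves only finitely many contributing tuples. Relatedly, the claim $\dim W_{0;0,0}=1$ is not supplied by the existence of the empty lasagna filling alone, which only gives $\ge1$; you again need the upper bound to see that the $h=0$ piece of $\mathcal S_0^2(\overline{\CP^2};0;\Q)$ is exactly one-dimensional, concentrated at $q=0$, so that $W_{0;0,0}=\Q$. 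With those two points repaired, your argument is complete and follows the paper's route.
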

\begin{proof}
Applying $\mathcal S_0^2$ to $\Sigma\#k\overline{\CP^2}=k\overline{\CP^2}$ and using the connected sum formula (Proposition~\ref{prop:cntd_sum}), we obtain $\mathcal S_0^2(\Sigma;\Q)\otimes\mathcal S_0^2(\overline{\CP^2};\Q)^{\otimes k}\cong\mathcal S_0^2(\overline{\CP^2};\Q)^{\otimes k}$. Since $\mathcal S_0^2(\overline{\CP^2};\Q)$ is nonzero and satisfies a suitable finiteness condition given by Proposition~\ref{prop:-CP^2_upper_bound}, we conclude that $\mathcal S_0^2(\Sigma;\Q)=\Q$.
\end{proof}

\subsection{Induced map of link cobordisms in \texorpdfstring{$k\overline{\CP^2}$}{-kCP2}}\label{sbsec:Kh_CP2_cob}
Unless otherwise stated, in this section we work over a field $\mathbb F$, which will be dropped from the notation throughout.

We begin with a general construction. Suppose $X$ is a closed $4$-manifold and $L_0,L_1\subset S^3$ are two framed oriented links. A cobordism from $L_0$ to $L_1$ in $X$ is a properly embedded framed oriented surface $\Sigma\subset X^{\circ\circ}:=X\backslash(int(B^4)\sqcup int(B^4))$ with $\partial\Sigma=-L_0\sqcup L_1\subset S^3\sqcup S^3$. Thus $\Sigma$ is a skein in $X^{\circ\circ}$ rel $-L_0\sqcup L_1$ without input balls, which itself defines a lasagna filling with homological degree $0$, quantum degree $-\chi(\Sigma)$, and homology class degree $[\Sigma]$, which represents a class $x_\Sigma\in\mathcal S_{0,0,-\chi(\Sigma)}^2(X^{\circ\circ};-L_0\sqcup L_1;[\Sigma])$.

On the other hand, since $(X^{\circ\circ},-L_0\sqcup L_1)=(X,\emptyset)\#(B^4,-L_0)\#(B^4,L_1)$, the connected sum formula (Proposition~\ref{prop:cntd_sum}) and Proposition~\ref{prop:recover_Z} imply that over a field $\mathbb F$, 
\begin{align*}
\mathcal S_0^2(X^{\circ\circ};-L_0\sqcup L_1;[\Sigma])\cong&\,\mathcal S_0^2(X;[\Sigma])\otimes\mathcal S_0^2(B^4;-L_0)\otimes\mathcal S_0^2(B^4;L_1)\\
\cong&\,\mathcal S_0^2(X;[\Sigma])\otimes KhR_2(-L_0)\otimes KhR_2(L_1)\\
\cong&\,\mathcal S_0^2(X;[\Sigma])\otimes\mathrm{Hom}(KhR_2(L_0),KhR_2(L_1)).
\end{align*}
If $\theta\in\mathcal(S_0^2(X;[\Sigma]))_{h_0,q_0}^*$ is a dual Khovanov skein lasagna class, then we obtain a homomorphism $$KhR_2(\Sigma,\theta):=\theta(x_\Sigma)\colon KhR_2^{*,*}(L_0)\to KhR_2^{*+h_0,*+q_0-\chi(\Sigma)}(L_1).$$

In the case when $X=\overline{\CP^2}$, there are natural dual lasagna classes, one for each homology class $\alpha\in H_2(X)$. In fact, by Proposition~\ref{prop:-CP^2}, for each $\alpha\in H_2(\overline{\CP^2})$, we have $\mathcal S_{0,0,|\alpha|}^2(\overline{\CP^2};\alpha;\Z)=\Z$. Therefore over a field $\mathbb F$ and up to sign, $(\mathcal S_0^2(\overline{\CP^2};\alpha))_{0,-|\alpha|}^*\cong\mathbb F$ has a canonical dual lasagna generator $\theta_\alpha$. Thus for a cobordism $\Sigma\colon L_0\to L_1$ in $\overline{\CP^2}$, there is an induced map $$KhR_2(\Sigma):=KhR_2(\Sigma,\theta_{[\Sigma]})\colon KhR_2^{*,*}(L_0)\to KhR_2^{*,*-\chi(\Sigma)-|[\Sigma]|}(L_1)$$ defined up to sign. Translate back to the more familiar Khovanov homology and reverse the ambient orientation, we see if $\Sigma\colon L_0\to L_1$ is a cobordism in $\CP^2$ between unframed oriented links, then there is an induced map $Kh(\Sigma)\colon Kh(L_0)\to Kh(L_1)$ defined up to sign, with bidegree $(0,\chi(\Sigma)-[\Sigma]^2+|[\Sigma]|)$, at least over a field. This map only depends on the isotopy class of $\Sigma$ rel boundary.

In fact, we can also give the following more direct construction of $Kh(\Sigma)\colon Kh(L_0)\to Kh(L_1)$ over $\Z$, which is proposed to us by Hongjian Yang. By transversality, suppose $\Sigma$ intersects the core $\CP^1$ of $\CP^2$ transversely, in some $p$ points positively and $q$ points negatively. A tubular neighborhood of $\CP^1$ has boundary $S^3$ on which $\Sigma$ is the torus link $-T(p+q,p+q)_{p,q}\subset S^3$ (the torus link $-T(p+q,p+q)$ equipped with an orientation where $p$ of the strands are oriented against the other $q$ strands). Tubing this boundary $S^3$ with the negative boundary of $(\CP^2)^{\circ\circ}$ changes $\Sigma$ into a cobordism $\Sigma^\circ\colon L_0\sqcup(-T(p+q,p+q)_{p,q})\to L_1$ in $S^3\times I$, inducing a map
\begin{equation}\label{eq:Kh_intermediate_map}
Kh(\Sigma^\circ)\colon Kh(L_0)\otimes Kh(-T(p+q,p+q)_{p,q})\to Kh(L_1)
\end{equation}
up to sign with bidegree $(0,\chi(\Sigma^\circ))$ where $\chi(\Sigma^\circ)=\chi(\Sigma)-p-q$. On the other hand, \cite[Corollary~2.2]{ren2023lee} implies that $Kh^{0,-(p-q)^2+2\max(p,q)}(-T(p+q,p+q)_{p,q};\Z)\cong\Z$. Plugging a generator of it into \eqref{eq:Kh_intermediate_map} defines a map $$Kh(\Sigma)\colon Kh(L_0)\to Kh(L_1)$$ over $\Z$ up to sign with bidegree $(0,\chi(\Sigma^\circ)-(p-q)^2+2\max(p,q))=(0,\chi(\Sigma)-[\Sigma]^2+|[\Sigma]|)$ as $[\Sigma]=\pm(p-q)[\CP^1]$.

We argue briefly that this direct construction agrees with the previous one over a field. We begin with the lasagna construction. Realize $(\overline{\CP^2})^{\circ\circ}$ as $S^3\times I\#B^4$ with a $2$-handle attached to $\partial B^4$ along a $(-1)$-framed unknot and another $4$-handle attached. Then a suitable generalization of the $2$-handlebody formula (see \cite[Theorem~3.2]{manolescu2023skein}) gives 
\begin{align}\label{eq:2-hdby_CP2}
&\ \mathcal S_0^2((\overline{\CP^2})^{\circ\circ};-L_0\sqcup L_1;\alpha)\nonumber\\
\cong&\,\left(\bigoplus_r\mathcal S_0^2(S^3\times I\#B^4;-L_0\sqcup L_1\sqcup-T(\alpha^++r,\alpha^-+r)))\{-|\alpha|-2r\}\right)\bigg/\sim
\end{align}
for some equivalence relation $\sim$, and the element $x_\Sigma$ for a skein $\Sigma$ with $[\Sigma]=\alpha$ is represented by the class of $\Sigma^\circ$ as a lasagna filling in the right-hand side of \eqref{eq:2-hdby_CP2} ($\Sigma^\circ$ is a skein without input balls in $S^3\times I\#B^4$ rel some $-L_0\sqcup L_1\sqcup-T(p+q,p+q)_{p,q}$ obtained by cutting out a tubular neighborhood of a core $\overline{\CP^1}\subset\overline{\CP^2}$). The evaluation of $x_\Sigma$ by the dual lasagna class $\theta_{[\Sigma]}\in\mathcal S_0^2(\overline{\CP^2})^*$ is realized by regarding $\Sigma^\circ$ as a skein in $S^3\times I$ rel $-L_0\sqcup L_1$, where $-T(p+q,p+q)_{p,q}\subset\partial B^4$ is regarded as an input link on an input ball (so its orientation is reversed) with decoration given by a generator of $KhR_2^{0,-2\max(p,q)}(T(p+q,p+q)_{p,q})\cong\mathbb F$. Tubing this input ball to the negative boundary, then Proposition~\ref{prop:recover_Z} shows the equivalence to the direct construction.

In the case when a cobordism $\Sigma\colon L_0\to L_1$ in $\CP^2$ is given by a negative full twist along some $p+q$ strands of $L$ ($p$ of which oriented against the other $q$), the induced map $Kh(\Sigma)\colon Kh(L_0)\to Kh(L_1)$ can be described by putting a local torus link $-T(p+q,p+q)_{p,q}$ near $L_0$, decorated by a generator of $Kh^{0,-(p-q)^2+2\max(p,q)}(-T(p+q,p+q)_{p,q})$, and performing $p+q$ saddles between $-T(p+q,p+q)_{p,q}$ and $L_0$ to realize the twist.\medskip

We note that the same constructions of induced maps on $Kh$---both the lasagna approach and the direct approach---also apply to cobordisms in $k\CP^2$. For the lasagna approach, this is because the preferred dual lasagna classes $\theta_\alpha$ of $\overline{\CP^2}$ give rise to preferred dual lasagna classes $\otimes_{i=1}^k\theta_{\alpha_i}$ of $k\overline{\CP^2}$. For the direct approach, we simply cut out neighborhoods of every core $\CP^1$ in $k\CP^2$. We summarize as follows.

\begin{Prop}\label{prop:Kh_CP2_cob}
For an oriented link cobordism $\Sigma\colon L_0\to L_1$ in $k\CP^2$, there is an induced map $$Kh(\Sigma)\colon Kh(L_0)\to Kh(L_1)$$ defined over any coefficient field up to sign, with bidegree $(0,\chi(\Sigma)-[\Sigma]^2+|[\Sigma]|)$, which only depends on the isotopy class of $[\Sigma]$ rel boundary. It agrees with the classical induced map for link cobordisms in $S^3\times I$ when $k=0$. Moreover, if $\Sigma'\colon L_1\to L_2$ is another oriented link cobordism in some $k'\CP^2$, then $Kh(\Sigma'\circ\Sigma)=Kh(\Sigma')\circ Kh(\Sigma)$ up to sign.\qed
\end{Prop}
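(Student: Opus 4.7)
Existence of $Kh(\Sigma)$ and its bidegree have already been sketched in two equivalent ways in the preceding discussion; the plan is to assemble these into a proof and verify the remaining claims (isotopy invariance, agreement at $k=0$, functoriality).

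I would first take the lasagna construction as the intrinsic definition. By Proposition~\ref{prop:-CP^2} together with Proposition~\ref{prop:cntd_sum}, $\mathcal S_{0,0,|\alpha|}^2(k\overline{\CP^2};\alpha;\Z)\cong\Z$, so the dual generator $\theta_\alpha^{\otimes k}$ in bidegree $(0,-|\alpha|)$ is canonical up to sign over a field. Two isotopic framed cobordisms rel boundary define the same skein class in $\mathcal S_0^2$, because isotopy rel boundary is the native relation on objects of $\mathcal C(X;L)$ in Section~\ref{sbsec:skein_construction}; hence $\theta_{[\Sigma]}^{\otimes k}(x_\Sigma)$ depends only on the isotopy class of $\Sigma$ rel boundary. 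For the bidegree computation I would switch to the direct (tubing) construction, where the count is transparent: the tubed cobordism $\Sigma^\circ\subset S^3\times I$ contributes $(0,\chi(\Sigma)-\sum_i(p_i+q_i))$, each canonical torus-link decoration from \cite[Corollary~2.2]{ren2023lee} sits in bidegree $(0,-(p_i-q_i)^2+2\max(p_i,q_i))$, and summing via $\sum_i(p_i-q_i)^2=[\Sigma]^2$ and $2\sum_i\max(p_i,q_i)-\sum_i(p_i+q_i)=\sum_i|p_i-q_i|=|[\Sigma]|$ produces the stated bidegree $(0,\chi(\Sigma)-[\Sigma]^2+|[\Sigma]|)$. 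The $k=0$ case is then immediate: no tubing is required, and the construction reduces via Proposition~\ref{prop:recover_Z} to the classical cobordism map on $(S^3\times I,\Sigma^\circ)$.

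The main step is functoriality. For $\Sigma\colon L_0\to L_1$ in $k\CP^2$ and $\Sigma'\colon L_1\to L_2$ in $k'\CP^2$, I would perturb the composition $\Sigma'\circ\Sigma\subset k\CP^2\#k'\CP^2=(k+k')\CP^2$ so that (a) it is transverse to every core with intersection data inherited from $\Sigma$ and $\Sigma'$, and (b) the separating $S^3$ carrying $L_1$ is disjoint from every tubing ball. Then $(\Sigma'\circ\Sigma)^\circ$ decomposes in $S^3\times I$ as the composition, along $L_1$ (after extending by identities on the remaining torus-link boundaries), of the tubed pieces $\Sigma^\circ$ and $\Sigma'^\circ$, with decoration the tensor product of the individual canonical torus-link generators. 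Classical functoriality of $KhR_2$ for cobordisms in $S^3\times I$ then yields $Kh(\Sigma'\circ\Sigma)=Kh(\Sigma')\circ Kh(\Sigma)$ up to sign.

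The main obstacle will be establishing the tensor-compatibility of the canonical decorations across the connected sum: the canonical torus-link generator for the composed intersection data must factor (up to sign) as the tensor product of those for $\Sigma$ and $\Sigma'$, equivalently the canonical dual lasagna class must satisfy $\theta_{\alpha\oplus\beta}^{\otimes(k+k')}=\pm\,\theta_\alpha^{\otimes k}\otimes\theta_\beta^{\otimes k'}$ under the connected sum isomorphism $\mathcal S_0^2(k\overline{\CP^2})\otimes\mathcal S_0^2(k'\overline{\CP^2})\cong\mathcal S_0^2((k+k')\overline{\CP^2})$. This follows from the naturality of Proposition~\ref{prop:cntd_sum} applied to the rank-one generators identified in Proposition~\ref{prop:-CP^2}, but the independent sign choice in each factor is precisely what forces the ``up to sign'' qualifier in the statement.
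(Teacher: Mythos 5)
Your proposal is essentially correct and follows the paper's own two-perspective strategy: define via the lasagna construction (which is manifestly isotopy-invariant and over which the dual class $\theta_\alpha^{\otimes k}$ is canonical up to sign), verify the bidegree and the $k=0$ comparison via the direct tubing construction, and transfer between the two using the sketch the paper gives with the relative $2$-handlebody formula. Your bidegree calculation correctly tracks $\chi(\Sigma^\circ)=\chi(\Sigma)-\sum_i(p_i+q_i)$, the torus-link generator degree $-(p_i-q_i)^2+2\max(p_i,q_i)$, and the identities $\sum_i(p_i-q_i)^2=[\Sigma]^2$, $2\max(p_i,q_i)-(p_i+q_i)=|p_i-q_i|$.

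The one place where you depart slightly from the paper's route is functoriality. The paper regards functoriality as \emph{manifest from the lasagna construction}: the skein $x_{\Sigma'\circ\Sigma}$ is the gluing of $x_\Sigma$ and $x_{\Sigma'}$ along $L_1$, and pairing with $\theta_\alpha^{\otimes k}\otimes\theta_\beta^{\otimes k'}$ factors through composition by the involutivity of $KhR_2$ and the naturality of Proposition~\ref{prop:cntd_sum}; the direct tubing construction, by contrast, is flagged by the paper as a priori \emph{not} manifestly independent of the choice of cores and tubing arcs. Your functoriality argument is phrased through the tubing construction (perturbing $\Sigma'\circ\Sigma$ to split the tubing balls and composing in $S^3\times I$), so it implicitly leans on the agreement of the two constructions. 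This is a valid route, but it shifts weight onto the verification that the two constructions agree, rather than reading functoriality off the lasagna picture directly. Two more small points worth smoothing: the Proposition treats an \emph{unframed} oriented cobordism $\Sigma$, so one should invoke the unframed gluing map \eqref{eq:glue_Kh_ver} (or the framing-change trick of Proposition~\ref{prop:framing_change}) to make $x_\Sigma$ well-defined with the stated quantum shift; and the tensor-compatibility $\theta_{\alpha\oplus\beta}^{\otimes(k+k')}\cong\theta_\alpha^{\otimes k}\otimes\theta_\beta^{\otimes k'}$ is really tautological given that $\theta_\alpha^{\otimes k}$ is defined factorwise, so the ``main obstacle'' you flag is milder than it appears.
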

The two perspectives compensate each other. The direct construction has the advantage that the induced map is computable from diagrams; however, it is not a priori clear that it is independent of the choices of the core $\CP^1$'s and the paths used to tube the extra boundaries to the negative boundary. The lasagna construction is manifestly well-defined and satisfies the functoriality statement in Proposition~\ref{prop:Kh_CP2_cob}; yet it is not directly computable.

It would be interesting to investigate some properties of this construction. For example, consider a link $L\subset S^3$ and a collection of disjoint $2$-disks $D=\sqcup_{i=1}^k D_i \subset S^3$ intersecting $L$ transversely, with $\partial D$ disjoint from $L$, such that the algebraic intersection number between each $D_i$ and $L$ is zero. Then one might hope that twisting along the various $D_i$'s induces maps whose limit recovers the $\#^k(S^1\times S^2)$ homology \cite{rozansky2010categorification,willis2021khovanov} of the link $L'\subset\#^k(S^1\times S^2)$ obtained by regarding $L$ as a link in the $0$-surgeries along $\partial D$, or whose colimit recovers the dual of the $\#^k(S^1\times S^2)$ homology of $-L'$, the mirror image of $L'$.

\subsection{A conjecture for Stein manifolds}\label{sbsec:stein}
Some similarities between skein lasagna modules and other gauge/Floer-theoretic smooth invariants appear in our study. For example, the skein lasagna modules are sensitive to orientations, and there is an adjunction-type inequality (Theorem~\ref{thm:intro_genus_bound}) for lasagna $s$-invariants. However, a serious constraint for our computation is that we were not able to produce any $4$-manifold with $b_2^+>0$ that has finite lasagna $s$-invariants or nonvanishing Khovanov skein lasagna module. In fact, if one were able to find a simply-connected closed $4$-manifold $X$ with $b_2^+(X)>0$ that has $\mathcal{S}_0^2(X)\ne0$, then $X\#\overline{\CP^2}$ is an exotic closed $4$-manifold, as it has nonvanishing $\mathcal S_0^2$ while its standard counterpart (some $k\CP^2\#\ell\overline{\CP^2}$) has vanishing $\mathcal S_0^2$.

To pursue this thread, we propose the following conjecture, which seems plausible and desirable.

\begin{Conj}\label{conj:stein}
If $K$ is a knot and $X=X_n(K)$ for some $n<TB(K)$ (hence $X$ is Stein \cite{eliashberg1990topological,gompf1998handlebody}), then $s(X;0)=0$ and $s(X;1)=s(K)-n$. In particular, $\mathcal S_0^2(X)\ne0$. More generally, any Stein $2$-handlebody $X$ has $s(X;0)>-\infty$ and nonvanishing Khovanov skein lasagna module.
\end{Conj}
The part concerning $s(X;1)$ in Conjecture~\ref{conj:stein} implies the $n$-shake genus of $K$ for $n<TB(K)$ (or more generally $n<TB(K')$ for some $K'$ concordant to $K$) is bounded below by $s(K)/2$, generalizing Theorem~\ref{thm:shake_genus}(1). This seems to be the proper straightening of the bound from the Stein adjunction inequality \eqref{eq:Stein_shake_genus}.

By Proposition~\ref{prop:las_s_from_classical}, Conjecture~\ref{conj:stein} is equivalent to an assertion on the $s$-invariants of cables of $K$ (or some link in the $2$-handlebody case).

\printbibliography
\appendix
\section{Proof of Lemma~\ref{lem:full_comparison}}\label{sec:append}
We first rewrite Lemma~\ref{lem:full_comparison} in terms of the classical Khovanov homology. For a framed link $L$, let $Q(L)(t,q)$ denote the Poincar\'e polynomial of $Kh(L)$, and $\tilde Q(L)(t,q):=(tq^3)^{-\tfrac12w(L)}Q(L)$. Then by \eqref{eq:KhR_2}, $P(L)(t,q)=q^{-w(L)}Q(-L)(t,q^{-1})$ and $\tilde P(L)(t,q)=\tilde Q(-L)(t,q^{-1})$. Thus Lemma~\ref{lem:full_comparison} is equivalent to the following Khovanov version.
\begin{Lem}[Comparison lemma]\label{lem:full_comparison_Kh}
For $n\in\Z_{\ge0}^m$ and $p\in\Z^m$ with $p\ge w$, we have 
\begin{equation}\label{eq:compare_Kh}
\tilde Q(K^p(n))\le\sum_{\substack{0\le n'\le n\\2|n-n'}}R_{n,n',p-w}\tilde Q(K^w(n'))
\end{equation}
for some polynomials $R_{n,n',p-w}\in\Z[t^{\pm1/2},q^{\pm1/2}]$ independent of $K$ with $$R_{n,n',p-w}(t,q)=t^{-\sum_i\frac{(p_i-w_i)n'^2_i}{2}}q^{-\sum_i\frac{(p_i-w_i)(n'^2_i+2n'_i)}{2}}\prod_{j=1}^m\left(\sum_{r_j=0}^{\frac{n_j-n'_j}{2}}\dim\left(\frac{n_j+n'_j}{2}+r_j,\frac{n_j-n'_j}{2}-r_j\right)q^{-2r_j}\right)+\cdots$$
where $\cdots$ are terms with lower $t$-degrees.

Moreover, if \eqref{eq:compare_Kh} is sharp at the highest $t$-degree of the left-hand side, then for any $i=1,\cdots,m$ with $n_i\ge2$, the morphism on $Kh$ induced by the dotted annular creation map $K^p(n-2e_i)\to K^p(n)$ is injective in the highest nontrivial homological degree of $Kh(K^p(n))$.
\end{Lem}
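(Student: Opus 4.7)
The plan is to prove the Khovanov reformulation (Lemma~\ref{lem:full_comparison_Kh}) by a double induction following Sto\v{s}i\'c's strategy \cite{stovsic2009khovanov}: an outer induction on $|p-w|:=\sum_i(p_i-w_i)\ge0$ and an inner induction on $|n|=\sum_in_i$. The core tool is the Khovanov skein long exact sequence for a positive crossing. For a diagram $D$ with a distinguished positive crossing and its two resolutions $D_0,D_\infty$, this sequence yields a polynomial inequality $\tilde Q(D)\le s_0\tilde Q(D_0)+s_\infty\tilde Q(D_\infty)$ with explicit orientation-dependent shifts $s_0,s_\infty$. Critically, equality at the top $t$-degree forces the relevant connecting homomorphism to vanish in that degree, producing a split short exact sequence there.

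For the outer inductive step with $p_i>w_i$, I would note that the standard cable diagram of $K^p(n)$ differs from that of $K^{p-e_i}(n)$ by one extra full positive twist on the $n_i$ strands of the $i$-th cable, contributing $n_i(n_i-1)$ positive crossings. Resolving these crossings one at a time via the skein sequence produces 0-resolutions (strictly decreasing $|p-w|$) and 1-resolutions (merging pairs of strands in the $i$-th cable, reducing $n_i$), both governed by the induction hypothesis. Tracking the cumulative quantum and homological shifts from a full twist recovers the leading prefactor $t^{-\sum_i(p_i-w_i)n'^2_i/2}q^{-\sum_i(p_i-w_i)(n'^2_i+2n'_i)/2}$. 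The inner base case $p=w$ is handled analogously by resolving adjacent crossings between parallel strands in the standard blackboard cable diagram; iterated reduction yields terms $\tilde Q(K^w(n'))$ with $n'\le n$ and $n-n'$ even. The combinatorial coefficient
\[
\sum_{r_j=0}^{(n_j-n'_j)/2}\dim\Bigl(\tfrac{n_j+n'_j}{2}+r_j,\tfrac{n_j-n'_j}{2}-r_j\Bigr)q^{-2r_j}
\]
arises from counting the compatible resolution patterns producing a given cable $K^w(n')$, identified via the standard bijection between reduction trees on $n_j$ strands and two-row standard Young tableaux.

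The injectivity claim at the end is the most delicate point and I expect it to be the main obstacle. When equality \eqref{eq:compare_Kh} holds at the top $t$-degree, the argument above promotes all the relevant polynomial inequalities to split short exact sequences in the top homological degree, and these splittings are natural with respect to cobordism-induced maps. The dotted annular creation map $K^p(n-2e_i)\to K^p(n)$ factors through one of these skein splittings (specifically, it corresponds to the $n'=n-2e_i$ summand, with the dot implementing the identity rather than multiplication by $X$ in that degree), and the splitting identifies it with the canonical inclusion, which is manifestly injective. The secondary difficulty is keeping track of all grading shifts and sign conventions consistently across the outer and inner inductions; a clean way to organize this is to first prove the weaker polynomial bound without the injectivity addendum, then re-examine the induction to extract sharpness-propagation statements on the nose. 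I would also need to verify that the dot operator commutes with the splitting up to lower-order terms (lower in $t$-degree), so that the top-degree injectivity is undisturbed.
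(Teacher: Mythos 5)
Your overall strategy — iterated positive-crossing skein resolution of the extra full twists, with a combinatorial bookkeeping of the resulting cable types — is the right idea, and it is indeed the Sto\v{s}i\'c-style approach the paper follows. However, the induction scheme you propose does not actually close, and this is a genuine gap, not a bookkeeping detail.

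You set up a double induction on $(|p-w|,|n|)$ and say that resolving one crossing of the extra full twist yields a $0$-resolution that "strictly decreases $|p-w|$" and a $1$-resolution that "reduces $n_i$." Neither is true for a \emph{single} crossing. The $0$-resolution of one crossing of a full positive twist on $n_i$ strands leaves a \emph{partial} twist with $n_i(n_i-1)-1$ crossings: the framing parameter $p$ has not yet decreased, and $n$ is unchanged, so neither of your two induction parameters drops. Similarly, the $1$-resolution produces a cable-with-turnback, which is not yet a lower cable $K^p(n-2e_i)$ — the turnback has to be dragged around the annulus through the rest of the partial twist (and through the crossings inherited from $D$), and only after that isotopy does one recognize it as a cable of smaller $n_i$. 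To make the recursion well-founded you must introduce intermediate objects: this is exactly the role of the annular links $D_{n,m}^i$ (braid closures of $(\sigma_1\cdots\sigma_{n-1})^m\sigma_1\cdots\sigma_i$) and the auxiliary isotopy lemma identifying the $1$-resolution $E_{n,m}^{i-1}$ with some $D_{n_0,m_0}^{i_0}$ (possibly union an unknot). The recursion then runs on the triple $(n,m,i)$, with $(n,m,i)\to(n,m,i-1)$ for the $0$-resolution, $(n,m+1,0)=(n,m,n-1)$ to wrap around, and $(n,m,i)\to(n_0,m_0,i_0)$ with $n_0<n$ for the $1$-resolution; the coefficient $R_{n,n',\cdot}$ is then a sum over weighted directed paths $(n,n,0)\rightsquigarrow(n',0,0)$ in the corresponding graph, and the leading $t$-degree computation requires a bound on the negative-crossing count of each $1$-resolution (the paper's Lemma on $\omega_{n,m}^i$). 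Your appeal to "a standard bijection between reduction trees and two-row Young tableaux" is plausible in spirit but would have to be derived from this path count; it is not an off-the-shelf bijection. (Also, your "inner base case $p=w$" discussion is off: at $p=w$ there is nothing to resolve — the inequality is the trivial identity $n'=n$ — so the nontrivial Young-diagram coefficients do not come from an inner induction there but purely from the framing-reduction steps.)

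On the injectivity addendum: your idea that sharpness forces the relevant exact triangles to split in the top degree is correct, and that the dotted annular creation lands in the $E$-piece. But you should be precise about how. The injectivity one actually extracts is for the map $Kh(E_{n,n-1}^{n-2}(J)\cup L)\to Kh(D_{n,n-1}^{n-1}(J)\cup L)$ in top homological degree, which under the identification $E_{n,n-1}^{n-2}(J)\cup L\simeq J^1(n-2)\cup L\sqcup U$ is a saddle cobordism map; the dotted annular creation map is the restriction of this map to the $X$-summand of the extra $Kh(U)$ tensor factor, and a restriction of an injective map is injective. Your phrase "the dot implementing the identity rather than multiplication by $X$" is wrong — the dot is multiplication by $X$, and that is exactly the factor you restrict to. The key facts you would additionally need to verify (and which are nontrivial) are (a) that the connecting map vanishes in top degree under the sharpness hypothesis, and (b) that the map $Kh(E)\to Kh(D)$ is indeed the saddle map for the given orientations, which requires an identification of the resolved diagram with the unknotted union and a naturality check for the skein triangle.
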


Lemma~\ref{lem:full_comparison_Kh} can be reduced to an elementary piece. Suppose $J\cup L\subset S^3$ is a framed oriented link, where $J$ is a knot and $L$ is a link. Let $J^1$ be $J$ with framing one higher. For $n\ge0$, let $J(n)\cup L$ denote the link obtained from $J\cup L$ by replacing $J$ by an $n$-cable of itself, and $J^1(n)\cup L$ similarly.

\begin{Lem}[Elementary comparison lemma]\label{lem:comparison}
\begin{equation}\label{eq:compare_ele}
\tilde Q(J^1(n)\cup L)\le\sum_{\substack{0\le n'\le n\\2|n-n'}}R_{n,n'}\tilde Q(J(n')\cup L)
\end{equation}
for some polynomials $R_{n,n'}\in\Z[t^{\pm1/2},q^{\pm1/2}]$ independent of $J\cup L$ with $$R_{n,n'}(t,q)=t^{-\frac{n'^2}{2}}q^{-\frac{n'^2}{2}-n'}\sum_{r=0}^{\frac{n-n'}{2}}\dim\left(\frac{n+n'}{2}+r,\frac{n-n'}{2}-r\right)q^{-2r}+\cdots$$ where $\cdots$ are terms with lower $t$-degrees.

Moreover, if \eqref{eq:compare_ele} is sharp at the highest $t$-degree of the left-hand side, then the morphism on $Kh$ induced by the dotted annular creation map $J^1(n-2)\cup L\to J^1(n)\cup L$ is injective in the highest nontrivial homological degree of $Kh(J^1(n)\cup L)$.
\end{Lem}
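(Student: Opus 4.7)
The link $J^1(n)\cup L$ differs from $J(n)\cup L$ by the insertion of a positive full twist $FT_n=(\sigma_1\cdots\sigma_{n-1})^n$ along the $n$ cable strands of $J$; this full twist contains $n(n-1)$ positive crossings. I plan to prove the inequality by induction on $n$, following the crossing-resolution strategy pioneered by Stošić \cite{stovsic2009khovanov} (and elaborated in \cite{tagami2013maximal}). The base cases $n=0$ and $n=1$ are immediate: for $n=0$ both sides agree, and for $n=1$ the framing change is a Reidemeister I move whose shift is absorbed by the renormalization $\tilde{Q}=(tq^3)^{-w/2}Q$.

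For the inductive step, I resolve the crossings of $FT_n$ one at a time using the Khovanov skein long exact sequence. At each positive crossing the oriented resolution leaves the cable structure intact while stripping off one crossing, and the unoriented (saddle) resolution fuses two adjacent strands, locally producing a link whose cable count has dropped by $2$ (after eliminating the resulting small unknotted loop via planar isotopy and Reidemeister moves). Iterating through all the crossings of $FT_n$, and exploiting the recursive structure of the full twist so that the surviving strands again carry a lower full twist, expresses $\tilde{Q}(J^1(n)\cup L)$ as bounded above by a sum of $\tilde{Q}(J(n')\cup L)$ terms for $0\le n'\le n$ with $n'\equiv n\pmod 2$, by the inductive hypothesis applied to the partial resolutions. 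The explicit $R_{n,n'}$ then records two ingredients: (a) the accumulated grading shifts, which produce the $t^{-n'^2/2}q^{-n'^2/2-n'}$ prefactor from the writhe renormalization on the surviving $n'$-strand full twist together with the standard Khovanov shifts contributed by each saddle; and (b) the combinatorial multiplicity of distinct sequences of $(n-n')/2$ saddle resolutions reducing an $n$-cable to an $n'$-cable, weighted by their quantum shifts. The multiplicity space is governed by the branching of two-row $S_n$-irreducibles along $S_n\supset S_{n'}$, which is exactly the combinatorics appearing in the analysis of $KhR_{Lee}(T(n,n))$ in \cite[Section~4]{ren2023lee} and in the categorified Jones-Wenzl / Cooper-Krushkal projectors, and gives $\sum_r\dim\bigl((n+n')/2+r,(n-n')/2-r\bigr)q^{-2r}$.

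For the "moreover" clause: if \eqref{eq:compare_ele} is sharp at the top nontrivial $t$-degree of $\tilde{Q}(J^1(n)\cup L)$, then each connecting map in the iterated skein long exact sequences must vanish when restricted to the top $t$-degree of the target, since otherwise top classes would be killed and the inequality would be strict there. A diagram chase then implies that the dotted annular creation map $Kh(J^1(n-2)\cup L)\to Kh(J^1(n)\cup L)$, which factors through the saddle-and-cap composition picked out by the top stratum of the resolution tree, is injective in the top nontrivial homological degree. The principal obstacle I anticipate is bookkeeping rather than conceptual: tracking homological and quantum shifts across $O(n^2)$ applications of the skein triangle, keeping sign and writhe conventions consistent, and verifying that the multiplicity polynomial reassembles precisely into the claimed symmetric-group dimension formula; the conceptual core sits entirely in the skein triangle and the recursive structure of the full twist.
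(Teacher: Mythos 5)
Your plan follows the same Sto\v si\'c--Tagami route as the paper's proof, so the overall strategy is right, but you are underestimating what the key step actually requires, and two places where you say ``bookkeeping'' or ``diagram chase'' are actually where the proof lives.

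First, the claim that the unoriented resolution ``fuses two adjacent strands, locally producing a link whose cable count has dropped by $2$ (after eliminating the resulting small unknotted loop)'' is substantially too optimistic. What you actually get from one unoriented resolution is an $n$-strand braid closure $E_{n,m}^{i-1}$; establishing that this is isotopic to some $D_{n_0,m_0}^{i_0}$ (or its flip), possibly disjoint with one unknot, and determining $(n_0,m_0,i_0)$, is exactly the content of Lemma~\ref{lem:Stosic}, whose case analysis is carried out carefully in \cite{tagami2013maximal}. It is not a short planar isotopy, and the unknot appears only for particular $(m,i)$. Second, and more importantly, your plan does not address how to pin down the leading $t$-degree of $R_{n,n'}$. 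That requires the quantitative bound $\omega_{n,m}^i\le\frac{n^2-n_0^2}{2}-1$, with equality if and only if $m=n-1$ (Lemma~\ref{lem:Stosic}(4)). Without this you cannot rule out that paths through the Sto\v si\'c graph that drop $n$ at intermediate $m$-values contribute equally high $t$-degrees, so the claimed form of the leading term of $R_{n,n'}$ would not follow from the general shape of the resolution tree. Also, your phrase ``the surviving strands again carry a lower full twist'' is literally false for the intermediate links ($m_0<n_0$ in general, by Lemma~\ref{lem:Stosic}(2)); it is only the $t$-maximizing paths that stay on the full-twist diagonal, and that is precisely what (4) forces.

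For the ``moreover'' clause, the sharpness assumption indeed forces the skein triangle at the top stage $(n,n-1,n-1)$ to split in the top homological degree, so $Kh(E_{n,n-1}^{n-2}(J)\cup L)\to Kh(D_{n,n-1}^{n-1}(J)\cup L)$ is injective there. But the statement to be proved is about the \emph{dotted annular creation} map $J^1(n-2)\cup L\to J^1(n)\cup L$; to finish you must identify the skein-triangle map with the saddle cobordism map $J^1(n-2)\cup L\sqcup U\to J^1(n)\cup L$ (which is not automatic --- the paper invokes \cite[Lemma~5.2]{ren2023lee} for this) and then observe that the dotted annular creation is its restriction to the $X$-summand of $Kh(U)$. ``A diagram chase'' does not supply this identification. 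So your plan has the right skeleton, but the two genuine gaps are (i) the $t$-degree control $\omega_{n,m}^i\le\frac{n^2-n_0^2}{2}-1$ with equality characterization and (ii) identifying the injective map with the dotted annular creation.
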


It is easy to check that Lemma~\ref{lem:full_comparison_Kh} is obtained by iteratively applying Lemma~\ref{lem:comparison} to components of $K$ until the framings drop to $w$. Therefore we only prove Lemma~\ref{lem:comparison}.

Before starting the proof, we introduce an auxiliary family of links defined by St\v osi\'c \cite{stovsic2007homological,stovsic2009khovanov} and re-exploited by Tagami \cite{tagami2013maximal} (our notation aligns with \cite[Section~5]{ren2023lee}, which is different from St\v osi\'c's or Tagami's).

For $n,m\ge0$, $0\le i\le n-1$, let $D_{n,m}^i$ denote the braid closure of $(\sigma_1\cdots\sigma_{n-1})^m\sigma_1\cdots\sigma_i\in B_n$, regarded as an annular link. Thus $D_{n,m+1}^0=D_{n,m}^{n-1}$, $D_{n,n}^0$ is the annular torus link $T(n,n)$, and $D_{n,m}^i=\emptyset$ if $n=0$. For $i>0$, in the standard diagram of $D_{n,m}^i$ defined by the braid word, resolving the last letter $\sigma_i$ leads to the $0$-resolution which is exactly $D_{n,m}^{i-1}$, and some $1$-resolution, denoted $E_{n,m}^{i-1}$.

\begin{Lem}\label{lem:Stosic}
The annular link $E_{n,m}^{i-1}$ is isotopic to some $D_{n_0,m_0}^{i_0}$ or its flip (i.e. the braid closure of $\sigma_{n_0-i_0}\cdots\sigma_{n_0-1}(\sigma_1\cdots\sigma_{n_0-1})^{m_0}$, still denoted $D_{n_0,m_0}^{i_0}$ by an abuse of notation), possibly disjoint union with a local unknot. Moreover,\vspace{-5pt}
\begin{enumerate}[(1)]
\item $0\le n_0<n$, $2|n-n_0$.
\item If $m<n$, then we may assume $m_0<n_0$ or $n_0=0$.
\item $E_{n,n-1}^{n-2}\simeq D_{n-2,n-3}^{n-3}\sqcup U$ and $E_{n,n-1}^i\simeq D_{n-2,n-3}^i$ if $i<n-2$. Here, we define $D_{0,-1}^{-1}=D_{0,0}^0$.
\item If $m<n$, then the number of negative crossings in the $1$-resolution of $D_{n,m}^i$ oriented via $D_{n_0,m_0}^{i_0}(\sqcup U)$, denoted $\omega_{n,m}^i$, is less than or equal to $\frac{n^2-n_0^2}{2}-1$, with equality if and only if $m=n-1$.\vspace{-5pt}
\end{enumerate}
\end{Lem}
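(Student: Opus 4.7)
The plan is to prove each part of Lemma~\ref{lem:Stosic} via explicit braid calculus on the defining word $(\sigma_1 \cdots \sigma_{n-1})^m \sigma_1 \cdots \sigma_i$ of $D_{n,m}^i$, closely following the analyses in Sto\v si\'c \cite{stovsic2007homological,stovsic2009khovanov} and \cite[Section~5]{ren2023lee}. The 1-resolution at the last generator $\sigma_i$ replaces the positive crossing by a horizontal saddle connecting strands $i$ and $i+1$, so in the annular closure the trajectories of these two strands fuse into a single combined trajectory.

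First I would describe the resulting link explicitly: after the saddle, one uses the braid relations (commutations $\sigma_j \sigma_k = \sigma_k \sigma_j$ for $|j - k| \ge 2$ together with the braid relation $\sigma_j \sigma_{j+1} \sigma_j = \sigma_{j+1} \sigma_j \sigma_{j+1}$) and an annular Markov destabilization to rewrite the fused word as the defining braid of some $D_{n_0, m_0}^{i_0}$ or its flip, optionally with a trivial loop split off when the saddle traps a short-circuit component. Parts (1) and (2) follow by bookkeeping: the Markov destabilization removes strands in pairs, so $n - n_0$ is even; and under the hypothesis $m < n$, the resulting word cannot contain $n_0$ or more full cycles $\sigma_1 \cdots \sigma_{n_0 - 1}$ without forcing the whole word to collapse (giving $n_0 = 0$), whence $m_0 < n_0$. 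Part (3) is a direct verification in the extremal case $m = n-1$: when $i = n-2$ the saddle traps a local unknotted component, yielding $D_{n-2, n-3}^{n-3} \sqcup U$, while for $i < n-2$ the would-be unknot is absorbed into the remaining braid, leaving $D_{n-2, n-3}^i$.

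For part (4), the canonical orientation of $D_{n_0, m_0}^{i_0}(\sqcup U)$ makes every crossing positive, so pulling this orientation back through the isotopy chain to the 1-resolution of $D_{n,m}^i$ induces an orientation on the original braid in which each crossing is negative precisely when the two strands it involves have opposing induced orientations. A direct count of the mismatched pairs, combined with the observation that the one resolved crossing is removed from the diagram, gives the bound $\omega_{n,m}^i \le (n^2 - n_0^2)/2 - 1$. The equality case $m = n-1$ is verified by noting that the reduction is then minimal ($n_0 = n-2$, so $(n^2-n_0^2)/2 - 1 = 2n - 3$) and that every potential mismatched crossing in the explicit diagram is in fact realized. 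The main technical obstacle will be the orientation bookkeeping through the chain of braid isotopies and the saturation check for (4); both are essentially carried out in \cite{stovsic2009khovanov,ren2023lee}, so the proof would largely consist of translating those calculations into the present notation.
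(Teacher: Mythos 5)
Your strategy for parts (1)--(3) matches the paper's: both defer to the explicit turnback-arc analysis of Sto\v si\'c/Tagami and to the $m=n-1$ case worked out in \cite{ren2023lee}. The genuine gap is in part (4), specifically in the ``only if'' direction of the equality statement, which you do not address and which is not actually carried out in the references you cite.

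Your argument gets as far as the bound $\omega_{n,m}^i\le\frac{n^2-n_0^2}{2}-1$ and verifies equality when $m=n-1$, but equality-iff requires showing strict inequality whenever $m<n-1$. This is not automatic: for $m<n-1$ the paper's turnback-arc bookkeeping yields the sharper estimate $\omega_{n,m}^i\le\frac{n^2-n_0^2}{2}-\frac{n-n_0}{2}$, which coincides with $\frac{n^2-n_0^2}{2}-1$ precisely when $n_0=n-2$, so one cannot conclude strictness from the estimate alone. Tracing through the chain of inequalities shows equality would additionally force $m=n-2$, and the paper then rules out the remaining candidates $(n,m,i)=(n,n-2,n-1)$ and $(n,n-2,n-2)$ by a direct check that their $1$-resolutions have $2n-4$ and $2n-5$ negative crossings respectively, both strictly below $\frac{n^2-(n-2)^2}{2}-1=2n-3$ (after identifying $E_{n,n-2}^{n-2}\simeq D_{n-2,n-4}^{n-3}\simeq E_{n,n-2}^{n-3}$). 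Without that refined bound and without the explicit exclusion of the $m=n-2$, $n_0=n-2$ boundary cases, your sketch only establishes the inequality and the ``if'' direction; it does not prove that $m=n-1$ is the \emph{only} equality case. Since this sharpened accounting does not appear in \cite{stovsic2009khovanov,ren2023lee}, the claim that the remaining work is ``essentially carried out'' there undersells the new content of (4).
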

\begin{proof}[Sketch proof]
The fact that $E_{n,m}^{i-1}$ is isotopic to some $D_{n_0,m_0}^{i_0}$ or $D_{n_0,m_0}^{i_0}\sqcup U$ is checked carefully in \cite[Figure~15\textasciitilde20]{tagami2013maximal}, from which (1) and (2) are also clear. (3) is exactly a half of \cite[Lemma~5.1]{ren2023lee}.

For (4), when $m=n-1$ there are exactly $2n-3=\frac{n^2-(n-2)^2}{2}-1$ negative crossings as claimed in the paragraph after Lemma~5.1 in \cite{ren2023lee}.

When $m<n-1$, we count the contribution as the turnback arc in Figure~15 to Figure~18 of \cite{tagami2013maximal} travels along the longitude. Note that the whole isotopy $E_{n,m}^{i-1}\simeq D_{n_0,m_0}^{i_0}(\sqcup U)$ is decomposed into $\frac{n-n_0}{2}$ full longitude travels of the turnback arc and an extra final move according to the four types listed in Figure~15 to Figure~18 of \cite{tagami2013maximal}.

Interpret $n$ as the number of through strands in the bottom box and $m$ as the number of full $\sigma_1\cdots\sigma_{n-1}$ rotations. Then each full longitude travel decreases $n$ by $2$, decreases $m$ by $0$ or $2$ (other than the last longitude travel of Type 4, which will be discussed below).  The negative crossing contribution of this full longitude travel is $m+n-2+\epsilon$ if $m$ is decreased by $2$ and $m+\epsilon$ if $m$ is not decreased, where $\epsilon=0$ or $1$ is the contribution outside the bottom box. The last longitude travel has $\epsilon=0$. The negative crossing contribution from the final move is $0$ for type $1,2$, and at most $n-1$ for type $3,4$ (for these two types, the last longitude travel decreases $m$ by either $0$ or $1$, contributing $m$ negative crossings). In total, the number of negative crossings is at most
\[
\sum_{r=0}^{(n-n_0)/2-1}((m-2r)+n-2r-1)\le\sum_{r=0}^{(n-n_0)/2-1}(2n-4r-3)=\frac{n^2-n_0^2}{2}-\frac{n-n_0}2\le\frac{n^2-n_0^2}{2}-1.
\]
If the equality holds, we must have $m=n-2$ and $n_0=n-2$. This implies $i=n-1$ or $n-2$. One can check $E_{n,n-2}^{n-2}\simeq D_{n-2,n-4}^{n-3}\simeq E_{n,n-2}^{n-3}$, and the $1$-resolutions of $D_{n,n-2}^{n-1}$, $D_{n,n-2}^{n-2}$ have $2n-4,2n-5$ crossings, respectively, which are strictly less than $\frac{n^2-(n-2)^2}{2}-1$.
\end{proof}

\begin{proof}[Proof of Lemma~\ref{lem:comparison}]
Let $D_{n,m}^i(J)\cup L\subset S^3$ be the link obtained by replacing $J$ by its $D_{n,m}^i$ satellite. For $i>0$, the resolution of $D_{n,m}^i$ at the last letter $\sigma_i$ induces a resolution of $D_{n,m}^i(J)\cup L$ into $D_{n,m}^{i-1}(J)\cup L$ and $E_{n,m}^{i-1}(J)\cup L$, the latter of which is isotopic to some $D_{n_0,m_0}^{i_0}(J)\cup L$ possibly disjoint union with an unknot.

Fix a framed link diagram of $J\cup L$. Then $D_{n,m}^i(J)\cup L$ have (almost) standard diagrams by taking the blackboard $n$-cable of $J$ and inserting the standard diagrams of $D_{n,m}^i$ locally. Then, the number of negative crossings of $D_{n,m}^i(J)\cup L$ is $$n^2\cdot n_-(J)+n\cdot n_-(J,L)+n_-(L)$$ where $n_\pm(J),n_\pm(L),n_\pm(J,L)$ are the numbers of positive/negative crossings of $J$, of $L$, and between $J,L$, respectively. Since the orientation of $\frac{n-n_0}{2}$ of the strands are reversed in the $1$-resolution of $D_{n,m}^i(J)\cup L$, the number of negative crossings in the $1$-resolution is 
\begin{align*}
&\,\left((\frac{n+n_0}{2})^2+(\frac{n-n_0}{2})^2\right)n_-(J)+2\cdot\frac{n+n_0}{2}\cdot\frac{n-n_0}{2}\cdot n_+(J)+\omega_{n,m}^i\\
+&\,\frac{n+n_0}{2}n_-(J,L)+\frac{n-n_0}{2}n_+(J,L)+n_-(L),
\end{align*}
The difference between the numbers of negative crossings of the $1$-resolution of $D_{n,m}^i(J)\cup L$ and $D_{n,m}^i(J)\cup L$ itself is
\begin{equation}\label{eq:omega_JL}
\omega_{n,m}^i(J,L):=\frac{n^2-n_0^2}{2}w(J)+(n-n_0)\ell k(J,L)+\omega_{n,m}^i.
\end{equation}

The skein exact triangle on Khovanov homology corresponding to the resolution is
\begin{equation}\label{eq:Stosic_exact_triangle}
\begin{tikzcd}[column sep=-20pt]
Kh(E_{n,m}^{i-1}(J)\cup L)[\omega_{n,m}^i(J,L)+1]\{3\omega_{n,m}^i(J,L)+2\}\ar[rr]&&Kh(D_{n,m}^i(J)\cup L)\ar[dl]\\
&Kh(D_{n,m}^{i-1}(J)\cup L)\{1\},\ar[ul,"\text{[}1\text{]}"]&
\end{tikzcd}
\end{equation}

which implies that 
\begin{equation}\label{eq:Q_exact_triangle}
Q(D_{n,m}^i(J)\cup L)\le qQ(D_{n,m}^{i-1}(J)\cup L)+t^{\omega_{n,m}^i(J,L)+1}q^{3\omega_{n,m}^i(J,L)+2}Q(E_{n,m}^{i-1}(J)\cup L).
\end{equation}
Start with $J^1(n)\cup L=D_{n,n-1}^{n-1}(J)\cup L$ and apply \eqref{eq:Q_exact_triangle} or the identity $D_{n,m+1}^0=D_{n,m}^{n-1}$ iteratively. By Lemma~\ref{lem:Stosic}, each $E_{n,m}^{i-1}$ is some simpler $D_{n_0,m_0}^{i_0}$ (possibly with an unknot component which contributes a multiplicative factor of $q+q^{-1}$ to $Q$), and thus we must end with an inequality \begin{equation}\label{eq:Stosic_ineq}
Q(J^1(n)\cup L)\le\sum_{\substack{0\le n'\le n\\2|n-n'}}S_{n,n',J,L}Q(J(n')\cup L)
\end{equation}
for some polynomials $S_{n,n',J,L}$. These polynomials are determined as follows. Define a graph with vertex set $(\{(n,m,i)\,\colon\,0\le i<n,\ 0\le m<n\}\cup\{(0,0,0)\})/\sim$ where the equivalence relation identifies $(n,m,0)$ with $(n,m-1,n-1)$. For $i>0$, construct a directed edge $(n,m,i)\to(n,m,i-1)$ with weight $q$, and a directed edge $(n,m,i)\to(n_0,m_0,i_0)$ with weight $t^{\omega_{n,m}^i+1}q^{3\omega_{n,m}^i+2}$ if $E_{n,m}^{i-1}\simeq D_{n_0,m_0}^{i_0}$ and weight $t^{\omega_{n,m}^i+1}q^{3\omega_{n,m}^i+2}(q+q^{-1})$ if $E_{n,m}^{i-1}\simeq D_{n_0,m_0}^{i_0}\sqcup U$ (here, we demand $(n_0,m_0,i_0)=(0,0,0)$ if $D_{n_0,m_0}^{i_0}=\emptyset$ or $U$). The weight of a directed path from $(n,m,i)$ to $(n',m',i')$ is the product of its edge weights, with an extra multiplicative factor $(tq^3)^{\frac{n^2-n'^2}{2}w(J)+(n-n')\ell k(J,L)}$ to account for the renormalization factors in \eqref{eq:omega_JL}. Then $S_{n,n',J,L}$ is the sum of the path weights over directed paths from $(n,n,0)$ to $(n',0,0)$.

By Lemma~\ref{lem:Stosic}(4), the $t$-degree of every path $(n,n,0)\to(n',0,0)$ is at most $\frac{n^2-n'^2}2w(J)+(n-n')\ell k(J,L)+\frac{n^2-n'^2}{2}$, with equality achieved if and only if every edge on it of the form $(n,m,i)\to(n_0,m_0,i_0)$ that decreases $n$ has $m=n-1$. We can also enumerate all paths that achieve equality thanks to Lemma~\ref{lem:Stosic}(3). A few path-counting combinatorics then yield (cf. \cite[Figure~4]{ren2023lee}) $$S_{n,n',J,L}(t,q)=(tq^3)^{\frac{n^2-n'^2}{2}w(J)+(n-n')\ell k(J,K)}t^{\frac{n^2-n'^2}{2}}q^{\frac{3n^2-n'^2}{2}-n'}\sum_{r=0}^{\frac{n-n'}{2}}\dim\left(\frac{n+n'}{2}+r,\frac{n-n'}{2}-r\right)q^{-2r}+\cdots$$ where $\cdots$ are terms with lower homological degrees. Since $w(J^1(n)\cup L)=n^2(w(J)+1)+2n\ell k(J,L)+w(L)$ and $w(J(n')\cup L)=n'^2w(J)+2n'\ell k(J,L)+w(L)$, \eqref{eq:Stosic_ineq} renormalizes to yield \eqref{eq:compare_ele}, as desired.

Finally, if \eqref{eq:compare_ele} is sharp in the sense of the condition in the lemma, then the exact triangle \eqref{eq:Stosic_exact_triangle} at $(n,m,i)=(n,n-1,n-1)$ splits into a short exact sequence in the highest homological degree. In particular 
\begin{equation}\label{eq:ED_inj}
Kh(E_{n,n-1}^{n-2}(J)\cup L)\to Kh(D_{n,n-1}^{n-1}(J)\cup L)
\end{equation}
is injective in the highest homological degree of the right-hand side. Note $D_{n,n-1}^{n-1}(J)\cup L=J^1(n)\cup L$ and $E_{n,n-1}^{n-2}(J)\cup L\simeq J^1(n-2)\cup L\sqcup U$. By \cite[Lemma~5.2]{ren2023lee} we know that \eqref{eq:ED_inj} is the induced map by the saddle cobordism $J^1(n-2)\cup L\sqcup U\to J^1(n)\cup L$. Since the dotted annular creation cobordism $Kh(J^1(n-2))\to Kh(J^1(n))$ is obtained by restricting the domain of \eqref{eq:ED_inj} (to the part where $Kh(U)$ contributes a tensor summand $X$), it is also injective in the highest homological degree.
\end{proof}

\section{Lee canonical generators}\label{sec:Lee_generator}
We explain our choice of canonical generators $x_\mathfrak o\in KhR_{Lee}(L)$ in $\mathfrak{gl}_2$ Lee homology in Section~\ref{sbsec:KhR_2} and prove Proposition~\ref{prop:Lee_generator_properties}. To streamline the procedure, we define these canonical generators using the formalism in \cite{morrison2024invariants}; see especially their Section~3.1 and Section~3.2. Note that \cite{morrison2024invariants} is using a different renormalization convention from ours; see their Remark~2.3. We will also remark on a description of the canonical generators $x_\mathfrak o$ up to sign in terms of Rasmussen's generators \cite{rasmussen2010khovanov} in the usual Lee homology.

As homologically $\Z$-graded vector spaces, the $\mathfrak{gl}_2$ Lee homology canonically and functorially decomposes into (twisted) $\mathfrak{gl}_1$ homologies \cite[Equation~(1)]{morrison2024invariants}:
\begin{equation}\label{eq:Lee_=_gl_1}
KhR_{Lee}(L)\cong\bigoplus_\mathfrak o(KhR_1^{(2)}(L_+(\mathfrak o);\Q)\otimes KhR_1^{(-2)}(L_-(\mathfrak o);\Q))[-2\ell k(L_+(\mathfrak o),L_-(\mathfrak o))].
\end{equation}
Here, $[\cdot]$ denotes homological grading shifts, $\mathfrak o$ runs over orientations of $L$ as an unframed link, and $L_+(\mathfrak o)$ (resp. $L_-(\mathfrak o)$) is the sublink of $L$ consisting of components whose orientations agree (resp. disagree) with $\mathfrak o$. The functor $KhR_1$ is the Khovanov-Rozansky $\mathfrak{gl}_1$ homology. It is determined by canonical graded isomorphisms $KhR_1(L)\cong\Z$ for each framed oriented link $L\subset S^3$ where $\Z$ is concentrated in (homological, quantum) bidegree $(0,0)$, such that the vacuum state in $KhR_1(\emptyset)$ corresponds to $1\in\Z$; under these identifications, all Reidemeister I-induced framing-change maps induce $\mathrm{id}_\Z\colon\Z\to\Z$, and $KhR_1(\Sigma)=\mathrm{id}_\Z\colon\Z\to\Z$ for each framed oriented cobordism $\Sigma\colon L_0\to L_1$. The upper script $(\cdot)^{(d)}$, $d\in\Q^\times$ indicates a $d$-twist on morphisms according to the Euler characteristic; explicitly, one may renormalize so that there are canonical graded isomorphisms
\[KhR_1^{(d)}(L;\Q)\cong\Q\]
for any framed oriented link $L\subset S^3$ where $\Q$ concentrates in bidegree $(0,0)$, under which the vacuum in $KhR_1^{(d)}(\emptyset;\Q)$ corresponds to $1\in\Q$; under these identifications the Reidemeister I-induced framing-change maps each induce $\mathrm{id}_\Q\colon\Q\to\Q$, and $$KhR_1^{(d)}(\Sigma;\Q)=d^{n(\Sigma)}\mathrm{id}_\Q\colon\Q\to\Q$$ for any framed oriented link cobordism $\Sigma\colon L_0\to L_1$, where $$n(\Sigma)=\frac{-\chi(\Sigma)+\#\partial_+\Sigma-\#\partial_-\Sigma}{2}\in\Z$$ is a normalized Euler characteristic as defined in \eqref{eq:n_renormalization}, which is additive under gluing cobordisms.

In the formulation of \cite{morrison2024invariants}, the orientation $\mathfrak o$ in \eqref{eq:Lee_=_gl_1} corresponds to a coloring of components of $L$ by $\{\pm1\}$, so that $L_\pm(\mathfrak o)$ is colored by $\pm1$. The colors $1,-1$ are also regarded as colors by idempotents $\A=(1+X)/2,\B=(1-X)/2$ in $V_{Lee}=\Q[X]/(X^2-1)$, respectively.

Note that one does not recover the quantum $\Z/4$-grading and the quantum filtration on $KhR_{Lee}(L)$ from \eqref{eq:Lee_=_gl_1}.

\begin{Def}\label{def:Lee_canonical_generator}
The canonical Lee generator $x_\mathfrak o\in KhR_{Lee}(L)$, under the isomorphism \eqref{eq:Lee_=_gl_1}, is the element $$1\otimes1\in KhR_1^{(2)}(L_+(\mathfrak o);\Q)\otimes KhR_1^{(-2)}(L_-(\mathfrak o);\Q)$$ with homological degree shifted by $-2\ell k(L_+(\mathfrak o),L_-(\mathfrak o))$, in the $\mathfrak o$-th direct summand of the right-hand side of \eqref{eq:Lee_=_gl_1}.
\end{Def}

\begin{Rmk}\label{rmk:natural_renormalization}
There is a more natural renormalization for $KhR_1^{(d)}$, but which requires choosing a square root $d^{1/2}$ of $d$, and extending the coefficient field to $\Q(d^{1/2})$ if necessary. Namely, one may renormalize so that $KhR_1^{(d)}(L;\Q(d^{1/2}))\cong \Q(d^{1/2})$ and $KhR_1^{(d)}(\Sigma;\Q(d^{1/2}))=d^{-\chi(\Sigma)/2}\mathrm{id}_{\Q(d^{1/2})}$. We will not be using this renormalization as this would require that we work over $\Q(\sqrt2,i)$.
\end{Rmk}

The isomorphism \eqref{eq:Lee_=_gl_1} is functorial in $L$ in the sense that if $S\colon L_0\to L_1$ is a framed oriented cobordism, then $KhR_{Lee}(S)\colon KhR_{Lee}(L_0)\to KhR_{Lee}(L_1)$ is determined in the canonical bases by the matrix entries
\begin{equation}\label{eq:KhR_Lee_matrix}
KhR_{Lee}(S)_{x_{\mathfrak o_0},x_{\mathfrak o_1}}=\sum_{\substack{\mathfrak O|_{L_0}=\mathfrak o_0\\\mathfrak O|_{L_1}=\mathfrak o_1}}2^{n(S_+(\mathfrak O))}(-2)^{n(S_-(\mathfrak O))}=2^{n(S)}\sum_{\substack{\mathfrak O|_{L_0}=\mathfrak o_0\\\mathfrak O|_{L_1}=\mathfrak o_1}}(-1)^{n(S_-(\mathfrak O))},
\end{equation}
where $\mathfrak o_i$ runs over orientations of $L_i$, $i=0,1$, $\mathfrak O$ runs over orientations of $S$ compatible with $\mathfrak o_0,\mathfrak o_1$, and $S_\pm(\mathfrak O)$ denotes the union of components of $S$ whose orientations agree/differ with that of $\mathfrak O$.

\begin{Rmk}
More classically, if $D$ is a diagram of an oriented link $L$ and $\mathfrak o$ is an orientation of $L$, Rasmussen \cite{rasmussen2010khovanov} defined a cocycle $\mathfrak s_\mathfrak o(D)\in CKh_{Lee}(D)$, whose class $[\mathfrak s_\mathfrak o(D)]\in Kh_{Lee}(L)$ in the Lee homology of $L$ is independent of the choice of $D$ up to a scalar of the form $\pm 2^k$, $k\in\Z$. Better, Rasmussen \cite{rasmussen2005khovanov} defined a renormalized cocycle $\tilde{\mathfrak s}_\mathfrak o(D):=2^{(w(D)-r(D))/2}\mathfrak s_{\mathfrak o}(D)\in CKh_{Lee}(D)\otimes\Q(\sqrt2)$, where $w(D)$ is the writhe of $D$, and $r(D)$ is the number of Seifert circles in the oriented resolution of $D$. Its class $[\tilde{\mathfrak s}_{\mathfrak o}]=[\tilde{\mathfrak s}_\mathfrak o(D)]\in Kh_{Lee}(L)\otimes\Q(\sqrt2)$ is well-defined up to sign, and behaves nicely under cobordisms as described in \cite[Proposition~3.2]{rasmussen2005khovanov}\footnote{The equation in \cite[Proposition~3.2]{rasmussen2005khovanov} missed a $\tfrac12$ multiplicative factor on the exponent on the right-hand side.}, consistent with the renormalization in Remark~\ref{rmk:natural_renormalization}. One can check, analogously to the proof of item (4) below, that the canonical generator $x_\mathfrak o\in KhR_{Lee}(L)$ for a framed oriented link $L$ in Definition~\ref{def:Lee_canonical_generator} corresponds, up to sign, to the element $2^{\ell k(L_+(\mathfrak o),L_-(\mathfrak o))-\#L/2}[\tilde{\mathfrak s}_{-\mathfrak o}]\in Kh_{Lee}(-L)$ under the canonical (up to sign) isomorphism $KhR_{Lee}(L)\cong Kh_{Lee}(-L)$ (cf. \eqref{eq:KhR_2}; be aware that this isomorphism may involve conjugations on the Frobenius algebra $V_{Lee}$, see \cite{beliakova2023functoriality}), with changes in gradings and filtration suppressed.
\end{Rmk}

\begin{proof}[Proof of Proposition~\ref{prop:Lee_generator_properties}]
(3) follows from the definition. (5) is a reformulation of \eqref{eq:KhR_Lee_matrix}.

To prove the other items, one needs to understand the isomorphism \eqref{eq:Lee_=_gl_1} more explicitly, as described in \cite{morrison2024invariants}. We sketch the arguments below. With appropriate (intricate) sign fixes, one may also prove these in terms of Rasmussen's generators.

(1) The canonical isomorphism $KhR_{Lee}(\emptyset)\cong KhR_1^{(2)}(\emptyset;\Q)\otimes KhR_1^{(-2)}(\emptyset;\Q)$ is determined by demanding that the vacuum corresponds to the vacuum. The canonical generator $x_\emptyset\in KhR_{Lee}(\emptyset)$ thus corresponds to the vacuum $1\in\Q$.

(2) For $(U,\mathfrak o)$ the $0$-framed oriented unknot, let $D$ be the spanning disk of $U$ in $S^3$, with interior pushed slightly into $int(B^4)$. Then the canonical isomorphism $KhR_{Lee}(U)\cong V_{Lee}\{-1\}$ of homologically graded, quantum filtered vector spaces is determined by identifying the class in $KhR_{Lee}(U)$ born from the $\A,\B$-colored disk $D$ with $\A,\B$ (with filtration degree shifts), respectively. On the other hand, the class represented by the $\A$-colored (resp. $\B$-colored) disk corresponds to $1\otimes1$ in the $\mathfrak o$-summand (resp. $\bar{\mathfrak o}$-summand) on the right-hand side of \eqref{eq:Lee_=_gl_1}. This proves the statements for the $0$-framed unknot. For a general $n$-framed unknot $(U^n,\mathfrak o)$, the statements follow from the fact that Reidemeister-I-induced framing-change maps induce a canonical isomorphism $KhR_{Lee}(U^n)\cong KhR_{Lee}(U)\{-n\}$, and that \eqref{eq:Lee_=_gl_1} intertwines with the Reidemeister-I-induced maps on the two sides.

(4) For $L=\emptyset$, the conclusion follows from (1) (note that $0\in\Q$ is homogeneous of arbitrary degree). For $L$ the $n$-framed unknot, the conclusion follows from (2).

Next, we check the statement for $L$ the positive (resp. negative) Hopf link where both components have zero framings, and $\mathfrak o$ an orientation on $L$ making it a negative (resp. positive) Hopf link. The corresponding canonical generator $x_\mathfrak o$ is denoted $h_+$ (resp. $h_-$) in \cite{morrison2024invariants}, which involves a scalar choice. Note that $x_\mathfrak o=h_+$ determines $x_{\bar{\mathfrak o}}$ by symmetry, and $h_+$ determines $h_-$ by \eqref{eq:KhR_Lee_matrix}. Moreover, $q_{\Z/4}(x_\mathfrak o\pm x_{\bar{\mathfrak o}})$ is independent of the scalar choice of $h_+$. Thus, for convenience, we may pick the diagram of $L$ consisting of two counterclockwise oriented crossingless circles, with exactly $2$ positive (resp. negative) crossings between them, and assign any $x_{\mathfrak o}$ for the purpose of proving (4). The advantage of this choice is that there is a planar isotopy of the diagram exchanging the two components of $L$, sending $\mathfrak o$ to $\bar{\mathfrak o}$. With this choice of diagram, one may write down explicit representatives for $x_{\mathfrak o}$ and $x_{\bar{\mathfrak o}}$ analogously to \cite[Example~3.7]{morrison2024invariants} and verify the claims about quantum $\Z/4$ degrees.

If $L=L_1\sqcup L_2$ is a split link, $\mathfrak o_1,\mathfrak o_2$ are orientations on $L_1,L_2$, and $\mathfrak o$ is the corresponding orientation on $L$, then $$x_{\mathfrak o}\pm x_{\bar{\mathfrak o}}=x_{\mathfrak o_1}\otimes x_{\mathfrak o_2}\pm x_{\overline{\mathfrak o_1}}\otimes x_{\overline{\mathfrak o_2}}=\tfrac12((x_{\mathfrak o_1}+x_{\overline{\mathfrak o_1}})\otimes(x_{\mathfrak o_2}\pm x_{\overline{\mathfrak o_2}})+(x_{\mathfrak o_1}-x_{\overline{\mathfrak o_1}})\otimes(x_{\mathfrak o_2}\mp x_{\overline{\mathfrak o_2}})).$$ Hence if $(L_1,\mathfrak o_1)$ and $(L_2,\mathfrak o_2)$ both satisfy the claims on quantum $\Z/4$ gradings, so does $(L,\mathfrak o)$.

Finally, for any pair $(L,\mathfrak o)$ where $L$ is a framed oriented link, and $\mathfrak o$ is an orientation of $L$ as an unframed link, there exists a cobordism $S\colon L\to L'$ so that
\begin{itemize}
\item $L'$ is a disjoint union of some unknots with some framings, and some $0$-framed positive and negative Hopf links;
\item there exists a unique orientation $\mathfrak O$ of $S$ compatible with $\mathfrak o$ on $L$, whose restriction on $L'$, denoted $\mathfrak o'$, does not agree with the orientation of any Hopf link disjoint summands or their opposites.
\end{itemize}
In particular, $(L',\mathfrak o')$ satisfies the claims on quantum $\Z/4$ degrees thanks to the previous arguments. On the other hand, $KhR_{Lee}(S)$ is homogeneous of quantum $\Z/4$ degree $-\chi(S)$, so 
\begin{align*}
q_{\Z/4}(x_{\mathfrak o}\pm x_{\overline{\mathfrak o}})&=q_{\Z/4}(KhR_{Lee}(S)(x_{\mathfrak o}\pm x_{\overline{\mathfrak o}}))+\chi(S)\\
&=q_{\Z/4}(2^{n(S)}(-1)^{n(S_-(\mathfrak O))}(x_{\mathfrak o'}\pm(-1)^{n(S)}x_{\overline{\mathfrak o'}}))+\chi(S)\\
&=q_{\Z/4}(x_{\mathfrak o'}\pm x_{\overline{\mathfrak o'}})+2n(S)+\chi(S)\\
&=-w(L)-\#L-1\pm1,
\end{align*}
as desired.
\end{proof}

\end{document}